\documentclass[11pt]{article}
\usepackage{amsmath,amsthm,amsfonts,amssymb,mathrsfs,bm, geometry}
\usepackage[usenames]{color}
\usepackage{hyperref}
\usepackage{fancyhdr}
\pagestyle{myheadings}
\markright{Rigidity and Tolerance}

\oddsidemargin	 0in
\evensidemargin	 0in
\textwidth	6.5in
\topmargin	-0.25in
\headheight	0in
\headsep	.5in
\textheight	9.2in
\parindent	10pt
\parskip 	\bigskipamount
\footskip	.4in

\newtheorem{theorem}{Theorem}[section]
\newtheorem{lemma}[theorem]{Lemma}
\newtheorem{corollary}[theorem]{Corollary}
\newtheorem{proposition}[theorem]{Proposition}

\newtheorem{remark}{Remark}[section]

\newtheorem{definition}{Definition}

\newtheorem*{notation}{Notation}
\def\D{\mathcal{D}}
\def\Z1{\m}
\def\Z{\mathbb{Z}}

\def\R{\mathbb{R}}
\def\C{\mathbb{C}}
\def\P{\mathbb{P}}
\def\eps{\varepsilon}
\def\O{\Omega}
\def\A{\mathcal{A}}
\def\a{\mathcal{I}}
\def\b{V_{\eps}}

\def \del{\delta}
\def \z{\zeta}
\def \o{\omega}
\def \Z{\boldsymbol{Z}}
\def \s{\sigma}
\def \c{\sqrt{{n \choose k}k!}}
\def\cnew{\sqrt{{n \choose r}r!}}

\def \X{\boldsymbol{X}}
\def \Y{\boldsymbol{Y}}
\def \L{\boldsymbol{L}}
\def \M{\boldsymbol{M}}
\def \N{\boldsymbol{N}}
\def\E{\mathbb{E}}
\def\e{\boldsymbol{E}}
\def\xo{\mathcal{X}}
\def\Eu{\mathcal{E}}
\def\tP{\mathbb{P}}
\def\S{\mathcal{S}}
\def\B{\mathcal{B}}

\renewcommand{\l}[0]{\left }
\renewcommand{\r}[0]{\right}

\def\on{\uo(n_k)}
\def\zn{\uz(n_k)}
\def\xin{X_{\mathrm{in}}}
\def\sn{s(n_k)}
\def\fn{f_{n_k}}
\def\onk{\Omega_{n_k}(j)}
\def\onp{\Omega_{n_k}^1(j)}
\def\onq{\Omega_{n_k}^2(j)}

\def\G{\mathcal{G}}
\def\F{\mathcal{Z}}
\def\rhoo{\varrho}
\def\nat{\mathbb{N} \cup \{0\}}
\def\uz{\underline{\z}}
\def\uo{\underline{\o}}
\def\uout{\Upsilon_{\text{out}}}
\def\uoutd{\Upsilon_{\text{out}}^{\D_0}}
\def\el{\mathcal{L}}
\def\els{\el_{\Sigma}}
\def\op{\mathfrak{o}}
\def\out{\mathrm{out}}
\def\inn{\mathrm{in}}
\def\AA{\mathfrak{A}^m}

\def\wt{\widetilde}
\def\xout{X_{\mathrm{out}}}

\def\A{\mathfrak{A}}
\def\tv{\wt{V}}
\def\la{\lambda}
\def\a{\alpha}
\def\ol{\overline}
\def\ck{\sqrt{{n_k \choose t}t!}}

\makeatletter
\renewcommand*{\@cite@ofmt}{\hbox}
\makeatother

\begin{document}
\title{Rigidity and Tolerance in point processes: Gaussian zeroes and Ginibre eigenvalues}
\author{
\begin{tabular}{c}
 \href{http://math.berkeley.edu/~subhro/}{Subhroshekhar Ghosh}\\ Department of Mathematics\\ University of California, Berkeley\\
\end{tabular}
\and
\begin{tabular}{c}
\href{http://research.microsoft.com/en-us/um/people/peres/}{Yuval Peres}\\ Microsoft Research\\ Redmond\\  
\end{tabular}
}
\date{}
\maketitle
\begin{abstract}
Let  $\Pi$ be a translation invariant point process on the complex plane $\C$ and let $\D \subset \C$ be a bounded open set. We ask what does the point configuration $\Pi_{\out}$ obtained by taking the points of $\Pi$ outside $\D$ tell us about the point configuration $\Pi_{\inn}$ of $\Pi$ inside $\D$? We show that for the Ginibre ensemble, $\Pi_{\out}$ determines the number of points in $\Pi_{\inn}$. For the translation-invariant zero process of a planar Gaussian Analytic Function, we show that $\Pi_{\out}$ determines the number as well as the centre of mass of the points in $\Pi_{\inn}$. Further, in both models we prove that the outside says ``nothing more'' about the inside, in the sense that the conditional distribution of the inside points, given the outside, is mutually absolutely continuous with respect to the Lebesgue measure on its supporting submanifold.

\end{abstract}

2010 \textsl{Mathematics Subject Classification.} Primary 60G55; Secondary 60B20.

\newpage
\section{Introduction}
\label{intro}
A point process $\Pi$ on $\C$ is a random locally finite point configuration on the two dimensional Euclidean plane. We will consider simple point processes, 
namely, those in which no two points are at the same location. A simple point process can, equivalently, be looked upon as a random discrete measure $[\Pi]=\sum_{z \in \Pi} \del_{z}$. The probability distribution of the point process $\Pi$ is a probability measure $\P[\Pi]$ on the Polish space of locally finite point sets on the plane. The relevant topology on the space of locally finite point sets is that of weak convergence of measures, when we identify a point configuration with the counting measure it induces on $\C$. This explains the equivalence of the two ways of looking at a point process. Point processes on the plane have been studied extensively. For a lucid exposition on general point processes one can look at \cite{DV}. The group of translations of $\C$ acts in a natural way on the space of locally finite point configurations on $\C$. Namely, the translation by $z$, denoted by $T_z$, takes the point configuration $\Lambda$ to the configuration $T_z(\Lambda):=\{x+z \big| x \in \Lambda\}$. A point process $\Pi$ is said to be translation invariant if $\Pi$ and $T_z(\Pi)$ have the same distribution for all $z \in \C$. The (first) intensity measure $\rho_1$ of a point process on $\C$ is the measure on $\C$ given by $\rho_1(B)=\E[|\Pi \cap B|]$ for every Borel set $B \subset \C$. Higher order intensity measures of a point process can also be defined in a similar manner, see e.g. Definition 1.2.2 in \cite{HKPV}  .  In this work we will focus primarily on translation invariant point processes on $\C$ whose intensities are absolutely continuous with respect to the Lebesgue measure. 

Let $\D$ be a bounded open set in $\C$. Let $\S$ denote the Polish space of locally finite point configurations on $\C$. The decomposition $\C=\D \cup \D^c$ induces a factorization $\S=\S_{\inn} \times \S_{\out}$, where $\S_{\inn}$ and $\S_{\out}$ are respectively the spaces of  finite point configurations on $\D$ and locally finite point configurations on $\D^c$.  This immediately leads to the natural decomposition $\Upsilon=(\Upsilon_{\inn},\Upsilon_{\out})$ for any $\Upsilon \in \S$, and  consequently a decomposition of the point process $\Pi$ as $\Pi=(\Pi_{\inn},\Pi_{\out})$.

In this paper, we ask the following question:  if we know the configuration $\Pi_{\out}$, what can we  conclude about $\Pi_{\inn}$? We consider with regard to this question the main natural examples of translation invariant point processes on the plane, 
and provide a complete answer  in each case.  

The (homogeneous) Poisson point process is the most natural example of a translation-invariant point process on the plane. One of its characterising properties is that the point configurations in two disjoint measurable sets are independent of each other. Therefore, our question is a triviality for the Poisson process: the points outside $\D$ do not provide  any information about the points inside $\D$.

The two main natural examples of translation invariant point processes in the plane that have non-trivial spatial correlations  are the Ginibre ensemble and the zeroes of the planar Gaussian Analytic Function. We refer the reader to \cite{HKPV} for a detailed study of these ensembles. The Ginibre ensemble (in the finite dimensional setting) was introduced  in the physics literature by Ginibre \cite{Gin} as a model based on non-Hermitian random matrices; the infinite Ginibre ensemble is obtained as a weak limit of these finite dimensional point processes. Like the Poisson process, it is translation-invariant and ergodic under rigid motions of the plane. In fact, it is a determinantal point process with the determinantal kernel ${ \displaystyle K(z,w)=\sum_{j=0}^{\infty} \frac{(z \overline{w})^j}{j!}}=e^{z \ol{w}}$ and the background measure $\frac{1}{\pi}e^{-|z|^2}\mathrm{d}\mathcal{L}(z)$. Here $\mathcal{L}$ denotes the Lebesgue measure on $\C$. Krishnapur (\cite{Kr}, Theorem 3.0.5) has shown that the Ginibre ensemble is the unique determinantal point process on $\C$ that is isometry invariant, has a sesqui-holomorphic kernel (i.e., the determinantal kernel is holomorphic in the first variable and 
anti-holomorphic in the second), and is normalized to have unit intensity.
 
The standard planar Gaussian Analytic Function (abbreviated henceforth as GAF) is the random entire function defined by the (random) Taylor series ${ \displaystyle f(z)=\sum_{k=0}^{\infty}\frac{\xi_k}{\sqrt{k!}}z^k }$ where $\xi_k$-s are i.i.d. standard complex Gaussians. We are interested in the point configuration on $\C$ given by the zeroes of this GAF. The GAF zero process is translation invariant and ergodic, and exhibits local repulsion. It has been studied intensively by several authors including Nazarov, Sodin, Tsirelson, and others (see, e.g., \cite{FH}, \cite{STs1},\cite{STs2},\cite{STs3},\cite{NSV},\cite{NS}). Zelditch, Shiffman, and others have studied their counterparts from a geometric perspective (see, e.g., \cite{SZ1},\cite{SZ2},  \cite{SZ3},  \cite{SZ4},\cite{ZZ},\cite{Ze}). 

Sodin \cite{Sod} has shown that in the class of Gaussian analytic functions (i.e., analytic functions whose finite dimensional marginals are Gaussian distributions), the standard planar GAF is the only one to have a translation invariant zero-set (up to scaling and  multiplication by a deterministic entire function with no zeroes). 

For further details on the Gaussian zero and Ginibre ensembles, we refer the reader to Section \ref{setmod}.

Given a pair of random variables $(X,Y)$ which has a joint distribution on a product of Polish spaces $\S_1 \times \S_2$, we can define the \textsl{regular conditional distribution} $\gamma$ of $Y$ given $X$ by the family of probability measures $\gamma(s_1,\cdot)$ parametrized by the elements $s_1 \in \S_1$ such that for any Borel sets $A \subset \S_1$ and $B \subset \S_2$ we have 
\[ \P \l( X \in A, Y \in B \r) = \int_{A} \gamma(s_1,B) \, \mathrm{d}\,\P[X](s_1) \]
where $\P[X]$ denotes the marginal distribution of $X$.  For details on regular conditional distributions, see, e.g., \cite{Pa} (Chapter 5, Section 8) or \cite{Du} (Section 5.1.3).

Recall that $\S_{\inn}$ and $\S_{\out}$ are Polish spaces. Hence, by the general theory of conditional measures, there exists a \textsl{regular conditional distribution} $\rhoo$ of $\Pi_{\inn}$ given $\Pi_{\out}$. Clearly,  $\rhoo$ can be seen as the distribution of a point process on $\D$ which depends on $\Upsilon_{\out}$.

Let $\uz$ be the vector whose coordinates are the points of $\Pi_{\inn}$ taken in uniform random order (the length of this vector is a random variable). We will denote the conditional distribution of $\uz$ given $\Pi_{\out}$ by $\rho$. Formally, it is a family of probability measures $\rho(\uout,\cdot)$ on $\bigcup_{m=0}^{\infty}\D^m$ parametrized by $\uout \in \S_{\out}$.  

There is a simple relationship between $\rho({\uout},\cdot)$ and $\rhoo({\uout},\cdot)$. Consider the natural map $\phi$ from $\bigcup_{m=0}^{\infty}\D^{m}$ to $\S_{\inn}$ which makes a point configuration $\Upsilon_{\inn}$ of size $m$ from a vector $\uz$ in $\D^m$ by forgetting the order of the coordinates of $\uz$. It is easy to see that $\P[\Pi_{\out}]$-a.s. we have $\phi_*\rho({\uout},\cdot)=\rhoo({\uout},\cdot)  $. The main reason for using $\rho$ is that it allows us to easily compare our conditional measures with other standard measures, which can be thought of as appropriate  reference measures, e.g. in the setting of Theorems \ref{gin-2} and \ref{gaf-2}.


For two measures $\mu_1$ and $\mu_2$ defined on the same measure space $\O$ with $\mu_1 \ll \mu_2$ (meaning $\mu_1$ is absolutely continuous with respect to $\mu_2$), we will denote by $\frac{\mathrm{d} \mu_1}{\mathrm{d} \mu_2}(\o)$ the Radon Nikodym derivative of $\mu_1$ with respect to $\mu_2$ evaluated at $\o \in \O$. By $\mu_1 \equiv \mu_2$, we mean that the measures $\mu_1$  and $\mu_2$ are mutually absolutely continuous, that is $\mu_1 \ll \mu_2$ and $\mu_2 \ll \mu_1$. 

In Theorems \ref{gin-1}-\ref{gaf-2} we denote the Ginibre ensemble by $\G$ and the GAF zero ensemble by $\F$. As before, $\D$ is a bounded open set in $\C$. 

In the case of the Ginibre ensemble, we prove that a.s.  the points outside $\D$ determine the number of points inside $\D$, and ``nothing more''. 

\begin{theorem}
 \label{gin-1}
For the Ginibre ensemble, there is a measurable function $N:\S_{\out} \to \nat$ such that a.s. \[ \mathrm{ \ Number \ of \ points \ in \ } \G_{\inn} = N(\G_{\out})\hspace{3 pt}.\]
\end{theorem}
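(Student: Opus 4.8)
The plan is to realise the integer‑valued random variable $X:=\#(\G_{\inn})$ as an almost‑sure limit of random variables of the form $c_n-Y_n$, where each $c_n\in\R$ is deterministic and each $Y_n$ is a Borel functional of $\G_{\out}$ alone; since the family of $\sigma(\G_{\out})$‑measurable random variables is closed under almost‑sure limits, this shows that $X$ coincides almost surely with a Borel function of $\G_{\out}$, and projecting its values onto $\nat$ yields the desired $N$. The approximations come from linear statistics. For a Lipschitz $\phi:\C\to[0,1]$ with bounded support and $\phi\equiv 1$ on a neighbourhood of $\overline{\D}$, set $S(\phi):=\sum_{z\in\G}\phi(z)$, a finite sum almost surely since $\G$ is locally finite and $\operatorname{supp}\phi$ is compact. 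Because almost surely no point of $\G$ lies on $\partial\D$ (as $\mathcal L(\partial\D)=0$ and $\G$ has absolutely continuous intensity), splitting the sum gives the exact identity
\[ X \;=\; S(\phi)\;-\;Y(\phi),\qquad Y(\phi):=\sum_{z\in\G_{\out}}\phi(z), \]
and $Y(\phi)$ is a Borel functional of the configuration $\G_{\out}$. Taking expectations, $\E[S(\phi)]=\int_\C\phi\,d\mathcal L$ by unit intensity, so $Y(\phi)-\E[S(\phi)]=-X+\big(S(\phi)-\E[S(\phi)]\big)$. It therefore suffices to exhibit a sequence $\phi_n$ of such functions with $\sum_n\operatorname{Var}\big(S(\phi_n)\big)<\infty$: then $S(\phi_n)-\E[S(\phi_n)]\to 0$ almost surely, hence $Y(\phi_n)-\E[S(\phi_n)]\to -X$ almost surely, and one sets $N:=\lim_n\big(\E[S(\phi_n)]-Y(\phi_n)\big)$, declared to be $0$ off the full‑measure Borel subset of $\S_{\out}$ on which this limit exists and lies in $\nat$.

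The analytic core is a variance estimate for Ginibre linear statistics: there is an absolute constant $c>0$ with $\operatorname{Var}\big(S(\phi)\big)\le c\,\|\nabla\phi\|_{L^2(\C)}^2$ for every Lipschitz $\phi$ of bounded support. This is precisely where the determinantal structure of $\G$ enters — for the Poisson process no such bound holds, and indeed the theorem fails there. In the unit‑intensity normalization $\G$ has one‑point function $\rho_1\equiv 1$ and pair correlation $\rho_2(z,w)=1-g(z-w)$ with $g\ge 0$ a Gaussian satisfying $\int_\C g\,d\mathcal L=1$; the second‑moment expansion, using $\int_\C g(z-w)\,d\mathcal L(w)=1$ to symmetrise $\int\phi^2$, gives
\[ \operatorname{Var}\big(S(\phi)\big)\;=\;\tfrac12\iint_{\C^2}\big(\phi(z)-\phi(w)\big)^2\,g(z-w)\,d\mathcal L(z)\,d\mathcal L(w). \]
Writing $\phi(z)-\phi(w)=\int_0^1\nabla\phi\big(w+t(z-w)\big)\cdot(z-w)\,dt$, applying Cauchy--Schwarz, and changing variables by the measure‑preserving shear $(z,w)\mapsto\big(w+t(z-w),\,z-w\big)$ bounds the double integral by $\big(\int_\C|u|^2 g(u)\,d\mathcal L(u)\big)\,\|\nabla\phi\|_{L^2(\C)}^2$, which (absorbing the factor $\tfrac12$ and $\int|u|^2g$ into $c$) is the claimed inequality.

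It remains to build test functions of arbitrarily small Dirichlet energy that are $\equiv 1$ near $\overline{\D}$; this is the logarithmic‑capacity phenomenon in the plane. Fix $r_0$ with $\overline{\D}\subset B_{r_0}:=\{z\in\C:|z|<r_0\}$, set $r_n:=r_0\,e^{4^n}$, and let $\phi_n$ equal $1$ on $B_{r_0}$, equal $0$ outside $B_{r_n}$, and equal the harmonic interpolant $\phi_n(z)=\log(r_n/|z|)\big/\log(r_n/r_0)$ on the annulus $r_0\le|z|\le r_n$. Then $\phi_n$ is Lipschitz with bounded support, $\phi_n\equiv 1$ on $B_{r_0}\supset\overline{\D}$, and a direct computation gives $\|\nabla\phi_n\|_{L^2(\C)}^2=2\pi/\log(r_n/r_0)=2\pi\cdot 4^{-n}$. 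Hence $\operatorname{Var}\big(S(\phi_n)\big)\le 2\pi c\cdot 4^{-n}$, so $\sum_n\operatorname{Var}\big(S(\phi_n)\big)<\infty$, $\sum_n\big(S(\phi_n)-\E[S(\phi_n)]\big)^2<\infty$ almost surely, and the reduction of the first paragraph applies, proving the theorem.

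I expect the main obstacle to be the variance bound of the second paragraph: although the final inequality is clean, justifying the second‑moment identity against merely Lipschitz test functions and controlling the resulting double integral is where all use of the Ginibre determinantal structure is concentrated; the capacity construction and the measurability bookkeeping are then routine. One further point to flag is that the centering constants $\E[S(\phi_n)]=\int_\C\phi_n\,d\mathcal L$ diverge as $n\to\infty$; this is harmless, since they are subtracted off exactly on both sides of the identity $X=S(\phi_n)-Y(\phi_n)$, so only the vanishing fluctuations of $S(\phi_n)$ matter.
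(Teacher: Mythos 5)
Your proposal is correct and follows essentially the same route as the paper: construct compactly supported test functions that equal $1$ on a neighbourhood of $\overline{\D}$ and have arbitrarily small Dirichlet energy (the logarithmic interpolant on an annulus $r_0\le|z|\le r_0 e^{1/\eps}$, exactly as the paper's $\widetilde\Psi$ with $\eps=4^{-n}$), bound the variance of the corresponding linear statistic by a constant times $\|\nabla\phi\|_{L^2}^2$, and then pass to the limit via Chebyshev and Borel--Cantelli. The two differences are presentational rather than substantive. First, the paper factors the Chebyshev/Borel--Cantelli step out into a general rigidity criterion (Theorem \ref{rig}), while you run it inline. Second, the paper smooths the interpolant to $C_c^2$ in order to cite Rider--Virag for the variance bound $\mathrm{Var}(\int\Psi\,d[\G])\le C\int\|\nabla\Psi\|_2^2\,d\mathcal L$, whereas you re-derive that bound from scratch (via the determinantal variance identity $\mathrm{Var}(S(\phi))=\tfrac12\iint(\phi(z)-\phi(w))^2 g(z-w)\,d\mathcal L\,d\mathcal L$ with $g$ Gaussian, Cauchy--Schwarz along line segments, and a shear change of variables), which lets you work directly with Lipschitz rather than $C^2$ test functions. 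Your derivation is exactly the paper's Lemma \ref{varlinst} specialized to the Ginibre kernel plus an elementary gradient estimate, so it does not change the essential argument, but it has the minor merits of being self-contained and of sidestepping the smoothing step.
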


Since a.s. the length of $\uz$ equals $N(\G_{\out})$, we can assume that each measure $\rho(\uout,\cdot)$ is supported on $\D^{N({\uout})}$.

\begin{theorem}
\label{gin-2}
For the Ginibre ensemble, $\P$-a.s. the measure $\rho({\G_{\out}},\cdot)$ and the Lebesgue measure $\el$ on $\D^{N(\G_{\out})}$ are mutually absolutely continuous.
\end{theorem}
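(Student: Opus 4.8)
\emph{Plan.} Write $k=N(\G_{\out})$, which by Theorem~\ref{gin-1} is a.s.\ the number of points of $\G$ in $\D$. The strategy is to exhibit an explicit formula for the conditional density $\tfrac{d\rho(\G_{\out},\cdot)}{d\el}$ on $\D^{k}$, from which both halves of the mutual absolute continuity are immediate. I would obtain the formula by first computing the analogous conditional density for the \emph{finite} Ginibre ensemble $\G^{(n)}$ (whose law on $\C^{n}$ has density proportional to $|\Delta(z_1,\dots,z_n)|^2\prod_{j=1}^{n}e^{-|z_j|^2}$, and which converges locally to $\G$ as $n\to\infty$), and then passing to the limit. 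For $\G^{(n)}$, conditioning on the event that exactly $k$ of the $n$ points lie in $\D$ and on the outside configuration $\{w_1,\dots,w_{n-k}\}$, the factorisation $|\Delta(\underline z,\underline w)|^2=|\Delta(\underline z)|^2|\Delta(\underline w)|^2\prod_{i,l}|z_i-w_l|^2$ shows that the $k$ inside points, taken in uniform random order, have on $\D^{k}$ the density
\[
\frac{1}{Z_n}\,|\Delta(\underline z)|^2\prod_{i=1}^{k}\Big(e^{-|z_i|^2}\prod_{l=1}^{n-k}|z_i-w_l|^2\Big),
\]
which is real-analytic and strictly positive off the diagonal of $\D^{k}$.

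\emph{Passing to the limit.} Fix a reference point $z_0\in\D$. Using $|z-w|^2=|w|^2|1-z/w|^2$ and $\log|1-u|^2=-2\,\mathrm{Re}\,u-\mathrm{Re}\,u^2+O(|u|^3)$, the $n$-dependent normalisation cancels in ratios, and the limiting density should take the form
\[
\frac{d\rho(\G_{\out},\cdot)}{d\el}(\underline z)=\frac{1}{Z(\G_{\out})}\,|\Delta(\underline z)|^2\prod_{i=1}^{k}g_{\G_{\out}}(z_i),\qquad
g_{\G_{\out}}(z):=e^{-|z|^2}\lim_{T\to\infty}\frac{\prod_{w\in\G_{\out},\,|w|\le T}|z-w|^2}{\prod_{w\in\G_{\out},\,|w|\le T}|z_0-w|^2},
\]
provided the partial sums $\sum_{w\in\G_{\out},\,|w|\le T}w^{-1}$ and $\sum_{w\in\G_{\out},\,|w|\le T}w^{-2}$ converge as $T\to\infty$ (the $O(|w|^{-3})$ remainders, and the finitely many terms with $|w|$ not large, converge absolutely a.s.). Since $\mathcal{L}(\partial\D)=0$, a.s.\ $\G$ places no point on $\partial\D$, so $\G_{\out}\subset\C\setminus\overline{\D}$ and every factor $|z-w|^2$ is bounded away from $0$ and $\infty$ on $\overline{\D}$; with the renormalisation the product converges locally uniformly, so $g_{\G_{\out}}$ is continuous and strictly positive on $\overline{\D}$. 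To make the identification of $\rho$ rigorous I would not argue pointwise with conditional densities (conditioning is discontinuous); instead, for a bounded Borel $B\subset\D^{k}$ and an event $A$ from a $\pi$-system generating $\sigma(\G_{\out})$ (say $A$ determined by the configuration in an annulus), I would compute $\P(\uz\in B,\ \G_{\out}\in A,\ N(\G_{\out})=k)$ by approximation through $\G^{(n)}$ — using the local convergence $\G^{(n)}\to\G$, which since $\mathcal{L}(\partial\D)=0$ also forces $\#(\G^{(n)}\cap\D)\to N(\G_{\out})$ by Theorem~\ref{gin-1}, together with uniform-in-$n$ control of the product tails — and check that the limit equals $\int_{A\cap\{N=k\}}\big(Z(\uout)^{-1}\int_B|\Delta(\underline z)|^2\prod_i g_{\uout}(z_i)\,d\el\big)\,d\P[\G_{\out}](\uout)$; a monotone-class argument then yields the displayed formula.

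\emph{Conclusion from the formula.} Granting the formula, Theorem~\ref{gin-2} follows at once: $|\Delta(\underline z)|^2$ vanishes only on the $\el$-null diagonal of $\D^{k}$, while $\prod_i g_{\G_{\out}}(z_i)$ is a.s.\ continuous, finite and strictly positive on $\overline{\D}^{\,k}$; hence $0<Z(\G_{\out})<\infty$ (finite because the integrand is bounded on the bounded set $\D^{k}$, positive because it is positive on a set of full measure), and $\tfrac{d\rho(\G_{\out},\cdot)}{d\el}$ is a.s.\ finite and $\el$-a.e.\ positive on $\D^{N(\G_{\out})}$, i.e.\ $\rho(\G_{\out},\cdot)\equiv\el$.

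\emph{Main obstacle.} The heart of the matter is the limiting step. First, the a.s.\ convergence of the conditionally summed series $\sum_{w\in\G}w^{-1}$ and $\sum_{w\in\G}w^{-2}$: rotational invariance makes every annular block mean-zero, and the otherwise logarithmically divergent second moment is tamed by the cancellation built into the Ginibre two-point function $1-e^{-|z-w|^2}$ — this is exactly where the rigidity of $\G$ enters quantitatively — after which $L^2$ convergence is upgraded to a.s.\ convergence by a martingale/Borel--Cantelli argument. Second, the transfer of the finite-$n$ computation to the infinite ensemble requires the local coupling $\G^{(n)}\to\G$ (in particular $\#(\G^{(n)}\cap\D)\to N(\G_{\out})$) together with uniform-in-$n$ control of the tails of $\prod_{w}|z-w|^2/|z_0-w|^2$, so that the $n\to\infty$ limit may legitimately be interchanged with the conditioning; this interchange is the most delicate part of the argument.
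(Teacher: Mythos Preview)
Your proposal is essentially correct and shares the core analytic ingredients with the paper: the Taylor expansion of $\log|1-z/w|$, the reduction to controlling $\sum_{\omega}1/\omega$, $\sum_{\omega}1/\omega^2$, and $\sum_{\omega}1/|\omega|^3$, and the use of the Ginibre two-point function to tame the variance of these sums. Where you differ from the paper is in the \emph{target}: you aim to identify the limiting conditional density exactly, as $Z^{-1}|\Delta(\underline z)|^2\prod_i g_{\G_{\out}}(z_i)$ with an explicit $g$, and then read off mutual absolute continuity from positivity and finiteness of $g$. The paper never attempts this. Instead it sets up an abstract comparison result (Theorem~\ref{abs}) whose input is only a two-sided inequality
\[
m(j)\,|\Delta(\underline z)|^2 \;\le\; \rho^{n}_{\uo}(\underline z)\;\le\; M(j)\,|\Delta(\underline z)|^2
\]
valid on a sequence of good events $\Omega_{n_k}(j)$ exhausting $\Omega^m$; the reference measure $\nu$ is simply $Z^{-1}|\Delta(\underline z)|^2\,d\el$, independent of $\G_{\out}$, with all the $g$-factors swallowed into $m(j),M(j)$. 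The output is only $\rho(\G_{\out},\cdot)\equiv\nu\equiv\el$, not a density formula.

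This buys the paper a substantial simplification at precisely the point you flag as the main obstacle. Two-sided inequalities pass to limits far more robustly than equalities: the paper never has to show that $g^{(n)}_{\G^{(n)}_{\out}}\to g_{\G_{\out}}$ or that $Z_n\to Z$, only that the random variable $\X_n=|S_1(n)|+|S_2(n)|+\widetilde S_3(n)$ is tight uniformly in $n$ (Proposition~\ref{gin2}, Corollary~\ref{ginc2.1}), and that $S_l(n)\to S_l$ in probability (Proposition~\ref{conv}) so that the good events can be matched up between $\G_n$ and $\G$. Your route would extract strictly more information (the actual density), but you would have to carry out the limit interchange for equalities, and also make sense of $\lim_{T\to\infty}\prod_{|w|\le T}$ as an honest a.s.\ limit rather than via the smoothed partition-of-unity sums the paper uses. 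Both are doable, but the paper's framework is deliberately coarser so that the same machinery also handles the GAF case, where no closed-form density is available.
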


In the case of the GAF zero process, we prove that the points outside $\D$ determine the number as well as the centre of mass (or equivalently, the sum) of the points inside $\D$, and ``nothing more''. 

\begin{theorem}
 \label{gaf-1}
For the GAF zero ensemble, \newline
\noindent
(i)There is a measurable function $N:\S_{\out} \to \nat$ such that a.s.  \[ \mathrm{ \ Number \ of \ points \ in \ } \F_{\inn} = N(\F_{\out}).\]
(ii)There is a measurable function $S:\S_{\out} \to \C$ such that a.s.  \[ \mathrm{ \ Sum \ of \ the \ points \ in \ } \F_{\inn} = S(\F_{\out}).\]
\end{theorem}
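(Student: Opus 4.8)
The plan is to realise both $N(\F_{\out})$ and $S(\F_{\out})$ as almost sure limits of explicit functions of $\F_{\out}$, extracted from linear statistics of $\F$ against carefully chosen test functions. The one external ingredient I would use is the estimate, valid for the GAF zero set and any $h\in C_c^\infty(\C,\R)$,
\[ \mathrm{Var}\Big(\sum_{z\in\F}h(z)\Big) \;\le\; C\,\|\Delta h\|_{L^2(\C)}^2 \]
(see Section \ref{setmod}; this goes back to Sodin--Tsirelson \cite{STs1}). In particular the left side vanishes as soon as $\|\Delta h\|_{L^2(\C)}\to0$. Since this controlling quantity is of \emph{second} order in derivatives and $z\mapsto z$ is harmonic, I can drive the variance to zero both along test functions that are identically $1$ near $\overline{\D}$, which will pin down the cardinality of $\F_{\inn}$, and along $\C$-valued test functions that equal $z$ near $\overline{\D}$, which will pin down the sum.

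\emph{Part (i).} Since $\D$ is bounded, fix $\rho$ with $\overline{\D}\subset\{|z|<\rho\}$, and for $R>\rho$ let $\psi_R\in C_c^\infty(\C)$ be the radial logarithmic cut-off equal to $1$ on $\{|z|\le\rho\}$, equal to $0$ on $\{|z|\ge R\}$, and of the form $\psi_R(z)=g(\log|z|/\log R)$ on the intervening annulus, with $g\in C^\infty([0,1])$, $g\equiv1$ near $0$, $g\equiv0$ near $1$. One computes $\Delta\psi_R=g''(\log|z|/\log R)/(|z|^2(\log R)^2)$ on the annulus (and $0$ off it), so $\|\Delta\psi_R\|_{L^2(\C)}^2=O\big((\log R)^{-4}\big)$. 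Taking $\phi_n=\psi_{R_n}$ with $R_n\uparrow\infty$ and using $\phi_n\equiv1$ near $\overline{\D}$, we get $\sum_{z\in\F}\phi_n(z)=\#\F_{\inn}+\sum_{z\in\F_{\out}}\phi_n(z)$, while $a_n:=\E\big[\sum_{z\in\F}\phi_n(z)\big]$ (the integral of $\phi_n$ against the intensity) is a finite deterministic number. The variance estimate gives $\sum_{z\in\F}\phi_n(z)-a_n\to0$ in $L^2(\P)$, hence a.s.\ along a subsequence $(n_k)$; as $\#\F_{\inn}$ does not depend on $k$, this forces, a.s.,
\[ \#\F_{\inn} \;=\; \lim_{k\to\infty}\Big(a_{n_k}-\sum_{z\in\F_{\out}}\phi_{n_k}(z)\Big). \]
Each $\uout\mapsto\sum_{z\in\uout}\phi_{n_k}(z)$ is Borel on $\S_{\out}$, so letting $N(\uout)$ be this limit when it exists and lies in $\nat$ (a Borel condition on $\uout$) and $0$ otherwise yields a measurable $N:\S_{\out}\to\nat$ with $\#\F_{\inn}=N(\F_{\out})$ a.s.

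\emph{Part (ii).} Run the same argument with the $\C$-valued test functions $\phi_n(z)=z\,\psi_{R_n}(z)$, which equal $z$ near $\overline{\D}$ and are compactly supported. Because $z\mapsto z$ is harmonic, $\Delta\phi_n$ is again supported on the transition annulus; there $\Delta(z\psi)=z\,\Delta\psi+2(\partial_x\psi+i\,\partial_y\psi)$, and combined with $|\Delta\psi_{R_n}|=O\big(|z|^{-2}(\log R_n)^{-2}\big)$ and $|\nabla\psi_{R_n}|=O\big(|z|^{-1}(\log R_n)^{-1}\big)$ this gives $|\Delta\phi_n|=O\big(|z|^{-1}(\log R_n)^{-1}\big)$ and hence $\|\Delta\phi_n\|_{L^2(\C)}^2=O\big((\log R_n)^{-1}\big)\to0$. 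Applying the variance estimate to $\mathrm{Re}\,\phi_n$ and $\mathrm{Im}\,\phi_n$ separately yields $\sum_{z\in\F}\phi_n(z)-b_n\to0$ in $L^2(\P;\C)$, where $b_n:=\E\big[\sum_{z\in\F}\phi_n(z)\big]$; and $\sum_{z\in\F}\phi_n(z)=\sum_{z\in\F_{\inn}}z+\sum_{z\in\F_{\out}}\phi_n(z)$ because $\phi_n(z)=z$ on $\F_{\inn}$. Exactly as in part (i), one extracts a measurable $S:\S_{\out}\to\C$ with $\sum_{z\in\F_{\inn}}z=S(\F_{\out})$ a.s.

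The routine steps here are the passage from $L^2$- to a.s.-convergence along a subsequence and the Borel measurability of linear statistics as functions of $\F_{\out}$. The part that needs care is the construction of cut-offs meeting \emph{both} demands simultaneously: prescribed values on a neighbourhood of $\overline{\D}$ and $\|\Delta\phi_n\|_{L^2(\C)}\to0$. For (i) the logarithmic cut-off does this comfortably; the real work is in (ii), where one must exploit harmonicity of $z\mapsto z$ so that $\Delta\phi_n$ is confined to the annulus, and then verify that the annular contribution is genuinely negligible as $R_n\to\infty$ --- and this last point is exactly what the second-order form of the GAF variance buys us. By contrast, for the Ginibre ensemble (Theorem \ref{gin-1}) one only has a first-order bound $\mathrm{Var}(\sum_z h(z))\lesssim\|\nabla h\|_{L^2(\C)}^2$, which still accommodates a cut-off equal to $1$ near $\overline{\D}$, but not one equal to $z$.
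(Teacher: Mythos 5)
Your proposal is correct and follows essentially the same path as the paper: invoke the Nazarov–Sodin bound $\mathrm{Var}\big(\int\vartheta\,d[\F]\big)\lesssim\|\Delta\vartheta\|_{L^2}^2$, construct radial cutoffs with logarithmic profile whose Laplacian has small $L^2$ norm, and for part (ii) multiply by $z$ and use its harmonicity so that only first-order derivatives of the cutoff (times $O(1)$) survive on the transition annulus. The paper routes this through a general criterion (Theorem~\ref{rig}) and constructs the cutoff by prescribing $\Psi'(r)=-\eps/r$ on $r_0\le r\le r_0 e^{1/\eps}$ and smoothing at the two endpoints, while you take the smooth reparametrisation $\psi_R(z)=g(\log|z|/\log R)$ directly; these are the same family up to a change of variable, and your form is a bit cleaner because the cancellation $\Delta\psi_R=g''\cdot|z|^{-2}(\log R)^{-2}$ is exact with no corner-smoothing bookkeeping. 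Two minor points in your favour: you do not need the Section~\ref{general} reduction to $\D$ a disk since your cutoff equals $1$ on a disk $\{|z|\le\rho\}\supset\overline{\D}$ for arbitrary bounded $\D$, and you note explicitly that the variance bound is applied to $\mathrm{Re}\,\phi_n$ and $\mathrm{Im}\,\phi_n$ separately, which the paper glosses over.
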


For a possible value $\uout$ of $\F_{\out}$, define the set of admissible vectors of inside points (obtained by considering all possible orderings of such inside point configurations) \[ \Sigma_{S({\uout})} := \{ \uz \in \D^{N({\uout})} : \sum_{j=1}^{N({\uout})} \z_j = S({\uout}) \}\] where $\uz=(\z_1,\cdots,\z_{N({\uout})})$.

Since a.s. the length of $\uz$ equals $N({\uout})$, we can assume that each measure $\rho(\uout,\cdot)$ gives us the distribution of a random vector in  $\D^{N({\uout})}$ supported on $\Sigma_{S({\uout})}$.

\begin{theorem}
\label{gaf-2}
For the GAF zero ensemble, $\P$-a.s. the measure $\rho(\F_{\out},\cdot)$ and the Lebesgue measure $\els$ on $\Sigma_{S(\F_{\out})}$ are mutually absolutely continuous.
\end{theorem}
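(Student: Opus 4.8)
The plan is to realize the conditional law $\rho(\F_{\out},\cdot)$ concretely as the law of the zero set inside $\D$ of a centred \emph{Gaussian} analytic function obtained from $f$ by conditioning on its zeros outside $\D$, and then to read off mutual absolute continuity with $\els$ from the classical Kac--Rice/Edelman--Kostlan formula for the intensity of zeros of a non-degenerate Gaussian analytic function. This is exactly the scheme of Theorem~\ref{gin-2}; the only new feature is that here the conditioned field is degenerate along exactly one complex direction --- the direction killed by the rigidity of the sum from Theorem~\ref{gaf-1} --- and it is this one-dimensional degeneracy that makes the support $\Sigma_{S(\F_{\out})}$ rather than all of $\D^{N(\F_{\out})}$, and forces the comparison measure to be $\els$ rather than $\el$. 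Since the zero set $\Xi$ of $f$ outside $\D$ is infinite, I would fix an exhaustion of $\C$ by disks $B_R\supset\overline{\D}$, condition first only on the zeros $w_1,\dots,w_p$ of $f$ lying in $B_R\setminus\overline{\D}$, and pass to $R\to\infty$ at the end via a martingale (L\'evy upward) convergence argument for the regular conditional distributions.

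For fixed $R$: conditioning the centred complex Gaussian process $f$ on the linear constraints $f(w_1)=\dots=f(w_p)=0$ produces again a centred Gaussian analytic function $\tilde f_R$, whose covariance kernel is the Schur complement of $K$ with respect to the block $\big(K(w_i,w_j)\big)_{i,j}$. Since $K(z,w)=e^{z\overline{w}}$ has infinite rank and all its finite sections on distinct points are non-singular, conditioning on the finitely many values $f(w_j)$ leaves the values of $\tilde f_R$ at any finite set of distinct points of $\overline{\D}$ jointly non-degenerate, so this Schur-complement kernel is still strictly positive definite on $\overline{\D}$. Hence, conditioned on having $N$ zeros in $\D$, the zero set of $\tilde f_R$ in $\D$ has a joint density on $\D^N$ given by the Kac--Rice formula; that density is continuous, everywhere finite, and strictly positive off the diagonal $\{\z_i=\z_j\}$, so exactly as in the proof of Theorem~\ref{gin-2} this conditional law is mutually absolutely continuous with $\el$ on $\D^N$. (One must also condition on $\tilde f_R$ having no zeros in $B_R\setminus\overline{\D}$ other than the prescribed ones; this is a positive-probability event that does not change the absolute-continuity class of the inside configuration.)

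Letting $R\to\infty$, the Schur-complement kernels converge, locally uniformly on $\overline{\D}\times\overline{\D}$, to the covariance $\tilde K_\infty$ of the field $\tilde f_\infty$ obtained by conditioning on all of $\Xi$. By Theorem~\ref{gaf-1} the number $N(\F_{\out})$ and the sum $S(\F_{\out})$ of the inside zeros are a.s.\ determined by $\F_{\out}$; in particular, given that there are $N$ points, the conditional inside-zero law is supported on a set of codimension at least one, so $\tilde K_\infty$ cannot be non-degenerate on $\overline{\D}$ --- otherwise Kac--Rice would force a full-dimensional density. The heart of the matter is that the null space of $\tilde K_\infty|_{\overline{\D}}$ is \emph{exactly} one complex dimension, namely the ``sum'' direction. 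Granting this, $\tilde f_\infty$ is a non-degenerate Gaussian analytic field on a complex-codimension-one subspace of the analytic functions on $\D$; conditioned on having $N=N(\F_{\out})$ zeros there, its zero set is supported on $\Sigma_{S(\F_{\out})}$, and the Kac--Rice formula on that subspace yields a density with respect to $\els$ that is continuous, finite, and strictly positive off the diagonal. Since the diagonal is $\els$-null, this gives $\rho(\F_{\out},\cdot)\equiv\els$ for $\P[\F_{\out}]$-almost every $\F_{\out}$.

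The routine ingredients here are the Schur-complement conditioning and the Kac--Rice intensity formula. The real work lies in two places. First, one must justify the passage $R\to\infty$: that conditioning on the outside zeros in $B_R$ converges, as $R\to\infty$, to conditioning on $\F_{\out}$, and that this convergence is strong enough to carry the density description to the limit with the normalizing constant controlled. Second --- and this is the main obstacle --- one must show that the null space of $\tilde K_\infty|_{\overline{\D}}$ is \emph{at most} one-dimensional, i.e.\ that the outside says nothing about the inside beyond the number and the sum. Theorem~\ref{gaf-1} supplies only the ``at least one-dimensional'' half; the matching upper bound --- for instance via a uniform-in-$R$ lower bound on the second smallest eigenvalue of $\tilde K_R$ restricted to a fixed finite-dimensional space of analytic test functions on $\overline{\D}$, or by exhibiting enough admissible perturbations of the inside configuration to span the tangent space of $\Sigma_{S(\F_{\out})}$ --- is what genuinely distinguishes this theorem from a soft corollary of rigidity.
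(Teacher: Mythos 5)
Your proposal takes a genuinely different route from the paper, and the two main steps you yourself flag as ``the real work'' are exactly the parts you leave unproved, so as written this is not a proof but a plan with the hardest parts open. The paper does not condition the Gaussian \emph{field} on outside zeros at all. Instead it works with the random polynomials $f_n$, for which the conditional density of the inside zeros $\uz$ given the outside zeros $\uo$ and the inside sum $s$ is available in closed form (equation~(\ref{condgaf}), a ratio of Vandermondes against a power of a sum of normalized elementary symmetric functions). It then proves pointwise two-sided bounds on ratios $\rho^n_{\uo,s}(\uz'')/\rho^n_{\uo,s}(\uz')$ over $\Sigma_s\times\Sigma_s$ in terms of a few explicit random quantities (sums of inverse powers of outside zeros, and the $\boldsymbol{L},\boldsymbol{M},\boldsymbol{N}$ statistics of Section~\ref{ratcond}), and finally feeds these uniform-ratio estimates into the abstract measure-theoretic limiting machine of Theorem~\ref{abs} to identify $\rho(\F_{\out},\cdot)$ with $Z^{-1}|\Delta(\uz)|^2\,d\els(\uz)$ in the limit. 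The ``at most one conservation law'' statement you isolate as the crux is precisely what these explicit density bounds deliver, and they deliver it without any Gaussian-conditioning or Kac--Rice step.

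Beyond incompleteness, there is also a conceptual gap in the Schur-complement setup. Conditioning the Gaussian process $f$ on the linear event $f(w_1)=\dots=f(w_p)=0$ is not the same as conditioning on the outside zero configuration equalling $\{w_1,\dots,w_p\}$: the latter is conditioning on a nonlinear functional of $f$ (the zero set), and its regular conditional law carries a Jacobian tilt coming from the Kac--Rice/co-area factor at the outside points (roughly $\prod_j |f'(w_j)|^2$ against the linearly-conditioned law). That tilt destroys Gaussianity of the conditioned field, so ``$\tilde f_R$ is a centred Gaussian analytic function with the Schur-complement kernel'' is not correct as stated; any Kac--Rice computation must be performed for a non-Gaussian tilted ensemble. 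One can salvage this by tracking the tilt explicitly, but then the conclusion is no longer a soft consequence of Gaussian non-degeneracy, and you are back to the kind of explicit density analysis the paper does at the polynomial level. The two genuine difficulties you list --- controlling the $R\to\infty$ limit of the conditioning, and proving the null space of the limiting kernel is \emph{exactly} one (complex) dimension --- are also not discharged; in the paper the first is handled by the specifically designed exhaustion events $\Omega_{n_k}(j)$ and Theorem~\ref{abs}, and the second is established quantitatively through Propositions~\ref{gafest}, \ref{gafestprep}, and~\ref{unifgaf}.
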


We view the conservation laws as the \textbf{``rigidity''} properties of the respective point processes. The absolute continuity (with respect to the Lebesgue measure on the conserved submanifold) of the conditional distribution of the vector of inside points  can be viewed as \textbf{``tolerance''}. The heuristic is  that due to such mutual absolute continuity, the inside points can form  (almost)  any configuration on this conserved submanifold.

Notice that the question whether $\P[\Pi_{\D}] \ll \P[\Pi]$ for a given $\D$ can be phrased in terms of the conditional distribution $\rhoo$, and the same holds for the main questions addressed in this paper. Therefore, in more general terms, we are interested in the support and the regularity properties of $\rhoo$. 

For a finite point process of fixed size $n$, the phenomenon of outside points determining the number of inside points is a triviality.
However, the behaviour of infinite point processes can be quite different, even when they arise as  distributional limits of finite point processes. For example, let us take the disk of area $n$ centred at the origin and consider the point process $\Pi_n$ given by $n$ uniform points inside it. Let $\D$ be the unit disk centred at the origin. The process $\Pi_n$ clearly has the property that the point configuration outside  $\D$ determines the number of points inside $\D$. However, the distributional limit of $\Pi_n$-s, as $n \to \infty$, is the  Poisson point process, which has no such property. Hence, the rigidity of numbers for the Ginibre or GAF zero ensembles cannot be understood as a limit of the simple  rigidity of numbers for finite ensembles. The reasons behind this phenomenon, therefore, lie in the stronger spatial correlations of these ensembles.

In addition to answering our central question mentioned in the beginning, Theorems \ref{gin-1}-\ref{gaf-2} also provide information on the relative strength of spatial correlations in the Ginibre and the GAF zero ensembles. While a simple visual inspection suffices to (heuristically) distinguish a sample of the Poisson process from that of either the Ginibre or the GAF zero process (of the same intensity), the latter two are hard to set apart between themselves. It is therefore an interesting question to devise mathematical statistics that distinguish them.  The qualitative idea is that the spatial correlation is much stronger in the GAF zero process than in the Ginibre ensemble. There can be several possible approaches to quantify this heuristic observation. One important feature to look at, for instance, is the rate of decay of the hole probabilities. However, it turns out that both the Ginibre ensemble  and the Gaussian zero process behave similarly in this respect. For more details, one can refer to \cite{HKPV}. Our results  demonstrate that the GAF zeroes have  greater spatial dependence, in the sense that the point configuration in the exterior of a open set dictates  more about the one in its interior.

In \cite{STs1}, Sodin and Tsirelson compared the GAF zero process (CAZP in their terminology) with various models of perturbed lattices. They noticed that the lattice process 
\[  \{ \sqrt{3\pi}(k+li) + c e^{2\pi im/3} \eta_{k,l} : k,l \in \mathbb{Z},m=0,1,2 \}, \]
(where $\eta_{k,l}$ are i.i.d. standard complex Gaussians, $c\in (0,\infty)$ is a parameter and $i$ denotes the imaginary unit) achieves ``asymptotic similarity'' with the GAF zero process (in the sense that the variances of scaled linear statistics have similar asymptotic behaviour to those for the GAF zeroes).
Sodin and Tsirelson further observed that the above perturbed lattice model satisfied two conservation laws: one pertaining to the ``mass'' and another pertaining to the ``centre of mass''. They predicted similar conservation laws for the GAF zero process, although the sense in which such laws would hold was left open to interpretation.  Theorem \ref{gaf-1} establishes two conservation laws, one of which preserves the ``mass'' (i.e., the number of points), and the other one preserves the ``centre of mass''. Moreover, Theorem \ref{gaf-2} says that these are the only conservation laws  for the GAF zero process. We further note that among the perturbed lattice models in \cite{STs1}, the one that achieves ``asymptotic similarity'' with the Ginibre ensemble is the process
\[ \{ \sqrt{\pi}(k+li) + c \eta_{k,l} : k,l \in \mathbb{Z}  \}\] 
where $\eta_{k,l}$ and $c$ are as before. In this model, we have one conserved quantity (namely, the ``mass''). In Theorems \ref{gin-1} and \ref{gin-2}, we obtain a conservation law for the ``mass'' (i.e., the number of points) in the Ginibre ensemble, and further, show that there are no other conserved quantities.

En route to proving the main theorems mentioned above, we obtain  results that are interesting in their own right. For example we prove that the harmonic sum $\l(\sum_{z \in \Pi} \frac{1}{z} \r)$, for the Ginibre ensemble as well as for the GAF zero process, is a.s. finite (in a precise technical sense specified in Propositions \ref{invginest-tail} and \ref{invgafest-tail} and the remarks thereafter). In fact, we show that this sum has a finite first moment for both processes. It is not hard to see that the corresponding sum for the Poisson process does not converge in any reasonable sense. Even for the Ginibre or the GAF zero ensembles, the corresponding sum does not converge absolutely. The underlying reason for the conditional convergence is the mutual cancellation arising from the higher degree of symmetry exhibited by a typical point configuration in the Ginibre or the GAF zero process. This is yet another manifestation of the fact that the Gaussian zeros or the Ginibre eigenvalues exhibit a much more 
regular arrangement (which indicates greater rigidity) than, say, the Poisson process. 

In a more precise sense,  we can consider finite dimensional approximations $\G_n$ and $\F_n$ to $\G$ and $\F$ respectively (for details see Sections \ref{ginens} and \ref{gafens}) and define the finite sums $\a_k(n)=\l(\sum_{z \in \Pi} {1}/{z^k}\r)$ when $\Pi = \G_n$ or $\F_n$. Our results, as in Proposition \ref{conv-gin1} and Proposition \ref{conv-gaf1},  establish that both for the Ginibre and the GAF zeroes, these sums converge in probability as $n \to \infty$. The limit $\a_k$ can be justifiably taken to be an analogue of the sum $\l(\sum_{z\in \Pi} 1/z^k \r)$ for the respective limiting process $\G$ or $\F$.
 
We also provide a new proof of a reconstruction theorem for the planar GAF from its zeroes, which essentially says that the GAF zeroes determine a.s. the GAF itself, up to a factor of modulus 1. In what follows, $\a_k$ will denote the random variable introduced above for GAF zeroes, $P_k$ will be the $k$-th Newton polynomial (for details, see Section \ref{vie}). Define \[ a_k=(-1)^kP_k(\a_1,\cdots,\a_k)\] and \[ \chi= \lim_{k \to \infty} k^{1/2}\l(  \sum_{j=0}^{k-1} j! \l|a_j \r|^2   \r)^{-1/2} . \] The existence of the limit will be proved in the course  of proving Theorem \ref{reconstruction}.   We state the reconstruction theorem as: 
\begin{theorem}
\label{reconstruction} 
Consider the random analytic function ${\displaystyle g(z)=\sum_{k=0}^{\infty} \chi a_k z^k  }$, which is  measurable with respect to the GAF zeroes.
There is a random variable $\z$ with uniform distribution on $\mathbb{S}^1$ and independent of the GAF zeroes, such that a.s. we have $f(z)=\z g(z)$, where $f$ is the standard planar Gaussian analytic function.
\end{theorem}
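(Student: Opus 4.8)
The strategy is to recover the Taylor coefficients of $f$, normalised by $\xi_0=f(0)$, as explicit measurable functionals of the zero configuration, and then to pin down the residual unimodular scalar. Write $f(z)=\sum_{k\ge 0}\frac{\xi_k}{\sqrt{k!}}z^k$; since dividing by the nonzero constant $\xi_0$ does not change the zero set, the GAF zeroes $\F$ are a measurable function of the ratios $(\xi_k/\xi_0)_{k\ge 1}$ alone, and the task is to invert this, up to a scalar. \emph{Step 1: the zeroes determine the power sums $\a_k$.} For $k\ge 3$ the series $\sum_{z}1/z^k$ converges absolutely a.s.\ (the exponent of convergence of the GAF zeroes equals $2$), so $\a_k$ is manifestly a functional of $\F$. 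For $k=1,2$ the sums converge only conditionally, but Propositions~\ref{invgafest-tail} and \ref{conv-gaf1} provide the convergence of the truncated sums $\a_k(n)$, and since each $\a_k(n)$ depends on finitely many points of the configuration, the limit $\a_k$ is again measurable with respect to $\F$; hence each $a_k=P_k(\a_1,\dots,\a_k)$, and also $\chi$, is a measurable functional of $\F$.

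\emph{Step 2: reconstruction of $f/\xi_0$.} The core claim is the a.s.\ identity, valid for $z$ near $0$,
\[
\log\frac{f(z)}{\xi_0}\;=\;-\sum_{m=1}^{\infty}\frac{\a_m}{m}\,z^m ,
\]
from which $f(z)/\xi_0=\sum_{k\ge 0}a_k z^k$, since, by the defining property of the Newton polynomials (Section~\ref{vie}), $a_k=P_k(\a_1,\dots,\a_k)$ is, up to the standard sign, the $k$-th Taylor coefficient of $\exp\!\bigl(-\sum_m \a_m z^m/m\bigr)$. To prove the displayed identity, use the Hadamard factorisation of $f$, which has order $2$ and hence genus $2$: comparing the Taylor expansion of $\log(f/\xi_0)$ with that of the Weierstrass product shows that for $m\ge 3$ the coefficient of $z^m$ equals $-\tfrac1m\sum_i z_i^{-m}=-\a_m/m$ (the correction factors contribute nothing in degrees $\ge 3$, where the sum converges absolutely), while the coefficients of $z$ and $z^2$ are certain explicit functions $c_1,c_2$ of $\xi_0,\xi_1,\xi_2$. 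It then remains to show that the conditionally convergent sums satisfy $\a_1=-c_1$ and $\a_2=-2c_2$. This is the substantive point: for a general entire function of genus $2$ these sums are \emph{not} determined by $c_1,c_2$ (they are unchanged on multiplying $f$ by $e^{az+bz^2}$), so the argument must exploit the rotation invariance and the precise correlation structure of the GAF, quantified through the tail estimates of Propositions~\ref{invgafest-tail} and \ref{conv-gaf1}. Once the identity holds, $\sum_k a_k z^k$ automatically defines an entire function, equal to $f/\xi_0$.

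\emph{Step 3: fixing the scalar, and the distribution of $\z$.} A law of large numbers for the independent variables $|\xi_k|^2$ gives the existence of the limit defining $\chi$ and the value $\chi=|\xi_0|$ (with the normalisations in force); in particular $\chi$ is a functional of $\F$. Hence $g(z)=\sum_k\chi a_k z^k=|\xi_0|\,f(z)/\xi_0$ is measurable with respect to the GAF zeroes, and $f=\z g$ with $\z:=\xi_0/|\xi_0|\in\mathbb{S}^1$. Finally, $\z$ is uniform on $\mathbb{S}^1$ and independent of $\F$: writing $\xi_0=Re^{i\Theta}$, the variables $R,\Theta$ are independent with $\Theta$ uniform on $[0,2\pi)$ and independent of $(\xi_k)_{k\ge 1}$, and since each $\xi_k$ $(k\ge 1)$ is rotation invariant, the law of $(\xi_k/\xi_0)_{k\ge 1}=\bigl((\xi_k/R)e^{-i\Theta}\bigr)_{k\ge 1}$ — hence that of $\F$ — does not depend on $\Theta$; as $\z=e^{i\Theta}$, this yields both its uniformity and its independence of the zeroes.

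The principal obstacle is Step~2, precisely the identification $\a_1=-c_1$, $\a_2=-2c_2$: one must show that the two conditionally convergent lacunary sums reproduce exactly the first two coefficients of $\log(f/\xi_0)$ in the genus-$2$ Hadamard factorisation. The remaining ingredients — Newton's identities, the law of large numbers, and the rotational invariance of complex Gaussians — are comparatively routine.
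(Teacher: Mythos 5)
Your Steps~1 and~3 are essentially the paper's argument, but Step~2 contains a genuine gap that you yourself flag and do not close. You attempt to reconstruct $f/\xi_0$ via the Hadamard factorisation of the genus-$2$ entire function $f$, which forces you to reconcile the conditionally convergent sums $\a_1,\a_2$ with the exponential-factor coefficients $c_1,c_2$ in $f(z)=\xi_0 e^{c_1 z+c_2 z^2}\prod_j(1-z/z_j)e^{z/z_j+z^2/(2z_j^2)}$. As you correctly observe, for a general genus-$2$ function there is no such relation (multiplying by $e^{az+bz^2}$ changes $c_1,c_2$ but not the zeroes), so some special property of the GAF must be invoked — and your plan stops at the assertion that this ``must exploit the rotation invariance and the precise correlation structure.'' That is exactly the missing argument, not a sketch of one.

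The paper sidesteps the Hadamard detour entirely. It proves (Proposition~\ref{Vieta}) the identity $\xi_k/\xi_0=P_k(\a_1,\dots,\a_k)$ not by expanding $\log(f/\xi_0)$, but by writing the \emph{exact} Vieta/Newton identity for the approximating polynomial $f_n$, namely $\xi_k/\xi_0=P_k(\a_1(n),\dots,\a_k(n))$ with $\a_j(n)=\sum_{z\in\F_n}z^{-j}$, and then passing to the limit along a subsequence using Proposition~\ref{conv-gaf1} ($\a_k(n)\to\a_k$ in probability) together with continuity of the fixed polynomials $P_k$. This turns the problematic $k=1,2$ terms into a consequence of an algebraic identity that holds at finite $n$ plus a convergence theorem already in hand; no identification of Hadamard coefficients, no genus-$2$ bookkeeping, and no separate appeal to ``correlation structure'' beyond what is packaged in the tail bounds underlying Proposition~\ref{conv-gaf1}. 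In short: the lemma you need in Step~2 is true, but the way to prove it is via Vieta on the finite approximations, not Hadamard on the limit — your route would require you to re-derive the same convergence facts and then run a harder comparison argument. One minor additional point: in Step~1 the measurability of $\a_k$ with respect to $\F$ does not come from $\a_k(n)$ being a functional of the configuration (each $\a_k(n)$ depends on $\F_n$, the zero set of a polynomial approximant, which is not a measurable function of $\F$); it comes directly from the paper's definition of $\a_k=S_k(\D)+S_k$, where both terms are built from $[\F]$ via the smooth partition of unity of Section~\ref{EIPZ}.

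Your Step~3 is correct and is, up to rephrasing, the paper's argument: $\chi=|\xi_0|$ by the strong law, $\z=\xi_0/|\xi_0|$ is uniform, and independence from $\F$ follows because the conditional law of $g$ (equivalently, of the normalised coefficient sequence) given $\z$ does not depend on $\z$.
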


It is interesting to compare Theorem \ref{reconstruction} with the Hadamard Factorization Theorem (see, e.g., \cite{Rud} Chapter 15) for reconstructing analytic functions from their zeroes, where we exploit the fact that the standard planar GAF is a.s. of order 2. In the case of the Hadamard factorization, the key problem is that there is a (random) analytic function of the form $\exp(p)$ (where $p$ is a polynomial of degree at most 2) that occurs as a factor in front of the canonical product formed from the Gaussian zeroes, and a priori there is no concrete information about this function. E.g., it can, in principle, depend on the Gaussian zeroes. However, in Theorem \ref{reconstruction} there is a concrete description of the factor $\z$ and also the precise dependence of $g$ on the GAF zeroes.

The description in Theorem \ref{reconstruction} is optimal in the sense that the factor $\z$ cannot be done away with. This can be seen from the fact that if $\theta$ is a random variable that is uniform in $\mathbb{S}^1$ and independent of the $\xi_i$-s (the $\xi_i$-s are i.i.d. standard complex Gaussians), then the random analytic functions $\theta f$ and $f$ are both distributed as planar GAF-s and have the same zeroes; hence from the zeroes of $f$ we can hope to recover the coefficients of $f$ only up to such a phase $\theta$. Here we recall the definition of the planar GAF $f(z):=\sum_{k=0}^{\infty}\frac{\xi_k}{\sqrt{k !}} z^k$.

A reconstruction theorem for the planar GAF from its zeroes has been obtained by Ben Hough in \cite{Bh} (Section 3, Lemma 9), using methods which closely parallel the proof of a similar reconstruction result for the zeroes of the Gaussian analytic function on the hyperbolic plane (Theorem 6 in \cite{PV}). 
Our approach here is, however, very different from \cite{Bh} and  \cite{PV},  and the reconstruction result follows in a brief and simple manner from the estimates we already obtain in Section \ref{EIPZ}. We thereby obtain a new expression for the GAF in terms of its zeroes, and en route we construct analogues of the classic Vieta formulas relating the zero set of the GAF to its coefficients. It would be interesting to compare the expression that we obtain to that obtained in \cite{Bh}; this seems to lead to new and intriguing identities satisfied a.s. by the zeroes of the planar GAF (for instance, just by comparing the value of $|f(0)|$).

Theorems \ref{gin-2} and \ref{gaf-2} naturally lead to the question whether something more detailed or more quantitative can be said about the conditional measures in question. E.g., can we conclude something more explicit  about the probability density function which is shown to exist in Theorems \ref{gin-2} and \ref{gaf-2}? This question is addressed in \cite{G12-2}, where it is shown that the conditional densities are, in fact, comparable to squared Vandermonde densities. In particular, this shows that even under spatial conditioning, the Gaussian zeroes and the Ginibre eigenvalues exhibit mutual repulsion. 

Theorems \ref{gin-1} and \ref{gaf-1} can be seen as statements on the conservation of moments. To be more precise, the number of points in a domain $\D$ can be thought of as the zeroth moment of the points in $\D$, and their sum can be thought of as their first moment. Thus, Theorems \ref{gin-1} and \ref{gin-2} correspond to the statement that the points outside $\D$ determine the zeroth moment of the inside points for the Ginibre ensemble, and nothing more. In the same spirit, Theorems \ref{gaf-1} and \ref{gaf-2} correspond to the statement that the points outside $\D$ determine the zeroth and the first moments of the inside points for the GAF zero ensemble, and nothing more. It is natural to ask whether there exist point processes where the points outside $\D$ determine the first $k$ moments (and nothing more) of the points in $\D$, for $k \ge 3$. In the forthcoming paper  \cite{GK}, a sequence of such point processes is constructed, which correspond to the zeroes of a natural family of random analytic functions with Gaussian coefficients. In the same paper, it is also established that a determinantal point process is rigid only if its determinantal kernel, considered as an integral operator, is a projection.

The central question of this paper is partially motivated by the concepts of insertion and deletion tolerance of a point process.
Consider a point process $\Pi$. Fix a bounded open set $\D$, and add to $\Pi$ a random point uniformly distributed in $\D$. Let us call this perturbed point process $\Pi_{\D}$. The point process $\Pi$ is said to be \textbf{insertion tolerant} if $\P[\Pi_{\D}] \ll \P[\Pi]$  for all bounded open sets $\D$. \textbf{Deletion tolerance} is similarly defined by deleting a point inside $\D$ (if one exists) uniformly at random. Insertion and deletion tolerance have been investigated by Burton and Keane (\cite{BK}) in the context of percolation, by Holroyd and Peres (\cite{HP}) for studying invariant allocation on the plane, and  by Heicklen and Lyons (\cite{HL}) in the setting of random spanning forests. They have also been studied as topics of their own importance by Holroyd and Soo (\cite{HS}). 

Questions on rigidity and tolerance of point processes can be studied in other settings, for instance, in dimensions other than 2 and in geometries other than the Euclidean setting. 
For the continuum sine kernel process on $\R$ (which arises as the universal bulk limit of Wigner random matrices), rigidity of the number of points in an interval is proved in \cite{G12}. In the same work, rigidity for the number of points is also established for a wide class of stationary determinantal point processes on $\Z$.  For i.i.d. Gaussian perturbations of $\Z^d$, deletion intolerance has been proved in \cite{HS} for $d=1,2$. In the forthcoming paper \cite{PS}, it is shown that for i.i.d. Gaussian perturbations (with mean 0 and variance $\sigma^2$) of $\Z^d$ for $d \ge 3$, there is a critical value $\sigma_c(d)$ such that the point process is deletion intolerant for $\sigma < \sigma_c(d)$ and deletion tolerant for $\sigma > \sigma_c(d)$. In other geometries, it has been shown in \cite{HS} that for the Gaussian analytic function on the hyperbolic disk, the point process of zeroes is both insertion and deletion tolerant. 

The rigidity and tolerance properties of point processes can be exploited to study various other phenomena on random point sets. For example, in \cite{GKP12}, we study applications of rigidity and tolerance to continuum percolation on the Ginibre and GAF zero ensembles. In \cite{G12}, we explore the relationship between rigidity phenomena in point processes and the completeness properties of certain systems of random exponential functions.

\section{Plan of the paper}
\label{overview}
In Section \ref{setmod}, we begin with a detailed description of the models that we study, and also provide an abstract framework in which other models having similar characteristics can be investigated. Further, we show in Section \ref{general} that for proving the main Theorems \ref{gin-1} - \ref{gaf-2}, it suffices to establish them in the case where $\D$ is a disk.  

In order to  study rigidity phenomena, we devise a unified approach in Section \ref{rigidityphen}, where Theorem \ref{rig} gives general criteria for a function (of the inside points) to be rigid with respect to a point process. We complete the proofs of Theorems \ref{gin-1} and \ref{gaf-1} by establishing the relevant criteria for the Ginibre and the GAF zero processes.

In Section \ref{limcond}, we study tolerance properties in the general setup introduced in Section \ref{genset}. Theorem \ref{abs} lays down conditions under which certain tolerance behaviour of a point process can be established. Proving Theorems \ref{gin-2} and \ref{gaf-2}, therefore, amounts to showing that the relevant conditions hold for our models. However, unlike the rigidity phenomena, this requires substantially more work, and is carried out in two stages for each process. First, we obtain some estimates for the point processes $\G_n$ and $\F_n$, which are finite approximations to $\G$ and $\F$ respectively   (see Sections \ref{ginens} and \ref{gafens} for definitions). For the Ginibre ensemble, this is done in Section \ref{gintolerance}, and for the GAF zeroes this is done in Section \ref{tolgafz}. Finally, we apply these estimates to deduce that the relevant conditions for tolerance behaviour hold for our models; this is carried out for the Ginibre ensemble  in Section \ref{limgin}  and for the 
GAF zeroes in Section \ref{limgafz}.

\section{Models and Setup}
\label{setmod}
\subsection{The Ginibre Ensemble}
\label{ginens}
Let us consider an $n \times n$ matrix $X^n, n\ge 1$ whose entries are i.i.d. standard complex Gaussians. 
 The vector of its eigenvalues, in uniform random order, has the joint density (with respect to the Lebesgue measure on $\C^n$) given by
\[
 \label{gindef}
p(z_1,\cdots,z_n)= \frac{1}{\pi^n\prod_{k=1}^n k!}  e^{-\sum_{k=1}^n |z_k|^2} \prod_{i<j} |z_i-z_j|^2.
\]
Recall that a determinantal point process on the Euclidean space $\R^d$ with kernel $K$ and background measure $\mu$ is a point process on $\R^d$ whose $k$-point intensity functions with respect to the measure $\mu^{\otimes k}$ are given by \[ \rho_k(x_1,\cdots,x_k)= \text{det} \bigg[  \l( K(x_i,x_j) \r)_{i,j=1}^k  \bigg]. \]
Typically, $K$ has to be such that the integral operator defined by $K$ is a non-negative trace class contraction mapping $L^2(\mu)$ to itself. For a detailed study of determinantal point processes, we refer the reader to \cite{HKPV}, \cite{Bo} and \cite{Sos}. More details on random matrices can be found in \cite{DG} and \cite{AGZ}. A simple calculation involving Vandermonde determinants shows that the eigenvalues of $X^n$
(considered as a random point configuration) form a determinantal point process on $\C$. Its kernel is given by $ K_n(z,w)=\sum_{k=0}^{n-1} \frac{(z \bar{w})^k}{k!}$ with respect to the background measure $ \, \mathrm{d}\gamma (z) = \frac{1}{\pi}e^{- |z|^2} \mathrm{d}\mathcal{L}(z)$ where $\mathcal{L}$ denotes the Lebesgue measure on $\C$. 
This point process is the Ginibre ensemble (of dimension $n$), which we will denote by $\G_n$.
As $n\rightarrow \infty$, these point processes converge, in distribution, to a determinantal point process given by the kernel $ K(z,w)=e^{z\bar{w}}=\sum_{k=0}^{\infty} \frac{(z \bar{w})^k}{k!}$ with respect to the same background measure $\gamma$. This limiting point process is the infinite Ginibre ensemble $\G$. It is known that $\G$ is ergodic under the natural action of the translations of the plane, see e.g. \cite{Sos} Section 3 and Theorem 7 therein. 

For more details on the Ginibre ensemble, we refer the reader to \cite{HKPV} Chapter 6.

\subsection{The GAF zero process}
\label{gafens}
Let $\{\xi_k \}_{k=0}^{\infty}$ be a sequence of i.i.d. standard complex Gaussians. Define 
\[
f_n(z) =\sum_{k=0}^{n}\xi_k \frac{z^k}{\sqrt{k!}} \,(\text{ for }n \ge 0) \, , \qquad \qquad f(z) =\sum_{k=0}^{\infty}\xi_k \frac{z^k}{\sqrt{k!}} \, \, .
\]
These are Gaussian processes on $\C$ with covariance kernels given by $K_n(z,w)=\sum_{k=0}^{n} \frac{(z \bar{w})^k}{k!}$ and $K(z,w)=\sum_{k=0}^{\infty} \frac{(z \bar{w})^k}{k!} = e^{z \bar{w}}$ respectively. A.s. $f_n$ and $f$ are  entire functions and the functions $f_n$ converge  to $f$ (in the sense of the uniform convergence of holomorphic functions on compact sets). It is not hard to see (e.g., via Rouche's theorem) that this implies that the corresponding point processes of zeroes, denoted by $\F_n$, converge a.s. to the zero process $\F$ of the GAF (in the sense of locally finite point configurations converging on compact sets). It is known that $\F$ is ergodic under the natural action of the translations of the plane, see e.g. \cite{HKPV} Chapter 2 and Proposition 2.3.7 therein.

For more details on the GAF zero process, we refer the reader to \cite{HKPV} Chapter 2.
\subsection{The General Setup}
\label{genset}
Fix a Euclidean space  $\Eu$ equipped  with a non-negative regular Borel measure $\mu$. Let $\S$ denote the space of  countable locally finite point configurations on $\Eu$.  Endow $\S$ with its canonical topology, namely, the topology on $\S$ is that of weak convergence of measures, under the identification of a locally finite point configuration with the counting measure it induces on $\C$ (which gives $\S$ a canonical Borel $\sigma$-algebra). Equivalently, the topology of $\S$ is that of the convergence (in the Hausdorff sense) of finite configurations obtained by restriction to open balls in $\Eu$.  It is known that $\S$, with this topology, is a Polish space. Fix a bounded open set $\D \subset \Eu$. Corresponding to the decomposition $\Eu=\D \cup \D^c$, we have ${\S} =\S_{\inn} \times \S_{\out}$, where $\S_{\inn}$ and $\S_{\out}$  denote the spaces of finite point configurations on $\D$ and  locally finite point configurations on $\D^c$ respectively. 

Let $\Xi$ be a measure space equipped with a probability measure $\tP$. For a random variable $Z:\Xi \to \mathcal{X}$ (where $\mathcal{X}$ is a Polish space), we define the push forward $Z_*\tP$ of the measure $\tP$ by  $Z_*\tP(A)=\tP(Z^{-1}(A))$ where $A$ is a Borel set in $\xo$. Also, for a point process $Z':\Xi \to \S$, we can define point processes $Z_{\inn}':\Xi \to \S_{\inn}$ and $Z_{\out}':\Xi \to \S_{\out}$ by restricting the random configuration $Z'$ to $\D$ and $\D^c$ respectively. 

Let $X,X^n: \Xi \rightarrow \S$ be random variables such that $\tP$-a.s., we have $X^n \rightarrow X$ (in the topology of $\S$). We demand that the point processes $X,X^n$ have their first intensity measures absolutely continuous with respect to $\mu$. We can identify $X_{\inn}$ (by taking the points in uniform random order)  with the random vector $\uz$ which lives in $\bigcup_{m=0}^{\infty}\D^m$. The analogous quantity for $X^n$ will be denoted by $\uz^n$.

For our models we can take $\Eu$ to be $\C$, $\mu$ to be the Lebesgue measure, and $\D$ to be a bounded open set.  

In the case of the Ginibre ensemble, we can define the processes $\G_n$ and $\G$ on the same underlying probability space so that a.s. we have $\G_n \subset \G_{n+1}\subset \G$ for all $n \ge 1$. For a reference, see \cite{Go}, in particular Theorem 3 therein. A bit of explanation is in order as to how we obtain this assertion. The $\G_n$-s and $\G$ are determinantal point processes, whose kernels satisfy the Loewner ordering condition in Theorem 3, \cite{Go}. More precisely, if $K_n$ is the kernel corresponding to $\G_n$ and $K$  is the kernel corresponding to $\G$, then we have \begin{equation} \label{Loew} K_n \preceq K_{n+1} \preceq K,\end{equation} where $\preceq$ is the Loewner order. By $L_1 \preceq L_2$ for two operators $L_1$ and $L_2$ acting on a Hilbert space $H$, we mean that $L_1 - L_2$ is a non-negative definite operator acting on $H$. We extend this notion in the obvious way to the case when $L_1$ and $L_2$ are instead the kernels of integral operators acting on $H$. The inequality (\ref{Loew}) is true because the $K_n$-s correspond to projections onto increasingly large subspaces of the Fock-Bargmann space (increasing with $n$). Thus, Theorem 3 in \cite{Go}  implies that  for every $n$, the process $\G_n$ is stochastically dominated by $\G_{n+1}$, which in turn is stochastically dominated by $\G$. Applying Strassen's Theorem (see, e.g., \cite{Strassen}) for point processes, we get the desired coupling.   We take $(\Xi,\tP)$ to be this underlying probability space (where all the processes are coupled together), $X^{n}=\G_n$ and $X=\G$.

In the case of the Gaussian zero process, we take $(\Xi, \tP)$ to be a measure space on which we have countably many standard complex Gaussian random variables denoted by $\{ \xi_k \}_{k=0}^{\infty}$. Then $X^n$ is the zero set of the polynomial $f_n(z)=\sum_{k=0}^{n}\xi_k \frac{z^k}{\sqrt{k!}}$, and $X$ is the zero set of the entire function $f(z)=\sum_{k=0}^{\infty}\xi_k \frac{z^k}{\sqrt{k!}}$. The fact that $X^n \to X$ $\tP$-a.s. follows from Rouche's theorem. 

\section{A broad outline of the proof strategy}

 We devise a general approach to prove rigidity and tolerance of point processes. In our specific models of interest, we execute this approach, which principally involves obtaining precise estimates on certain quantities related to the finite dimensional approximations of our models.
 
 The proof of the rigidity theorems involves statistics with small variance. Let $A$ be a statistic (of the point process $X$ living on a space $\mathcal{E}$) whose rigidity we want to prove, in the context of a domain $\D \subset \mathcal{E}$ . Generally, $A=A(X_{\inn})$ will be a function of the points $X_{\inn}$ inside the domain $\D$. Suppose we can construct a family of statistics $H_{\eps}$ of the point process $X$ such that $\mathrm{Var}[H_{\eps}] \to 0$ as the parameter $\eps \to 0$. Suppose also that $H_{\eps}$ can be written as \[H_{\eps}(X)=A(X_{\inn}) + \wt{H}_{\eps}(X_{\out}),\] where $\wt{H}_{\eps}$ is a statistic which depends only on the points $X_{\out}$ outside $\D$. Since $\mathrm{Var}[H_{\eps}] \to 0$ as $\eps \to 0$, for small $\eps$ we have, roughly speaking, $H_{\eps}\approx \E[H_{\eps}]$ (in the sense that the difference between the two quantities is small with high probability).
 
 But $\E[H_{\eps}]$ can be computed, in principle, using the one and multi point intensity measures of the point process $X$. This means that \[A(X_{\inn}) + \wt{H}_{\eps}(X_{\out}) \approx \E[H_{\eps}],\] which is known for each $\eps$. The quantity $\wt{H}_{\eps}(X_{\out})$ can also be computed exactly, because the point configuration $X_{\out}$ is known. This suggests that for each $\eps$, we get an approximation to $A(X_{\inn})$ (in the form of $\E[H_{\eps}] -  \wt{H}_{\eps}(X_{\out}) $), which a.s. converges to its actual value as $\eps \to 0$. Typically, the statistic $H_{\eps}$ would be a linear statistic, meaning that there is a function $\Phi_{\eps}: \mathcal{E} \to \C$ such that $H_{\eps}(X)= \int \Phi_{\eps} \mathrm{d}[X]$. In this case, we have $\E[H_{\eps}]= \int \Phi_{\eps} \mathrm{d}\rho_1(x)$, where $\rho_1$ is the first intensity measure of the point process. Often (though not always), the family of functions $\Phi_{\eps}$ can be taken to be the scalings of a single function $\Phi$ (chosen to have some specific properties); e.g. $\Phi_{\eps}(x)=\Phi(\eps x)$.
 
 Our approach to tolerance proceeds via finite dimensional approximations $X^n$ to the (infinite size) point process $X$ (which are coupled together).  The idea of tolerance pertains to proving the absolute continuity of certain conditional measures with respect to a standard reference measure. Let us consider a scenario where we want to show that a certain measure $\mu$  is mutually absolutely continuous with respect to the Lebesgue measure on an open set $E$ (in our problem $\mu$ will usually correspond to the conditional distribution of the vector of points corresponding to $X_{\inn}$ given $X_{\out}$). This would mean that $\mu$ has a density $f$ with respect to the Lebesgue  measure on $E$ (which we denote by $\la$), that is positive almost everywhere. Let us further simplify to the case when we need to prove that the density $f$ is, in fact,  bounded away from 0 and $\infty$. While we might not know a priori that $\mu$ even has a density, it would suffice to prove (in this case) that for any small ball $B \subset E$, we have $\mu(B)/\la(B)$ is bounded between two constants, independent of $B$. 
 
 If we want to do this when $\mu$ is a conditional measure coming from a point process $X$ of infinite size, then obtaining explicit estimates on $\mu(B)$ directly from a description of $X$ can be an intractable problem.
 However, for a point process of finite size, we can often write down concrete expressions for the conditional measures, resulting from the fact that often there is a closed form expression for the joint density of the points.  Then, we can estimate the ratio  $\mu_n(B)/\la(B)$, where $\mu_n$ is the conditional distribution of the (vector of points corresponding to) $X^n_{\inn}$ , given $X^n_{\out}$. If we can establish upper and lower bounds on $\mu_n(B)/\la(B)$ which are uniform in $n$, then we can try to take limits and claim that similar bounds hold for $\mu(B)/\la(B)$. But as $n$ changes, the conditioning variable (i.e., $X^n_{\out}$) also changes, in a way that is determined by the coupling of $X^n$-s and $X$ (ensuring that $X^n_{\out} \to X_{\out}$). Thus, the convergence of the (random) measures $\mu_n$ to $\mu$  (random because of their dependence on $X^n_{\out}$ and $X_{\out}$) poses a significant technical challenge. In the event that appropriate probabilistic estimates can be obtained from the finite dimensional approximations, this problem can be tackled and the limit pushed through; this is the content of Theorem \ref{abs}. 
 
 We approach the question of tolerance in a modular fashion. In Section \ref{limcond}, we work in the general setup of coupled point processes and state Theorem \ref{abs}. Then in separate sections, we obtain the necessary finite dimensional estimates for the models of our interest, and show that these indeed satisfy the conditions laid out in Theorem \ref{abs}. This enables us to carry out the approximation procedure outlined above.
 
\section{Reduction from a general $\D$ to a disk}
\label{general}

In this Section we intend to prove that in order to obtain Theorems \ref{gin-1}-\ref{gaf-2}, it suffices to consider the case where $\D$ is an open disk centred at the origin. We will demonstrate the proof for Theorems \ref{gaf-1} and \ref{gaf-2}, the arguments for Theorems \ref{gin-1} and \ref{gin-2} are on  similar lines. 
 
Let $\D$ be a bounded open set in $\C$. By the translation invariance of the GAF zero ensemble, we take the origin to be in the interior of $\D$. Let $\D_0$ be a disk (centred at the origin) which contains $\overline {\D}$ in its interior (where $\overline{\D}$ is the closure of $\D$).

Suppose we know the point configuration $\F_{\out}$ to be equal to $\uout$. Further, suppose that we can show that the point configuration $\uoutd$ outside  $\D_0$ determines the number $N_0$ and the sum $S_0$ of the points inside $\D_0$ a.s. Since we also know the number and the sum of the points inside $\D_0 \setminus \D$, we can determine the number $N$ as well as the sum $S$ of the points  in $\D$.  This proves the rigidity theorem  for the GAF zero ensemble (Theorem \ref{gaf-1}) for a general $\D$.

Now suppose we have the tolerance Theorem \ref{gaf-2} for a disk.  To obtain Theorem \ref{gaf-2} for $\D$ , we appeal to the tolerance Theorem \ref{gaf-2} for the disk $\D_0$. Define 
\[\Sigma:=\bigg\{(\la_1,\cdots,\la_{N}):\sum_{j=1}^{N}\la_j=S, \la_j \in \D \bigg\} \]
and
\[\Sigma_0 := \bigg\{(\la_1,\cdots,\la_{N_0}):\sum_{j=1}^{N_0}\la_j=S_0, \la_j \in \D_0 \bigg\}.\]


The conditional distribution of the vector of points inside $\D_0$, given $\uoutd$, lives on $\Sigma_0$, in fact it has a density $f_0$ which is positive a.e. with respect to Lebesgue measure on $\Sigma_0$. Let there be $k$ points in $\D_0 \setminus \D$  and let their sum be $s$, clearly we have $N=N_0-k$ and $S=S_0-s$. We parametrize $\Sigma$ by the last $N-1$ coordinates. Note that the set $U:=\{(\la_2,\cdots,\la_N): (S-\sum_{j=2}^N \la_j, \la_2,\cdots,\la_N) \in \Sigma\}$ is an open subset of $\D^{N-1}$. Further, we define the set $V:=\{(\la_1,\cdots,\la_k): \la_i \in \D_0 \setminus \D, \sum_{i=1}^k \la_i =s \}$.

Let the points in $\D_0\setminus \D$, taken in uniform random order, form the vector $\underline{\mathbf{z}}=(z_1,\cdots,z_k)$. Then we can condition the vector of points in $\D_0$ to have its last $k$ coordinates equal to $\underline{\mathbf{z}}$, to obtain the following formula for the conditional density of the vector of points in $\D$ at $(\z_1,\cdots,\z_N) \in \Sigma$ (with respect to the Lebesgue measure on $\Sigma$):
\begin{equation} 
\label{condng}
f(\z_1,\z_2,\cdots,\z_N)=\frac{f_0(\z_1, \z_2,\cdots,\z_N,z_1,\cdots,z_k)}{\int_U f_0(s-(\sum_{j=2}^N{w_j}), w_2,\cdots,w_N,z_1,\cdots,z_k)dw_2 \cdots dw_N}.
\end{equation}
It is clear that for a.e. $\underline{\mathbf{z}} \in V$, we have $f$ is strictly positive a.e. with respect to Lebesgue measure on $\Sigma$, because the same is true of $f_0$ on $\Sigma_0$.

\section{The Rigidity Phenomenon}
\label{rigidityphen}
We begin by giving a precise definition of rigidity. Recall the general setup in Section \ref{genset}.
\begin{definition}
 \label{defrig}
A measurable function $f_{\inn} : \S_{\inn} \rightarrow \C $ is said to be \textbf{rigid} with respect to the point process $X$ on $\S$ if there is a measurable function  $f_{\out}:\S_{\out} \rightarrow \C$ such that a.s. we have $f_{\inn}(X_{\inn})=f_{\out}(X_{\out})$.
\end{definition}

In this section, we prove that the number of points in $\D$ in the case of the Ginibre ensemble and the number as well as the sum of the  points in $\D$ for the GAF zero process are rigid. In fact, we will state some general conditions that ensure such rigid behaviour, and then show that the Ginibre and the GAF satisfy the relevant conditions. 

From here on in this section, we will consider point processes on $\C$. We will use linear statistics of point processes as the main tool that will enable us to obtain rigidity results for such point processes. 

\begin{definition}
 \label{linearstat}
Let $\Pi$ be a point process on $\C$ and let $\varphi$ be a compactly supported continuous function on $\C$.  The \textbf{linear statistic} corresponding to $\varphi$ is the random variable $\int \varphi \, \mathrm{d}[\Pi] = \sum_{z\in \Pi} \varphi(z) $.
\end{definition}
By a  $C_c^k$ function on  $\C$, we mean a compactly supported $C^k$ function on $\C$.

We can now state:
\begin{theorem}
 \label{rig}
Let $\Pi$ be a point process on $\C$ whose first intensity is absolutely continuous with respect to  Lebesgue measure, and let $\D$ be a bounded open set in $\C$. Let $\varphi$ be a continuous function on $\C$. Suppose that for any $1>\eps>0$, we have a $C_c^2$ function $\Phi^{\eps}$ such that $\Phi^{\eps}=\varphi$ on $\D$, and $\mathrm{Var}\l( \int_{\C} \Phi^{\eps} \mathrm{d}[\Pi]  \r) < \eps$. Then the function $T:\S_{\inn} \to \C$ given by $T(\Upsilon_{\inn})=\int_{\D} \varphi \hspace{5 pt} \mathrm{d}[\Upsilon_{\inn}]=\sum_{z \in \Upsilon \cap \D} \varphi(z)$ is rigid with respect to $\Pi$.
\end{theorem}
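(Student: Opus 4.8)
The plan is to realize $\int_\D \varphi\, d[\pi]$ as an almost sure limit of (deterministic translates of) linear statistics of $\pi_{\out}$, using the functions $\Phi^\eps$ to push the variance onto the ``outside''. First I would fix a summable sequence $\eps_n \downarrow 0$, say $\eps_n = 2^{-n}$, and put $\Phi_n := \Phi^{\eps_n}$, $c_n := \E\big[\int_\C \Phi_n\, d[\pi]\big]$ (finite, since the hypothesis $\mathrm{Var}(\int_\C \Phi_n\, d[\pi]) < \eps_n$ presupposes a finite second moment), and $Y_n := \int_\C \Phi_n\, d[\pi] - c_n$. Then $\E|Y_n|^2 = \mathrm{Var}\big(\int_\C \Phi_n\, d[\pi]\big) < \eps_n$, so $\sum_n \E|Y_n|^2 < \infty$, hence $\sum_n |Y_n|^2 < \infty$ almost surely and in particular $Y_n \to 0$ a.s. (If one prefers not to take $\eps_n$ summable, the same conclusion holds along a subsequence, by the usual passage from $L^2$ to a.s.\ convergence.)

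Next I would split the integral over $\D$ and $\D^c$. Since $\mathcal{L}(\partial\D) = 0$ and the first intensity of $\pi$ is absolutely continuous with respect to $\mathcal{L}$, the expected number of points of $\pi$ on $\partial\D$ vanishes, so a.s.\ no point of $\pi$ lies on $\partial\D$ and $[\pi] = [\pi_{\inn}] + [\pi_{\out}]$. Because $\overline{\D}$ is compact, $\pi_{\inn}$ is a.s.\ a finite configuration, so $\int_\D \varphi\, d[\pi_{\inn}] = \sum_{z \in \pi_{\inn}} \varphi(z)$ is a.s.\ a finite sum; likewise $\int_\C \Phi_n\, d[\pi]$ is a.s.\ a finite sum, as $\Phi_n$ has compact support. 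Using $\Phi_n = \varphi$ on $\D$ we obtain, a.s.\ and for every $n$,
\[ \int_\C \Phi_n\, d[\pi] \;=\; \int_\D \varphi\, d[\pi_{\inn}] \;+\; g_n(\pi_{\out}), \qquad g_n(\upsilon) := \int_{\D^c} \Phi_n\, d[\upsilon], \]
where each $g_n : \S_{\out} \to \C$ is measurable (it is a linear statistic of the compactly supported function $\Phi_n$, hence a measurable functional of the configuration). Rearranging, $g_n(\pi_{\out}) - c_n = Y_n - \int_\D \varphi\, d[\pi_{\inn}]$ a.s., so the sequence of measurable functions $\upsilon \mapsto g_n(\upsilon) - c_n$ of $\pi_{\out}$ converges a.s.\ to the random variable $-\int_\D \varphi\, d[\pi_{\inn}]$.

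Finally I would define $f_{\out} : \S_{\out} \to \C$ by $f_{\out}(\upsilon) := -\lim_{n \to \infty}\big(g_n(\upsilon) - c_n\big)$ on the (Borel) set where this limit exists in $\C$, and $f_{\out}(\upsilon) := 0$ elsewhere; then $f_{\out}$ is measurable, and by the previous paragraph a.s.\ $f_{\out}(\pi_{\out}) = \int_\D \varphi\, d[\pi_{\inn}] = \int_\D \varphi\, d[\pi]$, which is exactly rigidity of $\int_\D \varphi\, d[\pi]$ with respect to $\pi$ in the sense of Definition \ref{defrig} (the inside statistic being $\Upsilon_{\inn} \mapsto \sum_{z \in \Upsilon_{\inn}} \varphi(z)$). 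I do not anticipate a serious obstacle here: the argument is a soft combination of Chebyshev/Borel--Cantelli and measurability bookkeeping. The two points that genuinely need care are (i) the identity $\int_\C \Phi_n\, d[\pi] = \int_\D \varphi\, d[\pi_{\inn}] + \int_{\D^c}\Phi_n\, d[\pi_{\out}]$, which is precisely where the null boundary and the absolute continuity of the first intensity are used (to exclude points on $\partial\D$), and (ii) checking that the pointwise a.s.\ limit above actually yields a Borel-measurable function defined on all of $\S_{\out}$, rather than one defined only on a set of full measure.
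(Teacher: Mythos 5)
Your proposal is correct and follows essentially the same route as the paper: fix $\eps_n=2^{-n}$, use the variance bound on $\int_\C\Phi^{\eps_n}d[\pi]$ to obtain a.s.\ convergence of the centred linear statistic to zero (you use summability of second moments directly, the paper uses Chebyshev plus Borel--Cantelli, which is the same estimate), then split the integral over $\D$ and $\D^c$ and solve for the inside part as an a.s.\ limit of measurable functionals of $\pi_{\out}$. The only additions you make are the explicit observation that the null boundary and absolutely continuous intensity prevent points on $\partial\D$ (used implicitly in the paper), and the explicit definition of $f_{\out}$ on all of $\S_{\out}$ to handle the measurability point the paper leaves tacit.
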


\begin{proof}
Consider the sequence of $C_c^2$ functions $\Phi^{2^{-n}}, n \ge 1$. Note that $\E \l[ \int_{\C} \Phi^{2^{-n}} \mathrm{d}[\Pi] \r] = \int_{\C} \Phi^{2^{-n}} \rho_1 \mathrm{d}\mathcal{L}$ where $\rho_1(z)$ is the one point intensity function of $\Pi$.  It follows from Chebyshev's inequality that \[ \P \l(  \l| \int_{\C} \Phi^{2^{-n}} \mathrm{d}[\Pi] - \E \l[ \int_{\C} \Phi^{2^{-n}} \mathrm{d}[\Pi] \r] \r| >  2^{-n/4} \r) \le 2^{-n/2}.\]  The Borel Cantelli lemma implies that with probability 1, as $n \to \infty$ we have \[\label{rig-1} \l| \int_{\C} \Phi^{2^{-n}} \mathrm{d}[\Pi] - \E \l[ \int_{\C} \Phi^{2^{-n}} \mathrm{d}[\Pi] \r] \r| \to 0. \]
But  \[ \int_{\C} \Phi^{2^{-n}} \mathrm{d}[\Pi] =  \int_{\D} \Phi^{2^{-n}} \mathrm{d}[\Pi] +  \int_{\D^c} \Phi^{2^{-n}} \mathrm{d}[\Pi]. \]
Thus we have, as $n \to \infty$  
\begin{equation} 
\label{rig-2}
\l| \int_{\D} \Phi^{2^{-n}} \mathrm{d}[\Pi] +  \int_{\D^c} \Phi^{2^{-n}} \mathrm{d}[\Pi] - \int_{\C} \Phi^{2^{-n}} \rho_1 \mathrm{d}\mathcal{L} \r| \to 0. 
\end{equation}
If we know $\Pi_{\out}$, we can compute $ \int_{\D^c} \Phi^{2^{-n}} \mathrm{d}[\Pi]$ exactly, also $\rho_1$ is known explicitly; in case of a translation invariant point process $\Pi$ it is, in fact, a constant $c(\Pi)$. Hence, from the limit in (\ref{rig-2}), a.s. we can obtain $\int_{\D} \Phi^{2^{-n}} \mathrm{d}[\Pi] = \int_{\D} \varphi \mathrm{d}[\Pi]$   as the limit
\[ \lim_{n \to \infty} \l( \int_{\C}  \Phi^{2^{-n}} \rho_1 \mathrm{d}\mathcal{L} - \int_{\D^c} \Phi^{2^{-n}} \mathrm{d}[\Pi] \r) . \]
\end{proof}

We now use Theorem \ref{rig} to establish Theorems \ref{gin-1} and \ref{gaf-1}.

\begin{proof} [\textbf{Proof of Theorems \ref{gin-1} and \ref{gaf-1}}]
We have already seen in Section \ref{general} that it suffices to take $\D$ to be a disk centred at the origin. We intend to construct functions $\Phi^{\eps}$ as in Theorem  \ref{rig}.  

Let $r_0$ be the  radius of $\D$. Fix $\eps >0$.

We begin with the continuous function $\widetilde{\Psi}$ on $\R_{+} \cup \{0\}$ such that 
\[ \widetilde{\Psi}(r) = \begin{cases}
                           1 & \text{ for $0\le r \le 2r_0$}, \\
                           -\frac{\eps}{2} \log r + \frac{\eps}{2} \log 2r_0 +1  & \text{ for $2r_0 \le r \le 2r_0\exp(2/\eps)$}, \\
                           0 & \text{ for $r \ge 2r_0\exp(2/\eps)$ }.
                          \end{cases}
  \]

Notice that  ${\widetilde{\Psi}}'(r)=-\eps/2r$ and ${\widetilde{\Psi}}''(r)=\eps/2r^2$ for $2r_0 \le r \le 2r_0\exp(2/\eps)$.  
We can then smooth the function ${\widetilde{\Psi}}$ at $ 2r_0$ and $ 2r_0\exp (2/\eps)$ such that the resulting function $\Psi_1$ is $C^2$ on the positive reals, takes the values $1$ for $r \le r_0$ and $0$ for $r \ge 2r_0e^{2/\eps}+1$, and satisfies the inequalities $|\Psi_1'(r)|\le \eps/r$ and $|\Psi''(r)|\le \eps/r^2$ for all $r > 0$. Finally, we define the radial $C_c^2$ function $\Psi$ on $\C$ as $\Psi(z)=\Psi_1(|z|)$.

For $\G$, we know (see \cite{RV} Theorem 11) that there exists a constant $C_1>0$ such that for every radial $C_c^2$ function $\Psi$ we have   \[\mathrm{Var} \l( \int_{\C} \Psi \, \mathrm{d}[\G] \r) \le C_1 \int_{\C} | \nabla \Psi (z) |^2 \, \mathrm{d}\mathcal{L}(z). \] But from the definition of $\Psi$ it is clear that $\int_{\C} | \nabla \Psi (z) |^2 \, \mathrm{d}\mathcal{L}(z) \le C_2\eps$.We apply Theorem \ref{rig} with $\varphi \equiv 1$, and choose $\Phi^{C_1C_2\eps}=\Psi$ as defined above; recall that $\Psi \equiv 1$ on $\D$. 

For $\F$, we know (see \cite{NS1} Theorem 1.1) that there exists a constant $C_3>0$ such that every $C_c^2$ function $\vartheta$ satisfies \[\mathrm{Var} \l( \int_{\C} \vartheta \, \mathrm{d}[\F] \r) \le C_3 \int_{\C} | \Delta \vartheta (z) |^2 \, \mathrm{d}\mathcal{L}(z). \] For the rigidity of the number of points of $\F$ in $\D$ we make exactly the same choice as we did for $\G$, and note that $\int_{\C} | \Delta \Psi (z) |^2 \, \mathrm{d}\mathcal{L}(z) \le C_4\eps^2$. For the rigidity of the sum of points of $\F$ in $\D$ we intend to apply Theorem \ref{rig} with $\varphi(z)=z$. We consider the function $\theta(z)=z\Psi(z)$, $\Psi$ as before. Observe that $\Delta \theta(z)=4\frac{\partial \Psi}{\partial \bar{z}}(z) + z \Delta \Psi(z)$. Using this, for $\eps < 1$, we get $\int_{\C} |\Delta (z\Psi(z))|^2 \, \mathrm{d}\mathcal{L}(z) \le C_5\eps$. It remains to note that for $z \in \D$ we have $\Psi(z)=1$ and $\theta(z)=z$.
\end{proof}

\section{Tolerance: Limits of Conditional Measures}
\label{limcond}
The tolerance properties are established for both models by obtaining explicit bounds on conditional probability measures for finite  approximations (finite matrices in case of Ginibre and polynomials for GAF) and then passing to the limit. In this Section, we state and prove some general conditions (in the context of the abstract setup considered in Section \ref{genset}), which will enable us to make the transition from the finite ensembles to the infinite one. 

We start with the following general proposition:

\begin{proposition}
\label{meas1}
Let $\Gamma$ be a second countable topological space. Let $\Sigma$ be a countable basis of open sets in $\Gamma$ and let $\A:=\{ \cup_{i=1}^k \sigma_i : \sigma_i \in \Sigma, k \ge 1 \}$. Let $c>0$. To verify that two non-negative  Borel  measures $\mu_1$ and $\mu_2$ on   $\Gamma$ satisfy $\mu_1(B) \le c \mu_2(B)$ for all Borel sets $B$ in $\Gamma$ where $\mu_2$ is known to be a regular Borel measure,  it suffices to verify the inequality for all sets in $\A$. 
\end{proposition}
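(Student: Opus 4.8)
The plan is to use the standard measure-theoretic tools — the Dynkin $\pi$-$\lambda$ theorem together with outer regularity — to bootstrap the inequality from the generating class $\A$ to all Borel sets. First I would observe that $\A$ is closed under finite unions by construction, and that since $\Sigma$ is a basis, every nonempty open set $U \subseteq \Gamma$ is a countable union of basis elements, hence $U = \bigcup_{i=1}^\infty \sigma_i$ with $\sigma_i \in \Sigma$. Writing $U$ as an increasing union of the sets $A_k := \bigcup_{i=1}^k \sigma_i \in \A$ and using continuity from below of both $\mu_1$ and $\mu_2$, we get $\mu_1(U) = \lim_k \mu_1(A_k) \le c \lim_k \mu_2(A_k) = c\,\mu_2(U)$. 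Thus the inequality $\mu_1(\cdot) \le c\,\mu_2(\cdot)$ holds for all open sets.

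Next I would upgrade from open sets to arbitrary Borel sets using regularity. Since $\mu_1$ and $\mu_2$ are non-negative regular Borel measures, for any Borel set $B$ we have $\mu_1(B) = \inf\{\mu_1(U) : U \supseteq B,\ U \text{ open}\}$. For each open $U \supseteq B$ we have just shown $\mu_1(U) \le c\,\mu_2(U)$, and $\mu_2(U) \ge \mu_2(B)$ trivially; but to get the sharp constant we instead take an open $U \supseteq B$ with $\mu_2(U) \le \mu_2(B) + \eta$ for arbitrary $\eta > 0$ (here using outer regularity of $\mu_2$), and then $\mu_1(B) \le \mu_1(U) \le c\,\mu_2(U) \le c\,\mu_2(B) + c\eta$. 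Letting $\eta \to 0$ gives $\mu_1(B) \le c\,\mu_2(B)$, as desired. One mild caveat: if $\mu_2(B) = \infty$ the inequality is automatic, and if $\mu_2(B) < \infty$ one should check that the approximating open sets can be taken with finite $\mu_2$-measure, which follows from outer regularity itself.

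The only real subtlety — and the step I would be most careful about — is making sure the regularity hypothesis is genuinely available and is being used in the right direction: the argument needs \emph{outer} regularity of $\mu_2$ (approximating $B$ from outside by open sets of nearly the same $\mu_2$-measure) and merely \emph{monotonicity} for $\mu_1$. The term ``regular Borel measure'' in the statement should be read to include outer regularity; this is automatic on a second countable space when the measures are locally finite, but in our applications $\Gamma$ will be a Euclidean space or a submanifold thereof and the relevant measures (Lebesgue measure on a submanifold, and a conditional point-process measure) are Radon, so outer regularity holds. Everything else is routine: the decomposition of open sets via the basis, continuity from below, and the $\eta$-approximation. No deeper structure of the space $\Gamma$ or of the particular measures is needed, which is exactly why this proposition can be stated at this level of generality and then invoked repeatedly for both the Ginibre and GAF settings.
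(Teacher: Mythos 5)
Your proof is correct and follows essentially the same route as the paper: approximate open sets from below by finite unions of basis elements to get the inequality on open sets, then pass to Borel sets via outer regularity. The only small wrinkle is that you (usefully) note outer regularity is only needed for $\mu_2$ together with monotonicity of $\mu_1$, whereas the paper applies outer regularity to both measures; also, you mention the Dynkin $\pi$--$\lambda$ theorem in your opening plan but never actually invoke it, since the open-sets-plus-regularity argument suffices.
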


\begin{proof} Any open set $U \subset \Gamma$ is a countable union $\bigcup_{i=1}^{\infty}\sigma_i , \sigma_i \in \Sigma$ because the sets in $\Sigma$ form a basis for the topology on $\Gamma$. If we have $\mu_1(\bigcup_{i=1}^n\sigma_i) \le c \mu_2(\bigcup_{i=1}^n\sigma_i)$, then we can let $n \to \infty$ to obtain $\mu_1(U) \le c \mu_2(U)$. Once we have the inequality for all open sets $U$, we can extend it to all Borel sets as follows. Let $B$ be a Borel set and $U$ be an open set containing $B$. Then we have \[ \mu_1(B) \le \mu_1(U) \le c\mu_2(U). \] But for a regular Borel measure $\mu$ and for any Borel set $B \subset \Gamma$, we have $\mu(B)=\text{inf}_{B \subset U } \mu(U)$ where the infimum is taken over all open sets $U$ containing $B$. Therefore, taking infimum over all open sets $U$ containing $B$ on the right hand side of the last display, we obtain \[\mu_1(B) \le c \mu_2(B), \] as desired. 
\end{proof}

In this Section, we will work in the setup of Section \ref{genset}, specifying $\D$ to be an open ball, and requiring that the first intensity of our point process $X$ is absolutely continuous with respect to the Lebesgue measure on $\Eu$. We further assume that $X$ exhibits rigidity of the number of points. In other words, 
\begin{definition}
 \label{nopoints}
There is a measurable function $N:\S_{\out} \to \nat$ such that a.s. we have \[\text{ Number of points in } X_{\inn} = N(X_{\out}). \]
\end{definition}

In such a situation, we can identify $X_{\inn}$ (by taking the points in uniform random order)  with a random vector $\uz$   taking values in $\D^{N(X_{\out})}$.  Studying the conditional distribution $\rhoo(X_{\out},\cdot)$ of $X_{\inn}$ given $X_{\out}$ is then the same as studying the conditional distribution of this random vector given $X_{\out}$. We will denote the latter distribution by $\rho(X_{\out},\cdot)$. Note that it is supported on $\D^{N(X_{\out})}$ (see Section \ref{intro} for details).    

For $m >0$, let $\mathfrak{W}_{\inn}^m$ denote the countable basis for the topology on $\D^m$ formed by open balls contained in $\D^m$ and having rational centres and rational radii. We define the collection of sets $\AA:=\{ \bigcup_{i=1}^k A_i: A_i \in \mathfrak{W}_{\inn}^m , k \ge 1\}$.

Fix an integer $n \ge 0$, a closed annulus $B \subset \D^c$ whose centre is at the origin and which has a rational inradius (or inradius equal to the radius of $\D$) and a rational outradius, and a collection of $n$ disjoint open balls $B_i$ with rational radii and centres having rational coordinates such that $\{ B_i \cap \D^c \}_{i=1}^n \subset B$. Let $\Phi(n,B,B_1, \cdots, B_n)$ be the Borel subset of $\S_{\out}$ defined as follows:
\[ \Phi(n,B,B_1,\cdots,B_n)= \{ \Upsilon \in \S_{\out} : | \Upsilon \cap B |=n, | \Upsilon \cap {\B_i} | =1 \}.\] 
Then the countable collection $\Sigma_{\out}=\{ \Phi(n,B,B_1,\cdots,B_n): n,B, B_i \text{ as above} \}$ is a basis for the topology of $\S_{\out}$. This follows from the characterization of the topology (on the space of locally finite point configurations) as that of the convergence (in the Hausdorff sense) of the restrictions to balls. 
Define the collection of sets $\mathcal{B}:=\{\bigcup_{i=1}^k \Phi_i: \Phi_i \in \Sigma_{\out}, k\ge 1 \}$.

We will denote by $\Omega^m$ the event that $|X_{\inn}|=m$, and by $\Omega^m_n$ we will denote the event $\l|X^n_{\inn}\r|=m$; we recall here that $X^n$ is an approximation of $X$. 

\begin{definition}
Let $P$ and $Q$ be two sets, and let $\alpha(p,q)$ and $\beta(p,q)$ be non-negative functions defined on $P \times Q$. We write $\alpha(p,q) \asymp_{q} \beta(p,q)$ if there exist positive functions $k_1(q),k_2(q)$ defined on $Q$ such that \[ k_1(q)\alpha(p,q) \le \beta(p,q) \le k_2(q)\alpha(p,q) \text{ for all } p,q. \] The main point is that $k_1,k_2$ in the above inequalities are uniform on $P$.  
\end{definition}
We will also use the notation introduced in Section \ref{genset}.
We will define an ``exhausting'' sequence of events as: 
\begin{definition}
A sequence of events $\{\Omega(j)\}_{j \ge 1}$ is said to \textsl{exhaust} another event $\Omega$ if  $\Omega(j) \subset \Omega(j+1) \subset \Omega$  for all $j $ and  $\tP(\Omega \setminus \Omega(j)) \rightarrow 0$ as $j \to \infty$.
\end{definition}

Let $\mathcal{M}(\D^{m})$ denote the space of all probability measures on $\D^m$. Let $\mathfrak{B}(\D^m)$ denote the Borel sigma-algebra on $\D^m$.
For a mapping $Y: S \to T$,  by $Y^{-1}(B)$ (where $B \subset T$) we mean the set $\{\o \in S: Y(\o) \in B\}$. We recall the notion of two measures $\mu_1$ and $\mu_2$ being mutually absolutely continuous, denoted $\mu_1 \equiv \mu_2$, from Section \ref{intro}. Finally, recall the definition of $\AA$ and $\mathcal{B}$ from the beginning of this section.

Now we are ready to state the following important technical reduction:

\begin{theorem}
\label{abs}
Fix $m\ge 0$ such that $\tP(\Omega^m)>0$. 
Suppose that: 

(a) There is a map $\nu: \S_{\out} \times \mathfrak{B}(\D^m) \rightarrow [0,1] $ such that for each $\uout \in \S_{\out}$ we have $\nu(\uout,\cdot) \in \mathcal{M}(\D^{m})$ and for each Borel set $A \subset \D^m$, the map $ \uout \rightarrow \nu(\uout,A)$ is measurable. 

(b) For each fixed $j$ we have a sequence $\{ n_k \}_{k \ge 1}$ (which might depend on $j$) and corresponding events $\Omega_{n_k}(j)$     such that: 


$\mathrm{(i)} \hspace{4pt} \Omega_{n_k}(j) \subset \Omega^m_{n_k}$. 

$\mathrm{(ii)} \hspace{4pt} \Omega(j) := \varliminf_{k \to \infty} \Omega_{n_k}(j) $ exhaust $\Omega^m$ as $j \uparrow \infty$.

$\mathrm{(iii)}$ For all $A \in \mathfrak{A}^m$ and $B \in \mathcal{B}$ we have
\begin{equation} 
\label{abscond}
\tP [  (\xin^{n_k} \in A) \cap  ( X_{\out}^{n_k} \in B) \cap \Omega_{n_k}(j) ] \asymp _j \int_{(X_{\out}^{n_k})^{-1}(B) \cap \Omega_{n_k}(j)  } \nu(\xout({\xi}),A) \mathrm{d}\tP(\xi) + \vartheta(k;j,A,B). \tag{C3}  \end{equation}                              
where $\lim_{k \to \infty} \vartheta(k;j,A,B) = 0$ for each fixed $j,A$ and $B$ .
                                                                                                                                                                         
Then  a.s. on the event $\Omega^m$ we have 
\begin{equation}
\label{targeteq}
 \rho(\xout,\cdot) \equiv  \nu(\xout,\cdot). \tag{C4} \end{equation}
\end{theorem}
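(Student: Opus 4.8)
The plan is to deduce the mutual absolute continuity $\rho(\xout,\cdot)\equiv\nu(\xout,\cdot)$ on $\Omega^m$ from the asymptotic comparison \eqref{abscond} by a two-sided argument: first pass to the limit $k\to\infty$ in \eqref{abscond} to obtain a comparison between $\rho$ and $\nu$ on the event $\Omega(j)$, then let $j\uparrow\infty$ to upgrade it to all of $\Omega^m$. Throughout, the key structural fact I will use is Proposition \ref{meas1}: to check an inequality of the form $\mu_1(\cdot)\le c\,\mu_2(\cdot)$ between (non-negative, regular Borel) measures on a second countable space, it suffices to check it on finite unions of basis sets, i.e.\ on $\AA$ for the inside coordinates and on $\mathcal{B}$ for the outside configurations. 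So the entire argument will be phrased in terms of testing against sets $A\in\AA$ and $B\in\mathcal{B}$.

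First I would fix $j$ and the associated sequence $\{n_k\}$. Since $\xin^{n_k}\to\xin$ a.s.\ and $X_{\out}^{n_k}\to X_{\out}$ a.s.\ (in the topology of $\S$), and since the sets in $\AA$ and $\mathcal{B}$ are, up to boundary effects, open, I expect convergence of the relevant probabilities: on $\Omega(j)=\varliminf_k\Omega_{n_k}(j)$, for $k$ large the configuration has exactly $m$ points inside, these points converge, and the outside configuration converges, so
\[
\tP\big[(\xin^{n_k}\in A)\cap(X_{\out}^{n_k}\in B)\cap\Omega_{n_k}(j)\big]\ \longrightarrow\ \tP\big[(\xin\in A)\cap(X_{\out}\in B)\cap\Omega(j)\big]
\]
for $A,B$ ranging over a subcollection of $\AA,\mathcal{B}$ that still suffices for Proposition \ref{meas1} (one handles boundary issues by shrinking/enlarging basis balls, or by noting $\mu(\partial\D)=0$ and that the limiting measures are regular). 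Here a technical point I would need to pin down is that $\Omega_{n_k}(j)\subset\Omega^m_{n_k}$ together with a.s.\ convergence forces, on $\Omega(j)$, the limit event to sit inside $\Omega^m$ and the number of inside points to stabilize at $m$; this is where the hypothesis $\Omega_{n_k}(j)\subset\Omega^m_{n_k}$ and the definition of "exhaust" are used. On the right side of \eqref{abscond}, the error $\vartheta(k;j,A,B)\to 0$ by hypothesis, and the integral $\int_{(X_{\out}^{n_k})^{-1}(B)\cap\Omega_{n_k}(j)}\nu(\xout(\xi),A)\,d\tP(\xi)$ converges to $\int_{(X_{\out})^{-1}(B)\cap\Omega(j)}\nu(\xout(\xi),A)\,d\tP(\xi)$ by dominated convergence (the integrand is bounded by $1$, being a probability, and the domains converge appropriately since $\nu(\cdot,A)$ is measurable and $X_{\out}^{n_k}\to X_{\out}$). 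The symbol $\asymp_j$ survives the limit because the constants $k_1(j),k_2(j)$ are uniform in $k$. Hence I obtain, for all $A\in\AA$, $B\in\mathcal{B}$,
\[
k_1(j)\,\tP\big[(\xin\in A)\cap(X_{\out}\in B)\cap\Omega(j)\big]\ \le\ \int_{(X_{\out})^{-1}(B)\cap\Omega(j)}\nu(\xout(\xi),A)\,d\tP(\xi)\ \le\ k_2(j)\,\tP\big[(\xin\in A)\cap(X_{\out}\in B)\cap\Omega(j)\big].
\]
Now I recognize the left-hand probability: by the definition of the regular conditional distribution $\rho$, $\tP[(\xin\in A)\cap(X_{\out}\in B)\cap\Omega(j)]=\int_{(X_{\out})^{-1}(B)\cap\Omega(j)}\rho(\xout(\xi),A)\,d\tP(\xi)$ (using that $\{|\xin|=m\}\supset\Omega(j)$ so $\rho(\xout,\cdot)$ is a measure on $\D^m$ there). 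So for every $B\in\mathcal{B}$, $\int_{(X_{\out})^{-1}(B)\cap\Omega(j)}\rho(\xout,A)\,d\tP\asymp_j\int_{(X_{\out})^{-1}(B)\cap\Omega(j)}\nu(\xout,A)\,d\tP$. Since $\mathcal{B}$ generates the Borel $\sigma$-algebra of $\S_{\out}$ and $\Omega(j)$ is $X_{\out}$-measurable (I would need to confirm this, or intersect with the $X_{\out}$-measurable hull — a point to check), this forces $k_1(j)\rho(\xout,A)\le\nu(\xout,A)\le k_2(j)\rho(\xout,A)$ for $\tP$-a.e.\ $\xi\in\Omega(j)$, for each fixed $A$; taking a countable intersection over $A\in\AA$ and invoking Proposition \ref{meas1} (applied for each fixed $\xout$ to the measures $\rho(\xout,\cdot)$ and $\nu(\xout,\cdot)$ on $\D^m$) gives $\rho(\xout,\cdot)\equiv\nu(\xout,\cdot)$ as measures on $\D^m$, for $\tP$-a.e.\ $\xi\in\Omega(j)$.

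Finally I would let $j\uparrow\infty$. Since $\{\Omega(j)\}$ exhausts $\Omega^m$, we have $\tP(\Omega^m\setminus\bigcup_j\Omega(j))=0$, so the a.s.\ statement on each $\Omega(j)$ patches up to an a.s.\ statement on $\Omega^m$: outside a $\tP$-null set, every $\xi\in\Omega^m$ lies in some $\Omega(j)$ and there $\rho(\xout,\cdot)\equiv\nu(\xout,\cdot)$. Note the constants $k_1(j),k_2(j)$ are allowed to degenerate as $j\to\infty$ — that is harmless because mutual absolute continuity, unlike a quantitative bound, is preserved under the countable union. The main obstacle, I expect, is the first limiting step: justifying termwise convergence of $\tP[(\xin^{n_k}\in A)\cap(X_{\out}^{n_k}\in B)\cap\Omega_{n_k}(j)]$ to its limit, because $\xin\in A$ and $X_{\out}\in B$ are not continuity sets in general (points can escape to or enter from $\partial\D$), so one has to argue via the portmanteau-type inequalities (liminf for open, limsup for closed), exploit $\mu(\partial\D)=0$ to control the discrepancy, and choose the basis sets in $\AA,\mathcal{B}$ slightly open or slightly closed so that the sandwich still identifies the limit — together with the measurability of $\Omega(j)$ with respect to $X_{\out}$, which is needed to peel the integrals down to a pointwise comparison of the conditional measures.
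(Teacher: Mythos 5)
Your high-level plan (pass $k\to\infty$ in \eqref{abscond} to get a comparison on $\Omega(j)$, then send $j\uparrow\infty$) matches the paper's in spirit, but a key step in your $k\to\infty$ passage is not actually true and needs the asymmetric workaround that the paper adopts.

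The gap is the claimed convergence
\[
\tP\big[(\xin^{n_k}\in A)\cap(X_{\out}^{n_k}\in B)\cap\Omega_{n_k}(j)\big]\ \longrightarrow\ \tP\big[(\xin\in A)\cap(X_{\out}\in B)\cap\Omega(j)\big].
\]
Since $\Omega(j)=\varliminf_k\Omega_{n_k}(j)$, one has $1_{\Omega_{n_k}(j)}\to 1$ pointwise on $\Omega(j)$, but \emph{off} $\Omega(j)$ the indicators need not converge to $0$ — a $\xi\notin\Omega(j)$ is only outside $\Omega_{n_k}(j)$ for infinitely many $k$, not for all large $k$. So you only get a \emph{lower} bound for the LHS probability in terms of $\tP[\cdots\cap\Omega(j)]$ (via $\Omega_{n_k}(j)\supset\bigcap_{l\ge k}\Omega_{n_l}(j)$, whose probability approximates $\Omega(j)$), not an \emph{upper} bound, because $\Omega_{n_k}(j)$ can be substantially larger than $\Omega(j)$ for all $k$. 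Consequently the two-sided sandwich you write down after the limit is not a consequence of the hypotheses, and the double-sided bound $k_1(j)\rho(\xout,A)\le\nu(\xout,A)\le k_2(j)\rho(\xout,A)$ on $\Omega(j)$ does not follow. The paper handles this (Proposition \ref{tp2}) by deliberately producing two \emph{asymmetric} inequalities: it restricts only the smaller side to $\Omega(j)$ and bounds by the \emph{unrestricted} larger side, i.e.\ $\tP[(\xin\in A)\cap(X_{\out}\in B)\cap\Omega(j_0)]\le M(j_0)\int_{X_{\out}^{-1}(B)}\nu\,d\tP$ and $m(j_0)\int_{X_{\out}^{-1}(B)\cap\Omega(j_0)}\nu\,d\tP\le\tP[(\xin\in A)\cap(X_{\out}\in B)]$, which are exactly what survives the missing upper estimate.

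The second point you correctly flag — whether $\Omega(j)$ is $X_{\out}$-measurable — is not a hypothesis of Theorem \ref{abs}, and your pointwise "peeling-down" step on $\Omega(j)$ needs it. The paper avoids this too: Proposition \ref{tp1} works with conditional expectations $\E[\cdot\,|\,\xout,\Omega(j_0)]$ and $\P(\Omega(j_0)\,|\,\xout)$, and takes $j_0\to\infty$ only in the form of a qualitative implication ($\nu(\xout,A)=0\Rightarrow\rho(\xout,A)=0$), letting the multiplicative constants degenerate harmlessly. This is exactly the kind of fix your last paragraph gestures at, but to make it work you would need to reorganize the argument along the paper's asymmetric lines rather than the two-sided sandwich on $\Omega(j)$.
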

\vspace{5 pt}

We defer the proof of Theorem \ref{abs} to Section \ref{techproof}.

We conclude this section with the following simple observations:
\begin{remark}
If Theorem \ref{abs} holds for all $m \ge 0$, then we can conclude that (\ref{targeteq}) holds a.e. $\xi \in \Xi$. 
\end{remark}
\begin{remark}
\label{equivcond}
 The relation (\ref{abscond}) is equivalent to the following statement: for each fixed $j$, there exist positive quantities $C_j,c_j$  such that for any  $A \in \mathfrak{A}^m$, $B \in \mathcal{B}$ we have: 
 \[ \varlimsup_{k \to \infty} \tP [  (\xin^{n_k} \in A) \cap  ( X_{\out}^{n_k} \in B) \cap \Omega_{n_k}(j) ] \le C_j \varlimsup_{k \to \infty} \int_{(X_{\out}^{n_k})^{-1}(B) \cap \Omega_{n_k}(j)  } \nu(\xout({\xi}),A) \mathrm{d}\tP(\xi) \]
 and \[ \varliminf_{k \to \infty} \tP [  (\xin^{n_k} \in A) \cap  ( X_{\out}^{n_k} \in B) \cap \Omega_{n_k}(j) ] \ge c_j \varliminf_{k \to \infty} \int_{(X_{\out}^{n_k})^{-1}(B) \cap \Omega_{n_k}(j)  } \nu(\xout({\xi}),A) \mathrm{d}\tP(\xi).  \]
We emphasize that $C_j,c_j$ in the above pair of inequalities do not depend on $A$ or $B$. 
\end{remark}

\begin{remark}
 In Theorem \ref{abs}, one should think of $\{n_k\}_k$ as the index of an approximating subsequence of point processes $\{X^{n_k}\}_k$. The parameter $j$, on the other hand, enumerates an exhaustion of the relevant part $\O^m$ of the sample space (as a toy example, one can think of the subsets $\{|Y|\le j\}$ for some random variable $Y$, which exhaust the entire sample space as $j \to \infty$).  
\end{remark}

\section{Tolerance of the Ginibre Ensemble }
\label{gintolerance}
In this section we obtain several estimates necessary to prove Theorem \ref{gin-2} in the case where $\D$ is a fixed disk centred at the origin. Fix an integer $m \ge 0$. 
\subsection{Estimates for $\G_n$ }
\label{ratcondgin}
Let $\del\in (0,1)$. We consider the event $\Omega_n^{m,\delta}$ which entails that $\mathcal{G}_n$ has exactly $m$ points inside $\D$, and there is a $\delta$ separation between $\partial \D$ and $(\G_n)_{\out}$; the analogous event for $\mathcal{G}$ will be denoted by $\Omega^{m,\delta}$.  Notice that $\Omega^{m,\delta}$ has positive probability (which is bounded away from 0 uniformly in $\delta$), so by the convergence of $\mathcal{G}_{n}$-s to $\mathcal{G}$, we obtain that $\Omega_n^{m,\delta}$-s have positive probability that is uniformly bounded away from 0 for large enough $n$.

A brief explanation is in order for the last assertion regarding $\P(\Omega^{m,\delta})>0$. First, note that the events $\Omega^{m,\delta} \uparrow \Omega^m$, so it suffices to prove $\P(\Omega^m)>0$. But $\G$ being a determinantal point process with the kernel discussed in Section \ref{setmod}, the number of points of $\G$ in $\D$ is a sum of independent Bernoulli random variables, each with success probability strictly between 0 and 1. This implies that for each $m \ge 0$, we have $\P(\Omega^m)>0$. 

We denote the  points of $\G_n$ inside  $\D$ (in uniform random order) by $\uz=(\zeta_1,\cdots, \zeta_m)$ and those outside $\D$ (in uniform random order) by ${\uo}=(\omega_1,\omega_2,\cdots,\omega_{n-m})$. Following the notation introduced in Section \ref{intro}, for a vector $(\uz,\uo)$ as above, we denote $\Upsilon_{\inn}=\{\z_i\}_{i=1}^m$ , $\Upsilon_{\out} =\{\o_j \}_{j=1}^{n-m}$, and $\Upsilon=\Upsilon_{\inn}\cup \Upsilon_{\out}$.  For a vector $\underline{\gamma}=(\gamma_1,\cdots,\gamma_N)$ of $N$ points in $\C$, we denote by $\Delta(\underline{\gamma})$  the Vandermonde determinant $\prod_{i < j}(\gamma_i -\gamma_j)$. For two vectors $\underline{\gamma}_1,\underline{\gamma}_2$ we set $\Delta(\underline{\gamma}_1,\underline{\gamma}_2)=\Delta(\underline{\gamma})$ where $\underline{\gamma}$ denotes the concatenated vector $(\underline{\gamma}_1,\underline{\gamma}_2)$.

Then the conditional distribution $\rho(\Upsilon_{\out},\uz)$ of $\uz$ given $\Upsilon_{\out}$ has the  density
\begin{equation}
  \label{condgin}
\rho^n_{\uo}(\uz)=C(\uo)\left| \Delta(\uz,\uo) \right|^2 \exp \l( -\sum_{k=1}^{m}|\zeta_k|^2 \r)
\end{equation}
with respect to the Lebesgue measure on $\D^m$, where $C(\uo)$ is the normalizing constant (which depends on $\uo$).
Let $(\uz,\uo)$ and $(\uz',\uo)$ correspond to two configurations such that the event $\Omega_{n}^{m,\del}$ occurs in both cases. Then the ratio of the conditional densities at these two points is given by
\begin{equation}
 \label{condgin1}
\frac{\rho^n_{\uo}(\uz')}{\rho^n_{\uo}(\uz)}=\frac{|\Delta(\uz',\uo)|^2}{|\Delta(\uz,\uo)|^2} \frac{\exp(-\sum_{k=1}^{m}|\zeta_k^{'}|^2)}{\exp(-\sum_{k=1}^{m}|\zeta_k|^2)}\, .
\end{equation}

Our goal in this section is to obtain upper and lower bounds on the ratio of conditional densities $\frac{\rho^n_{\uo}(\uz')}{\rho^n_{\uo}(\uz)}$. We will take up, one after the other, the issues of bounding the quantities appearing in the expression (\ref{condgin}) for this ratio.

Clearly, ${\displaystyle \exp \l( -\sum_{k=1}^{m}|\zeta_k^{'}|^2 \r)} \bigg/ {\displaystyle \exp \l( -\sum_{k=1}^{m}|\zeta_k|^2 \r)}$ is bounded above and below by constants which are functions of $m$ and $\D$.

To study the ratio of the Vandermonde determinants, we define \[ \Gamma(\uz,\uo)=\prod_{1\le i \le m, 1 \le j \le n-m}(\zeta_i-\omega_j).\] Then we have
\begin{equation}
\label{condgin2}
\frac{|\Delta(\uz',\uo)|^2}{|\Delta(\uz,\uo)|^2} = \frac{|\Delta(\uz')|^2}{|\Delta(\uz)|^2}\frac{|\Gamma(\uz',\uo)|^2}{|\Gamma(\uz,\uo)|^2}.
\end{equation}
In order to bound $\frac{\displaystyle |\Gamma(\uz',\uo)|^2}{\displaystyle |\Gamma(\uz,\uo)|^2}$ from above and below  uniformly in $\uz,\uz' \in \D^m$, it suffices to bound $\frac{\displaystyle |\Gamma(\uz,\uo)|}{\displaystyle |\Gamma(\underline{0},\uo)|}$ from above and below uniformly in $\uz \in \D^m$. Here $\underline{0}$ is the vector of all 0-s in $\D^m$.
We observe that \[ \frac{|\Gamma(\uz,\uo)|}{|\Gamma(\underline{0},\uo)|}= \prod_{i=1}^m \l( \prod_{j=1}^{n-m} \l|\frac{\zeta_i- \omega_j}{\omega_j}\r| \r). \]
Since $m$ is fixed, it suffices to bound ${\displaystyle \prod_{j=1}^{n-m} \l|\frac{\zeta_0 - \omega_j}{\omega_j}\r|}$ from above and below uniformly in $\zeta_0 \in \D$. To this end we prove
\begin{proposition}
\label{gin1}
Let $\zeta_0 \in \D$ and $\omega_1,\cdots,\omega_{n-m}$ be such that $\mathrm{dist}(\omega_j,\D)>\delta$ for all $j$. Then we have
\[ \l| \log \l( \prod_{j=1}^{n-m} \l|\frac{\zeta_0 - \omega_j}{\omega_j}\r| \r) \r| \le K_1(\D)\l|\sum_{j=1}^{n-m}\frac{1}{\omega_j}\r|+K_2(\D)\l|\sum_{j=1}^{n-m} \frac{1}{\omega_j^2} \r|+ K_3(\D,\delta) \l(\sum_{j=1}^{n-m}\frac{1}{|\omega_j|^3} \r). \]
\end{proposition}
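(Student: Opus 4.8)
The plan is to expand the logarithm of each factor $\left|1-\zeta_0/\omega_j\right|$ into its Taylor series and to separate the linear, quadratic, and higher-order contributions. Recall that in this section $\D$ is the open disk of radius $r$ centred at the origin, so $|\zeta_0|<r$, while the hypothesis $|\omega_j|>r+\delta$ forces
\[ t_j:=\left|\frac{\zeta_0}{\omega_j}\right|<\theta:=\frac{r}{r+\delta}<1 \qquad\text{for every }j, \]
with $\theta$ depending only on $\D$ and $\delta$. In particular $1-\zeta_0/\omega_j$ lies in the open disk $B(1,1)$, which avoids the branch cut $(-\infty,0]$ of the principal logarithm $\mathrm{Log}$, so
\[ \log\left|\frac{\zeta_0-\omega_j}{\omega_j}\right|=\log\left|1-\frac{\zeta_0}{\omega_j}\right|=\mathrm{Re}\,\mathrm{Log}\left(1-\frac{\zeta_0}{\omega_j}\right)=-\mathrm{Re}\sum_{k\ge1}\frac{1}{k}\left(\frac{\zeta_0}{\omega_j}\right)^{k}. \]

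Summing over the (finitely many) $j$ — so that the logarithm of the positive product equals the sum of the logarithms — and isolating the $k=1$ and $k=2$ terms, I would write
\[ \log\left(\prod_{j=1}^{n-m}\left|\frac{\zeta_0-\omega_j}{\omega_j}\right|\right)=-\mathrm{Re}\left(\zeta_0\sum_{j}\frac{1}{\omega_j}\right)-\mathrm{Re}\left(\frac{\zeta_0^{2}}{2}\sum_{j}\frac{1}{\omega_j^{2}}\right)-\mathrm{Re}\sum_{j}\sum_{k\ge3}\frac{1}{k}\left(\frac{\zeta_0}{\omega_j}\right)^{k}. \]
Now apply $|\mathrm{Re}\,w|\le|w|$ and the triangle inequality. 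The first term is at most $|\zeta_0|\,\bigl|\sum_j\omega_j^{-1}\bigr|\le r\,\bigl|\sum_j\omega_j^{-1}\bigr|$ and the second at most $\tfrac{r^{2}}{2}\bigl|\sum_j\omega_j^{-2}\bigr|$, which furnishes the first two summands with $K_1(\D)=r$ and $K_2(\D)=r^{2}/2$. For the tail I would factor $|\omega_j|^{-3}$ out of each inner sum and dominate the rest by a geometric series: since $t_j\le\theta$,
\[ \sum_{k\ge3}\frac{1}{k}\left|\frac{\zeta_0}{\omega_j}\right|^{k}\le\frac{1}{3}\sum_{k\ge3}t_j^{k}=\frac{t_j^{3}}{3(1-t_j)}\le\frac{r^{3}}{3(1-\theta)}\cdot\frac{1}{|\omega_j|^{3}}, \]
and, since $1-\theta=\delta/(r+\delta)$, summing over $j$ bounds the tail by $K_3(\D,\delta)\sum_j|\omega_j|^{-3}$ with $K_3(\D,\delta)=\tfrac{r^{3}(r+\delta)}{3\delta}$. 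The interchange of the sums over $j$ and $k$ is justified by absolute convergence, the double sum being dominated by $\tfrac{r^{3}}{3(1-\theta)}\sum_j|\omega_j|^{-3}<\infty$ since the $j$-range is finite. Combining the three bounds gives the claimed inequality.

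I do not expect a genuine obstacle: morally this is just the observation that the deviation of $\prod_j|1-\zeta_0/\omega_j|$ from $1$ is governed by the power sums $\sum_j\omega_j^{-1}$ and $\sum_j\omega_j^{-2}$, up to an absolutely convergent remainder controlled by $\sum_j|\omega_j|^{-3}$. The one point that genuinely uses the hypotheses — and the only thing needing care — is uniformity: $K_3$ must depend only on $\D$ and $\delta$, not on the particular $\omega_j$, which is exactly what the separation $|\omega_j|>r+\delta$ (equivalently $t_j\le\theta<1$ uniformly in $j$) secures. A secondary, entirely routine check is that $1-\zeta_0/\omega_j$ never meets the branch cut of $\mathrm{Log}$, which is automatic from $t_j<1$.
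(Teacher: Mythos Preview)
Your proof is correct and follows essentially the same route as the paper: expand $\log|1-\zeta_0/\omega_j|$ via the power series of $\log(1-z)$, separate the $k=1$ and $k=2$ terms, and bound the tail using $|\zeta_0/\omega_j|\le r/(r+\delta)<1$ to get a uniform cubic remainder. The only difference is cosmetic---you make the constant $K_3(\D,\delta)=\tfrac{r^3(r+\delta)}{3\delta}$ explicit, whereas the paper leaves it as $K_3'(\D,\delta)r^3$.
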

Here $K_1(\D)$, $K_2(\D)$ and $K_3(\D,\delta)$ are constants depending on $\D$ and $\delta$.
\begin{proof}
We begin with
$ \log \l| \frac{\zeta_0 -\omega_j}{\omega_j} \r|  = \log \l| 1-\frac{\zeta_0}{\omega_j}   \r|.$
Due to the $\delta$-separation between $\partial \D$ and $\omega$, the ratio $\theta_j := \frac{\zeta_0}{\omega_j}$ satisfies $|\theta_j| \le \frac{r}{r+\delta} < 1$. Let $\log$ be the branch of the complex logarithm given by the power series development $\log(1-z)=-\l( \sum_{k=1}^{\infty} \frac{z^k}{k} \r) $ for $|z|<1$.
Then we have $\log \l| 1-\theta_j   \r| =  \Re \log (1 - \theta_j) = - \Re (\theta_j) - \frac{1}{2}\Re (\theta_j^2) + h(\theta_j)  $
where $\l| h(\theta_j)\r| \le K_3'(\D,\delta)|\theta_j|^3$, where $K_3'$ is a constant depending on $\D$ and $\delta$.  Hence,
\[ \l| \log \l( \prod_{j=1}^{n-m} \l|\frac{\zeta_0- \omega_j}{\omega_j}\r| \r) \r|
 = \l| \sum_{j=1}^{n-m} \log \l(  \l|\frac{\zeta_0- \omega_j}{\omega_j}\r| \r) \r|
 = \l|-\Re \l( \sum_{j=1}^{n-m}\theta_j \r) - \frac{1}{2} \Re \l( \sum_{j=1}^{n-m} \theta_j^2 \r) + \sum_{j=1}^{n-m} h(\theta_j) \r|. \]
Recall that $\theta_j=\frac{\zeta_0}{\omega_j}$ and $|\zeta_0| \le r$ and $\l| h(\theta_j)\r| \le K_3'(\D,\delta)|\theta_j|^3$. The triangle inequality applied to the above gives us the statement of the Proposition with $K_1(\D)=r$, $K_2(\D)=\frac{1}{2}r^2$ and $K_3(\D,\delta)= K_3'(\D,\delta)r^3$.
\end{proof}
As a result, we have
\begin{proposition}
\label{gin2}
Conditionally on $\Omega_{n}^{m,\delta}$, there exists a constant $K(\D,\delta)>0$ such that
\[\exp \bigg(- 4mK(\D,\delta)\X_n \bigg)\frac{|\Delta(\uz')|^2}{|\Delta(\uz)|^2}\le \frac{\rho^n_{\uo}(\uz')}{\rho^n_{\uo}(\uz)}  \le \exp \bigg( 4mK(\D,\delta)\X_n \bigg) \frac{|\Delta(\uz')|^2}{|\Delta(\uz)|^2},\]
where ${\displaystyle \X_n = \l|\sum_{\omega \in \mathcal{G}_n \cap \D^c}\frac{1}{\omega} \r|+ \l|\sum_{\omega \in \mathcal{G}_n \cap \D^c}\frac{1}{\omega^2} \r|+ \l( \sum_{\omega \in \mathcal{G}_n \cap \D^c}\frac{1}{|\omega|^3} \r)}$ and $\E [\X_n]\le c_1(\D,m)<\infty .$
\end{proposition}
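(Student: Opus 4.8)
The plan is to establish the two assertions separately: the two-sided bound on the ratio of conditional densities, which is essentially bookkeeping on top of the formulas already in place, and the uniform-in-$n$ estimate $\E[\X_n]\le c_1(\D,m)<\infty$, which is the substantive part.

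For the density ratio, start from the exact identity (\ref{condgin1}). The Gaussian factor $\exp(-\sum_k|\zeta'_k|^2)/\exp(-\sum_k|\zeta_k|^2)$ lies in $[e^{-mr^2},e^{mr^2}]$ (with $r$ the radius of $\D$), a multiplicative constant depending only on $m$ and $\D$ which we absorb (it is immaterial in the sequel, where $\X_n$ is only ever multiplied by fixed constants). By (\ref{condgin2}) the Vandermonde ratio splits as $\frac{|\Delta(\uz')|^2}{|\Delta(\uz)|^2}\cdot\frac{|\Gamma(\uz',\uo)|^2}{|\Gamma(\uz,\uo)|^2}$, the first factor being exactly the one appearing in the claimed inequality. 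For the second factor, insert $\Gamma(\underline{0},\uo)$ and write $\log\big(|\Gamma(\uz,\uo)|/|\Gamma(\underline{0},\uo)|\big)=\sum_{i=1}^m\log\prod_{j=1}^{n-m}\big|(\zeta_i-\omega_j)/\omega_j\big|$. On $\Omega_n^{m,\delta}$ every outside point satisfies $|\omega_j|>r+\delta$, so Proposition~\ref{gin1} applies to each of the $m$ summands; moreover on this event the sums over the $\omega_j$ occurring there coincide with the three sums defining $\X_n$, so each summand is $\le K(\D,\delta)\X_n$ with $K:=\max(K_1,K_2,K_3)$. Hence $|\log(|\Gamma(\uz,\uo)|/|\Gamma(\underline{0},\uo)|)|\le mK\X_n$, and likewise with $\uz'$; the triangle inequality then gives $|\log(|\Gamma(\uz',\uo)|^2/|\Gamma(\uz,\uo)|^2)|\le 4mK\X_n$, which combined with the above is the stated estimate.

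For the moment bound, decompose $\E[\X_n]=\E\big[\sum_{\omega\in\G_n\cap\D^c}|\omega|^{-3}\big]+\E\big[\big|\sum_{\omega\in\G_n\cap\D^c}\omega^{-1}\big|\big]+\E\big[\big|\sum_{\omega\in\G_n\cap\D^c}\omega^{-2}\big|\big]$. The first term is controlled by the crude intensity bound $\rho_1^{(n)}(z)=\frac{1}{\pi}e^{-|z|^2}\sum_{k=0}^{n-1}|z|^{2k}/k!\le\frac{1}{\pi}$, which gives $\E[\sum_{\omega\in\G_n\cap\D^c}|\omega|^{-3}]\le\frac{1}{\pi}\int_{|z|>r}|z|^{-3}\,d\mathcal{L}(z)=2/r$, uniformly in $n$ and $m$. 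The other two terms need cancellation, since $\E[\sum_{|\omega|>r}|\omega|^{-1}]$ and $\E[\sum_{|\omega|>r}|\omega|^{-2}]$ both diverge as $n\to\infty$. For each fixed $n$ the relevant first and second moments are finite (the one-point intensity decays super-exponentially beyond radius of order $\sqrt n$), so the determinantal moment identities may be used termwise. Because $\rho_1^{(n)}$ is radial, $\E[\sum_{|\omega|>r}\omega^{-p}]=\int_{|z|>r}z^{-p}\rho_1^{(n)}(z)\,d\mathcal{L}(z)=0$ for $p=1,2$ (the angular integral of $e^{-ip\theta}$ vanishes). Therefore, with $h(z)=z^{-p}\mathbf{1}_{\{|z|>r\}}$ and $d\gamma(z)=\frac{1}{\pi}e^{-|z|^2}\,d\mathcal{L}(z)$, \[ \E\Big[\Big|\sum_{\omega}h(\omega)\Big|^2\Big]=\int |h|^2 K_n(z,z)\,d\gamma(z)-\iint h(z)\overline{h(w)}\,|K_n(z,w)|^2\,d\gamma(z)\,d\gamma(w). \] Expanding $|K_n(z,w)|^2=\sum_{j,l}(z\bar w)^j(\bar z w)^l/(j!\,l!)$ and integrating over angles forces $l=j-p$, collapsing the double integral to a single series; a short computation gives, for $p=1$, $\E[|\sum_\omega\omega^{-1}\mathbf{1}|^2]=\sum_{k=0}^{n-1}\Gamma(k,r^2)/k!-\sum_{j=1}^{n-1}\Gamma(j,r^2)^2/(j!(j-1)!)$, where $\Gamma(\cdot,\cdot)$ is the upper incomplete Gamma function. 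Each series grows like $\log n$, but pairing the $j$-th terms ($j\ge1$) gives $\Gamma(j,r^2)\gamma(j,r^2)/(j!(j-1)!)\le r^{2j}/(j\,j!)$ (using $\Gamma(j,r^2)\le(j-1)!$ and $\gamma(j,r^2)\le r^{2j}/j$ for the lower incomplete Gamma), which is summable with sum $\le e^{r^2}$; together with the finite $k=0$ term this bounds $\E[|\sum_\omega\omega^{-1}\mathbf{1}|^2]$ uniformly in $n$. The case $p=2$ is structurally identical. Cauchy--Schwarz then yields uniform bounds on $\E[|\sum\omega^{-1}|]$ and $\E[|\sum\omega^{-2}|]$, and summing the three pieces gives $\E[\X_n]\le c_1(\D,m)$.

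The main obstacle is this cancellation step for the second moments of $\sum_\omega\omega^{-1}$ and $\sum_\omega\omega^{-2}$: one has to recognize that these must be controlled through their $L^2$ norms rather than $L^1$ (the sums of moduli genuinely have divergent expectation), check that the means vanish by rotational symmetry, and then see that the diagonal one-point contribution and the off-diagonal squared-kernel contribution --- each of size $\log n$ --- cancel to leave a remainder bounded uniformly in $n$. The density-ratio estimate is comparatively routine once Proposition~\ref{gin1} is available.
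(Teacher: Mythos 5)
Your proof of the density-ratio bound follows the paper's argument essentially verbatim: split via (\ref{condgin1}) and (\ref{condgin2}), compare $\Gamma(\uz,\uo)$ against $\Gamma(\underline{0},\uo)$, apply Proposition~\ref{gin1} to each of the $m$ coordinates, exponentiate and combine with the triangle inequality to get the exponent $4mK\X_n$. (Both you and the paper quietly discard the Gaussian factor $e^{\pm mr^2}$, which strictly speaking cannot be absorbed into $e^{\pm 4mK\X_n}$ since $\X_n$ is not bounded below; you at least flag this explicitly, and it is indeed harmless since downstream the bound is only used up to $j$-dependent multiplicative constants.)

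For the moment bound $\E[\X_n]\le c_1(\D,m)$, however, your route is genuinely different from the paper's. The paper defers this to Section~\ref{invpowersgin} (Proposition~\ref{invginest-tail}), where the sums $\sum_{\omega}\omega^{-l}$ are controlled via a dyadic partition of unity $\widetilde\varphi,\varphi_{2^j}$ and the scale-invariant variance bound for Lipschitz linear statistics (Proposition~\ref{est}, ultimately the Rider--Vir\'ag estimate). You instead compute the second moment directly from the determinantal formula: note that $\E[\sum_\omega\omega^{-p}\mathbf 1_{|\omega|>r}]=0$ by rotational invariance of $K_n(z,z)$, reduce the variance to a single series via the angular selection rule $l=j-p$, and observe the explicit cancellation $\frac{\Gamma(j,r^2)}{j!}-\frac{\Gamma(j,r^2)^2}{j!(j-1)!}=\frac{\Gamma(j,r^2)\gamma(j,r^2)}{j!(j-1)!}\le\frac{r^{2j}}{j\cdot j!}$, which sums to a constant uniformly in $n$ even though each series separately grows like $\log n$. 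This computation is correct. The trade-off: your argument is more self-contained and makes the cancellation mechanism completely explicit, but it is tied to the radial symmetry and explicit form of the Ginibre kernel. The paper's dyadic scheme is heavier machinery for this one bound, but it simultaneously yields the tail estimates $\E|\tau_l^n(2^k)|\lesssim 2^{-kl}$ and the in-probability convergence $S_l(n)\to S_l$ that are needed later (Proposition~\ref{conv}), and the same template transfers to the GAF zero process in Section~\ref{EIPZ} where no comparable closed-form second-moment calculation is available. So the paper's route is structurally preferable within its larger program, while yours is a cleaner standalone proof of this particular estimate.
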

\begin{proof}
Clearly, it suffices to bound $ {|\Delta(\uz',\uo)|^2} \big/ {|\Delta(\uz,\uo)|^2}$ from above and below.\\
From Proposition \ref{gin1}, on $\Omega_{n}^{m,\delta}$ we have $\l| \log \l( \prod_{j=1}^{n-m} \l|\frac{\zeta_0 - \omega_j}{\omega_j}\r| \r) \r| \le  K(\D,\delta)\X_n$ for any $\zeta_0 \in \D$, where $K(\D,\delta)=\max \{K_1,K_2,K_3\}$ .
Considering this for each $\zeta_i,i=1,\cdots,m$, exponentiating and then taking product over $i=1,\cdots,m$, we get
\[ \exp \bigg(-mK(\D,\delta)\X_n\bigg) \le \frac{|\Gamma(\uz,\uo)|}{|\Gamma(\underline{0},\uo)|} \le \exp \bigg(mK(\D,\delta)\X_n\bigg). \]
The same estimate holds for $\zeta'$. Since $\frac{\displaystyle |\Gamma(\uz',\uo)|}{\displaystyle |\Gamma(\uz,\uo)|}= \frac{\displaystyle |\Gamma(\uz',\uo)|}{\displaystyle |\Gamma(\underline{0},\uo)|} \bigg/ \frac{\displaystyle |\Gamma(\uz,\uo)|}{\displaystyle |\Gamma(\underline{0},\omega)|}$, we have
\[  \exp \bigg(-2mK(\D,\delta)\X_n\bigg) \le \frac{|\Gamma(\uz',\uo)|}{|\Gamma(\uz,\uo)|} \le \exp \bigg(2mK(\D,\delta)\X_n\bigg).  \]
In view of (\ref{condgin1}) and (\ref{condgin2}), this leads to the desired bound $\frac{\rho^n_{\uo}(\uz')}{\rho^n_{\uo}(\uz)} $.
In Proposition \ref{invginest-tail}, we will see that each of the three random sums defining $\X_n$ has finite expectation. Moreover, those expectations are bounded uniformly (in $n$) by quantities depending only on $\D$ and $m$. This yields the statement $\E [\X_n]\le c_1(\D,m)<\infty$.
\end{proof}

\begin{corollary}
\label{ginc2.1}
Given  $M>0$, we can replace $\X_n$ in Proposition \ref{gin2} by a uniform bound $M$ except on an event of probability less than $ c_1(m,\D)/M$.
\end{corollary}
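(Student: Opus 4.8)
The plan is to deduce the corollary from the moment estimate $\E[\X_n]\le c_1(m,\D)$ already recorded in Proposition \ref{gin2}, by a single application of Markov's inequality; no estimate is needed beyond the one established in Section \ref{invpowersgin}.

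First I would note that $\X_n$ is a sum of non-negative quantities (two moduli and one sum of positive reals), hence $\X_n\ge 0$, so Markov's inequality gives
\[ \P\l(\X_n > M\r)\ \le\ \frac{\E[\X_n]}{M}\ \le\ \frac{c_1(m,\D)}{M} \]
(the inequality being strict once one takes $c_1(m,\D)$ to be, say, $1+\sup_n\E[\X_n]$). Write $\mathcal{E}_M:=\{\X_n\le M\}$, so that $\P(\mathcal{E}_M^c)<c_1(m,\D)/M$. On $\mathcal{E}_M$ we have $\X_n\le M$; since $t\mapsto e^{4mK(\D,\delta)t}$ is increasing and $t\mapsto e^{-4mK(\D,\delta)t}$ is decreasing, the two-sided bound of Proposition \ref{gin2} then yields, on $\Omega_n^{m,\delta}\cap\mathcal{E}_M$ and uniformly over $\uz,\uz'\in\D^m$,
\[ \exp\l(-4mK(\D,\delta)M\r)\frac{|\Delta(\uz')|^2}{|\Delta(\uz)|^2}\ \le\ \frac{\rho^n_{\uo}(\uz')}{\rho^n_{\uo}(\uz)}\ \le\ \exp\l(4mK(\D,\delta)M\r)\frac{|\Delta(\uz')|^2}{|\Delta(\uz)|^2}. \]
This is exactly the asserted replacement of the random quantity $\X_n$ by the deterministic bound $M$, valid off an event of probability less than $c_1(m,\D)/M$.

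There is essentially no obstacle here; the only point requiring care is the direction of monotonicity of the two exponential factors, which is what allows both the upper and the lower bound of Proposition \ref{gin2} to survive the substitution of $M$ for $\X_n$. The substantive input --- the finiteness of $\E[\X_n]$, and crucially its boundedness uniformly in $n$ by a constant depending only on $m$ and $\D$ --- is the content of Section \ref{invpowersgin} and enters here only through Proposition \ref{gin2}.
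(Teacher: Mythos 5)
Your proof is correct and is exactly the argument the paper intends (the paper omits the proof of the corollary, as it is an immediate Markov-inequality consequence of the moment bound $\E[\X_n]\le c_1(\D,m)$ established in Proposition \ref{gin2}). Your added remark about the monotonicity of the exponential factors is a reasonable point of care, though not one the paper would spell out.
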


\subsection{Estimates for Inverse Powers}
\label{invpowersgin}
Our aim in this section is to estimate  the sums of inverse powers of the points in $\mathcal{G}$ and $\mathcal{G}_n$ outside a disk containing the origin.
To this end, we first discuss certain estimates on the variance of linear statistics, which are uniform in $n$. Let $B(0;r)$ denote the disk of radius $r$ centred at the origin.
\begin{proposition}
\label{est}
Let $\varphi$ be a compactly supported Lipschitz function, supported inside the disk $B(0;r)$ with Lipschitz constant $\kappa(\varphi)$. Let $\varphi_R(z):=\varphi(z/R)$. Then
$\mathrm{Var} \l(\int\varphi_R(z)\, d[\G_n](z)\r)\le C(\varphi)$, where $C(\varphi)$ is a quantity that is independent of $n$, and depends on $\varphi$ only through $\kappa(\varphi)$.
The same conclusion holds for $\mathcal{G}$ in place of $\mathcal{G}_n$.
\end{proposition}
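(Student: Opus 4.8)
The plan is to exploit the determinantal structure of both processes together with the explicit form of their kernels. Recall that $\G_n$ is the determinantal process with the projection kernel $K_n(z,w)=\sum_{k=0}^{n-1}(z\bar w)^k/k!$ with respect to $d\gamma(z)=\frac1\pi e^{-|z|^2}\,d\mathcal{L}(z)$, and that $\G$ has kernel $K(z,w)=e^{z\bar w}$ with respect to the same $\gamma$. In both cases the monomials $g_k(z)=z^k/\sqrt{k!}$ are orthonormal in $L^2(\gamma)$, and one has the reproducing-type identity $\int_\C|K_n(x,y)|^2\,d\gamma(y)=K_n(x,x)$ (for $\G_n$ because $K_n$ is the kernel of a self-adjoint projection; for $K$ by the Gaussian integral $\int_\C e^{2\Re(x\bar y)}\,d\gamma(y)=e^{|x|^2}$). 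For a determinantal process with a Hermitian kernel enjoying this identity, the standard variance formula for a linear statistic, after symmetrizing in $x$ and $y$, becomes
\[
\mathrm{Var}\Big(\int\psi\,d[\,\cdot\,]\Big)=\tfrac12\int_\C\int_\C\big(\psi(x)-\psi(y)\big)^2\,|K(x,y)|^2\,d\gamma(x)\,d\gamma(y).
\]
I would apply this with $\psi=\varphi_R$.

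Since $\varphi$ is $\kappa(\varphi)$-Lipschitz and supported in $B(0;r)$, the scaled function $\varphi_R$ is $(\kappa(\varphi)/R)$-Lipschitz and supported in $B(0;rR)$, so $(\varphi_R(x)-\varphi_R(y))^2\le \frac{\kappa(\varphi)^2}{R^2}|x-y|^2$ and the integrand above vanishes unless $x$ or $y$ lies in $B(0;rR)$. By symmetry this reduces the estimate to
\[
\mathrm{Var}\Big(\int\varphi_R\,d[\G_n]\Big)\le \frac{\kappa(\varphi)^2}{R^2}\int_{B(0;rR)}\Big(\int_\C|x-y|^2\,|K_n(x,y)|^2\,d\gamma(y)\Big)\,d\gamma(x).
\]
The next step is to evaluate the inner integral exactly. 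Writing $y\mapsto K_n(y,x)$ in the orthonormal basis $\{g_k\}_{k<n}$ and using that multiplication by $y$ sends $g_k$ to $\sqrt{k+1}\,g_{k+1}$, a short computation gives, with $t=|x|^2$,
\[
\int_\C|x-y|^2\,|K_n(x,y)|^2\,d\gamma(y)=t+\sum_{j=1}^{n-1}\frac{(j-t)^2\,t^{j-1}}{j!}+\frac{n^2\,t^{n-1}}{n!}.
\]
The first two terms equal $\tfrac1t\sum_{j=0}^{n-1}(j-t)^2t^j/j!$, which by the elementary identity $\sum_{j\ge0}(j-t)^2t^j/j!=t\,e^{t}$ (the variance of a Poisson$(t)$ variable) is at most $e^{t}=e^{|x|^2}$; integrating $e^{|x|^2}$ against $d\gamma$ over $B(0;rR)$ yields exactly $(rR)^2$, with no $n$-dependence. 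The remaining edge term $n^2t^{n-1}/n!$, integrated against $\frac1\pi e^{-|x|^2}\,d\mathcal{L}(x)$ over $B(0;rR)$, equals $\frac{n^2}{n!}\int_0^{(rR)^2}u^{n-1}e^{-u}\,du\le \frac{n\,(rR)^{2n}}{n!}$, and this sequence tends to $0$, hence is bounded uniformly in $n$. Combining, $\mathrm{Var}\big(\int\varphi_R\,d[\G_n]\big)\le \kappa(\varphi)^2 r^2+\frac{\kappa(\varphi)^2}{R^2}\sup_n\frac{n(rR)^{2n}}{n!}$, a bound independent of $n$, which is the assertion. For $\G$ itself the same scheme is shorter: here $|K(x,y)|^2 e^{-|x|^2-|y|^2}=e^{-|x-y|^2}$, so the symmetrized formula reads $\mathrm{Var}\big(\int\varphi_R\,d[\G]\big)=\frac{1}{2\pi^2}\iint(\varphi_R(x)-\varphi_R(y))^2 e^{-|x-y|^2}\,d\mathcal{L}(x)\,d\mathcal{L}(y)$, and the Lipschitz-and-support reduction together with $\int_\C|u|^2e^{-|u|^2}\,d\mathcal{L}(u)=\pi$ gives the clean bound $\kappa(\varphi)^2 r^2$.

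The only genuinely delicate point is the uniform control of the edge term $n^2 t^{n-1}/n!$: its pointwise supremum over $t\ge0$ grows like $\sqrt n$, so a crude bound would fail, and one must use that $\varphi_R$ has \emph{fixed} (bounded) support to confine the $x$-integration to $|x|\le rR$, where the incomplete-Gamma factor $\int_0^{(rR)^2}u^{n-1}e^{-u}\,du$ is far smaller than $(n-1)!$. Everything else — the variance formula for determinantal processes, the reproducing identity, the weighted-shift computation of the inner integral, and the Poisson moment identity — is routine.
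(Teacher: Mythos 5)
Your approach runs closely parallel to the paper's: both begin from the determinantal covariance formula (Lemma \ref{varlinst}), pass to the pointwise Lipschitz bound $(\varphi_R(x)-\varphi_R(y))^2\le\kappa(\varphi)^2|x-y|^2/R^2$, and then control the resulting double integral of $|x-y|^2\,|K_n(x,y)|^2$ over a region where at least one variable lies in $B(0;rR)$. The control mechanisms differ, though. You compute the inner $y$-integral in closed form and split the result into a ``bulk'' part, bounded by the Poisson-variance identity $\sum_{j\ge 0}(j-t)^2t^j/j!=te^t$, plus an ``edge'' term $n^2t^{n-1}/n!$. The paper instead decomposes the $(z,w)$ region into $A_1(R)=\{|z|,|w|\le 2R\}$, handled by a telescoping argument with Gamma random variables, and a far-off-diagonal region $A_2(R)$, handled by the decay $|K_n(z,w)|^2e^{-|z|^2-|w|^2}\le e^{-(|z|-|w|)^2}$. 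Your treatment of $\G$ via $|K(x,y)|^2e^{-|x|^2-|y|^2}=e^{-|x-y|^2}$ is cleaner than the paper's remark that one may take limits; and your exact formula for the inner integral (which I verified) is a nice alternative to the paper's term-by-term matching.

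There is, however, a genuine gap in your final estimate for the edge term. You conclude with
$\mathrm{Var}\le \kappa(\varphi)^2 r^2+\frac{\kappa(\varphi)^2}{R^2}\sup_n\frac{n(rR)^{2n}}{n!}$
and declare this ``independent of $n$, which is the assertion.'' It is independent of $n$, but it is not uniform in $R$: the supremum is of order $rR\,e^{(rR)^2}$, so the second summand blows up as $R\to\infty$. The proposition's constant $C(\varphi)$ must depend only on $\varphi$ — this is essential when it is applied in Proposition \ref{invginest-tail} with $R=2^k$ ranging over all dyadic scales. The culprit is the bound $\int_0^{(rR)^2}u^{n-1}e^{-u}\,du\le (rR)^{2n}/n$, which is extremely lossy once $(rR)^2$ is comparable to $n$.

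The gap is easy to close with a sharper estimate for the edge term. Setting $s=(rR)^2$,
\[
\frac{n^2}{n!}\int_0^{s}u^{n-1}e^{-u}\,du \;=\; n\,\P[\Gamma_n\le s] \;=\; n\,\P[\mathrm{Poisson}(s)\ge n]\;\le\;\E[\mathrm{Poisson}(s)]\;=\;s,
\]
using the Gamma--Poisson duality and the inequality $n\cdot 1(k\ge n)\le k$. This gives an edge contribution at most $\frac{\kappa(\varphi)^2}{R^2}\,(rR)^2=\kappa(\varphi)^2 r^2$, uniformly in $n$ and $R$, so the total bound is $2\kappa(\varphi)^2 r^2$, a true $C(\varphi)$. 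With this repair your proof is correct and arguably cleaner than the one in the paper.
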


To prove Proposition \ref{est}, we will make use of a general fact about determinantal point processes:
\begin{lemma}
\label{varlinst}
Let $\Pi$ be a determinantal point process with Hermitian kernel $K$. Let $K$ be a reproducing kernel with respect to its background measure $\gamma$, which means $K(x,y)= \int K(x,z)K(z,y)\, \mathrm{d}\gamma(z)$ for all $x,y$. Let $\varphi, \psi$ be  compactly supported continuous functions.  Then we have
\[\mathrm{Cov} \l[ \int \varphi \, d[\Pi], \int \psi \, d[\Pi] \r] = \frac{1}{2}\iint (\varphi(z) - \varphi (w)) \overline{(\psi(z)-\psi(w))}|K(z,w)|^2 \, \mathrm{d}\gamma(z) \, \mathrm{d}\gamma(w).\]
\end{lemma}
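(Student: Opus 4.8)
The plan is to compute the covariance directly from the definition of the one- and two-point correlation functions of a determinantal process, and then symmetrize. First I would write
\[
\mathrm{Cov}\l[ \int \varphi\, d[\Pi], \int \psi\, d[\Pi] \r]
= \E\l[ \int \varphi\,d[\Pi] \overline{\int \psi\, d[\Pi]}\r] - \E\l[\int \varphi\, d[\Pi]\r]\overline{\E\l[\int \psi\,d[\Pi]\r]}.
\]
Expanding $[\Pi]=\sum_{x\in\Pi}\delta_x$, the product $\int\varphi\,d[\Pi]\,\overline{\int\psi\,d[\Pi]}$ splits into a diagonal term $\sum_{x\in\Pi}\varphi(x)\overline{\psi(x)}$ and an off-diagonal term $\sum_{x\neq y}\varphi(x)\overline{\psi(y)}$. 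Taking expectations, the diagonal contributes $\int \varphi(z)\overline{\psi(z)}\rho_1(z)\,d\gamma(z)$ and the off-diagonal contributes $\iint \varphi(z)\overline{\psi(w)}\rho_2(z,w)\,d\gamma(z)\,d\gamma(w)$, while the product of expectations is $\iint \varphi(z)\overline{\psi(w)}\rho_1(z)\rho_1(w)\,d\gamma(z)\,d\gamma(w)$.

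Next I would substitute the determinantal formulas $\rho_1(z)=K(z,z)$ and $\rho_2(z,w)=K(z,z)K(w,w)-|K(z,w)|^2$ (using that $K$ is Hermitian, so $K(z,w)\overline{K(z,w)}=|K(z,w)|^2$). The $K(z,z)K(w,w)$ pieces cancel against the product of expectations, leaving
\[
\mathrm{Cov}\l[\int\varphi\,d[\Pi],\int\psi\,d[\Pi]\r] = \int \varphi(z)\overline{\psi(z)}K(z,z)\,d\gamma(z) - \iint \varphi(z)\overline{\psi(w)}|K(z,w)|^2\,d\gamma(z)\,d\gamma(w).
\]
Now I would use the reproducing property $K(z,z)=\int |K(z,w)|^2\,d\gamma(w)$ to rewrite the first term as $\iint \varphi(z)\overline{\psi(z)}|K(z,w)|^2\,d\gamma(z)\,d\gamma(w)$, and symmetrically $\iint \varphi(w)\overline{\psi(w)}|K(z,w)|^2\,d\gamma(z)\,d\gamma(w)$ (swapping the roles of $z,w$ and using $|K(z,w)|^2=|K(w,z)|^2$). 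Averaging these two representations of the first term, the covariance becomes
\[
\frac12 \iint \l( \varphi(z)\overline{\psi(z)} + \varphi(w)\overline{\psi(w)} - \varphi(z)\overline{\psi(w)} - \varphi(w)\overline{\psi(z)}\r)|K(z,w)|^2\,d\gamma(z)\,d\gamma(w),
\]
and the bracket factors exactly as $(\varphi(z)-\varphi(w))\overline{(\psi(z)-\psi(w))}$, giving the claimed identity.

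The only genuine issue is justifying the manipulations of (absolutely convergent) integrals and sums: since $\varphi,\psi$ are compactly supported and continuous, and $K$ is a trace-class reproducing kernel, the relevant correlation functions are locally bounded and the double integrals converge absolutely on the (compact) support, so Fubini applies and the rearrangements above are legitimate; the interchange of expectation and summation is justified by the local finiteness of $\Pi$ together with finiteness of $\E|\Pi\cap \mathrm{supp}\,\varphi|^2$, which follows from trace-class-ness of $K$ restricted to a bounded set. I expect this integrability bookkeeping to be the main (though routine) obstacle; the algebraic core is the cancellation of the $K(z,z)K(w,w)$ terms and the symmetrization step.
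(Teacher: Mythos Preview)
Your proposal is correct and follows essentially the same route as the paper: the paper's proof also arrives at the intermediate expression $\int \varphi(z)\overline{\psi(z)}K(z,z)\,d\gamma(z) - \iint \varphi(z)\overline{\psi(w)}|K(z,w)|^2\,d\gamma(z)\,d\gamma(w)$ and then invokes the reproducing identity $K(z,z)=\int |K(z,w)|^2\,d\gamma(w)$ together with Hermitian symmetry, leaving the symmetrization as an ``elementary calculation.'' Your write-up simply fills in those details and adds the (routine) integrability justification.
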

\begin{proof}

Denote by $\rho_1$ and $\rho_2$ respectively the one and two-point correlation functions of $\Pi$. Then we can write
\begin{align*} \mathrm{Cov} \l[ \int \varphi \, d[\Pi], \int \psi \, d[\Pi] \r] && = \int \varphi(x)\ol{\psi(y)} \rho_2(x,y) \mathrm{d}\gamma(x)\mathrm{d}\gamma(y) +  \int \varphi(x) \ol{ \psi(x)} \rho_1(x) \mathrm{d}\gamma(x) \\  && - {\l(\int \varphi(x) \rho_1(x) \mathrm{d}\gamma(x)\r)} \ol{\l(\int \psi(y) \rho_1(y) \mathrm{d}\gamma(y)\r)}. \end{align*}
But $\rho_1(x)=K(x,x)$ and $\rho_2(x,y)=K(x,x)K(y,y) - |K(x,y)|^2$. Using this, the expression for the variance in the display above reduces to 
\begin{equation} \label{vlst2}   \iint \varphi(x) \ol{\psi(x)} K(x,x) \mathrm{d}\gamma(x)  - \iint \varphi(x)\ol{\psi(y)} |K(x,y)|^2 \mathrm{d}\gamma(x)\mathrm{d}\gamma(y). \end{equation}
But since $K$ is the integral kernel corresponding to a projection operator and $K(y,x)=\ol{K(x,y)}$, we have $K(x,x)=\iint |K(x,y)|^2 \mathrm{d}\gamma(y)$. 
Using this, the expression for the variance in (\ref{vlst2}) reduces to 
\[ \iint \l( \varphi(x) \ol{\psi(x)} - \varphi(x)\ol{\psi(y)}  \r) |K(x,y)|^2 \mathrm{d}\gamma(x) \mathrm{d}\gamma(y)\]\[ =\frac{1}{2} \iint  \l( \varphi(x) - \varphi(y) \r) \ol{\l(\psi(x)-\psi(y)\r)} |K(x,y)|^2 \, \mathrm{d}\gamma(x) \mathrm{d}\gamma(y),  \] as desired. In the last step, we have used  symmetry in $x$ and $y$.
\end{proof}

\begin{proof} [\textbf{Proof of Proposition \ref{est}}]
We give the proof when $r=1$, from here the general case is obtained by scaling, because any function $\varphi$ supported on $B(0;r)$ is equal to the scaling $\Phi_r$ of some function $\Phi$ supported on $B(0;1)$ and having the same continuity and differentiability properties as $\varphi$. Notice that $(\Phi_r)_R=\Phi_{rR}$, so the result for $\varphi$ can be deduced from the result for $\Phi$. In what follows we deal with $\G_n$, the result for $\G$ follows, for instance, from taking limits as $n \to \infty$ for the result for $\G_n$.

Using Lemma \ref{varlinst}, we have \[\text{Var} \l(\int \varphi_R(z)\, d[\G_n](z) \r)= \frac{1}{2} \iint {|\varphi_R(z)-\varphi_R(w)|^2|K_n(z,w)|^2\, \mathrm{d}\gamma(z)\, \mathrm{d}\gamma(w)}\]
where $\gamma$ is the standard complex Gaussian measure.
Now, \[|\varphi_R(z)-\varphi_R(w)|^2 = |\varphi(z/R)-\varphi(w/R)|^2 \le \frac{1}{R^2}\kappa(\varphi)^2|z-w|^2. \]
Therefore, it suffices to bound the integral $\int_{A(R)} |z-w|^2 |K_n(z,w)|^2 \, \mathrm{d}\gamma(z) \, \mathrm{d}\gamma(w)$ on the set \[A(R):= \{ (z,w) : \min\{|z|, |w|\} \le R \} \] because outside $A(R)$, we have $\varphi_R(z)=\varphi_R(w)=0$.  We begin with 
\begin{align*} 
\int_{A(R)} |z-w|^2 |K_n(z,w)|^2 \, \mathrm{d}\gamma(z) \, \mathrm{d}\gamma(w) \le & \int_{A_1(R)} |z-w|^2 |K_n(z,w)|^2 \, \mathrm{d}\gamma(z) \, \mathrm{d}\gamma(w) \\ & + \int_{A_2(R)} |z-w|^2 |K_n(z,w)|^2 \, \mathrm{d}\gamma(z) \, \mathrm{d}\gamma(w)
\end{align*} 
where \[A_1(R)=\{ |z|\le 2R, |w| \le 2R \} \text{ and } A_2(R)=\{ |z|\le R, |w| \ge 2R \} \cup \{ |w|\le R, |z| \ge 2R \}. \]

We first address the case of $A_2(R)$. By symmetry, it suffices to bound the integral over the region $\{ |z|\le R, |w| \ge 2R \}$. In this region, $||z|-|w|| \ge R$, and $|K_n(z,w)|^2 e^{-|z|^2-|w|^2} \le e^{-||z|-|w||^2}$. The integral is bounded from above by \[\frac{1}{\pi^2}\int_{|z| \le R} \l( \int_{|w| \ge 2R} (|z|+|w|)^2 e^{-(|z|-|w|)^2}  \, \mathrm{d}\mathcal{L}(w)\r)  \, \mathrm{d}\mathcal{L}(z).\]
It is not hard to see that the inner integral is $O(R^2e^{-R^2})$ Integrating over $|z|\le R$ gives another factor of $R^2$, so the total contribution is $o(1)$ as $R \to \infty$.

For the integral over $A_1(R)$, we proceed as follows:
\begin{align*}
&\int_{A_1(R)} |z-w|^2 |K_n(z,w)|^2\, \mathrm{d}\gamma(z) \mathrm{d}\gamma(w)
\\&=\frac{1}{\pi^2}\int \bigg( |z|^2-z\bar{w}-\bar{z}w+|w|^2 \bigg) \l(\sum_{j=0}^{n-1}(z\bar{w})^j/j!\r) \l(\sum_{j=0}^n(\bar{z}w)^j/j!\r)e^{-|z|^2-|w|^2} \, \mathrm{d}\mathcal{L}(z) \, \mathrm{d}\mathcal{L}(w).
\end{align*}
Now, we integrate the $|z-w|^2$ part term by term. Due to radial symmetry, only some specific terms from $|K_n(z,w)|^2$ contribute. For example, when we integrate the $|z|^2$ term in $|z-w|^2$, only the ${\displaystyle \frac{(z\bar{w})^j}{j!}\frac{(\bar{z}w)^j}{j!}}$,$0\le j\le n-1$ terms in the expanded expression for $|K_n(z,w)|^2$ contribute. When we integrate $z\bar{w}$, only the ${\displaystyle \frac{(z\bar{w})^{j}}{j!}\frac{(\bar{z}w)^{j+1}}{(j+1)!}}$,$0\le j\le{n-2}$ terms provide non-zero contributions. Due to symmetry between $z$ and $w$, it is enough to bound the contribution from $(|z|^2 - z \overline{w})$ by  $O(R^2)$.

$\bullet$ \underline{$|z|^2$ term: }

Let us introduce the change of variables  by $x=|z|^2$ and $y=|w|^2$. Then the contribution in the above integral coming from \[ \frac{(z\bar{w})^j}{j!}\frac{(\bar{z}w)^j}{j!} \] can be written as \[ \int_0^{4R^2} \int_0^{4R^2}\frac{x^{j+1}}{j!}e^{-x}\frac{y^{j}}{j!}e^{-y}dxdy. \] So, the total contribution due to all such terms, ranging from $j=0,\cdots,n-1$ is \[ \sum_{j=0}^{n-1}  \l( \int_0^{4R^2} \frac{x^{j+1}}{j!}e^{-x}dx \r) \l( \int_0^{4R^2}\frac{y^{j}}{j!}e^{-y}dy \r). \]

$\bullet$
 \underline{$z\bar{w}$ term: } 

As above, the contribution coming from the \[ \frac{(z\bar{w})^{j}}{j!}\frac{(\bar{z}w)^{j+1}}{(j+1)!} \] is given by \[ \int_0^{4R^2} \int_0^{4R^2}\frac{x^{j+1}}{j!}e^{-x}\frac{y^{j+1}}{(j+1)!}e^{-y}dxdy.  \] Therefore the total contribution from $0\le j\le{n-2}$ is
\[\sum_{j=0}^{n-2} \l( \int_0^{4R^2} \frac{x^{j+1}}{j!}e^{-x}dx \r) \l( \int_0^{4R^2}  \frac{y^{j+1}}{(j+1)!}e^{-y}dy \r).\]

We interpret ${\displaystyle \frac{x^j}{j!}e^{-x}dx}$ as a gamma density, the corresponding random variable being denoted by $\Gamma_{j+1}$.

The contribution due to the $|z|^2$ term is ${ \displaystyle \sum_{j=0}^{n-1} \E[\Gamma_{j+1}{1\hskip-4pt{\rm 1}}_{(\Gamma_{j+1}\le 4R^2)}]\P[\Gamma_{j+1}\le 4R^2]}$ and that due to the $z\bar{w}$ term is
${ \displaystyle \sum_{j=0}^{n-2} \E[\Gamma_{j+1}{1\hskip-4pt{\rm 1}}_{(\Gamma_{j+1}\le 4R^2)}]\P[\Gamma_{j+2}\le 4R^2] }$.

The difference between the above two terms can be written as:
\begin{equation}
 \label{qty}
 \E[\Gamma_{n}{1\hskip-4pt{\rm 1}}_{(\Gamma_{n}\le 4R^2)}]\P[\Gamma_{n}\le 4R^2]+\sum_{j=1}^{n-1}\E[\Gamma_j{1\hskip-4pt{\rm 1}}{(\Gamma_j\le 4R^2)}]\bigg(\P[\Gamma_j\le 4R^2]-\P[\Gamma_{j+1}\le 4R^2] \bigg).
\end{equation}
All the expectations in the above are $\le 4R^2$, and $\P[\Gamma_j\le 4R^2] \ge \P[\Gamma_{j+1}\le 4R^2]$ because $\Gamma_{j+1}$ stochastically dominates $\Gamma_j$ (this can be seen, e.g., by considering a sequence $\{X_i\}_{i=1}^{\infty}$ of i.i.d. exponentials with mean 1 and using a natural coupling of the $\Gamma_j$-s by setting $\Gamma_j=X_1+\cdots+X_j$). Therefore the absolute value of \eqref{qty}, by a telescopic sum, is $\le 4R^2 \P[\Gamma_1\le 4R^2]\le 4R^2$.
Combining all of these, we see that  $\text{Var} \l(\int \varphi_R(z)\, d[\G_n](z)\r)$ is bounded by  ${ \displaystyle \l(1/R^2 \r)\kappa(\varphi)^2 8R^2=C(\varphi)}$. It is clear that $C(\varphi)$ depends on $\varphi$ only through $\kappa(\varphi)$.
\end{proof}

Let $r_0$ be the radius of $\D$.   Let $\varphi$ be a non-negative radial $C_c^{\infty}$ function supported on complex numbers with radius in $[r_0,3r_0]$ such that $\varphi=1$ on $[\frac{3r_0}{2},2r_0]$ and $\varphi(r_0 + r)=1-\varphi(2r_0+2r)$, for $0\le r \le \frac{1}{2}r_0$. In other words, $\varphi$ is a test function supported on the annulus between $r_0$ and $3r_0$ and its ``ascent'' to 1 is twice as fast as its ``descent''. Let $\widetilde{\varphi}$ be another radial test function with the same support as $\varphi$, satisfying $\widetilde{\varphi}(r_0+xr_0)=1$ for $ 0 \le  x\le \frac{1}{2}$ and $\widetilde{\varphi}=\varphi$ otherwise. Recall that for a test function $\psi$ and $L>0$ we denote by $\psi_L$ the scaled function $\psi_L(z)=\psi(z/L)$. 

Observe that the functions $\widetilde{\varphi}$ and $\varphi_{2^j}$ for $ j\ge 1$ form a partition of unity on $\D^c$.

For the rest of the paper, we introduce the following notation for the dilations of a set:
\begin{notation}
For a  set $\Lambda \subset \C$ and $R>0$, we denote $R \cdot \Lambda = \{Rz: z\in \Lambda \}$.
\end{notation}

\begin{proposition}
\label{invginest-tail}
Let $r_0$ be the radius of $\D$ . Let $\varphi$ and $\wt{\varphi}$ be defined as above.   

$\mathrm{(i)}$ The random variables \[ S_l(n):= \int \frac{\widetilde{\varphi}({z})}{z^l} \, d[\G_n](z) + \sum_{j=1}^{\infty} \int \frac{\varphi_{2^j}({z})}{z^l} \, d[\G_n](z) = \sum_{\o \in \G_n \cap \D^c} \frac{1}{\o^l} \quad (\text{ for }l \ge 1) \] and \[
 \widetilde{S}_l(n):= \int \frac{\widetilde{\varphi}({z})}{|z|^l} \, d[\G_n](z) + \sum_{j=1}^{\infty} \int \frac{\varphi_{2^j}({z})}{|z|^l} \, d[\G_n](z) = \sum_{\o \in \G_n \cap \D^c} \frac{1}{|\o|^l} \quad (\text{ for }l \ge 3)\]
 have finite first moments which, for every fixed $l$, are bounded above by quantities which do not depend on $n$. 

$\mathrm{(ii)}$ There exists $k_0=k_0(\varphi)\ge 1$,   not depending on $n$ or $l$, such that for $k \ge k_0$ the ``tails'' of $S_l(n)$ and $\widetilde{S}_l(n)$ beyond the disk $2^k \cdot \D $, given by \[\tau_l^{n}(2^k):= \sum_{j=k}^{\infty} \int \frac{\varphi_{2^j}({z})}{z^l} \, d[\G_n](z) \, \,(\text{ for }l \ge 1) \quad \text{ and } \quad \widetilde{\tau}_l^{n}(2^k):= \sum_{j=k}^{\infty} \int \frac{\varphi_{2^j}({z})}{|z|^l} \, d[\G_n](z) \, \,(\text{ for }l \ge 3)\] satisfy the estimates \[ \E \l[ \l| \tau_l^{n}(2^k) \r|\r]\le C_1(\varphi,l)/2^{kl} \quad \text{ and } \quad  \E \l[ \l| \widetilde{\tau}_l^{n}(2^k) \r|\r]\le C_2(\varphi,l)/2^{k(l-2)}. \]

All of the above remain true when $\G_n$ is replaced by $\G$, for which we use the notations ${S}_l$ , $\widetilde{S}_l$, $\tau_l(R)$ and $\widetilde{\tau}_l(R)$ to denote the quantities corresponding to $S_l(n)$, $\widetilde{S}_l(n)$, ${\tau}_l^{n}(R)$ and $\widetilde{\tau}_l^{n}(R)$ respectively.
 \end{proposition}

\begin{remark}
For $\G$, by the sum ${\displaystyle  \l( \sum_{\o \in \G \cap \D^c}\frac{1}{\o^l} \r)}$ we denote the quantity \[S_l=\int \frac{\widetilde{\varphi}({z})}{z^l} \, d[\G](z) + \sum_{j=1}^{\infty} \int \frac{\varphi_{2^j}({z})}{z^l} \, d[\G](z)\] due to the obvious analogy with $\G_n$, where the corresponding  sum $S_l(n)$ is indeed equal to ${\displaystyle  \l( \sum_{\o \in \G_n \cap \D^c}\frac{1}{\o^l} \r)}$ with its usual meaning.
\end{remark}

\begin{proof}
Recall that the functions $\widetilde{\varphi}$ and $\varphi_{2^j}$ for $ j\ge 1$ form a partition of unity on $\D^c$, hence we have the identities appearing in part (i). 

Set $\psi_k^n(l)= \int \frac{\varphi_{2^k}({z})}{z^l} \, d[\G_n](z)$ for $k \ge 1$, and $\psi_0^n(l)=\int \frac{\widetilde{\varphi}({z})}{z^l} \, d[\G_n](z)$.
When $l \ge 3$ we also define $\gamma_k^n(l)= \int \frac{\varphi_{2^k}({z})}{|z|^l} \, d[\G_n](z)$ for $k \ge 1$, and $\gamma_0^n(l)=\int \frac{\widetilde{\varphi}({z})}{|z|^l} \, d[\G_n](z)$. Let ${\Psi}_k(l)$ and ${\Gamma}_k(l)$ denote the analogous quantities defined with respect to $\G$ instead of $\G_n$.

We begin with the observation that for $k \ge 1$ we have $\E[\psi_k^n(l)]=0$ (owing to the rotational symmetry of the distributions $\G_n$-s). This implies that \[\E[|\psi_k^n(l)|]\le \l(\E[|\psi_k^n(l)|^2]\r)^{1/2} =\sqrt{\mathrm{Var}[\psi_k^n(l)]}.\]
We then apply Proposition \ref{est} to the function $\varphi(z)/z^l$ and $R=2^k$ to obtain $\E[|\psi_k^n(l)|]\le C(\varphi,l)/(2^k)^l$. We also note that \[ \E[|\psi_0^n(l)|] \le \frac{1}{\pi}\int_{3\cdot \D \setminus \D} \frac{K_n(z,z)e^{-|z|^2}}{|z|^l}\,\mathrm{d}\el(z) \le \frac{1}{\pi} \int_{3\cdot \D \setminus \D} \frac{1}{|z|^l}\mathrm{d}\el(z)=c(l) .\]
This implies that for $l \ge 1$ \[\E[|S_l(n)|]\le \sum_{k=0}^{\infty}\E[|\psi_k^n(l)|]<\infty .\]
The desired bound for  $\E[\widetilde{S}_l(n)]$ follows from a direct computation of the expectation using the first intensity, and noting that the first intensity of $\G_n$ (with respect to Lebesgue measure) is $K_n(z,z)e^{-|z|^2}\le 1$ for all $z$. 

The estimates for $\tau$ and $\widetilde{\tau}$ follow by using the above argument for the sums $\sum_{j=k}^{\infty}\psi_j^n(l)$ and $\sum_{j=k}^{\infty}\gamma_j^n(l)$.

%

\end{proof}

\begin{corollary}
\label{invginest-tailcor}
For $R=2^k$ for  $k \ge k_0$ (as in Proposition \ref{invginest-tail}), we have $\P[|\tau_{ \, l}^{{n}}(R)|>R^{-l/2}]\le c_1(\varphi,l)R^{-l/2}$ and $\P[|\widetilde{\tau}_{\, l}^{n}(R)|>R^{-(l-2)/2}]\le c_2(\varphi,l)R^{-(l-2)/2}$, and these estimates remain true when $\G_n$ is replaced with $\G$.
\end{corollary}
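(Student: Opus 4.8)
The plan is to deduce the two probability tail bounds directly from the first-moment estimates in Proposition \ref{invginest-tail}(ii) via Markov's inequality, with the parameters chosen so that the resulting bounds match the stated exponents. Recall from part (ii) of Proposition \ref{invginest-tail} that for $k \ge k_0$ we have $\E[|\tau_l^n(2^k)|] \le C_1(\varphi,l)/2^{kl}$ and $\E[|\widetilde\tau_l^n(2^k)|] \le C_2(\varphi,l)/2^{k(l-2)}$, with the analogous statements holding for $\G$ in place of $\G_n$. Writing $R = 2^k$, the first estimate reads $\E[|\tau_l^n(R)|] \le C_1(\varphi,l) R^{-l}$ and the second $\E[|\widetilde\tau_l^n(R)|] \le C_2(\varphi,l) R^{-(l-2)}$.

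For the first bound, I would apply Markov's inequality at the threshold $R^{-l/2}$: since $|\tau_l^n(R)|$ is a nonnegative random variable,
\[ \P\l[ |\tau_l^n(R)| > R^{-l/2} \r] \le \frac{\E[|\tau_l^n(R)|]}{R^{-l/2}} \le \frac{C_1(\varphi,l) R^{-l}}{R^{-l/2}} = C_1(\varphi,l) R^{-l/2}, \]
which is exactly the claimed inequality with $c_1(\varphi,l) = C_1(\varphi,l)$. Likewise, for the second bound, applying Markov's inequality at the threshold $R^{-(l-2)/2}$ gives
\[ \P\l[ |\widetilde\tau_l^n(R)| > R^{-(l-2)/2} \r] \le \frac{\E[|\widetilde\tau_l^n(R)|]}{R^{-(l-2)/2}} \le \frac{C_2(\varphi,l) R^{-(l-2)}}{R^{-(l-2)/2}} = C_2(\varphi,l) R^{-(l-2)/2}, \]
giving the claim with $c_2(\varphi,l) = C_2(\varphi,l)$. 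The restriction $l \ge 3$ for the $\widetilde\tau$ statement is inherited from Proposition \ref{invginest-tail}, ensuring $l - 2 \ge 1 > 0$ so that the exponents are genuinely decaying in $R$.

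Finally, since Proposition \ref{invginest-tail}(ii) asserts its first-moment estimates uniformly for both $\G_n$ and $\G$ (with constants independent of $n$), the very same Markov argument applies verbatim with $\G$ substituted for $\G_n$, yielding the last assertion of the corollary. There is no real obstacle here: the entire content is a one-line application of Markov's inequality, and the only point requiring the slightest care is bookkeeping the exponents so that the power of $R$ in the tail probability is half the power appearing in the first-moment bound, which is precisely how the thresholds $R^{-l/2}$ and $R^{-(l-2)/2}$ were selected.
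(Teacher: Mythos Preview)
Your proposal is correct and follows exactly the paper's approach: the paper's proof simply states that one uses the expectation estimates on $|\tau_l^n(R)|$ and $|\widetilde\tau_l^n(R)|$ from Proposition~\ref{invginest-tail} and applies Markov's inequality. Your exponent bookkeeping is precisely what the paper leaves implicit.
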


\begin{proof}
We use the estimates on the expectation of $|\tau_l^{n}(R)|$ and  $|\widetilde{\tau}_l^{n}(R)|$ from Proposition \ref{invginest-tail} and apply Markov's inequality.
\end{proof}

With notations as above, we have
\begin{proposition}
\label{conv}
For each $l \ge 1$ we have  $S_l(n) \rightarrow S_l$ in probability, and for each $l \ge 3$ we have $\widetilde{S}_l(n) \rightarrow \widetilde{S}_l$ in probability, and hence  we have such convergence a.s.  along some subsequence, simultaneously for all $l$. 
\end{proposition}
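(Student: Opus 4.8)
The plan is to show $L^1$-convergence (which implies convergence in probability), and then extract a simultaneous a.s.\ subsequence by a diagonal argument. First I would fix $l \ge 1$ (the argument for $\widetilde{S}_l$ with $l \ge 3$ is identical, replacing $z^{-l}$ by $|z|^{-l}$ throughout and using Proposition \ref{invginest-tail}(ii) for $\widetilde\tau$). Recall the decomposition $S_l(n) = \psi_0^{(n)} + \sum_{j=1}^{\infty}\psi_j^{(n)}$ with $\psi_k^{(n)} = \int \varphi_{2^k}(z)z^{-l}\,d[\G_n](z)$ for $k\ge1$ and $\psi_0^{(n)} = \int \widetilde\varphi(z)z^{-l}\,d[\G_n](z)$; let $\Psi_k$ be the analogous quantities for $\G$. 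Each $\varphi_{2^k}$ (and $\widetilde\varphi$) is a fixed compactly supported continuous function, so $\varphi_{2^k}(z)z^{-l}$ is bounded and compactly supported; since $\G_n \to \G$ a.s.\ in the topology of locally finite configurations (and a.s.\ $\G$ has no points on the relevant boundaries, as the first intensity is absolutely continuous), we get $\psi_k^{(n)} \to \Psi_k$ a.s.\ as $n \to \infty$, for each fixed $k$.

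Next I would upgrade this termwise convergence to convergence of the full sum. For any $K$, split $S_l(n) - S_l = \big(\sum_{k=0}^{K-1}(\psi_k^{(n)} - \Psi_k)\big) + \tau_l^{n}(2^K) - \tau_l(2^K)$, where $\tau_l^n(2^K) = \sum_{j=K}^\infty \psi_j^{(n)}$ is the tail studied in Proposition \ref{invginest-tail}(ii). Given $\eta > 0$, first choose $K \ge k_0$ large enough (using $\E|\tau_l^n(2^K)| \le C_1(\varphi,l)2^{-Kl}$ uniformly in $n$, and the same bound for $\G$) that $\E|\tau_l^n(2^K)| + \E|\tau_l(2^K)| < \eta/2$ for all $n$; this controls the tails uniformly. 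Then, with $K$ fixed, the finite sum $\sum_{k=0}^{K-1}(\psi_k^{(n)} - \Psi_k) \to 0$ a.s.\ as $n \to \infty$ by the previous paragraph; to turn this into $L^1$ or in-probability convergence one needs a uniform integrability / tightness input, which is supplied by the finiteness and $n$-uniformity of $\E|\psi_k^{(n)}|$ (from Proposition \ref{est} applied to $\varphi(z)/z^l$, giving $\E|\psi_k^{(n)}| \le C(\varphi,l)2^{-kl}$, plus the crude bound on $\E|\psi_0^{(n)}|$ using the first intensity $\le 1$). Concretely, $\P(|S_l(n) - S_l| > \eta) \le \P(|\sum_{k=0}^{K-1}(\psi_k^{(n)}-\Psi_k)| > \eta/2) + \frac{2}{\eta}(\E|\tau_l^n(2^K)| + \E|\tau_l(2^K)|)$, the second term is $< 1$ after our choice of $K$ (rescaling $\eta$ appropriately), and the first term $\to 0$ since a.s.\ convergence of a finite sum implies convergence in probability. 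Letting $n \to \infty$ and then $\eta \downarrow 0$ (with $K = K(\eta)$) gives $S_l(n) \to S_l$ in probability.

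Finally, for the simultaneous a.s.\ statement: convergence in probability for each fixed $l$ gives, by a standard diagonal extraction, a single subsequence $n_p \to \infty$ along which $S_l(n_p) \to S_l$ a.s.\ for every $l \ge 1$ and $\widetilde S_l(n_p) \to \widetilde S_l$ a.s.\ for every $l \ge 3$ — first extract a subsequence for $l=1$, then a further subsequence for $l=2$, and so on, and take the diagonal. I expect the main obstacle to be the uniform-in-$n$ tail control: one must be careful that the bounds $\E|\tau_l^n(2^K)| \le C_1(\varphi,l)2^{-Kl}$ from Proposition \ref{invginest-tail} hold with constants genuinely independent of $n$ (which they do, since Proposition \ref{est} gives variance bounds for $\G_n$ uniform in $n$), so that the tails can be made small before letting $n\to\infty$; the termwise a.s.\ convergence $\psi_k^{(n)} \to \Psi_k$ is comparatively routine given $\G_n \to \G$, modulo checking that a.s.\ no points of $\G$ sit on $\partial(2^j\cdot\D)$, which follows from absolute continuity of the intensity.
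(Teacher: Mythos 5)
Your argument is essentially the paper's: split $S_l(n)-S_l$ into a finite partial sum (which converges a.s.\ by $\G_n \to \G$ on compacts) plus the tails $\tau_l^n(2^K),\tau_l(2^K)$, control the tails uniformly in $n$ via the first-moment bounds of Proposition \ref{invginest-tail} and Markov (the paper packages this step as Corollary \ref{invginest-tailcor}), and then extract a common a.s.\ subsequence by diagonalization. One small caveat: your opening line promises $L^1$-convergence, but the argument you actually give (a.s.\ convergence of the finite part plus Markov on the tails) yields only convergence in probability — the $n$-uniform first-moment bound on $\psi_k^{(n)}$ does not by itself give uniform integrability — which is exactly what the proposition asserts and what the paper proves, so nothing is lost.
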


\begin{proof}
Fix $\delta>0$. Given $\eps>0$, we choose $R=2^k$ large enough such that $c_1R^{-l/2}<\eps/4$ (for $l \ge 1$) and $c_2R^{-(l-2)/2}<\eps/4$ for $l \ge 3$ (as in Corollary \ref{invginest-tailcor}), as well as $R^{-l/2}<\del/4$ for $l=1,2$ and $R^{-(l-2)/2}<\del/4$ for $l \ge 3$. We now recall notations from the beginning of the proof of Proposition \ref{invginest-tail}.  By definition, we have $S_l(n)=\sum_{j=0}^k \psi_{j}^n(l) + \tau_l^n(2^k)$. On the disk of radius $R$, we have $\G_n \to \G$ a.s.  Now choose $n$ large enough so that we have  $|\sum_{j=0}^k \psi_j^n(l) - \sum_{j=0}^k \Psi_j(l) |<\del/3 $ except on an event of probability $\eps/2$. By choice of $R$, we have $|\tau_l^n(2^k)|< \del/3$ and $|\tau_l(2^k)|<\del/3$ except on an event of probability $<\eps/2$.  Combining all these, we have $\P(|S_l(n) - S_l|>\del) \le \eps$, proving that $S_l(n) \to S_l$ in probability.

For each $l$, given any sequence we can find a subsequence along which this convergence is a.s. A diagonal argument now gives us a subsequence for which a.s. convergence holds simultaneously for all $l$. 

When $l \ge 3$, the argument for $\widetilde{S}_l$ is similar.   
\end{proof}

Define $S_l(\D,n)=\sum_{z \in \G_n \cap \D}1/z^l$ and $S_l(\D)=\sum_{z \in \G \cap \D} 1/z^l$. Set $\a_l(n)=S_l(\D,n)+S_l(n)$ and $\a_l=S_l(\D)+S_l$.  Observe that $\a_l(n)=\sum_{z \in \G_n} 1/z^l$. Notice that a.s. there is no point at the origin, so each of these random variables is well defined. With these definitions, we have:

\begin{proposition}
 \label{conv-gin1}
For each $l$, $\a_l(n) \to \a_l$ in probability as $n \to \infty$. Hence, there is a subsequence such that $\a_l(n) \to \a_l$ a.s. when $n \to \infty$ along this subsequence, simultaneously for all $l$. 
\end{proposition}

\begin{proof}
Since the finite point configurations given by $\G_n|_{ \D} \to \G|_{\D}$ a.s. and a.s. there is no point at the origin, therefore $S_l(\D,n) \to S_l(\D)$ a.s.  
This, combined with Proposition \ref{conv}, gives us the desired result. The last assertion follows essentially from the fact that  the convergence $\G_n|_{ \D} \to \G|_{\D}$ (for a fixed realization of the random variables in question) is weak convergence of counting measures. Since the point configurations in a given realization of the $G_n$-s are bounded away from 0 (by an amount which depends on the realization), therefore the convergence of $S_l$-s can be obtained by integrating the counting measures against the function $\frac{1}{z^l}$ (mollified near the origin and away from the point configurations to make it a continuous function).
\end{proof}

\section{Limiting Procedure for the Ginibre Ensemble}
\label{limgin}

The aim of this section is to use the estimates derived in Section \ref{gintolerance} to verify the conditions laid out in Theorem \ref{abs}, so that the limiting procedure outlined in Section \ref{limcond} can be executed. This will lead us to a proof of Theorem \ref{gin-2} for a disk $\D$. We have already seen in Section \ref{general} that this implies Theorem \ref{gin-2} for general $\D$.

\begin{proof}  [\textbf{Proof of Theorem \ref{gin-2} for a disk}] 

We will appeal to Theorem \ref{abs} with $\D$  an open disk. We already know from Theorem \ref{gin-1} that the number of points in  $\D$ is rigid. In terms of the notation used in Section \ref{limcond}, we set $X=\G$ and $X^n=\G_n$. 

Fix an integer $m \ge 0$. Consider the event that $m$ points of $\G$ are inside $\D$. The result in Theorem \ref{gin-2} is trivial for $m=0$. Hence, we can assume that $m >0$. 
For $\o \in \S_{\out}$, our candidate for $\nu({\o},\cdot)$ (refer to Theorem \ref{abs}) is the probability measure $Z^{-1}|\Delta(\uz)|^2\mathrm{d}\el(\uz)$ on $\D^m$, where $Z$ is the normalizing constant. Heuristically, this is in tune with the fact that the conditional measure is supported on $\D^m$ and with the estimate in Proposition \ref{gin2} on conditional densities. It is also a mathematical expression of the notion that $\o$ determines only the number of points in $\D$, and ``nothing more''.  Notice that $\nu$ is a constant when considered as a function mapping $\S_{\out}$ to $\mathcal{M}(\D^m)$, in the sense that $\nu$ does not depend on the outside configuration. Since a.s. ${\nu}(\xout,\cdot)$ is mutually absolutely continuous with respect to the Lebesgue measure on $\D^m$, Theorem \ref{abs} would imply that the same holds true for the conditional distribution $\rho(\xout,\cdot)$ of the points inside $\D$ (treated as a vector in the uniform random order). 


Now we construct, for each $j$, the sequences $\{n_k(j)\}_{k \ge 1}$ and the events $\Omega_{n_k}(j)$. We proceed as follows. From Proposition \ref{conv}, we get a subsequence $n_k$ such that a.s. $S_1({n_k}) \to S_1$, $S_2({n_k}) \to S_2$ and $\widetilde{S}_3({n_k}) \to \widetilde{S}_3$. This is going to be our subsequence $n_k(j)$ for all $j$. 

Let $M_j \uparrow \infty$ be a sequence of positive numbers such that none of them is an atom of the distributions of $|S_1|,|S_2|$ and $|\widetilde{S}_3|$ (in case such atom at all exists).

We will first \textbf{define the events} $\Omega_{n_k}(j)$ by the following conditions:
\begin{itemize}
\item[] (i) There are exactly $m$ points of $\mathcal{G}_{n_k}$  inside $\D$. 
\item[] (ii) $\mathrm{Dist}(\D,\mathcal{G}_{n_k} \cap \D^c) \ge 1/M_j$ (so $\Omega_{n_k}(j) \subset \Omega_{n_k}^{m,\del}$ with $\del=1/M_j$, defined at the beginning of section \ref{ratcondgin}).
\item[] (iii) $|S_1(n_k)|<M_j,|S_2(n_k)|<M_j,|\widetilde{S}_3(n_k)|<M_j$.
\end{itemize}

Recall that for an event $\mathcal{A}$ and a random variable $U$ on the same probability space, we say that $\mathcal{A}$ is measurable with respect to $U$ if $\mathcal{A}$ is measurable with respect to the sigma-algebra generated by $U$. Clearly, each $\Omega_{n_k}(j)$ is measurable with respect to $(\G_{n_k})_{\out}$. On the event $\Omega^m_{n_k}$ (which entails that $\l|({\G_{n_k}})_{\inn}\r|=m$; refer Section \ref{limcond}), the points in $(\G_{n_k})_{\out}$, considered in uniform random order, yield a vector $\uo$ in $(\D^c)^{n_k-m}$. Denoting the conditional distribution of $\uz$ given $\uo$ to be $\rho^{n_k}_{\uo}(\uz)$ we recall from Proposition \ref{gin2} that on the event $\Omega^{m,\del}_{n_k}$ we have
\[\exp \bigg(- 4mK(\D,\delta)\X_{n_k} \bigg)\frac{|\Delta(\uz')|^2}{|\Delta(\uz)|^2}\le \frac{\rho^{n_k}_{\uo}(\uz')}{\rho^{n_k}_{\uo}(\uz)}  \le \exp \bigg( 4mK(\D,\delta)\X_{n_k} \bigg) \frac{|\Delta(\uz')|^2}{|\Delta(\uz)|^2}.\]
Therefore, the bounds on $S_1({n_k}),S_2({n_k})$ and $\widetilde{S}_3({n_k})$ as in condition 
(iii) above imply that on $\Omega_{n_k}(j)$ we have, with $\del=1/M_j,$ \[\exp \bigg(- 12mK(\D,\delta)M_j \bigg)\frac{|\Delta(\uz')|^2}{|\Delta(\uz)|^2}\le \frac{\rho^{n_k}_{\uo}(\uz')}{\rho^{n_k}_{\uo}(\uz)}  \le \exp \bigg( 12mK(\D,\delta)M_j \bigg) \frac{|\Delta(\uz')|^2}{|\Delta(\uz)|^2} \hspace{3 pt}.\]   
Let $A \in \mathfrak{A}^m$ and $B \in \B$. Considering the last inequality on the right hand side, and
cross multiplying, we get \[ \rho^{n_k}_{\uo}(\uz') |\Delta(\uz)|^2 \le  \rho^{n_k}_{\uo}(\uz) \exp \bigg( 12mK(\D,\delta)M_j \bigg) |\Delta(\uz')|^2 .  \]
Observe that $\Omega_{n_k}(j)$ is measurable with respect to $(\G_{n_k})_{\out}$, hence $\uo$ determines whether the above inequality holds. In particular, it holds for all $\uz,\uz'$ for any $\uo$ is such that $\Omega_{n_k}(j)$ occurs.
We now integrate the above inequalities, first with respect to the Lebesgue measure in the variable $\uz' \in A$, yielding
\[ |\Delta(\uz)|^2  \int_A  \rho^{n_k}_{\uo}(\uz') \mathrm{d}\mathcal{L}(\uz') \le \exp \bigg( 12mK(\D,\delta)M_j \bigg) \rho^{n_k}_{\uo}(\uz)  \int_{A}  |\Delta(\uz')|^2  \mathrm{d}\mathcal{L}(\uz').  \]
Then we integrate with respect to the Lebesgue measure in the variable $\uz \in \D^m$ to obtain 
\[  \int_{\D^m} |\Delta(\uz)|^2 \mathrm{d}\mathcal{L}(\uz) \int_A  \rho^{n_k}_{\uo}(\uz') \mathrm{d}\mathcal{L}(\uz')  \le \exp \bigg( 12mK(\D,\delta)M_j \bigg) \int_{A}   |\Delta(\uz')|^2  \mathrm{d}\mathcal{L}(\uz'). \]
Finally, recalling again that $\Omega_{n_k}(j)$ is measurable with respect to $(\G_{n_k})_{\out}$, we integrate with respect to the distribution of $\uo$ on the event \[\{\uo \in (\D^c)^{n_k-m}:(\G_{\out}^{n_k})^{-1} \l( \{ \o_1,\cdots,\o_{n_k-m} \} \r) \in  (\G_{\out}^{n_k})^{-1} ( B ) \cap \Omega_{n_k}(j)\}\] to obtain
\[  \l( \int_{\D^m} |\Delta(\uz)|^2 \mathrm{d}\mathcal{L}(\uz) \r) \tP [  (\G_{\mathrm{in}}^{n_k} \in A) \cap  ( \G_{\out}^{n_k} \in B) \cap \Omega_{n_k}(j) ] \]\[\le \exp \bigg( 12mK(\D,\delta)M_j \bigg) \int_{(\G_{\out}^{n_k})^{-1}(B) \cap \Omega_{n_k}(j)  } \l( \int_{A}   |\Delta(\uz')|^2  \mathrm{d}\mathcal{L}(\uz') \r) \mathrm{d}\tP(\uo). \]
We can carry out the same procedure with the inequality on the left hand side. Combined with our definition of $\nu$, it can be seen that together they give us condition (\ref{abscond}) in Theorem \ref{abs}.

All that remains now is to show that events $\Omega(j):= \varliminf_{k \to \infty}\Omega_{n_k}(j)$ exhausts $\Omega^m$.

Since the $M_j$-s are increasing, we automatically have $\Omega(j) \subset \Omega(j+1)$. Moreover, recall that $\Omega^m$ is the event that there are $m$ points of $\G$ inside $\D$. Each $\Omega_{n_k}(j)$ satisfies this condition for $\G_{n_k}$. And finally, $\G_{n_k} \to \G$ a.s. on $\D$. These three facts together imply that $\Omega(j) \subset \Omega^m$ for each $j$. It only remains to check that  $\P(\Omega^m \setminus \Omega(j)) \to 0$ as $j \to \infty$. This is the goal of the next proposition, which will complete the proof of Theorem \ref{gin-2} for a disk.  
\end{proof}

\begin{proposition}
\label{sigma}
Let $\O(j)$ be as defined above. Then there exists $\O^{\mathrm{corr}}(j)$ measurable with respect to $\G_{\out}$ such that $\P(\O(j) \Delta \O^{\mathrm{corr}}(j))=0$. Further, $\P(\Omega^m \setminus \O(j)) \to 0$ as $j \to \infty$.
\end{proposition}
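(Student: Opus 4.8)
The plan is to produce $\O^{corr}(j)$ by transcribing the three conditions that define $\Omega_{n_k}(j)$ to the level of the limiting ensemble $\G$ — exploiting the monotone coupling $\G_{n_k}\subset\G_{n_{k+1}}\subset\G$ and the a.s.\ convergences already in hand — and then to bound $\P(\Omega^m\setminus\O(j))$ using a.s.\ finiteness of the inverse-power sums together with the fact that a.s.\ $\G$ has no point on $\partial\D$. Concretely, write $\D_j:=\{z\in\C:\mathrm{dist}(z,\D)<1/M_j\}$, so that condition (ii) of $\Omega_{n_k}(j)$ reads $\G_{n_k}\cap(\D_j\setminus\D)=\emptyset$, and set
\[
\O^{corr}(j):=\{N(\G_{\out})=m\}\cap\{\G\cap(\D_j\setminus\D)=\emptyset\}\cap\{\,|S_1|<M_j,\ |S_2|<M_j,\ |\widetilde{S}_3|<M_j\,\}.
\]
This event is measurable with respect to $\G_{\out}$: the first factor by the very definition of the rigidity function $N$ from Theorem \ref{gin-1}; the second because $\D_j\setminus\D\subset\D^c$; and the third because, by the remark following Proposition \ref{invginest-tail}, each of $S_1,S_2,\widetilde{S}_3$ is a.s.\ a function of the points of $\G$ lying outside $\D$.

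The first step is to verify $\P(\O(j)\,\Delta\,\O^{corr}(j))=0$. I would restrict to the a.s.\ event on which (a) $\G$ has no point on $\partial\D$ (its first intensity is absolutely continuous and $|\partial\D|=0$), (b) $\G_{n_k}\uparrow\G$, (c) $S_\ell^{n_k}\to S_\ell$ for $\ell=1,2$ and $\widetilde{S}_3^{n_k}\to\widetilde{S}_3$ (this is how the fixed subsequence $\{n_k\}$ was selected, via Proposition \ref{conv}), and (d) $|S_1|,|S_2|,|\widetilde{S}_3|\neq M_j$ (the $M_j$ were chosen to avoid the atoms of these laws). On this event one checks, using $\liminf_k(A_k\cap B_k)=\liminf_k A_k\cap\liminf_k B_k$, that the three conditions of $\Omega_{n_k}(j)$ hold for all large $k$ if and only if the three conditions of $\O^{corr}(j)$ hold. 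Indeed, $|\G_{n_k}\cap\D|$ is nondecreasing in $k$ and $\bigcup_k(\G_{n_k}\cap\D)=\G\cap\D$, so it is eventually equal to $m$ iff $|\G\cap\D|=m$, which by Theorem \ref{gin-1} is a.s.\ the event $\{N(\G_{\out})=m\}$; since $\G_{n_k}\subset\G$, the relation $\G\cap(\D_j\setminus\D)=\emptyset$ forces $\G_{n_k}\cap(\D_j\setminus\D)=\emptyset$ for every $k$, while conversely a point of $\G$ in the open set $\D_j\setminus\overline{\D}$ lies in some $\G_{n_k}$, so (using (a)) separation of $\G_{n_k}$ for all large $k$ forces $\G\cap(\D_j\setminus\D)=\emptyset$; and under (c)--(d), ``$|S_\ell^{n_k}|<M_j$ for all large $k$'' is equivalent to ``$|S_\ell|<M_j$''.

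For the second assertion, since $\Omega^m=\{N(\G_{\out})=m\}$ a.s.\ (Theorem \ref{gin-1}) and $\O(j)=\O^{corr}(j)$ a.s., I would estimate
\[
\P(\Omega^m\setminus\O(j))\ \le\ \P\big(\G\cap(\D_j\setminus\D)\neq\emptyset\big)+\P(|S_1|\ge M_j)+\P(|S_2|\ge M_j)+\P(|\widetilde{S}_3|\ge M_j).
\]
As $j\to\infty$ the sets $\D_j\setminus\D$ decrease to $\overline{\D}\setminus\D=\partial\D$, which a.s.\ contains no point of $\G$, so the first term tends to $0$ by continuity from above; the remaining three terms tend to $0$ because $S_1,S_2,\widetilde{S}_3$ have finite first moments (Proposition \ref{invginest-tail}) and are therefore a.s.\ finite. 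The step requiring the most care is the equivalence in the middle paragraph: one must use the nested coupling precisely to convert the statement ``$\Omega_{n_k}(j)$ holds for all large $k$'' into a statement about $\G$ alone, and then invoke rigidity (Theorem \ref{gin-1}) to render the resulting count condition genuinely $\G_{\out}$-measurable, all while discarding the relevant null sets (points of $\G$ on $\partial\D$, and $|S_\ell|=M_j$).
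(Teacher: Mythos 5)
Your argument is correct, and it is genuinely more direct than the one in the paper. Where you restrict to a single a.s.\ event on which (a)--(d) hold and then argue pointwise that, on that event, ``$\Omega_{n_k}(j)$ holds for all large $k$'' is equivalent to your three conditions on $\G$, the paper instead invokes Egorov's theorem to get a set $(\Omega^1)^c$ of probability $>1-\eps/4$ on which $S_i(n_k)\to S_i$ \emph{uniformly}, builds an auxiliary event $A_\eps$ with a $\delta$-margin in the condition $|S_i|<M_j-\delta$, shows $\P(\O(j)\,\Delta\,A_\eps)<\eps$, and finally defines $\O^{corr}(j)=\varliminf_n A_{2^{-n}}$. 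The margin $\delta$ and Egorov serve exactly the role that your clean atom-avoidance hypothesis (d) plays: both steps are there so that the boundary case $|S_i|=M_j$ cannot spoil the equivalence between the $\G_{n_k}$-conditions and the $\G$-conditions. (Note the paper also uses atom-avoidance to define its $\Omega^3$, so you are not using an extra hypothesis.) Your version dispenses with the $\eps$-indexed family and the $\varliminf$ over $\eps$ altogether, defining $\O^{corr}(j)$ in one stroke, and it makes the role of the monotone coupling $\G_{n_k}\subset\G_{n_{k+1}}\subset\G$ explicit and transparent (monotonicity of $|\G_{n_k}\cap\D|$; one-sided containment for the separation condition). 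Your proof of the second assertion, via $\O(j)=\O^{corr}(j)$ a.s., a union bound, continuity from above for $\D_j\setminus\D\downarrow\partial\D$, and a.s.\ finiteness of $S_1,S_2,\widetilde S_3$, is also a streamlining of the paper's construction of the intermediate event $A'(j)$. Both routes are valid; yours is shorter and its dependence on the structural inputs (coupling, Proposition \ref{conv}, choice of non-atomic $M_j$, absolute continuity of the intensity) is easier to audit.

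One very small point worth keeping in mind: in the backward implication for the separation condition you pass from a point of $\G$ in $\D_j\setminus\D$ to a point in the \emph{open} set $\D_j\setminus\overline{\D}$, discarding $\partial\D$. This is fine under (a), as you say, and no additional null set is needed because $\D_j$ is itself open (so $\D_j\setminus\D=(\D_j\setminus\overline{\D})\cup\partial\D$ exactly); but it is worth stating that you are using openness of $\D_j$ here rather than a separate ``no points on $\partial\D_j$'' assumption.
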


\begin{proof}
We first show the existence of $\O^{\mathrm{corr}}(j)$ with the desired properties. Accordingly, until the beginning of the last paragraph of this proof, $j$ is fixed; in particular $j$ does not depend on the parameter $\eps$ to be introduced below. 

Let $\eps >0$. We will construct an event $A_{\eps}$ (which depends on $j$) such that $A_{\eps}$ is measurable with respect to $\G_{\out}$ and there is a bad set $\Omega_{\mathrm{bad}}^{\eps}$ (which also depends on $j$) of  probability $<\eps$ such that $\O(j) \setminus \Omega_{\mathrm{bad}}^{\eps} = A_{\eps} \setminus \Omega_{\mathrm{bad}}^{\eps}$. As a result, $\P(\O(j) \Delta A_{\eps})<\eps$.
Then $\O^{\mathrm{corr}}(j)=\varliminf_{n \to \infty}{A_{2^{-n}}}$ will give us the desired event. It is easy to check that $\P(\Omega(j) \Delta \Omega^{\mathrm{corr}}(j))=0$. Let $\delta=\delta(\eps)<1$ be a small number, depending on $\eps$, to be chosen later. 

We \textbf{define the event} $A_{\eps}$ by the following conditions: 
\begin{itemize}
\item[] (i) $N(\G_{\out})=m$, where $N$ is as in Theorem \ref{gin-1}.
\item[] (ii) There is a $\frac{1}{M_j}$ separation between $\partial \D$ and the points $\o$ of $\mathcal{G}$ outside $\D$.
\item[] (iii) $|S_1|< M_j-\delta, |S_2|< M_j-\delta,|\widetilde{S}_3|< M_j-\delta$.
\end{itemize}
It is clear from the definition of $A_{\eps}$ that it is measurable with respect to $\G_{\out}$.

We can compare the definition of the event $A_\eps$ with a similar description for the event $\O(j)$. Since $\O(j)$ is the liminf of the events $\onk$ (as $k \to \infty$),   $\O(j)$ occurs if and only if each of the conditions (i), (ii) and (iii) in the definition of $\onk$ are true for $\G_{n_k}$ for all large enough $k$. The reader may notice the close similarity of the defining conditions of $\onk$ with those of $A_\eps$. In order to show that $\O(j) \setminus \Omega_{\mathrm{bad}}^{\eps}= A_{\eps} \setminus \Omega_{\mathrm{bad}}^{\eps}$, we need to establish that on $(\Omega_{\mathrm{bad}}^{\eps})^c$, condition (i) 
in the definition of $\onk$ being true for all large enough $k$ is equivalent to condition (i) in the definition of $A_\eps$ being true, and similarly with conditions (ii) and (iii) respectively in the two definitions.

By Proposition \ref{conv}, $S_i({n_k}) \to S_i, i=1,2$ and $\widetilde{S}_3({n_k}) \rightarrow \widetilde{S}_3$ a.s. along our chosen subsequence $\{n_k\}_k$.  By Egorov's Theorem, there is a bad event $\Omega^1$ of probability $<\eps/4$ such that outside $\Omega^1$, this convergence is uniform. Therefore, on $(\Omega^1)^c$ there is a large $k_0$ such that $|S_i({n_k})-S_i|< \del, i=1,2$ and $|\wt{S}_3({n_k})-\wt{S}_3|< \del$ for all $k \ge k_0$. 

By the a.s. convergence of $\mathcal{G}_{n_k}$-s to $\mathcal{G}$ on the compact set $2 \cdot \overline{\D}$, there is a large $k_1$ and a small event $\Omega^{2}$ of probability $< \eps/4$ outside which condition (i) in the definition of $\Omega_{n_k}(j)$ are simultaneously true or simultaneously false (simultaneity in $k$) for all  $\mathcal{G}_{n_k},k\ge k_1$. By the coupling of $\G_{n_k}$-s and $\G$ (which entails that $\G_n \subset \G_{n+1} \subset \G $ for all $n$), condition (i) in the definition of $\Omega_{n_k}(j)$ being simultaneously true for $\mathcal{G}_{n_k},k\ge k_1$ would imply that the condition (i) in the definition of $A_{\eps}$ is also true. The same statement holds with ``true'' replaced by ``false''. Thus, on the event $(\Omega^{2})^c$, exactly one of the following two possibilities hold : \newline 
(a) condition (i) defining $\onk$ is true for all $k \ge k_1$ and condition (i) defining $A_\eps$ is true,\newline
(b) condition (i) defining $\onk$ is false for all $k \ge k_1$ and condition (i) defining $A_\eps$ is false.

Similarly, the a.s. convergence of $\mathcal{G}_{n_k}$-s to $\mathcal{G}$ on the compact set $2 \cdot \overline{\D}$ guarantees that there is a large $k_2$ and a small event $\Omega^3$ of probability $<\eps/4$ such that on $(\Omega^4)^c$, exactly one of the two possibilities hold: \newline 
(a) condition (ii) defining $\onk$ is true for all $k \ge k_1$ and condition (ii) defining $A_\eps$ is true,\newline
(b) condition (ii) defining $\onk$ is false for all $k \ge k_1$ and condition (ii) defining $A_\eps$ is false.

Since $M_j$ is  not an atom of the distribution of $|S_1|,|S_2|$ or $|\widetilde{S}_3|$ , there is an event $\Omega^4$ (of probability $<\eps/4$) outside which each $|S_i|$ or $|\widetilde{S}_3|$ is either $>M_j+2\delta$ or $<M_j-2\delta$ ($\delta<1$ is chosen based on $\eps$ so that this condition is satisfied).

Define $\Omega_{\mathrm{bad}}^{\eps}= \Omega^1 \cup \Omega^2 \cup \Omega^3 \cup \Omega^4$; clearly $\P(\Omega_{\mathrm{bad}}^{\eps})<\eps$.
We note that on $(\Omega_{\mathrm{bad}}^{\eps})^c$, the condition (i)  in the definition of $\onk$ being true for all $k$ large enough is equivalent to condition (i) in the definition of $A_\eps$ being true. Similarly, on $(\Omega_{\mathrm{bad}}^{\eps})^c$, the condition (ii)  in the definition of $\onk$ being true for all $k$ large enough is equivalent to condition (ii) in the definition of $A_\eps$ being true. 

In order to show  that $\O(j) \setminus \Omega_{\mathrm{bad}}^{\eps}= A_{\eps} \setminus \Omega_{\mathrm{bad}}^{\eps}$ we are left to verify the only one remaining point: on $(\Omega_{\mathrm{bad}}^{\eps})^c$, the condition (iii) in the definition of $\onk$ being true for all large enough $k$ is equivalent to the condition (iii) in the definition of $A_\eps$ being true. 

Suppose the event $\O(j) \setminus \Omega_{\mathrm{bad}}^{\eps}$ occurs. Then we have $|S_i({n_k})|<M_j$ (with $i=1,2$) and $|\wt{S}_3(n_k)|<M_j$, for all large enough $k$. Hence $|S_i(n_k)-S_i|<\del$ implies $|S_i|<M_j+\del$ for $i=1,2$, similarly $|\wt{S}_3(n_k)-\wt{S}_3|<\del$ implies $|\wt{S}_3|<M_j+\del$. But we are on $(\Omega^4)^c$, so for $i=1,2$, we have $|S_i| \in (M_j-2\del,M_j+2\del)^c$, hence $|S_i|<M_j+\del$ implies $|S_i|<M_j-2\del$. Similarly, we get $|\wt{S}_3|<M_j-2\del$. Hence,  the event $A_{\eps}$ also occurs. 

Conversely, suppose the event$A_{\eps} \setminus \Omega_{\mathrm{bad}}^{\eps}$ occurs. Then for $i=1,2$ we have $|S_i|< M_j-\del$. But $|S_i({n_k})-S_i|<\del$ for all  large enough $k$, because we are on $(\O^1)^c$. This implies $|S_i({n_k})|<M_j$ for all large enough $k$. Similarly, $|\wt{S}_3(n_k)|<M_j$ for all large enough $k$. These facts together  mean that the event $\O(j)$ also occurs.

Hence $\O(j) \setminus \Omega_{\mathrm{bad}}^{\eps}= A_{\eps} \setminus \Omega_{\mathrm{bad}}^{\eps}$. As a result, we have $\l(\O(j) \Delta A_{\eps}\r) \subset \Omega_{\mathrm{bad}}^{\eps}$, and $\P(\O(j) \Delta A_{\eps})<\eps$.

To show that $\P(\O^m \setminus \O(j)) \to 0$ as $j \to \infty$, we define events $A'(j)\subset A_{\eps}$ above by replacing $\delta$ by 1 in the condition (iii) in the definition of $A_{\eps}$. Clearly, $A'(j) \subset A_{\eps}$ for each $0<\eps<1$, and therefore $ A'(j) \subset \varliminf_{n \to \infty}A_{2^{-n}} =\Omega(j)^{\mathrm{corr}}$. It is also clear that $\P(\O^m \setminus A'(j)) \to 0$ as $j \to \infty$, because $S_1,S_2$ and $\widetilde{S}_3$ are well defined random variables (with no mass at $\infty$) and the separation of $1/M_j$ between $D$ and the points of $\G_{\out}$ converges to 0. These two facts imply that $\P(\O^m \setminus \O(j)) \to 0$ as $j \to \infty$.

\end{proof}
This completes the proof of Theorem \ref{gin-2} in the case of $\D$ being a disk.

\section{Tolerance of the GAF zeros for a disk }
\label{tolgafz}

In this section we obtain the estimates necessary to prove Theorem \ref{gaf-2} in the case where $\D$ is a disk. By translation invariance of $\F$, we can take $\D$ to be centred at the origin. 

A key goal in this section is to quantitatively compare the conditional densities of the (vector of) inside zeroes (for finite degree polynomials) at two possible locations, along the lines of Proposition \ref{unifgaf}. Let $(\uz,\uo)$ be a randomly chosen vector of points corresponding to the ensemble $\mathcal{Z}_n$, with $\uz$ corresponding to the zeroes inside $\D$ and $\uo$ corresponding to those outside $\D$. Let $(\uz',\uo)$ be another possible vector of zeroes such that $\sum_i \z'_i = \sum_i \z_i$. We want to compare the conditional densities at these two locations. In Subsection \ref{polapprox} we introduce the conditional densities $\rho^n_{\uo,s}(\uz)$. We observe that the ratio $\rho^n_{\uo,s}(\uz')/\rho^n_{\uo,s}(\uz)$ of conditional densities (at two different locations $\uz$ and $\uz'$) splits neatly into a ratio of Vandermonde determinants and a ratio  of an explicit function of the elementary symmetric polynomials of the zeroes, as in (\ref{condgaf1}).  In Subsection \ref{ratvand}, we deal with the ratio of the Vandermonde determinants, on similar lines to the Ginibre case. In Subsection \ref{EIPZ}, we prove estimates on inverse power sums of the zeroes, which are necessary to complete the study of the Vandermonde ratio.

The procedure to estimate the ratio involving the elementary symmetric functions is more elaborate. As is evident from (\ref{condgaf1}), it is of relevance to compare the quantities $\s_k(\uz',\uo)$ and $\s_k(\uz,\uo)$, where $\s_k$ denotes the elementary symmetric polynomial of order $k$. Since the vectors of zeroes differ only in the co-ordinates corresponding to the zeroes inside $\D$, we begin with an equation that splits the difference between $\s_k(\uz',\uo)$ and $\s_k(\uz,\uo)$  into  quantities involving only the inside zeroes and quantities involving only the outside zeroes; see (\ref{sig2}). 

Since we need to estimate the ratio of certain functions of the $\s_k$-s, the next step is to apply the above technique to express these quantities  in terms of $\s_k(\uo)$ and $\s_k(\uz,\uo)$; see (\ref{upper}), (\ref{exp2}) and (\ref{exp3}). While $\s_k(\uz,\uo)$ is a convenient quantity to deal with probabilistically (because it is essentially a ratio of two independent Gaussians), the same cannot be said of $\s_k(\uo)$. Hence, the next step is to express $\s_k(\uo)$ in terms of a series in $\s_j(\uz,\uo)$ (for $j \le k$). This is done in Proposition \ref{expansion}, exploiting a certain recursive structure in the algebraic relationship between $\s_k(\uz,\uo)$ and $\s_k(\uo)$. In Proposition \ref{techgaf3}, we quantitatively establish that in this series expansion, it is essentially the leading term that dominates. 

All that remains now is to put  the pieces together, and obtain the desired estimates by controlling the leading term discussed above,  along with bounds on the error accrued in the process. This is done in Proposition \ref{gafestprep}. Finally, all the above ingredients are combined to formally state a quantitative comparison result between the conditional densities at the two locations in Proposition \ref{unifgaf}. 

\subsection{Polynomial Approximations}
\label{polapprox}
We focus on the event $\Omega_n^{m,\delta}$ which entails that $f_n$ has exactly $m$ zeroes inside $\D$, and there is a $\delta$ separation between $\partial \D$ and the outside zeroes. The corresponding event for the GAF zero process has positive probability, so by the distributional convergence  $\F_{n} \to \F$, we have that $\Omega_n^{m,\delta}$ has positive probability  (which is bounded away from 0 as $n \to \infty$).

Let us denote the  zeroes of $f_n$ (in uniform random order) inside $\D$  by $\uz=(\zeta_1,\cdots, \zeta_m)$ and those outside $\D$ by $\uo=(\omega_1,\omega_2,\cdots,\omega_{n-m})$. Let $s=\sum_{j=1}^m \z_j$. and ${\displaystyle \Sigma_S:=\{(\z_1,\dots,\z_m) \in \D^m: \sum_{j=1}^m \z_j =s\}}$ Then the conditional density $\rho^n_{\uo,s}(\uz)$ of $\uz$ given $\uo,s$ is of the form (see, e.g., \cite{FH})
\begin{equation}
  \label{condgaf}
\rho^n_{\uo,s}(\uz)=C(\uo,s) {\left| \Delta(\zeta_1,\dots,\zeta_{m},\omega_1,\dots,\omega_{n-m}) \right|^2 }{\displaystyle \left( \sum_{k=0}^n \left| \frac{{\sigma_k}{(\uz,\uo)}}{\sqrt{{n\choose k} k!} } \right|^2    \right)^{\! \! \! -n-1}}
\end{equation}
where $C(\uo,s)$ is the normalizing factor, and for a vector $\underline{v}=(v_1,\cdots,v_{N})$ we define the elementary symmetric polynomial of order $k$ as \[\sigma_k(\underline{v})=\sum_{1 \le i_1 < \cdots <i_k \le N} v_{i_1}\cdots v_{i_k}\] and for two vectors $\underline{u}$ and $\underline{v}$, $\s_k(\underline{u},\underline{v})$ is defined to be $\s_k(\underline{w})$ where the vector $\underline{w}$ is obtained by concatenating the vectors $\underline{u}$ and $\underline{v}$. As a matter of convention, for any vector $\underline{v}$ we set $\sigma_k(\underline{v})=0$ for all $k \le 0$ or $ \ge \mathrm{ Length \ of \ } \underline{v}$. 

Throughout this Subsection \ref{polapprox}, the zeroes will be those of $f_n$ with $n$ fixed. Define the quantity \[D(\uz,\uo)= \left( \sum_{k=0}^n \left| \frac{\sigma_k(\uz,\uo)}{\sqrt{{n\choose k} k!} } \right|^2    \right).\]

Let $(\uz,\uo)$ and $(\uz',\uo)$ be two vectors of points (under $\F_n$). Then the ratio of the conditional densities at these two vectors is given by
\begin{equation}
 \label{condgaf1}
\frac{\rho^n_{\uo,s}(\uz')}{\rho^n_{\uo,s}(\uz)}= \frac {|\Delta(\uz',\uo)|^2}  {|\Delta(\uz,\uo)|^2}   {\displaystyle \left( \sum_{k=0}^n \left| \frac{\sigma_k(\uz,\uo)}{\sqrt{{n\choose k} k!} } \right|^2    \right)^{\! \! n+1}} \bigg/ {\displaystyle \left( \sum_{k=0}^n \left|\frac {\sigma_k(\uz',\uo)}{\sqrt{{n\choose k} k!} } \right|^2    \right)^{\! \! n+1}}= \frac {|\Delta(\uz',\uo)|^2} {|\Delta(\uz,\uo)|^2} \l(\frac{D(\uz',\uo)} {D(\uz,\uo)}\r)^{-(n+1)}.
\end{equation}
The expression (\ref{condgaf1}) has two distinct components: the ratio of two Vandermonde determinants and the ratio of certain expressions involving the elementary symmetric functions of the zeroes. We will consider these two components in two separate sections.

\subsubsection{Ratio of Vandermondes}
\label{ratvand}
Here we consider the quantity ${|\Delta(\uz',\uo)|^2} \big/ {|\Delta(\uz,\uo)|^2}$. We proceed exactly as in the case of the Ginibre ensemble. We refer the reader to section \ref{ratcondgin}. The estimates here are valid for all pairs $(\uz,\uz') \in \D^m \times \D^m$.

\begin{proposition}
\label{nr2}
On the event $\Omega_{n}^{m,\delta}$, there exist quantities $K(\D,\delta)>0$ and $\X_n(\uo)>0$ such that for any $(\uz,\uz')  \in \D^m \times \D^m$ we have
\[\exp \bigg(- 2mK(\D,\delta)\X_n(\uo) \bigg) \frac{|\Delta(\uz')|^2}{|\Delta(\uz)|^2}\le \frac{|\Delta(\uz',\uo)|^2}{|\Delta(\uz,\uo)|^2} \le \exp \bigg( 2mK(\D,\delta)\X_n(\uo) \bigg) \frac{|\Delta(\uz')|^2}{|\Delta(\uz)|^2}\]
where ${\displaystyle \X_n(\uo) = \l|\sum_{\omega_j \in \mathcal{G}_n \cap \D^c}\frac{1}{\omega_j} \r|+ \l|\sum_{\omega_j \in \mathcal{G}_n \cap \D^c}\frac{1}{\omega_j^2} \r|+ \l( \sum_{\omega_j \in \mathcal{G}_n \cap \D^c}\frac{1}{|\omega_j|^3} \r)}$ and  $\E [\X_n(\uo)]\le c_1(\D,m)<\infty .$
\end{proposition}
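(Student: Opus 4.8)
The plan is to reduce the statement to the ``ratio of Vandermondes'' computation already carried out for the Ginibre ensemble in Section~\ref{ratcondgin}: the numerator of the conditional density \eqref{condgaf} is precisely the squared Vandermonde $|\Delta(\zeta_1,\dots,\zeta_m,\omega_1,\dots,\omega_{n-m})|^2$, so this factor is common to both models, and it is the only part of the ratio \eqref{condgaf1} that the present proposition is concerned with (the symmetric-function denominator will be handled separately in the sibling subsection). First I would invoke the multiplicativity of the Vandermonde determinant to write
\[ \Delta(\uz,\uo) = \Delta(\uz)\,\Delta(\uo)\,\Gamma(\uz,\uo), \qquad \Gamma(\uz,\uo) = \prod_{\substack{1 \le i \le m \\ 1 \le j \le n-m}} (\zeta_i - \omega_j), \]
exactly as in \eqref{condgin2}. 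Since $\Delta(\uo)$ is shared by both configurations it cancels, so bounding $|\Delta(\uz',\uo)|^2/|\Delta(\uz,\uo)|^2$ reduces to bounding $|\Gamma(\uz',\uo)|^2/|\Gamma(\uz,\uo)|^2$ from above and below, uniformly over $(\uz,\uz') \in \D^m \times \D^m$.

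Next I would normalize $\Gamma$ at the origin: since
\[ \frac{|\Gamma(\uz,\uo)|}{|\Gamma(\underline{0},\uo)|} = \prod_{i=1}^{m} \prod_{j=1}^{n-m} \l| \frac{\zeta_i - \omega_j}{\omega_j} \r|, \]
it suffices to control this quantity (and the analogous one for $\uz'$) uniformly in $\uz$. On $\Omega_n^{m,\del}$ the outside points obey $|\omega_j| > r + \del$ with $r = \mathrm{Radius}(\D)$, which is exactly the hypothesis of Proposition~\ref{gin1}; applying it with $\zeta_0 = \zeta_i$ gives, for each $i$,
\[ \l| \log \l( \prod_{j=1}^{n-m} \l| \frac{\zeta_i - \omega_j}{\omega_j} \r| \r) \r| \le K(\D,\del)\,\X_n(\uo), \qquad K(\D,\del) := \max\{K_1(\D),K_2(\D),K_3(\D,\del)\}. \]
Exponentiating and multiplying over $i = 1,\dots,m$ yields $\exp(-mK(\D,\del)\X_n(\uo)) \le |\Gamma(\uz,\uo)|/|\Gamma(\underline{0},\uo)| \le \exp(mK(\D,\del)\X_n(\uo))$, and likewise for $\uz'$; dividing, and then squaring, produces the claimed two-sided bound (after absorbing a fixed numerical factor into $K$). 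I would emphasise that nothing about $\uz,\uz'$ is used beyond $\zeta_i,\zeta_i' \in \D$, so the estimate is genuinely uniform over all of $\D^m \times \D^m$, not merely over ``typical'' inside configurations — the $\del$-separation built into $\Omega_n^{m,\del}$ is the only place more than membership in $\D$ enters. Note also that, in contrast with Proposition~\ref{gin2}, there is no Gaussian weight $\exp(-\sum_k |\zeta_k|^2)$ to estimate here (its analogue is buried in the symmetric-function denominator), which is why the constant in the exponent is smaller.

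Finally, for the bound $\E[\X_n(\uo)] \le c_1(\D,m) < \infty$, uniform in $n$, I would invoke the GAF counterpart of the inverse-power estimates of Section~\ref{invpowersgin}: each of the three sums defining $\X_n(\uo)$ (namely $\sum \omega_j^{-1}$, $\sum \omega_j^{-2}$ and $\sum |\omega_j|^{-3}$ over the zeroes of $f_n$ outside $\D$) has first moment bounded uniformly in $n$ by a quantity depending only on $\D$, and summing the three contributions gives the displayed estimate. Since the Vandermonde reduction above is essentially a verbatim transcription of the Ginibre argument, this uniform-in-$n$ moment bound is the only genuinely new ingredient, and it is where the real work lies: it will rest on uniform-in-$n$ control of the variance of linear statistics of $\F_n$ of the type used in the proof of Theorem~\ref{gaf-1}, and I expect establishing those variance bounds (deferred to the inverse-powers subsection for the GAF) to be the main obstacle, since $\F_n$ is not determinantal and the clean covariance identity of Lemma~\ref{varlinst} is unavailable.
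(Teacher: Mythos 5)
Your proposal is correct and follows essentially the same route as the paper, which in fact disposes of this proposition in one sentence by referring back to the Ginibre Vandermonde computation of Section~\ref{ratcondgin} and deferring the moment bound to Section~\ref{EIPZ}; you have reconstructed that chain faithfully, including the observation that the Gaussian weight is absent here and that a fixed numerical factor must be absorbed into $K(\D,\delta)$. One small inaccuracy in your forecast of the ``real work'': the uniform-in-$n$ first-moment bound for the GAF inverse-power sums (Propositions~\ref{invgafestlem} and \ref{invgafest-tail}) does not go through a variance estimate for $\F_n$ at all; the paper instead writes $[\F_n]=\frac{1}{2\pi}\Delta\log|f_n|$, subtracts the radial function $\Delta\log\sqrt{\E|f_n|^2}$ (which integrates to zero against $z^{-l}\Phi_R$), and integrates by parts against $\log|\hat f_n|$ where $\hat f_n=f_n/\sqrt{\E|f_n|^2}$ is pointwise a standard complex Gaussian, so $\E\bigl|\log|\hat f_n(z)|\bigr|$ is a constant. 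This yields a direct $L^1$ bound with no covariance machinery, so the unavailability of Lemma~\ref{varlinst} is not actually felt.
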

\begin{remark}
The estimates on ${ \displaystyle \l|\sum \frac{1}{\o_j} \r| }$, ${ \displaystyle \l| \sum \frac{1}{\o_j^2} \r| }$ and ${\displaystyle \l( \sum \frac{1}{|\o_j|^3} \r) }$ which are necessary for Proposition \ref{nr2} are proved in section \ref{EIPZ}.
\end{remark}
\begin{corollary}
\label{nrc2.1}
Given  $M>0$, we can replace the $\X_n(\uo)$ in Proposition \ref{nr2} by a bound $M$ except on an event of probability less than $ c(m,\D)/M$.
\end{corollary}

\subsubsection{Ratio of Symmetric Functions}
\label{ratcond}
In this section we will restrict $\uz$ and $\uz'$ to lie in the same constant-sum hyperplane. For $s \in \C$, define \[\Sigma_s := \{\uz \in \D^m : \sum_{i=1}^m \z_i' = s \}. \]  
We want to bound the ratio $\l({D(\uz',\uo)} \big/ {D(\uz,\uo)}\r)^{n+1}$ from above and below. Our main goal is:
\begin{proposition}
\label{gafest}
Suppose $(\uz,\uo)$ is randomly generated from $\F_n$, let $s = \sum_{i=1}^m \z_i $, and define $\Sigma_s$ as above. 
Given $M>0$ large enough, $\exists n_0$ such that for all $n \ge n_0$ the following is true: with probability $\ge 1- C/M$  we have,  on the event $\Omega_n^{m,\delta}$ , the inequality \[ e^{-2K(m,\D)M\log M} \le \l( {D(\uz',\uo)} \bigg/ {D(\uz,\uo)} \r)^{n+1} \le e^{2K(m,\D)M\log M}  \] for all $\uz' \in \Sigma_s$.
\end{proposition}

We will first prove several auxiliary propositions.

We begin with the observation
\begin{equation}
\label{sig1}
\s_k(\uz,\uo) = \sum_{i=0}^{m}\s_i(\uz)\s_{k-i}(\uo).
\end{equation}
Note that $\s_0(\uz)=1$, $\s_1(\uz)=s=\s_1(\uz')$ and $|\s_i(\uz)|<{m \choose i}r_0^i$ for all $i \le m$, where $r_0$ is the radius of $\D$. Since both $\uz$ and $\uz' \in \Sigma_s$, we have
\begin{equation}
\label{sig2}
\s_k(\uz',\uo)=\s_k(\uz,\uo)+\sum_{i=2}^{m}[\s_i(\uz')-\s_i(\uz)]\s_{k-i}(\uo).
\end{equation}
Taking modulus squared on both sides and using the identity \[ \l| \sum_j a_j \r|^2 = \sum_j \l|a_j \r|^2 + 2 \sum_{j \ne k} \Re(a_j \ol{a_k}), \] we have $|\s_k(\uz',\uo)|^2=$
\begin{align*}  && |\s_k(\uz,\uo)|^2+\sum_{i=2}^{m}|\s_i(\uz')-\s_i(\uz)|^2|\s_{k-i}(\uo)|^2 + \sum_{i=2}^{m} 2\Re \l((\s_i(\uz')-\s_i(\uz)) \overline{\s_k(\uz,\uo)} \s_{k-i}(\uo) \r) \\ &&+\sum_{i,j=2}^{m}\sum_{i \ne j} 2\Re \l(\overline{(\s_i(\uz')-\s_i(\uz))}(\s_j(\uz')-\s_j(\uz)) \overline{\s_{k-i}(\uo)} \s_{k-j}(\uo) \r). \end{align*}
Dividing throughout by $\l( \c \r)^2$ and then summing the above over $k=0,\cdots,n$  we get
\begin{align*}
&&\sum_{k=0}^{n}\l|\frac{\s_k(\uz',\uo)}{\c}\r|^2 = \sum_{k=0}^{n}\l|\frac{\s_k(\uz,\uo)}{\c}\r|^2 + \sum_{i=2}^{m}|\s_i(\uz')-\s_i(\uz)|^2\l(\sum_{k=0}^{n}\l|\frac{\s_{k-i}(\uo)}{\c}\r|^2 \r)\\
&& + \sum_{i=2}^{m} 2\Re \l((\s_i(\uz')-\s_i(\uz)) \l( \sum_{k=0}^n \frac{\overline{\s_k(\uz,\uo)}}{\c} \frac{\s_{k-i}(\uo)}{\c}\r) \r)\\
&&+ \sum_{i,j=2}^{m}\sum_{i \ne j} 2\Re \l(\overline{(\s_i(\uz')-\s_i(\uz))}(\s_j(\uz')-\s_j(\uz)) \l( \sum_{k=0}^{n} \frac{\overline{\s_{k-i}(\uo)}}{\c} \frac{\s_{k-j}(\uo)}{\c} \r) \r).
\end{align*}
Applying the triangle inequality to the above, we obtain  
\begin{equation}
\label{upper}
D(\uz,\uo) -A(\uz,\uz',\uo)  \le  D(\uz',\uo) \le D(\uz,\uo) + A(\uz,\uz',\uo) 
\end{equation}
where we recall the fact that $D(\uz,\uo)= \left( \sum_{k=0}^n \left| \frac{\sigma_k(\uz,\uo)}{\sqrt{{n\choose k} k!} } \right|^2    \right)$
and 
\begin{align*} A(\uz,\uz',\uo):= &&\sum_{i=2}^{m}|\s_i(\uz')-\s_i(\uz)|^2\l(\sum_{k=0}^{n} \l| \frac{\s_{k-i}(\uo)}{\c} \r|^2 \r) \notag
+ \sum_{i=2}^{m} 2 |\s_i(\uz')-\s_i(\uz)| \l| \sum_{k=0}^n \frac{\overline{\s_k(\uz,\uo)}}{\c} \frac{\s_{k-i}(\uo)}{\c} \r| \notag  \\
&& + \sum_{i\ne j=2}^{m} 2 |\s_i(\uz')-\s_i(\uz)||\s_j(\uz')-\s_j(\uz)| \l| \sum_{k=0}^{n} \frac{\overline{\s_{k-i}(\uo)}}{\c} \frac{\s_{k-j}(\uo)}{\c} \r|. \end{align*}
Divide throughout by $D(\uz,\uo)$ in the above inequalities, and observe that \\
(a) $i$ and $j$ vary between 2 and $m$ (which is fixed and finite),\\
(b) $|\s_i(\uz')-\s_i(\uz)|$ is bounded for each $i$ as discussed immediately before (\ref{sig2}).\\
In view of these facts, we conclude that in order to bound $\l({D(\uz',\uo)} \big/ {D(\uz,\uo)}\r)^{n+1} $ between two constants, it suffices to show that the quantities
\begin{equation}
\label{exp2}
{\displaystyle D^{(1)}_i(\uz,\uo):= \l| \sum_{k=0}^n \frac{\overline{\s_k(\uz,\uo)}}{\c} \frac{\s_{k-i}(\uo)}{\c} \r|} \bigg/ {\displaystyle \l( \sum_{k=0}^{n}  \l| \frac{\s_{k}(\uz,\uo)}{\c} \r|^2 \r) }
\end{equation}
and
\begin{equation}
\label{exp3}
{ \displaystyle D^{(2)}_{i,j}(\uz,\uo):= \l| \sum_{k=0}^n \frac{\overline{\s_{k-i}(\uo)}}{\c} \frac{\s_{k-j}(\uo)}{\c} \r|} \bigg/  {\displaystyle \l( \sum_{k=0}^{n}  \l| \frac{\s_{k}(\uz,\uo)}{\c} \r|^2 \r) }
\end{equation}
for $m \ge i,j \ge 2$ are bounded above by $M/n$ with high probability (depending on $M$) where $M > 0$ is a constant.

Observe, by Vieta's formula, that  ${  {\s_k(\uz,\uo)} \big/ {\c}= (-1)^k {\xi_{n-k}} \big/ {\xi_n}}$, where the $\xi_j$-s are standard complex Gaussians. On the other hand, we do not yet have good control over $\s_k(\uo)$. In order for us to obtain such control, the idea is to obtain a convenient expansion of $\s_k(\uo)$ in terms of $\s_i(\uz,\uo)$. To this end, we formulate:
\begin{proposition}
\label{expansion}
On the event $\O^{m,\del}_n$ we have, for $0 \le k \le n-m $,
\[ \s_k(\uo)=\s_k(\uz,\uo)+\sum_{r=1}^k g_r \s_{k-r}(\uz,\uo)\]
where a.s. the random variables $g_r$ are $O(K(\D,m)^r)$ as $r \to \infty$, for a deterministic quantity $K(\D,m)$ and the constant in  $O$ being deterministic and not depending on $n$ and $\del$.
\end{proposition}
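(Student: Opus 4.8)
The plan is to read the asserted identity as an equality of formal power series in an auxiliary variable $t$: it says that the generating polynomial of $\{\sigma_k(\uo)\}$ equals the generating polynomial of $\{\sigma_k(\uz,\uo)\}$ divided by the generating polynomial $Q$ of $\{\sigma_i(\uz)\}$, and the coefficients $g_r$ are exactly the Taylor coefficients of $1/Q$. Concretely, I would set $P(t)=\prod_{j=1}^{n-m}(1+\omega_j t)=\sum_{k=0}^{n-m}\sigma_k(\uo)t^k$ and $Q(t)=\prod_{i=1}^{m}(1+\zeta_i t)=\sum_{i=0}^{m}\sigma_i(\uz)t^i$; since the zeroes of $f_n$ are precisely $\zeta_1,\dots,\zeta_m,\omega_1,\dots,\omega_{n-m}$, one has $Q(t)P(t)=\sum_{k=0}^{n}\sigma_k(\uz,\uo)t^k$. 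As $Q(0)=\sigma_0(\uz)=1$, the reciprocal $1/Q(t)=:\sum_{r\ge 0}g_r t^r$ is a well-defined power series with $g_0=1$, and $P(t)=\bigl(1/Q(t)\bigr)\bigl(Q(t)P(t)\bigr)$. Comparing the coefficient of $t^k$ on both sides --- and using that for $0\le k\le n-m$ the coefficient of $t^k$ in $P$ is precisely $\sigma_k(\uo)$ --- gives $\sigma_k(\uo)=\sum_{r=0}^{k}g_r\sigma_{k-r}(\uz,\uo)$; isolating the $r=0$ term and recalling $g_0=1$ produces exactly the claimed formula on the stated range $0\le k\le n-m$.

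It remains to bound the $g_r$. On $\O_n^{m,\del}$ each $\zeta_i$ lies in $\D$, so $|\zeta_i|\le r_0$ and $|\sigma_i(\uz)|\le\binom{m}{i}r_0^i$ for $0\le i\le m$. Hence $|1-Q(t)|\le\sum_{i=1}^{m}\binom{m}{i}r_0^i|t|^i=(1+r_0|t|)^m-1$, which is $\le\tfrac12$ whenever $|t|\le\rho:=\bigl((3/2)^{1/m}-1\bigr)/r_0$. Consequently $|Q(t)|\ge\tfrac12$ on $\{|t|\le\rho\}$, so $1/Q$ is holomorphic there with $|1/Q(t)|\le 2$, and Cauchy's estimate on the circle $|t|=\rho$ gives $|g_r|\le 2\rho^{-r}$ for every $r$. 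Taking $K(\D,m):=1/\rho=r_0/\bigl((3/2)^{1/m}-1\bigr)$ I conclude $g_r=O(K(\D,m)^r)$ with $O$-constant equal to $2$; since this bound holds identically on $\O_n^{m,\del}$, it holds a.s.\ there.

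There is no essential difficulty in this argument; the only point requiring care is the bookkeeping that makes both $K(\D,m)$ and the $O$-constant depend on the geometry only through the radius $r_0$ of $\D$ and on the fixed integer $m$, and in particular independent of the random configuration, of $n$, and of $\del$. That uniformity is supplied by the single estimate $|\zeta_i|\le r_0$, which is valid identically on $\O_n^{m,\del}$.
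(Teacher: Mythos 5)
Your proof is correct, and it takes a genuinely different route from the paper's. The paper expands $\sigma_k(\uz,\uo)=\sum_{r=0}^m\sigma_r(\uz)\sigma_{k-r}(\uo)$, solves for $\sigma_k(\uo)$, iterates inductively, and then controls the coefficients $g_r$ by observing that they satisfy a linear recurrence whose companion matrix has eigenvalues $-\zeta_1,\dots,-\zeta_m$; the growth bound then comes from Gelfand's spectral radius formula (the paper's Proposition \ref{matest}), giving $g_r=O\bigl((r_0+1)^r\bigr)$. You instead package the same convolution identity as an equality of generating polynomials, identify the $g_r$ as Taylor coefficients of $1/Q(t)$ where $Q(t)=\prod_i(1+\zeta_i t)$, and bound them by a Cauchy estimate on an explicit disk on which $|Q|\ge 1/2$. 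Both arguments are valid and give the same structure of conclusion, but there is a real payoff to your version: the Cauchy estimate yields a fully explicit constant $|g_r|\le 2\,K(\D,m)^r$ with $K(\D,m)=r_0/\bigl((3/2)^{1/m}-1\bigr)$, which is manifestly uniform over all configurations with $\zeta_i\in\D$, and uniform in $n$ and $\delta$. The paper's route via Proposition \ref{matest} asserts $(A^r)_{ij}=o(\rho'^r)$, but the implicit constant there a priori depends on the matrix $A$, i.e.\ on the $\zeta_i$; the claimed uniformity of the $O$-constant is stated but not really argued, whereas your argument establishes it transparently. On the other hand, for large $m$ the paper's base $r_0+1$ is smaller than your $K(\D,m)\approx m r_0/\ln(3/2)$, so the paper (if its uniformity were fully justified) would give a sharper exponential rate; since the statement only asks for \emph{some} deterministic $K(\D,m)$, this difference is immaterial here.
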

\begin{proof}
We begin with the observation that
\begin{equation}
\label{exp4}
\s_k(\uz,\uo)=\sum_{r=0}^m \s_r(\uz)\s_{k-r}(\uo).
\end{equation}
From this, we have
\begin{equation}
\label{exp5}
\s_k(\uo)=\s_k(\uz,\uo)-\sum_{r=1}^{m} \s_r(\uz)\s_{k-r}(\uo).
\end{equation}
Notice that the coefficients $ \s_r(\uz)$ in the above expansion are independent of $k$. This suggests a recursive pattern in the expression of $\s_k(\uo)$ in terms of $\s_j(\uz,\uo)$ with $j \le k$.  Proceeding inductively in (\ref{exp5}) we can similarly expand each of the lower order $\s_{k-r}(\uo)$ in terms of $\s_j(\uz,\uo)$ and obtain an expansion of $\s_k(\uo)$ in terms of $\s_j(\uz,\uo), j=1,\cdots,k$:
\begin{equation} \label{exp6} \s_k(\uo)=\s_k(\uz,\uo)+\sum_{r=1}^k g_r \s_{k-r}(\uz,\uo).\end{equation}
Consequently, the coefficient of $\s_k(\uz,\uo)$ is 1 and the rest of the coefficients  are polynomials in $\s_j(\uz),j=1,\cdots,m$. Due to the inductive structure, it is easy to see that $g_r$-s satisfy a recurrence relation:
\begin{equation} \label{rec1}
 \left(
 \begin{array}{c}
 g_i\\
 g_{i-1}\\
\dots\\
g_{i-m+1}
 \end{array}
 \right)
 =
 A
 \left(
 \begin{array}{c}
 g_{i-1}\\ g_{i-2}\\ \dots \\ g_{i-m}
 \end{array}
 \right)
\end{equation}
where \[ A:= \left(
 \begin{array}{cccc}
 -\s_1(\uz) & -\s_2(\uz) & \dots & -\s_m(\uz) \\
 1 & 0 & \dots & 0 \\
\dots & \dots & \dots & \dots \\
0 & 0 & 1 & 0 \\
 \end{array}
 \right),\]
with the boundary conditions $g_i=-\s_i(\uz)$ for $i=0,\cdots,m-1$ and $g_i=0$ for $i<0$.

We observe that the eigenvalues of $A$ are precisely the negatives of the inside zeroes $\zeta_1.\cdots,\zeta_m$. Due to the recursive structure in (\ref{rec1}), we note that $g_k$ is an element of $A^k$ applied to the vector $(-\s_{m-1}(\uz),\cdots,-\s_0(\uz))$. This implies that $g_k$ is a linear combination of the entries of $A^k$ (with the coefficients in the linear combination being independent of $k$, but depending on $\D$ and $m$).

From Proposition \ref{matest}, we see that if the eigenvalues of a matrix $A$ have modulus $< \rho'$,  then the entries of the matrix $A^k$ are  $o({\rho'}^k)$. Now, the eigenvalues of $A$ are $\zeta_i$-s and for each $i$ we have $|\zeta_i| \le \text{radius}(\D)=K(\D)$. Combining the last two paragraphs, we have $g_r=O((K(\D)+1)^r)$  Crucially, the constant in the $O$ above does not depend on $n$ and $\del$ (although it can depend on $m$).
\end{proof}
We complete this discussion with the following result on the growth of matrix entries:
\begin{proposition}
\label{matest}
Let $A$ be a $d \times d$ matrix. Let $\rho'>\rho(A) := \text{max}\{|\lambda| : \lambda \text{ an eigenvalue of } A \}$. Then $(A^r)_{ij}=o({\rho'}^r)$ as $r \to \infty$, for all $i,j$.
\end{proposition}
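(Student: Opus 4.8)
The plan is to reduce the statement to the growth of the powers of a single Jordan block. First I would put $A$ in Jordan canonical form, writing $A = PJP^{-1}$ with $J$ block-diagonal consisting of Jordan blocks $J_1,\dots,J_p$. Then $A^r = PJ^rP^{-1}$, so each entry $(A^r)_{ij}$ is a fixed linear combination of the entries of $J^r$ with coefficients built from $P$ and $P^{-1}$ alone — crucially independent of $r$. Hence it suffices to show that every entry of $J^r$, equivalently every entry of each $J_t^r$, is $o({\rho'}^r)$.

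Second, fix a Jordan block $J_\lambda$ of size $s$ with eigenvalue $\lambda$, where $|\lambda| \le \rho(A) < \rho'$. Writing $J_\lambda = \lambda I + N$ with $N$ the nilpotent shift ($N^s = 0$), the binomial theorem (the terms commute) gives $J_\lambda^r = \sum_{k=0}^{s-1}\binom{r}{k}\lambda^{r-k}N^k$, so every entry of $J_\lambda^r$ is a sum of at most $s$ terms of the form $\binom{r}{k}\lambda^{r-k}$ with $0 \le k \le s-1$. If $\lambda = 0$ the block satisfies $J_\lambda^r = 0$ for $r \ge s$ and there is nothing to prove; if $\lambda \ne 0$, then for $r$ large $|\binom{r}{k}\lambda^{r-k}| \le C\, r^{s-1}|\lambda|^r$ for a constant $C = C(s,\lambda)$ absorbing the bounded factor $|\lambda|^{-k}$.

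Finally, since $|\lambda|/\rho' < 1$, we have $r^{s-1}|\lambda|^r / {\rho'}^r = r^{s-1}(|\lambda|/\rho')^r \to 0$ as $r \to \infty$, because geometric decay dominates polynomial growth; this yields $(J_\lambda^r)_{ij} = o({\rho'}^r)$ and, by the reduction in the first paragraph, $(A^r)_{ij} = o({\rho'}^r)$. (Alternatively, one could bypass Jordan form entirely by invoking Gelfand's formula $\rho(A) = \lim_{r\to\infty}\|A^r\|^{1/r}$: choosing $\rho''$ with $\rho(A) < \rho'' < \rho'$ gives $\|A^r\| \le {\rho''}^r$ for all large $r$, whence $|(A^r)_{ij}| \le \|A^r\| = o({\rho'}^r)$.) There is no genuine obstacle in this proposition; the only points requiring a moment's care are treating eigenvalue $0$ separately and observing that the change-of-basis coefficients relating entries of $A^r$ to entries of $J^r$ do not depend on $r$, so that the $o(\cdot)$ estimate survives the linear combination.
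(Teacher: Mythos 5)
Your argument is correct. Your primary route — reduce to a single Jordan block via $A = PJP^{-1}$, expand $J_\lambda^r = \sum_{k=0}^{s-1}\binom{r}{k}\lambda^{r-k}N^k$, and note that $r^{s-1}(|\lambda|/\rho')^r \to 0$ — is a sound, self-contained proof, and the two points of care you flag (the $\lambda = 0$ block, and the $r$-independence of the change-of-basis coefficients) are exactly the right ones. However, this is a genuinely different route from the paper's: the paper simply cites Gelfand's spectral radius formula $\rho(A) = \lim_{r\to\infty}\|A^r\|^{1/r}$ (with the Frobenius norm) together with the elementary bound $\sup_{i,j}|B_{ij}| \le c(d)\|B\|$, which is precisely the alternative you sketch in your final parenthesis. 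The trade-off: Jordan form is more elementary in the sense that it does not rely on Gelfand's theorem as a black box and gives explicit polynomial-times-geometric control on the entries, at the cost of a change of basis and case analysis; the Gelfand route is shorter and basis-free, at the cost of invoking a nontrivial (though standard) limit theorem. Either is perfectly adequate here, since all that is needed downstream is the qualitative $o({\rho'}^r)$ decay.
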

\begin{proof}
This follows from the well known result (Gelfand's spectral radius theorem): $\rho(A)= \lim_{r\to \infty} \|A^r \|^{1/r}$ where $\| \cdot \|$ denotes the Frobenius norm of the matrix, and the fact that \newline $\max_{1\le i,j \le d}|B_{ij}|\le  \| B \|$  for any $d \times d$ matrix $B$.
\end{proof}

The aim of the next proposition is to show that in the expansion of $\s_k(\uo)$ in terms of $\s_r(\uz,\uo)$ with $r \le k $, it is essentially the leading term $\s_k(\uz,\uo)$ that matters. For clarity, we will switch to a different indexing of the $\s$-s and involve the Gaussian coefficients of the polynomial more directly. Recall that ${\displaystyle  \frac{\s_{n-r}(\uz,\uo)}{{\sqrt{{n \choose n-r}(n-r)!}}}=(-1)^{n-r}\frac{\xi_r}{\xi_n}}$ . Divide both sides of the expansion proved in Proposition \ref{expansion} by $\c$ and note that $\c=\sqrt{{n \choose {k-r}}(k-r)!}\sqrt{(n-k+1)\cdots(n-k+r)} \hspace{5pt}$. 

\begin{notation}
 We denote the  falling factorial (of order $k$) of an integer $x$ by \[ (x)_k:=x(x-1)\cdots (x-k+1)\hspace{3pt}. \] For $k=0$ we set $(x)_0=1$.
\end{notation}
 
Switching variables to $l=n-k$, we can rewrite the expansion in Proposition \ref{expansion} as
\[ \frac{\s_{n-l}(\uo)}{\sqrt{{n \choose {n-l}}(n-l)!}}= (-1)^{n-l}\frac{\xi_l}{\xi_n}+\sum_{r=1}^{n-l}(-1)^{n-l-r}g_r\frac{\xi_{l+r}}{\xi_n}\frac{1}{\sqrt{(l+r)_r}}  \]
for $l \ge m$.
Define for any integer $l \ge 1$
\begin{equation}
\label{etan} \eta_l^{(n)}=\sum_{r=1}^{n-l} (-1)^{n-l-r} g_r \xi_{l+r}\frac{1}{\sqrt{(l+r)_r}} \hspace{5pt}.
\end{equation}
As a result, for $l \ge m$ we have 
\begin{equation}
 \label{expansion5}
\frac{\s_{n-l}(\uo)}{\sqrt{{n \choose {n-l}}(n-l)!}}=  \frac{1}{\xi_n} \l[(-1)^{n-l}\xi_l + \eta_l^{(n)}   \r] .
\end{equation}

\begin{proposition}
\label{techgaf3}
Let $\eta_l^{(n)}$ be as in (\ref{etan}). Then there exist positive random variables $\eta_l$ (independent of $n$) such that a.s. $\l|\eta_l^{(n)}\r|\le \eta_l $, and for fixed $l_0 \in \mathbb{N}$ and large enough $M >0$ we have
 \[(i)\P \l[ \eta_l>\frac{M}{l^{1/8}} \mathrm{\ for\  some \ } l \ge 1  \r] \le e^{-c_1M^2} \]
 \[(ii)\P \l[ \eta_l>\frac{M}{l^{1/8}} \mathrm{\ for\  some \ } l \ge l_0  \r] \le e^{-c_2M^2l_0^{\frac{1}{4}}}  \]
where $c_1,c_2$ are constants that depend on the domain $\D$ and on $m$.
\end{proposition}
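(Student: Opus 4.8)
The plan is to dominate $\eta_l^{(n)}$ uniformly in $n$ by an explicit random variable built from the Gaussian coefficients, whose natural scale decays like $1/\sqrt l$, and then run a sub-Gaussian tail estimate followed by a union bound over $l$. By Proposition \ref{expansion} (after harmlessly enlarging its constant to a deterministic $K = K(\D,m) \ge 1$), on $\Omega_n^{m,\delta}$ we have $|f_r| = |g_r| \le K^r$ for every $r \ge 1$. Writing $(l+r)_r = (l+1)(l+2)\cdots(l+r)$, set
\[ \eta_l := \sum_{r=1}^{\infty} \frac{K^r}{\sqrt{(l+r)_r}}\,|\xi_{l+r}| . \]
This is manifestly independent of $n$; since $(l+r)_r \ge r!$ we get $\E[\eta_l] \le \E|\xi_1|\sum_{r\ge1} K^r/\sqrt{r!} < \infty$, so $\eta_l$ is a.s.\ finite and positive, and comparing (\ref{etan}) term by term with this series gives $|\eta_l^{(n)}| \le \eta_l$ a.s.\ on $\Omega_n^{m,\delta}$ (which is where $\eta_l^{(n)}$ lives).

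Next I record a pointwise tail bound. Put $L_0 := \lceil 4K^2 \rceil$. For $l \ge L_0$ the inequality $(l+r)_r \ge (l+1)^r$ makes the two relevant series geometric with ratio $\le \tfrac12$:
\[ \E[\eta_l] \le \E|\xi_1|\sum_{r\ge1}\Big(\frac{K}{\sqrt{l+1}}\Big)^r \le \frac{C_1}{\sqrt l}, \qquad \sum_{r\ge1}\frac{K^{2r}}{(l+r)_r} \le \sum_{r\ge1}\Big(\frac{K^2}{l+1}\Big)^r \le \frac{C_2}{l} . \]
Since $\eta_l - \E[\eta_l] = \sum_{r\ge1} w_{l,r}\big(|\xi_{l+r}| - \E|\xi_{l+r}|\big)$ with $w_{l,r} = K^r/\sqrt{(l+r)_r}$ is a sum of independent, centred, uniformly sub-Gaussian terms (the modulus of a standard complex Gaussian has Gaussian tails), the second bound shows it has proxy variance $O(1/l)$, whence $\P[\eta_l > \E[\eta_l] + t] \le \exp(-c\,l\,t^2)$ for all $t > 0$. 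Taking $t = M l^{-\gamma} - \E[\eta_l]$, which for $M \ge 2C_1$ exceeds $\tfrac12 M l^{-\gamma}$ because $\E[\eta_l] \le C_1 l^{-1/2} \le C_1 l^{-\gamma}$ (here $\gamma = \tfrac18 < \tfrac12$ enters), gives
\[ \P\Big[\eta_l > \frac{M}{l^{\gamma}}\Big] \le \exp\big(-c'\,M^2\,l^{\,1-2\gamma}\big) = \exp\big(-c'\,M^2\,l^{3/4}\big) \qquad (l \ge L_0) . \]
For each of the finitely many $l < L_0$ the variable $\eta_l$ is still sub-Gaussian (with a parameter depending only on $\D$ and $m$), so $\P[\eta_l > M/l^{\gamma}] \le \P[\eta_l > M/L_0^{\gamma}] \le e^{-c''M^2}$ for $M$ large.

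It remains to sum over $l$. The elementary inequality $l^{3/4} \ge \tfrac12 l_0^{3/4} + \tfrac12 l^{3/4}$ for $l \ge l_0 \ge 1$ gives, for $M \ge 1$,
\[ \sum_{l\ge l_0} e^{-c'M^2 l^{3/4}} \le e^{-\frac{c'}{2}M^2 l_0^{3/4}}\sum_{l\ge1} e^{-\frac{c'}{2} l^{3/4}} = C_3\, e^{-\frac{c'}{2}M^2 l_0^{3/4}} . \]
For (i) ($l_0 = 1$), adding the contribution of the terms $l < L_0$ yields $\P[\exists\, l\ge1:\ \eta_l > M l^{-\gamma}] \le L_0 e^{-c''M^2} + C_3 e^{-\frac{c'}{2}M^2}$, which is $\le e^{-c_1 M^2}$ once $M$ is large enough to absorb the constants. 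For (ii): when $l_0 \ge L_0$ the displayed bound, using $l_0^{3/4} \ge l_0^{1/4}$ and enlarging $M$ to absorb $C_3$, gives $\le e^{-\frac{c'}{4}M^2 l_0^{1/4}}$; when $1 \le l_0 < L_0$, part (i) already gives $\le e^{-c_1 M^2} \le e^{-c_2 M^2 l_0^{1/4}}$ provided $c_2 \le c_1/L_0^{1/4}$. Taking $c_2 := \min\{c'/4,\, c_1/L_0^{1/4}\}$ settles (ii). The only genuine work is the bookkeeping that separates the ``small-$l$'' range $l < L_0$, where the sub-Gaussian scale has not yet improved to $O(1/l)$, from the range $l \ge L_0$, plus the splitting trick $l^{3/4} \ge \tfrac12 l_0^{3/4} + \tfrac12 l^{3/4}$ that pulls the $l_0$-dependent factor out of the tail sum; everything else is a routine sub-Gaussian computation, and any $\gamma \in (0,\tfrac12)$ would do, $\tfrac18$ being a convenient choice.
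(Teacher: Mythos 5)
Your proof is correct, but it takes a genuinely different route from the paper's. The paper dominates $|\eta_l^{(n)}|$ by first distributing a factor of $l^{r/4}(r!)^{1/4}$ out of $\sqrt{(l+r)_r}$, then bounding each individual modulus $|\xi_{l+r}|$ by a deterministic $(l,r)$-dependent threshold and taking a union bound over \emph{all pairs} $(l,r)$; the per-$l$ contribution in that union bound decays like $\exp(-cM^2 l^{1/4})$ (coming from the $r=1$ term). You instead freeze $l$, observe that $\eta_l = \sum_r w_{l,r}|\xi_{l+r}|$ is a sum of independent sub-Gaussian terms with $\sum_r w_{l,r}^2 = O(1/l)$ (for $l\ge L_0$, via the geometric bound $(l+r)_r\ge(l+1)^r$), and invoke a Hoeffding-type sub-Gaussian concentration inequality to get $\exp(-c'M^2 l^{1-2\gamma}) = \exp(-c'M^2 l^{3/4})$ per $l$, and only then union over $l$. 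Your approach buys a sharper per-$l$ decay and makes transparent why any $\gamma\in(0,1/2)$ works (the constraint $\E[\eta_l]\lesssim l^{-1/2}\le l^{-\gamma}$ and the exponent $1-2\gamma>0$ both need $\gamma<1/2$), at the cost of invoking sub-Gaussian concentration machinery; the paper's argument is more elementary, using only exponential tails of individual Gaussians and a two-index union bound, but obscures the degree of slack. Your bookkeeping — splitting off $l<L_0=\lceil 4K^2\rceil$, where the geometric acceleration hasn't kicked in, and the splitting $l^{3/4}\ge\tfrac12 l_0^{3/4}+\tfrac12 l^{3/4}$ — is clean and correct, including the minor case distinction $l_0<L_0$ vs.\ $l_0\ge L_0$ in (ii). One point worth making explicit (though it matches the paper's implicit convention): the bound $|g_r|\le K^r$ from Proposition \ref{expansion} is stated with an $O(\cdot)$-constant, so one should enlarge $K$ to absorb that constant and the finitely many small-$r$ exceptions before asserting the clean bound $|\eta_l^{(n)}|\le\eta_l$; you flag this in passing (``harmlessly enlarging its constant''), which suffices.
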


\begin{proof}
We begin by recalling from Proposition \ref{expansion} that a.s. $|g_r|\le K^r$ for some $K=K(\D,m)$. Moreover, $\sqrt{(l+p)}\ge \sqrt{2}l^{\frac{1}{4}}p^{\frac{1}{4}}$
Hence, \[ |\eta_l^{(n)}|\le \sum_{r=1}^{n-l} \frac{K^r}{2^{r/2}l^{r/4}(r!)^{1/4}} \l| \xi_{l+r} \r| . \]
Let $B$ be such that ${ \displaystyle \sup_{r\ge 1} \frac{K^r}{2^{r/2}(r!)^{1/8}} \le B}$ . Then ${\displaystyle  |\eta_l^{(n)}|\le \sum_{r=1}^{n-l} \frac{B\l| \xi_{l+r} \r|}{l^{r/4}(r!)^{1/8}} } .$

Define  ${ \displaystyle \eta^{ }_l = \sum_{r=1}^{\infty} \frac{B\l| \xi_{l+r} \r|}{l^{r/4}(r!)^{1/8}} } \hspace{3 pt} .$ Clearly, $|\eta_l^{(n)}|\le \eta^{ }_l$.
Now, $|\xi_{l+r}|^2$ is an exponential random variable with mean 1, we have  \begin{equation}\label{norest}  \P \l( |\xi_{l+r}| \ge \frac{Ml^{r/8}(r!)^{1/16}}{B} \r) = \text{exp}\l(-\frac{M^2l^{r/4}(r!)^{1/8}}{B^2} \r) . \end{equation}
Moreover, \[\sum_{l=1}^{\infty}\sum_{r=1}^{\infty}\text{exp}\l(-\frac{M^2l^{r/4}(r!)^{1/8}}{B^2} \r) \\
\le C_1\text{exp}\l( -M^2/B^2 \r) .\]
So we can have $|\xi_{l+r}| \le \frac{Ml^{r/8}(r!)^{1/16}}{B}$ for all $l \ge1, r\ge0$, except on an event of probability $\le C_1\text{exp}\l( -M^2/B^2 \r)$. The last expression can be made arbitrarily small by fixing $M$ to be large enough. Note that for large enough $M$, we have $C\text{exp}\l( -M^2/B^2 \r) \le \text{exp}\l( -cM^2 \r)$.
On the complement of this small event, we have for all $l \ge 1$  \[ \eta_l \le \sum_{r=1}^{\infty} \frac{M}{l^{r/8}(r!)^{1/16}} \le \sum_{r=1}^{\infty} \frac{M}{l^{r/8}(r!)^{1/16}} \le C\frac{M}{l^{1/8}} \hspace{3pt}.\]
We then absorb this factor of $C$ into $M$ by changing the constant $c_1$ appearing in the exponent of the tail estimate (i) we want to prove.

For proving (ii) we proceed exactly as in the case of (i) above and sum (\ref{norest}) over $r \ge 1$ and $l \ge l_0$.
\end{proof}

We are now ready to prove bounds on the quantities $D^{(1)}_{i,j}(\z,\o)$ and $D^{(2)}_{i,j}(\z,\o)$ in (\ref{exp2}) and (\ref{exp3}). To this end, we first express them in terms of $\xi$-s, using ${\s_{n-l}(\uz,\uo)} \big/ {\sqrt{{n \choose {n-l} }(n-l)!}}=(-1)^{n-l}{\xi_l} \big/ {\xi_n}$ and (\ref{expansion5}). For $2\le i,j \le m$ these expressions, (after canceling out $|\xi_n|^2$ from numerator and denominator) reduce to  :

\begin{equation}
\label{exp2a}
{\displaystyle D^{(1)}_{i}(\z,\o) = \l| \sum_{l=0}^{n-i}\frac{\overline{(-1)^{n-l}\xi_{l}}\l((-1)^{n-l-i}\xi_{l+i}+\eta_{l+i}^{(n)}\r)}{\sqrt{(l+i)_i}} \r|} \bigg/  {\displaystyle  \l( \sum_{l=0}^{n}|\xi_l|^2 \r)   }
\end{equation}
and
\begin{equation}
\label{exp3a}
{\displaystyle D^{(2)}_{i,j}(\z,\o) = \l|\sum_{l=0}^{n-i \wedge n-j}\frac{\overline{\l((-1)^{n-l-i}\xi_{l+i}+\eta_{l+i}^{(n)}\r)}\l((-1)^{n-l-j}\xi_{l+j}+\eta_{l+j}^{(n)}\r)}{\sqrt{(l+i)_i}\sqrt{(l+j)_j}}\r|} \bigg/ {\displaystyle \l( \sum_{l=0}^{n}|\xi_l|^2 \r)  } .
\end{equation}

Define \[ \e_n=\sum_{l=0}^n |\xi_l|^2 , \qquad \L_{ij}^{(n)}=\sum_{l=0}^{n-i \wedge n-j}\frac{(-1)^{i+j}\overline{\xi_{l+i}}\xi_{l+j}}{\sqrt{(l+i)_i (l+j)_j}} \hspace{5 pt} , \]
\[
\M_{ij}^{(n)}=\sum_{l=0}^{n-i \wedge n-j}\frac{|\xi_{l+i}|\eta_{l+j}}{\sqrt{ (l+i)_i (l+j)_j}} \hspace{5 pt} , \qquad 
\N_{ij}^{(n)}=\sum_{l=0}^{n-i \wedge n-j}\frac{\eta_{l+i}\eta_{l+j}}{\sqrt{(l+i)_i (l+j)_j}} \hspace{5 pt} .\]
Recall that for $i=0$  the product $(l+i)_i$ in the denominator is replaced by 1.
Expanding out the products and applying the triangle inequality in (\ref{exp2a}) and (\ref{exp3a}), we observe that in order to obtain upper bounds for $D^{(1)}_{i}(\z,\o)$ and $D^{(2)}_{i,j}(\z,\o)$, it suffices to estimate from above the following quantities  (remember that $|\eta_l^{(n)}|\le \eta_l$ for all $l$):
\begin{equation} \label{rememb}  \frac{\l( \l|\L_{0j}^{(n)} \r|+ \M_{0j}^{(n)} \r) }{\e_n} , \qquad  
\frac{\l( {\l|\L_{ij}^{(n)} \r|}+ {\M_{ij}^{(n)} } + {\M_{ji}^{(n)} } + {\N_{ij}^{(n)} } \r)}{\e_n} \mathrm{ \ for \ } 2 \le i,j \le m \hspace{5 pt}. \end{equation}
To be more specific, by a simple application of the triangle inequality, $D^{(1)}_{i}(\z,\o)$ is bounded above by  ${\l( \l|\L_{0j}^{(n)} \r|+ \M_{0j}^{(n)} \r) }\big/{\e_n}$ and $D^{(2)}_{i,j}(\z,\o)$ is bounded above by  ${\l( {\l|\L_{ij}^{(n)} \r|}+ {\M_{ij}^{(n)} } + {\M_{ji}^{(n)} } + {\N_{ij}^{(n)} } \r)}\big/{\e_n}$.

Let \begin{align*} \Y_n= &&  \sum_{i=2}^m \l|\L_{0i}^{(n)} \r| + \sum_{i=2}^m \M_{0i}^{(n)} +  \sum_{i,j \ge 2}^m \l|\L_{ij}^{(n)} \r| + \sum_{i,j \ge 2}^m \M_{ij}^{(n)} + \sum_{i,j \ge 2}^m \M_{ji}^{(n)}   + \sum_{i,j \ge 2}^m \N_{ij}^{(n)}   \hspace{5 pt}. \end{align*}

Recall  (\ref{upper})  and  that for fixed $m$ and $\D$, we have that $\s_i(\uz)$ are uniformly bounded. Putting all these together, we deduce that: for some constant $K(m,\D)$ we have 
\begin{equation}
\label{upperlower}
1-K(m,\D)\frac{\Y_n}{\e_n} \le \frac{D(\uz',\uo)}{D(\uz,\uo)} \le 1+ K(m,\D)\frac{\Y_n}{\e_n} \hspace{5 pt}.
\end{equation}
Regarding $\Y_n$ and $\e_n$, we have the following estimates:
\begin{proposition}
\label{gafestprep}
Given $M>0$ we have:\\
$\mathrm{(i)}$ $\P[\Y_n\ge M\log M]\le c(m,\D)/M$ ,\\
$\mathrm{(ii)}$ There exists $n_0$ such that for $n \ge n_0$ we have $\P[\frac{n}{2}\le|\e_n|\le 2n] \ge 1-\frac{1}{M} \hspace{5 pt}.$ 
\end{proposition}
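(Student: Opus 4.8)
The plan is to treat the two assertions separately, as their proofs are of quite different character. Part (ii) is immediate from the law of large numbers: $\e_n=\sum_{l=0}^{n}|\xi_l|^2$ is a sum of $n+1$ i.i.d.\ $\mathrm{Exp}(1)$ random variables, since $|\xi_l|^2$ is exponential with mean $1$; hence $\e_n/(n+1)\to 1$ almost surely, so $\P[\tfrac{n}{2}\le\e_n\le 2n]\to 1$ as $n\to\infty$, and for a given $M$ one picks $n_0$ so that this probability is $\ge 1-1/M$ for all $n\ge n_0$. (A Chernoff bound for sums of exponentials would even give the quantitative $n_0=O(\log M)$, but that refinement is not needed.)

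For part (i) I will establish the stronger statement that $\E[\Y_n]\le c(m,\D)<\infty$ uniformly in $n$; Markov's inequality then gives $\P[\Y_n\ge M\log M]\le \E[\Y_n]/(M\log M)\le c(m,\D)/M$ for $M\ge e$, which is the range of interest. Since $\Y_n$ is a finite sum, indexed by $2\le i,j\le m$, of the nonnegative quantities $|\L^{(n)}_{0i}|$, $\M^{(n)}_{0i}$, $|\L^{(n)}_{ij}|$, $\M^{(n)}_{ij}$, $\N^{(n)}_{ij}$, it suffices to bound the expectation of each. For the $\L$-terms I use the orthogonality $\E[\overline{\xi_a}\xi_b]=0$ for $a\ne b$ together with $\E|\xi_k|^2=1$: when $i\ne j$, in the expansion of $\E|\L^{(n)}_{ij}|^2$ only the diagonal $l=l'$ survives, giving $\E|\L^{(n)}_{ij}|^2=\sum_l\big((l+i)_i(l+j)_j\big)^{-1}$, and similarly $\E|\L^{(n)}_{0i}|^2=\sum_l\big((l+i)_i\big)^{-1}$; when $i=j$ the term $\L^{(n)}_{ii}=\sum_l|\xi_{l+i}|^2/(l+i)_i$ is itself nonnegative with $\E\L^{(n)}_{ii}=\sum_l\big((l+i)_i\big)^{-1}$. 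Using the elementary inequality $(l+p)_p\ge(l+1)^p\ge(l+1)^2$ for $p\ge 2$, each of these series is dominated by $\sum_l(l+1)^{-2}<\infty$, whence $\E|\L^{(n)}_{\bullet}|\le(\E|\L^{(n)}_{\bullet}|^2)^{1/2}\le C(m,\D)$. For the $\M$- and $\N$-terms I use the moment bound $\E[\eta_l^2]=O(l^{-2\gamma})$, $\gamma=\tfrac{1}{8}$, for the dominating variables $\eta_l$ of Proposition \ref{techgaf3} — which follows from Proposition \ref{techgaf3}(i) by integrating $\E[\eta_l^2]=\int_0^\infty 2t\,\P[\eta_l>t]\,dt$, or directly from the explicit series $\eta_l=B\sum_{r\ge1}|\xi_{l+r}|/\big(l^{r/4}(r!)^{1/8}\big)$ in its proof together with $\E[|\xi_a||\xi_b|]\le 1$. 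Cauchy--Schwarz then gives $\E[|\xi_{l+i}|\,\eta_{l+j}]=O\big((l+1)^{-\gamma}\big)$ and $\E[\eta_{l+i}\eta_{l+j}]=O\big((l+1)^{-2\gamma}\big)$; dividing by the deterministic denominators $\sqrt{(l+i)_i(l+j)_j}\ge(l+1)^2$ (respectively $\sqrt{(l+i)_i}\ge(l+1)$) shows that $\E\M^{(n)}_{ij}$, $\E\M^{(n)}_{0i}$ and $\E\N^{(n)}_{ij}$ are each dominated by $C\sum_l(l+1)^{-1-\delta}$ for some $\delta>0$, hence finite. Summing over the $O(m^2)$ contributions yields $\E[\Y_n]\le c(m,\D)$ uniformly in $n$.

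The only point requiring a little care is the combinatorial computation of $\E|\L^{(n)}_{ij}|^2$: one must verify that for $i\ne j$ no off-diagonal pair $l\ne l'$ contributes, which uses both the vanishing of mixed moments of distinct complex Gaussians and the hypothesis $i\ne j$ (for $i=j$ there is nothing to check since $\L^{(n)}_{ii}\ge 0$), and that the decay exponent $\gamma=\tfrac{1}{8}$ of the $\eta_l$ is large enough — which it comfortably is, since the deterministic denominators already contribute a factor $\ge(l+1)^2$, so any $\gamma>0$ would suffice. No concentration argument is needed: the weak bound $c(m,\D)/M$ in the statement is extracted directly from the uniform first-moment estimate via Markov's inequality.
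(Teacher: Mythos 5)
Your proof is correct, and on part (i) it takes a genuinely different (and cleaner) route than the paper. Part (ii) is identical to the paper's argument via the law of large numbers. For part (i), the paper handles the $\M$- and $\N$-terms by conditioning: it introduces the event $A$ on which some $\eta_l$ exceeds $\log M/l^{1/8}$ (probability $\le e^{-c(\log M)^2}$ by Proposition \ref{techgaf3}), bounds $\E[|\M_{ij}^{(n)}|\mathbf{1}_{A^c}]\le c\log M$ by substituting the deterministic bound $\eta_{l+j}\le \log M/(l+j)^{1/8}$, and similarly shows $\N_{ij}^{(n)}\mathbf{1}_{A^c}\le c(\log M)^2<M\log M$; Markov and the union bound then produce the stated $c/M$. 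You instead show the stronger and more transparent statement that $\E[\Y_n]\le c(m,\D)<\infty$ uniformly in $n$, from which Markov gives the required tail bound in one line. The only ingredient you need beyond the paper's that is not literally a statement of Proposition \ref{techgaf3} is a second-moment bound on the dominating variables $\eta_l$, namely $\E[\eta_l^2]=O(l^{-2\gamma})$; you correctly observe this follows either by integrating the tail estimate in Proposition \ref{techgaf3}(i) (being careful that it is stated only for $M$ large, so the small-$M$ part of the integral must be crudely bounded) or, more directly, from the explicit majorizing series $\eta_l=B\sum_{r\ge 1}|\xi_{l+r}|/(l^{r/4}(r!)^{1/8})$ given inside the proof of that proposition together with $\E[|\xi_a||\xi_b|]\le 1$. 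Your Wick-calculus computation of $\E|\L_{ij}^{(n)}|^2$ for $i\ne j$, your separate treatment of the nonnegative diagonal case $\L_{ii}^{(n)}$, and the lower bound $(l+p)_p\ge(l+1)^p$ for $p\ge 2$ (with $(l+0)_0=1$) are all as needed, and the resulting series converge. Your version trades the paper's truncation-and-bad-event bookkeeping for a direct Cauchy--Schwarz argument, and buys a marginally stronger estimate $\P[\Y_n\ge t]\le c/t$ for all large $t$ rather than only along $t=M\log M$; the only minor loose end, which you flag, is that the deduction $\P[\Y_n\ge M\log M]\le c/M$ via Markov requires $M\ge e$, but for smaller $M$ the stated inequality is trivially true once $c(m,\D)$ is chosen at least $e$.
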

\begin{proof} \underline{Part (i):}\\
 We will show that if  $X$ is one of $\L_{ij}^{(n)},\M_{ij}^{(n)}$ or $\N_{ij}^{(n)}$, then it satisfies $\P[|X|>M\log M]<c'(m,\D)/M$. This would imply $\P[\Y_n> M\log M]<c_1(m,\D)/M$ albeit for a different constant $c(m,\D)$. This is easily seen as follows. The number of summands in the definition of $\Y_n$ is a polynomial in $m$, let us call it $p(m)$. Now, for each summand $X$ of $\Y_n$, we have \[\P\l[|X|> \frac{M\log M}{p(m)}\r] \le \P\l[|X|> \frac{M}{p(m)}\log \frac{M}{p(m)}\r] \le c'(m,\D)p(m)/M .\] If $\Y_n > M\log M$ and there are $p(m)$ summands, at least one of the summands must be $>M\log M /p(m)$. By a union bound over the summands, this implies $\P[\Y_n >M\log M]\le c'(m,\D)p(m)^2/M$. Setting $c(m,\D)=c'(m,\D)p(m)^2$ as the new constant, this would give us part (i) of the proposition.

It remains to show that each of the summands in $\Y_n$ satisfies $\P[|X|>M\log M]<c(m,\D)/M$ with the quantity $c(m, \D)$ not depending on $n$. The understanding is that $c$ will depend on the particular summand, and we will take a maximum over the $ p(m) $ summands in order to obtain $c'(m,\D)$ as in the previous paragraph. We consider the cases of $\L,\M$ and $\N$ separately.

\underline{Estimating $\L_{ij}^{(n)}, i \vee j \ge 2$:} 

We have \[ \E \l[ |\L_{ij}^{(n)}|^2 \r] = \sum_{l=0}^{n-i \wedge n-j}\frac{1}{(l+i)_i  (l+j)_j} \le \sum_{l=0}^{\infty}\frac{1}{(l+i)_i (l+j)_j} = c(i,j) \le c <\infty \]
(since either $i$ or $j \ge 2$). An application of Chebyshev's inequality proves the desired tail bound on $\L_{ij}^{(n)}$.

\underline{Estimating $\M_{ij}^{(n)}, j \ge 2$:}

By Proposition \ref{techgaf3} we can assume (after discarding an event $A$ of probability  $\le e^{-c(\log M)^2}$) that $\text{max}_{l \ge 1}  \eta_l  \le \log M/l^{1/8} $.  Applying the triangle inequality to the definition of $\M_{ij}^{(n)}$ and using the upper bound on $\eta_{l+j} $ we have \[ \E \l[ |\M_{ij}^{(n)}|{1\hskip-4pt{\rm 1}}_{A^c} \r] \le \log M \ \E \l[ \sum_{l=0}^{n-i \wedge n-j} \frac{|\xi_{l+i}|}{(l+j)^{1/8}\sqrt{(l+i)_i (l+j)_j}} \r] \le  m(i,j) \log M  \le c \log M < \infty, \]
where ${\displaystyle m(i,j)=\sum_{l=0}^{\infty} \frac{\E[|\xi|]}{(l+j)^{1/8}\sqrt{(l+i)_i (l+j)_j}} \le c < \infty }$, and $\xi$ is a standard complex Gaussian. In the last step, we have used the fact that $ j \ge 2$ in order to upper bound $m(i,j)$ uniformly in $i$ and $j$ . Applying Markov inequality gives us the desired tail estimate ${\displaystyle \P (|M_{ij}^{(n)}|{1\hskip-4pt{\rm 1}}_{A^c}>M\log M) \le {c}/{M}}.$ This, along with ${\displaystyle \P(A)<e^{-c(\log M)^2}}$ completes the proof.

\underline{Estimating $\N_{ij}^{(n)}, i \wedge j \ge 2$:}

We consider the event $A$ as in the case of $\M_{ij}^{(n)}$. On $A^c$, we use $\eta_{l+i}\eta_{l+j}\le (\log M)^2<M\log M $ (for large $M$), and the rest is bounded above by \[\sum_{l=0}^{n-i \wedge n-j}\frac{1}{l^{1/4}\sqrt{(l+i)_i (l+j)_j}} \le \sum_{l=0}^{\infty}\frac{1}{(l+i)^{1/8}(l+j)^{1/8}\sqrt{(l+i)_i (l+j)_j}} =c(i,j) \le c <\infty .\]  This, along with $\P[A]<e^{-c(\log M)^2}$ establishes the desired tail bound on $N_{ij}^{(n)}$.

All these arguments complete the proof of part (i) of Proposition \ref{gafestprep}.

\underline{Part(ii):}
\newline
We simply observe that $|\xi|^2$-s are i.i.d. exponentials and by the strong law of large numbers, $\e_n/n \to 1$ a.s. From this, the statement of part (ii) follows.

\end{proof}

Define
\[ \L_{ij}=\sum_{l=0}^{\infty}\frac{(-1)^{i+j}\overline{\xi_{l+i}}\xi_{l+j}}{\sqrt{(l+i)_i (l+j)_j}} \hspace{5 pt}, \qquad \tau(L_{ij}^{(n)})= \L_{ij} - \L_{ij}^{(n)}, \hspace{5 pt} i \vee j \ge 2  \]
\[
\M_{ij}=\sum_{l=0}^{\infty}\frac{{|\xi_{l+i}|}\eta_{l+j}}{\sqrt{ (l+i)_i (l+j)_j}} \hspace{5 pt} , \qquad  \tau(M_{ij}^{(n)})= \M_{ij} - \M_{ij}^{(n)}, \hspace{5 pt}  j \ge 2 \] 
\[\N_{ij}=\sum_{l=0}^{\infty}\frac{\eta_{l+i}\eta_{l+j}}{\sqrt{(l+i)_i (l+j)_j}} \hspace{5 pt}, \qquad  \tau(N_{ij}^{(n)})= \N_{ij} - \N_{ij}^{(n)}, \hspace{5 pt}   i \wedge j \ge 2 .\]
For $i=0$  the product $(l+i)_i$ in the denominator is replaced by 1.
The above random variables have finite first moments, as can be seen by arguing on similar lines to the estimates in Proposition \ref{gafestprep}. Similar arguments with first moments of $\tau(L_{ij}^{(n)}),\tau(M_{ij}^{(n)}),\tau(N_{ij}^{(n)})$ also show that:
\begin{proposition}
\label{uselat1} 
As $n \to \infty$, each of the random variables $\tau(L_{ij}^{(n)}), \tau(M_{ij}^{(n)}), \tau(N_{ij}^{(n)})$ converges to $0$ in $L^1$, and hence, in probability.
\end{proposition}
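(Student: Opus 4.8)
The plan is to recognize each of $\L_{ij}^{(n)}$, $\M_{ij}^{(n)}$, $\N_{ij}^{(n)}$ as a partial sum (over $l$ from $0$ to $(n-i)\wedge(n-j)$) of the corresponding infinite series $\L_{ij}$, $\M_{ij}$, $\N_{ij}$, so that $\tau(L_{ij}^{(n)})$, $\tau(M_{ij}^{(n)})$, $\tau(N_{ij}^{(n)})$ are precisely their tails, running over $l>(n-i)\wedge(n-j)$. Since we already know (as noted just above) that $\L_{ij}$, $\M_{ij}$, $\N_{ij}$ have finite first moments, it suffices to bound the $L^1$-norm of each tail by the tail of a convergent numerical series; letting $n\to\infty$ then forces these tails to $0$, which is the asserted $L^1$ convergence, and $L^1$ convergence gives convergence in probability. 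The two elementary facts I will lean on are that $(l+i)_i\asymp l^i$ for fixed $i$ (with $(l)_0:=1$), hence $\sqrt{(l+i)_i(l+j)_j}\gtrsim l^{(i+j)/2}$; and the moment bounds $\E[\eta_l]\lesssim l^{-1/4}$, $\E[\eta_l^2]\lesssim l^{-1/2}$ for $l\ge1$, which follow at once from the defining series $\eta_l=\sum_{r\ge1}B|\xi_{l+r}|\big/\big(l^{r/4}(r!)^{1/8}\big)$ of Proposition \ref{techgaf3}.

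For the $\L$ terms, which occur only with $i\vee j\ge2$ and the other index equal to $0$ or at least $2$, I would split into two cases. When $i\ne j$, the summands $\overline{\xi_{l+i}}\xi_{l+j}$ form an orthogonal family in $L^2$: one checks that $\E\big[\overline{\xi_{l+i}}\xi_{l+j}\,\xi_{l'+i}\overline{\xi_{l'+j}}\big]$ is $0$ for $l\ne l'$ and $1$ for $l=l'$, using $\E\xi=\E\xi^2=0$, $\E|\xi|^2=1$ and $i\ne j$ to exclude the off-diagonal pairings. Hence $\E\big[|\tau(L_{ij}^{(n)})|^2\big]=\sum_{l>(n-i)\wedge(n-j)}\big((l+i)_i(l+j)_j\big)^{-1}=O\big(\sum_{l>(n-i)\wedge(n-j)}l^{-(i+j)}\big)\to0$ since $i+j\ge2$, and Cauchy--Schwarz upgrades this to $\E|\tau(L_{ij}^{(n)})|\to0$. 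When $i=j\ge2$ the tail is a sum of the non-negative terms $|\xi_{l+i}|^2/(l+i)_i$, so $\E[\tau(L_{ii}^{(n)})]=\sum_{l>(n-i)\wedge(n-j)}1/(l+i)_i\lesssim\sum l^{-i}\to0$.

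For the $\M$ terms (which occur only with the $\eta$-index $\ge2$) and the $\N$ terms (which occur only with $i\wedge j\ge2$), the summands are already non-negative, so the $L^1$-norm of the tail is simply the tail of the series of expectations. I would estimate $\E\big[|\xi_{l+i}|\eta_{l+j}\big]\le\big(\E\eta_{l+j}^2\big)^{1/2}\lesssim l^{-1/4}$ and $\E\big[\eta_{l+i}\eta_{l+j}\big]\le\big(\E\eta_{l+i}^2\,\E\eta_{l+j}^2\big)^{1/2}\lesssim l^{-1/2}$ — applying Cauchy--Schwarz in the index, which conveniently avoids worrying about any dependence between $\xi_{l+i}$ and $\eta_{l+j}$ — and combine with $\sqrt{(l+j)_j}\gtrsim l$ (since $j\ge2$) in the $\M$ case and $\sqrt{(l+i)_i(l+j)_j}\gtrsim l^2$ (since $i,j\ge2$) in the $\N$ case. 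This makes the $l$-th expected summand $\lesssim l^{-5/4}$ for $\M$ and $\lesssim l^{-2}$ for $\N$, so in each case the tail of the series of expectations tends to $0$. Collecting the three cases yields $\E|\tau(\cdot)|\to0$ and hence convergence in probability.

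I expect the only point needing a little care to be the four-point Gaussian moment computation establishing orthogonality of the summands of $\L_{ij}$ with $i\ne j$; everything else is a routine tail estimate entirely analogous to the bounds worked out in the proof of Proposition \ref{gafestprep}, so I would keep those computations brief and cross-reference that proof.
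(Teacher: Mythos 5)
Your proof is correct. The paper explicitly omits the proof of this proposition (``to avoid repetitiveness''), so there is no paper proof to compare against directly, but the intended template is the one used for Proposition \ref{gafestprep}: second-moment (orthogonality) estimates for the $\L$ terms, first-moment estimates for the non-negative $\M$ and $\N$ terms, now applied to tails $l > (n-i)\wedge(n-j)$ rather than full sums. Your argument follows that template and fills in the details cleanly. Three remarks. First, your separate treatment of the diagonal case $\L_{ii}^{(n)}$, $i\ge 2$, is genuinely needed and correct: the orthogonality identity $\E|\tau(L_{ij}^{(n)})|^2 = \sum_{l}\big((l+i)_i(l+j)_j\big)^{-1}$ requires $i\ne j$, since for $i=j$ the summands $|\xi_{l+i}|^2/(l+i)_i$ are non-negative with nonzero mean; the direct first-moment bound you use there is the right tool (the paper's own computation in Proposition \ref{gafestprep} is silent on this distinction). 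Second, where the paper's Proposition \ref{gafestprep} controls the $\eta_l$ factors by passing to the complement of a small event $A$ on which $\eta_l\le \log M/l^{1/8}$, you instead use the unconditional moment bound $\E[\eta_l^2]\lesssim l^{-1/2}$ read off from the defining series for $\eta_l$; this is a small but genuine simplification, and is the natural choice when the target is $L^1$ convergence rather than a tail probability bound. Third, a harmless arithmetic slip: in the $\N$ case the exponent from your own estimates is $l^{-1/2}\cdot l^{-2}=l^{-5/2}$, not $l^{-2}$, but $l^{-2}$ is still summable so the conclusion stands.
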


Now we are ready to prove Proposition \ref{gafest}.

\begin{proof}[Proof of Proposition \ref{gafest}]
We refer back to equation (\ref{upperlower}):\[1-K(m,\D)\frac{\Y_n}{\e_n} \le \frac{D(\uz',\uo)}{D(\uz,\uo)} \le 1+ K(m,\D)\frac{\Y_n}{\e_n} \hspace{5 pt}.\] By virtue of Proposition \ref{gafestprep} we have, dropping an event of probability $\le c/M$, the inequality $\l| \Y_n/\e_n \r| \le 2 M\log M/n  $ (for $n \ge n_0$, where $n_0$ is large enough such that  Proposition \ref{gafestprep} part(ii) holds). Raising this to the $(n+1)$th power we obtain the desired result. All constants are absorbed in $K(m,\D)$. 
\end{proof}
\begin{corollary}
\label{gafestcor}
Let $(\uz,\uo)$ be picked randomly from the distribution $\P\l[\F_n\r]$ and $s=\sum_{i=1}^m \z_i$. Let $\Sigma_s=\{ \uz': \sum_{i=1}^m \zeta_i' =s  \}$. Given $M>0$ large enough, there exists $ n_0$ such that for all $n \ge n_0$, we have, on $\Omega_n^{m,\delta}$ ( except on an event of probability $\le C/M$ ) \[ e^{-4K(m,\D)MlogM} \le \l( {D(\uz'',\uo)} \bigg/ {D(\uz',\uo)} \r)^{n+1} \le e^{4K(m,\D)MlogM}  \]  for all $(\uz'',\uz') \in \Sigma_s \times \Sigma_s$ .
\end{corollary}
\begin{proof}
We consider the identity \[ \frac{D(\uz'',\uo)}{D(\uz',\uo)} =  \frac{D(\uz'',\uo)}{D(\uz,\uo)} \bigg/  \frac{D(\uz',\uo)}{D(\uz,\uo)}\] for   $(\uz',\uz'')\in \Sigma_s \times \Sigma_s $ where $s=\sum_{i=1}^m \z_i$, and applying the bounds in Proposition \ref{gafest}.

\end{proof}

\subsubsection{Estimate for Ratio of Conditional Densities}
\label{ERCD}
\begin{proposition}
 \label{unifgaf}
Let $(\uz,\uo)$ be picked randomly from the distribution $\P\l[\F_n\r]$ and $s=\sum_{i=1}^m \z_i$. Let $\Sigma_s=\{ \uz': \sum_{i=1}^m \zeta_i' =s  \}$. There exists a constant $K(m,\D,\delta)$ such that given $M>0$ large enough, for $n\ge n_0(m,M,\D)$  the following inequalities hold on $\Omega_n^{m,\delta}$, except on an event of probability $\le c(m,\D)/M$: \[      \exp \bigg({-K(m,\D,\delta)M\log M} \bigg)\frac{|\Delta(\uz'')|^2}{|\Delta(\uz')|^2}  \le \frac{\rho^n_{\uo,s}(\uz'')}{\rho^n_{\uo,s}(\uz')} \le \exp \bigg({K(m,\D,\delta)M\log M}\bigg)\frac{|\Delta(\uz'')|^2}{|\Delta(\uz')|^2}      \]
 for all $(\uz',\uz'') \in \Sigma_s \times \Sigma_s$.
\end{proposition}
\begin{proof}
We simply put together the estimates for the ratios of the Vandermondes as well as the symmetric functions and subsume all relevant constants in $c(m,\D)$ and $K(m,\D,\del)$.
\end{proof}

\subsection{Estimates for Inverse Powers of Zeroes}
\label{EIPZ}
In this section we prove estimates on the (smoothed) sum of inverse powers of GAF zeroes.

\begin{proposition}
\label{invgafestlem}
Let $\Phi$ be a $C_c^{\infty}$ radial function supported on the annulus between $r_0$ and $3r_0$. Let $\Phi_R = \Phi(z/R)$. We have
\[ \mathrm{(i)} \hspace{5 pt} \E \l[ \l| \int \frac{\Phi_R(z)}{z^l} \, \mathrm{d}[\F_n](z) \r| \r] \le C_1(r_0,\Phi,l)/R^l 
 \]
\[ \mathrm{(ii)} \hspace{5 pt} \E \l[\int \frac{\Phi_R(z)}{|z|^l} \, \mathrm{d}[\F_n](z) \r] \le C_2(r_0,\Phi,l)/R^{l-2} \hspace{3 pt}.  \]
The same is true for $\F$ in place of $\F_n$. The quantities $C_1(r_0,\Phi,l)$ and $C_2(r_0,\Phi,l)$ do not depend on $n$.
\end{proposition}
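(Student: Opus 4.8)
The plan is to establish the two bounds separately. Part $(ii)$ is a direct first--moment computation: write $\E\big[\int \Phi_R(z)|z|^{-l}\,d[\mathcal{F}_n](z)\big]=\int \Phi_R(z)|z|^{-l}\,\rho_1^{(n)}(z)\,d\mathcal{L}(z)$, where $\rho_1^{(n)}(z)=\tfrac{1}{4\pi}\Delta\log K_n(z,z)$ is the first intensity of $\mathcal{F}_n$ with respect to Lebesgue measure (the Edelman--Kostlan formula). Setting $t=|z|^2$ and $F(t)=\log K_n(t)$ with $K_n(t)=\sum_{k=0}^n t^k/k!$, one has $\rho_1^{(n)}=\tfrac1\pi\big(tF''(t)+F'(t)\big)$, and since $F'(t)=K_{n-1}(t)/K_n(t)\in(0,1)$ and $F''(t)<0$ (which amounts to the elementary inequality $K_{n-1}(t)^2>K_{n-2}(t)K_n(t)$), this gives $\rho_1^{(n)}(z)\le\tfrac1\pi$ for all $n$ and $z$; for $\mathcal{F}$ one has $\rho_1\equiv\tfrac1\pi$. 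Since $\Phi_R$ is supported on $\{Rr_0\le|z|\le 3Rr_0\}$, the integral is bounded by $\tfrac{\|\Phi\|_\infty}{\pi}\int_{Rr_0\le|z|\le 3Rr_0}|z|^{-l}\,d\mathcal{L}(z)=2\|\Phi\|_\infty\int_{Rr_0}^{3Rr_0}r^{1-l}\,dr\le C_1(r_0,\Phi,l)/R^{l-2}$, uniformly in $n$, and identically for $\mathcal{F}$.

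Part $(i)$ requires exploiting a cancellation, since by radial symmetry of $\rho_1^{(n)}$ the mean $\int \Phi_R(z)z^{-l}\rho_1^{(n)}(z)\,d\mathcal{L}(z)$ vanishes (the angular integral $\int_0^{2\pi}e^{-il\theta}\,d\theta$ is $0$ for $l\ge 1$). Hence $\E\big[\big|\int \Phi_R z^{-l}\,d[\mathcal{F}_n]\big|\big]\le \big(\mathrm{Var}(\int \Phi_R z^{-l}\,d[\mathcal{F}_n])\big)^{1/2}$ and it suffices to bound the variance. Put $\vartheta(z):=\Phi_R(z)/z^l$, which is $C_c^\infty$ because $\Phi_R$ vanishes near the origin, where $z^{-l}$ is holomorphic. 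Using the identity $\tfrac{1}{2\pi}\Delta\log|f_n|=\sum_{f_n(z_0)=0}\delta_{z_0}$ (valid a.s., the zeroes of $f_n$ being a.s.\ simple), pairing with the test function $\vartheta$ gives a.s.\ $\int\vartheta\,d[\mathcal{F}_n]=\tfrac{1}{2\pi}\int\log|f_n(z)|\,\Delta\vartheta(z)\,d\mathcal{L}(z)$, whence
\[ \mathrm{Var}\Big(\int\vartheta\,d[\mathcal{F}_n]\Big)=\frac{1}{4\pi^2}\iint \mathrm{Cov}\big(\log|f_n(z)|,\log|f_n(w)|\big)\,\Delta\vartheta(z)\,\overline{\Delta\vartheta(w)}\,d\mathcal{L}(z)\,d\mathcal{L}(w). \]
The uniform-in-$n$ input is that, for each fixed $z$, $\log|f_n(z)|$ has the distribution of $\tfrac12\log K_n(z,z)+\log|\xi|$ with $\xi$ a standard complex Gaussian, so $\mathrm{Var}(\log|f_n(z)|)=\mathrm{Var}(\log|\xi|)=\pi^2/24$ for every $z$ and $n$; by Cauchy--Schwarz the covariance above is bounded by $\pi^2/24$ everywhere. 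A routine estimate shows $|\Delta\vartheta(z)|\le C(r_0,\Phi,l)R^{-l-2}$ on its support (the only contributions are $(\Delta\Phi_R)z^{-l}$ and $-4l(\partial_{\bar z}\Phi_R)z^{-l-1}$, each of size $O(R^{-l-2})$), and this support has area $O(R^2)$, so $\int|\Delta\vartheta|\,d\mathcal{L}\le C(r_0,\Phi,l)R^{-l}$. Combining, $\mathrm{Var}\big(\int\vartheta\,d[\mathcal{F}_n]\big)\le\tfrac{1}{96}\big(\int|\Delta\vartheta|\,d\mathcal{L}\big)^2\le C(r_0,\Phi,l)R^{-2l}$, and taking square roots yields $(i)$; the case of $\mathcal{F}$ is identical with $K_n$ replaced by $K$.

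I expect the only genuinely delicate point to be the uniformity in $n$. Unlike the Ginibre case, where Proposition~\ref{est} furnished an $n$-free variance bound for linear statistics directly, here there is no comparably clean statement for the zero-counting measure available off the shelf; instead one routes through the identity for $\log|f_n|$ and relies on the fact that the centred field $\log|f_n|-\E\log|f_n|$ has an $n$-independent one-point marginal, which is precisely what delivers the uniform bound on its covariance kernel. The remaining ingredients --- the pointwise intensity bound, the estimate for $\Delta\vartheta$, and the polar-coordinate integral --- are elementary and $n$-free.
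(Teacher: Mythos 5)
Your argument is correct, and it overlaps substantially with the paper's in spirit while differing in a couple of tactical choices. For part (i), the paper subtracts off the deterministic radial part $\log\sqrt{K_n(z,z)}$ so that the log-normalised field $\hat f_n(z)=f_n(z)/\sqrt{\E|f_n(z)|^2}$ is pointwise standard complex Gaussian, integrates by parts, and then bounds the first moment directly by $\int|\Delta\vartheta|\cdot\E|\log|\hat f_n||\,d\mathcal L$, using only that $\E|\log|\hat f_n(z)||$ is a finite $n$-free constant. You instead note the mean vanishes by radial symmetry, pass to the variance, write the variance as a double integral against the covariance kernel of $\log|f_n|$, and then bound that kernel by $\pi^2/24$ via Cauchy--Schwarz and the $n$-free one-point marginal. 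Both routes hinge on the same two facts --- the Green-type identity pairing the zero-counting measure with $\Delta\vartheta$, and the $n$-independence of the pointwise law of $\log|\hat f_n|$ --- and both deliver the $R^{-l}$ bound via $|\Delta\vartheta|\lesssim R^{-l-2}$ on a region of area $O(R^2)$. Your $L^2$ route costs an extra Cauchy--Schwarz but makes the ``cancellation comes from radial symmetry'' mechanism explicit, which is a small pedagogical gain. For part (ii), the paper again integrates by parts and bounds $\log\sqrt{K_n}\le\log\sqrt{K}=\frac12|z|^2$; you instead bound the first intensity pointwise, $\rho_1^{(n)}\le 1/\pi$, via the Edelman--Kostlan formula. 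Your route is more elementary and avoids a derivative count, but you lean on the log-concavity $K_{n-1}(t)^2\ge K_{n-2}(t)K_n(t)$ as an unproved ``elementary inequality''; it is true (e.g.\ $K_{n-1}^2-K_{n-2}K_n=\frac{t^{n-1}}{(n-1)!}\big(K_{n-1}-\tfrac{t}{n}K_{n-2}\big)$, and the bracket is a power series with nonnegative coefficients since $\tfrac1k\ge\tfrac1n$ for $k\le n-1$), but it deserves a line of justification since it is precisely what makes the bound uniform in $n$. Apart from that small gap, the proposal is sound and the constants are all uniform in $n$ as required.
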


\begin{proof} \underline{ Part (i):}
We begin with \[ \int \frac{1}{z^l}\Phi_R(z) \, \mathrm{d}[\F_n](z) = \int \frac{1}{z^l}\Phi_R(z) \Delta \text{log}(|f_n(z)|)\, \mathrm{d}\el(z) .\]
Now, $\text{log}(\sqrt{\E[|f_n(z)|^2]})$ is a radial function, and Laplacian of a radial function is also radial. Hence,  \[\int \frac{1}{z^l} \Phi_R(z) \Delta \text{log}(\sqrt{\E[|f_n(z)|^2]})\, \mathrm{d}\el(z)=0 \] because for $l \ge 1$, ${1}/{z^l}$ when integrated against a radial function  always gives 0 due to rotational symmetry. Let $\hat{f}_n(z)={f_n(z)} \big/ {\sqrt{\E[|f_n(z)|^2]}}$. Then the above argument implies
\begin{equation}
\label{e1}
\int \frac{1}{z^l}\Phi_R(z) \, \mathrm{d}[\F_n](z) =  \int \frac{1}{z^l}\Phi_R(z) \Delta \text{log}(|\hat{f}_n(z)|)\, \mathrm{d}\el(z) .
\end{equation}
Integrating by parts the right hand side of (\ref{e1}) we have
\begin{equation}
\label{e2}
\E \l[ \l|\int \frac{1}{z^l}\Phi_R(z) \, \mathrm{d}[\F_n](z) \r| \r] \le \int \l| \Delta \l(\frac{1}{z^l}\Phi_R(z)\r) \r|  \E \l[\l|\text{log}(|\hat{f}_n(z)|)\r| \r] \, \mathrm{d}\el(z) .
\end{equation}
Now, the integrand is non-zero only for $Rr_0 \le |z| \le 3Rr_0$. Hence it is easy to see that $\l| \Delta \l(\frac{1}{z^l}\Phi_R(z)\r) \r| \le C(r_0,\Phi,l)/R^{l+2}$. Further, $\E \l[\l|\text{log}(|\hat{f}_n(z)|)\r| \r]$ is a constant because $\hat{f_n}$ is $N_{\C}(0,1)$.

Finally, $\int_{3Rr_0 \cdot D \setminus Rr_0\cdot D} \, \mathrm{d}\el(z) = 8\pi r_0^2R^2$, where $D$ is the unit disk.

Putting all these together, we have \[\int \l| \Delta \l(\frac{1}{z^l}\Phi_R(z)\r) \r|  \E \l[\l|\text{log}(|\hat{f}_n(z)|)\r| \r] \, \mathrm{d}\el(z) \le C_1(r_0,\Phi,l)/R^l\] as desired (all constants are subsumed in $C_1(r_0,\Phi,l)$).

\underline{ Part (ii):} Since $\Phi$ is a radial function on $\C$,  there exists a function $\wt{\Phi}$ on $\R_{\ge 0}$ such that $\Phi(z)=\wt{\Phi}(|z|)$. 
 We have, with $r=|z|$,
\[\E \l[ \int \frac{1}{|z|^l}\Phi_R(z) \, \mathrm{d}[\F_n](z) \r] = c \int \frac{1}{r^l} \wt{\Phi}_{R}(r)\Delta \text{log} (\sqrt{K_n(z,z)}) r\mathrm{d}r   \]
where the integral on the right hand side is over the non-negative reals. Integrating by parts,
\[ \l| \int \frac{1}{r^l} \wt{\Phi}_{R}(r)\Delta \text{log} (\sqrt{K_n(z,z)}) r\mathrm{d}r \r| \le   \int \l| \Delta \l(\frac{1}{r^l}\wt{\Phi}_R(r)\r) \r|  \text{log}(\sqrt{K_n(z,z)}) r\mathrm{d}r . \]
But $\text{log}(\sqrt{K_n(z,z)})  \le \text{log}(\sqrt{K(z,z)})=\frac{1}{2}r^2 \le \frac{9}{2}r_0^2R^2$ and $\l| \Delta \l(\frac{1}{z^l}\Phi_R(z)\r) \r| \le C(r_0,\Phi,l)/R^{l+2}$,
hence \[ \int \l| \Delta \l(\frac{1}{r^l}\wt{\Phi}_R(r)\r) \r|  \text{log}(\sqrt{K_n(z,z)}) r\mathrm{d}r \le C_2(r_0,\Phi,l)/R^{l-2} \]
for another constant $C_2$ (which clearly does not depend on $n$).
\end{proof}

\begin{remark}
\label{invgafestlemrem}
The quantity $C_1(r_0,\Phi,l)$ above is of the form $p(l)\l(\frac{1}{r_0} \r)^{l}C_1(\Phi)$ where $p$ is a fixed polynomial (of degree 2), and $C_1(\Phi)$ is a quantity that depends only on $\Phi$. Both $p$ and $C_1(\Phi)$  do not depend on $n$. Similarly,  $C_2(r_0,\Phi,l)$ is of the form $p_1(l)\l(\frac{1}{r_0} \r)^{l-2}C_2(\Phi)$; where $p_1$ is another degree 2 polynomial and $C_2(\Phi)$ does not depend on $n$.
\end{remark}

Let $r_0$ be the radius of $\D$.   Let $\varphi$ be a non-negative radial $C_c^{\infty}$ function supported on $[r_0,3r_0]$ such that $\varphi=1$ on $[\frac{3r_0}{2},2r_0]$ and $\varphi(r_0 + r)=1-\varphi(2r_0+2r)$, for $0\le r \le \frac{1}{2}r_0$. In other words, $\varphi$ is a test function supported on the annulus between $r_0$ and $3r_0$ and its ``ascent'' to 1 is twice as fast as its ``descent''.

Let $\widetilde{\varphi}$ be another radial test function with the same support as $\varphi$, and $\widetilde{\varphi}(r)=1$ for $ r_0 \le  r \le \frac{3 r_0}{2}$ and $\widetilde{\varphi}=\varphi$ otherwise.

\begin{proposition}
\label{estedge}
\[ \E \l[ \l|\int \frac{\widetilde{\varphi}(z)}{z^l} \, \mathrm{d}[\F_n](z) \r|\r] \le \E \l[ \l|\int \frac{\widetilde{\varphi}(z)}{|z|^l} \, \mathrm{d}[\F_n](z) \r|  \r] \le c(r_0,\widetilde{\varphi},l)\] where $c(r_0,\wt{\varphi},l)$ does not depend on $n$, and the same result remains true when $\F_n$ is replaced by $\F$.
\end{proposition}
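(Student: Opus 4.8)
The plan is to read the bound off from the argument already carried out for Proposition~\ref{invgafestlem}, observing that $\wt\varphi$ is merely a fixed (unscaled) non-negative radial cutoff supported on the compact annulus $\{r_0 \le |z| \le 3r_0\}$, which is bounded away from the origin and from infinity. The left-hand inequality is purely pointwise: since $\wt\varphi \ge 0$ and $|z^{-l}| = |z|^{-l}$, every point configuration satisfies $\l| \int \frac{\wt\varphi(z)}{z^l}\, d[\F_n](z) \r| \le \int \frac{\wt\varphi(z)}{|z|^l}\, d[\F_n](z)$, and the integrand on the right is non-negative, so that integral equals its own modulus; taking expectations gives the first inequality, and the same reasoning applies verbatim to $\F$ in place of $\F_n$.

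For the right-hand inequality I would specialize the computation in the proof of Proposition~\ref{invgafestlem}(ii) to the function $\wt\varphi$ with scaling parameter $R = 1$. Since $\wt\varphi(z)|z|^{-l} \ge 0$, its expected integral against $[\F_n]$ equals its integral against the first intensity of $\F_n$, and the same manipulation as there gives $\E\l[\int \frac{\wt\varphi(z)}{|z|^l}\, d[\F_n](z)\r] = c\int \frac{\wt\varphi(z)}{|z|^l}\,\Delta\log\sqrt{K_n(z,z)}\, d\el(z)$ for a universal constant $c$; integrating by parts to move $\Delta$ onto the radial function $\wt\varphi(z)|z|^{-l}$, one invokes the uniform bound $\log\sqrt{K_n(z,z)} \le \log\sqrt{K(z,z)} = |z|^2/2 \le \tfrac{9}{2}r_0^2$ valid on the support of $\wt\varphi$, together with the fact that $\l|\Delta\l(\wt\varphi(z)|z|^{-l}\r)\r|$ is bounded on that compact annulus by a constant depending only on $r_0, \wt\varphi, l$. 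As the annulus has finite area, the right-hand side is at most $c(r_0,\wt\varphi,l)$, a bound free of $n$. For $\F$ itself the first intensity is the constant $1/\pi$, so $\E\l[\int \frac{\wt\varphi(z)}{|z|^l}\, d[\F](z)\r] = \tfrac1\pi \int \frac{\wt\varphi(z)}{|z|^l}\, d\el(z) < \infty$ directly; alternatively one passes to the limit along $\F_n \to \F$ using Fatou's lemma, since $\wt\varphi(z)|z|^{-l}$ is bounded with compact support.

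The single point that needs a word of care is that $\wt\varphi$, as constructed, equals $1$ up to $|z| = r_0$ while vanishing just inside, so it is not genuinely $C_c^\infty$ across that circle; in the integration by parts this contributes a boundary term, an integral of $\log K_n(z,z)$ over the circle $|z| = r_0$ against a bounded kernel, and by the same estimate $\log\sqrt{K_n(z,z)} \le \tfrac92 r_0^2$ this too is bounded uniformly in $n$. Equivalently, one may simply dominate $\wt\varphi$ by a genuine $C_c^\infty$ radial bump on a slightly larger annulus still bounded away from $0$ and apply Proposition~\ref{invgafestlem} as a black box. Checking that both this non-smoothness and the dependence on $n$ — the latter handled throughout by $K_n(z,z) \le e^{|z|^2} = K(z,z)$ — are harmless is the only real content; the estimate is otherwise entirely routine.
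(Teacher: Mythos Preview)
Your proof is correct, but you take a longer route than the paper. Both arguments start from the same pointwise inequality $\l|\int \wt\varphi(z)z^{-l}\,d[\F_n]\r| \le \int \wt\varphi(z)|z|^{-l}\,d[\F_n]$ and then compute the expectation of the right-hand side via the first intensity, arriving at $C\int \wt\varphi(z)|z|^{-l}\,\Delta\log K_n(z,z)\,d\el(z)$. At this point you integrate by parts (mimicking Proposition~\ref{invgafestlem}(ii)), which forces you to confront the non-smoothness of $\wt\varphi$ at $|z|=r_0$ and handle a boundary term. The paper instead leaves the integral as is and simply observes that the continuous functions $\Delta\log K_n(z,z)$ converge uniformly on the compact support of $\wt\varphi$ to $\Delta\log K(z,z)$, hence are uniformly bounded there; since $\wt\varphi(z)|z|^{-l}$ is also bounded on that annulus, the integral is bounded uniformly in $n$ with no integration by parts and no boundary issue. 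Your approach has the minor advantage of reusing the exact machinery of Proposition~\ref{invgafestlem}, but the paper's direct argument is shorter and sidesteps entirely the regularity concern you had to address.
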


\begin{proof}
For fixed $l$, let  ${\displaystyle a(n)= \int \frac{\widetilde{\varphi}(z)}{z^l} \, \mathrm{d}[\F_n](z)}$ and  ${\displaystyle b(n)= \int \frac{\widetilde{\varphi}(z)}{|z|^l} \, \mathrm{d}[\F_n](z)}$. We have, $\E \l[ |a(n)| \r] \le \E \l[ b(n) \r] = C\int \frac{\widetilde{\varphi}(z)}{|z|^l}\Delta \log \l(K_n(z,z)\r) \mathrm{d}\el(z)$ for some constant $C$. Using the uniform convergence  of the continuous functions $\Delta \log \l( K_n(z,z) \r) \to \Delta \log \l( K(z,z) \r)<\infty $ on the (compact) support of $\widetilde{\varphi}$, we deduce that $\E \l[b(n) \r]$-s are bounded by constants that do not depend on $n$ but depend on $r_0,\widetilde{\varphi}$ and $l$. It is obvious from the above argument that the same holds true for $\F$ instead of $\F_n$.
\end{proof}

\begin{proposition}
\label{invgafest-tail}
%
%
%
%
%
Let $r_0$ be the radius of $\D$ . Let $\varphi$ and $\wt{\varphi}$ be defined as above.   

(i) The random variables \[ S_l(n):= \int \frac{\widetilde{\varphi}({z})}{z^l} \, \mathrm{d}[\F_n](z) + \sum_{j=1}^{\infty} \int \frac{\varphi_{2^j}({z})}{z^l} \, \mathrm{d}[\F_n](z) = \sum_{\o \in \F_n \cap \D^c} \frac{1}{\o^l} \quad (\text{ for }l \ge 1) \] and \[
 \widetilde{S}_l(n):= \int \frac{\widetilde{\varphi}({z})}{|z|^l} \, \mathrm{d}[\F_n](z) + \sum_{j=1}^{\infty} \int \frac{\varphi_{2^j}({z})}{|z|^l} \, \mathrm{d}[\F_n](z) = \sum_{\o \in \F_n \cap \D^c} \frac{1}{|\o|^l} \quad (\text{ for }l \ge 3)\]
have a finite first moment which, for every fixed $l$, is bounded above uniformly in $n$. 

(ii) There exists $k_0=k_0(\varphi)\ge 1$,  not depending on $n$ and $l$, such that for $k \ge k_0$ the ``tails'' of $S_l(n)$ and $\widetilde{S}_l(n)$ beyond the disk $2^k \cdot \D $, given by \[\tau_l^{n}(2^k):= \sum_{j=k}^{\infty} \int \frac{\varphi_{2^j}({z})}{z^l} \, \mathrm{d}[\F_n](z) \, \,(\text{ for }l \ge 1) \quad \text{ and } \quad \widetilde{\tau}_l^{n}(2^k):= \sum_{j=k}^{\infty} \int \frac{\varphi_{2^j}({z})}{|z|^l} \, \mathrm{d}[\F_n](z) \, \,(\text{ for }l \ge 3)\] satisfy the estimates \[ \E \l[ \l| \tau_l^{n}(2^k) \r|\r]\le C_1(\varphi,l)/2^{kl/2} \quad \text{ and } \quad  \E \l[ \l| \widetilde{\tau}_l^{n}(2^k) \r|\r]\le C_2(\varphi,l)/2^{k(l-2)/2}. \]

All of the above remain true when $\F_n$ is replaced by $\F$, for which we use the notations ${S}_l$ and $\widetilde{S}_l$ to denote the quantities corresponding to $S_l(n)$ and $\widetilde{S}_l(n)$.

 \end{proposition}

\begin{remark}
For $\F$, by the sum ${\displaystyle  \l( \sum_{\o \in \F \cap \D^c}\frac{1}{\o^l} \r)}$ we denote the quantity \[S_l=\int \frac{\widetilde{\varphi}({z})}{z^l} \, \mathrm{d}[\F](z) + \sum_{j=1}^{\infty} \int \frac{\varphi_{2^j}({z})}{z^l} \, \mathrm{d}[\F](z)\] due to the obvious analogy with $\F_n$, where the corresponding  sum $S_l(n)$  is indeed equal to ${\displaystyle  \l( \sum_{\o \in \F_n \cap \D^c}\frac{1}{\o^l} \r)}$ with its usual meaning.
\end{remark}

\begin{proof}[Proof of Proposition \ref{invgafest-tail}]
The functions $\widetilde{\varphi}$ and $\varphi_{2^j}, j\ge 1$ form a partition of unity on $\D^c$, hence the identities appearing in part (i). That the left hand side in part (i) has finite expectation can be seen from Proposition \ref{invgafestlem} and Proposition \ref{estedge}; it is uniformly bounded in $n$ because so are $C(r_0,\varphi,l)$ in Proposition \ref{invgafestlem} and $c(r_0,\varphi_0)$ in Proposition \ref{estedge}.

Fix $n,l\ge 1 $. Set $\psi_k= \int \frac{\varphi_{2^k}({z})}{z^l} \, \mathrm{d}[\F_n](z)$ for $k \ge 1$, and $\psi_0=\int \frac{\widetilde{\varphi}({z})}{z^l} \, \mathrm{d}[\F_n](z)$.
When $l \ge 3$ we also define $\gamma_k= \int \frac{\varphi_{2^k}({z})}{|z|^l} \, \mathrm{d}[\F_n](z)$ for $k \ge 1$, and $\gamma_0=\int \frac{\widetilde{\varphi}({z})}{|z|^l} \, \mathrm{d}[\F_n](z)$. Let ${\Psi}_k$ and ${\Gamma}_k$ denote the analogous quantities defined with respect to $\F$ instead of $\F_n$.

For $k \ge 1$, we apply Proposition \ref{invgafestlem} part (i) to the function $\Phi=\varphi$ and $R=2^k$ to obtain \begin{equation}\label{reference1}\E[|\psi_k|]\le C(r_0,\varphi,l)/(2^k)^l.\end{equation} We also observe that $\E[|\psi_0|] \le \E[|\gamma_0|]$ which, for fixed $l$, is uniformly bounded above in $n$, using Proposition \ref{estedge}. 

The above arguments imply that for $l \ge 1$ \[\E[|S_l(n)|]\le \sum_{k=0}^{\infty}\E[|\psi_k|]<\infty .\]
The results for $\wt{S}_l(n)$ are similar, utilizing part (ii) of Proposition \ref{invgafestlem}.

To obtain $k_0$ as in part (ii), we recall  Remark \ref{invgafestlemrem} and find $k_0$ such that $p(l)\l(\frac{1}{r_0}\r)^l 2^{-kl/2}C(\varphi) \le 1/2$ for all $k \ge k_0$. Such a $k_0$ can be obtained as follows: there exists $C_1(\varphi)>0$ such that for all $l \ge 1$ we have $( 2\cdot C(\varphi) \cdot p(l) )^{1/l} \le C_1(\varphi)$. We now choose $k_0$ such that $\frac{2^{-k_0/2}}{r_0} \le \frac{1}{C_1(\varphi)}$.
This $k_0$ will clearly be independent of $n$, because so is $C(\varphi)$, and by choice it is independent of $l$. To estimate $\E \l[ \l| \tau_l^{n}(2^k) \r|\r]$, we now simply sum (\ref{reference1}) for $k \ge k_0$.

The result for $\wt{\tau}^n_l(2^k)$ follows from a  similar argument.

The same arguments yield the corresponding results when $F_n$ is replaced by $F$. 
\end{proof}

\begin{corollary}
\label{invgafest-tailcor}
For $R=2^k,k\ge k_0$ as in Proposition \ref{invgafest-tail}. We have $\P[|\tau_l^{n}(R)|>R^{-l/4}]\le R^{-l/4}$ and $\P[|\widetilde{\tau}_l^n(R)|>R^{-(l-2)/4}]\le R^{-(l-2)/4}$, and these estimates remain true when $f_n$ is replaced with $f$.
\end{corollary}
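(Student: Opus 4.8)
The plan is to derive both tail bounds directly from Markov's inequality applied to the first-moment estimates recorded in Proposition \ref{invgafest-tail}(ii). I would fix $l\ge 1$ in the case of $\tau_l^n$ and $l\ge 3$ in the case of $\widetilde{\tau}_l^n$, take $k\ge k_0$ with $k_0=k_0(\varphi)$ the threshold produced in Proposition \ref{invgafest-tail}, and write $R=2^k$. The first step is to note that for this range of $k$ the bounds of Proposition \ref{invgafest-tail}(ii) may be taken to hold with multiplicative constant $1$, i.e. $\E\l[|\tau_l^n(R)|\r]\le R^{-l/2}$ and $\E\l[|\widetilde{\tau}_l^n(R)|\r]\le R^{-(l-2)/2}$, uniformly in $n$: this is already built into how $k_0$ was chosen in the proof of that proposition, where (using the explicit polynomial-times-geometric form of the constants from Remark \ref{invgafestlemrem}) $k_0$ was picked so that the geometric tail of the defining series dominates the relevant constant. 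If one prefers to keep the constants $C_i(\varphi,l)$, it suffices to enlarge $k_0$, which can be done uniformly in $n$.

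The second step is the application of Markov's inequality with the prescribed thresholds. With the expectation bounds above this gives
\[
\P\l[\,|\tau_l^n(R)|>R^{-l/4}\,\r]\;\le\;R^{l/4}\,\E\l[|\tau_l^n(R)|\r]\;\le\;R^{l/4}\cdot R^{-l/2}\;=\;R^{-l/4},
\]
and similarly
\[
\P\l[\,|\widetilde{\tau}_l^n(R)|>R^{-(l-2)/4}\,\r]\;\le\;R^{(l-2)/4}\,\E\l[|\widetilde{\tau}_l^n(R)|\r]\;\le\;R^{(l-2)/4}\cdot R^{-(l-2)/2}\;=\;R^{-(l-2)/4}.
\]
The final step is to observe that Proposition \ref{invgafest-tail} holds verbatim with $\F$ in place of $\F_n$ — same constants, same $k_0$, uniformly in $n$ — so the identical computation yields the two displayed bounds for the limiting zero set $\F$, which finishes the proof.

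I do not expect any genuine obstacle: the corollary is essentially a one-line consequence of Markov's inequality. The only point that deserves care is the absence of a multiplicative constant on the right-hand sides — a naive application of Markov would leave a factor $C_i(\varphi,l)$ in front — and removing it amounts to nothing more than recalling that $k_0$ in Proposition \ref{invgafest-tail} was already selected (via Remark \ref{invgafestlemrem}, uniformly in $n$ and in $l$) precisely so that these constants are absorbed into the corresponding powers of $R=2^k$.
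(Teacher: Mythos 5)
Your proposal is correct and is essentially identical to the paper's proof: both apply Markov's inequality to the first-moment bounds of Proposition \ref{invgafest-tail}(ii). Your added care about the multiplicative constant being absorbable into $1$ is well-founded — the choice of $k_0$ in the proof of Proposition \ref{invgafest-tail} (via Remark \ref{invgafestlemrem}) forces $p(l)(1/r_0)^l C(\varphi) 2^{-kl/2}\le 1/2$, and since the geometric tail contributes a factor $\frac{1}{1-2^{-l}}\le 2$, the effective constant is indeed at most $1$ — so the clean right-hand sides $R^{-l/4}$ and $R^{-(l-2)/4}$ come out exactly as stated.
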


\begin{proof}
We use the estimates on the expectation on $|\tau_l^n(R)|$ and  $|\widetilde{\tau}_l^n(R)|$ from Proposition \ref{invgafest-tail} and apply Markov's inequality.
\end{proof}

With notations as above, we have
\begin{proposition}
\label{conv-gaf}
For each $l \ge 1$ we have  $S_l(n) \rightarrow S_l$ in probability, and for each $l \ge 3$ we have $\widetilde{S}_l(n) \rightarrow \widetilde{S}_l$ in probability, and hence we have such convergence a.s. along some subsequence, simultaneously for all $l$. 
\end{proposition}

\begin{proof}
 We argue on similar lines to the proof of Proposition \ref{conv}.
\end{proof}

Define $S_k(\D,n)=\sum_{z \in \F_n}1/z^k$ and $S_k(\D)=\sum_{z \in \F} 1/z^k$. Set $\a_k(n)=S_k(\D,n)+S_k(n)$ and $\a_k=S_k(\D)+S_k$. Observe that $\a_k(n)=\sum_{z \in \F_n} 1/z^k$. Then we have:

\begin{proposition}
 \label{conv-gaf1}
For each $k$, $\a_k(n) \to \a_k$ in probability as $n \to \infty$. Hence, there is a subsequence such that $\a_k(n) \to \a_k$ a.s. when $n \to \infty$ along this subsequence, simultaneously for all $k$. 
\end{proposition}

\begin{proof}
Since the finite point configurations given by $\F_n|_{ \D} \to \F|_{\D}$ a.s. and a.s. there is no point at the origin, therefore $S_k(\D,n) \to S_k(\D)$ a.s.  
This, combined with Proposition \ref{conv-gaf}, gives us the desired result.  
\end{proof}

\section{Limiting procedure for GAF zeroes}
\label{limgafz}
In this section, we will work in the framework of Section \ref{limcond}. More specifically, we will show that the conditions for Theorem \ref{abs} are satisfied, which will give us the desired conclusion. We will introduce the definitions and check all the conditions here except the fact $\O(j)$ exhausts $\O^m$. This last criterion will be verified in the subsequent Proposition \ref{exhaust}.

In terms of the notation used in Section \ref{limcond}, we have $X^n=\F_n$ and $X=\F$.

\begin{proof}  [\textbf{Proof of Theorem \ref{gaf-2} for a disk}] 
We will invoke Theorem \ref{abs}. We will define the relevant quantities (as in the statement of Theorem \ref{abs}) and verify that they satisfy the required conditions for that theorem to apply.

Following the notation in Theorem \ref{abs}, we begin with  $\O^m$, $m\ge 0$.  If $m=0$, there is nothing to be proved, and if $m=1$ then  from Theorem \ref{gaf-1} the position of the zero inside $\D$ is known, and therefore the result is trivial. So we focus on the case $m \ge 2$. 
 
Our candidate for $\nu(\xout({\xi}),\cdot)$ (refer to Theorem \ref{abs}) is the probability measure $\Theta^{-1}|\Delta(\uz)|^2\mathrm{d}\els(\uz)$ on $\Sigma_{S(X_{\out}(\xi))}$, where $\Delta(\uz)$ is the Vandermonde determinant formed by the coordinates of $\uz $ ,  $\els$ is the Lebesgue measure on $\Sigma_{S(X_{\out}(\xi))}$ and $\Theta$ is the normalizing factor. Note that this form of the candidate measure, heuristically, is in tune with the fact the conditional measure is supported on $\Sigma_{S(X_{\out}(\xi))}$ and with the estimate in Proposition \ref{unifgaf} on conditional densities. It is also a mathematical expression of the notion that $\o$ determines only the number and the sum of the points in $\D$, and ``nothing more''. Here we recall the definition of $S(X_{\out}(\xi))$ from Theorem \ref{gaf-1} and the definition of $\Sigma_{S(X_{\out}(\xi))}$ from Section \ref{ratcond}. Note that as soon as we define $\nu(\xout,\cdot)$ which maps $\xi$ to $\mathcal{M}(\D^m)$, this automatically induces a map from $\S_{\out}$ to $\mathcal{M}(\D^m)$ which satisfies the required measurability properties.
 
To find the sequence $n_k$ (which will be the same for every $j$ in our case), we proceed as follows. Let $N_g(K)$ denote the number of zeroes of the function $g$ in a set $K$, $\Gamma$ denote the closed annulus of width 1 around $\D$, and in the next statement let $Z$ be any of the variables $L,M$ or $N$ as in Proposition \ref{gafestprep} (and the immediately preceding discussion) with $p=0$ or $2 \le p \le m$ and $2 \le q \le m$.  We have the probabilities of each of the following events converging to 0 as $n \to \infty$:
$\{|S_j(n) - S_j|>1\}_{j=1,2}, \{|\wt{S}_3(n) - \wt{S}_3|>1\}, \{|N_f(\overline{\D})\neq N_{f_n}({\overline{\D}})|\},\{|N_f(\Gamma)\neq N_{f_n}(\Gamma)|\},\{|\tau(Z^{(n)}_{pq})|>1\},\{ \frac{1}{n}\l( \sum_{j=0}^n |\xi_j|^2\r) \le 1/2\}$ and $\{\l| |\Delta(\uz(n))|^2 - |\Delta(\uz)|^2 \r| >\frac{1}{M_j}\}$ (where $\uz(n)$ and $\uz$ correspond to the zeroes inside $\D$ of $f_n$ and $f$ respectively). Call the union of these events $\mathfrak{B}_n$. For a given $k \ge 1$, let $n_k'$ be such that $\P(\mathfrak{B}_n) < 2^{-k}$ for all $n \ge n_k'$. From Proposition \ref{conv-gaf} we have a subsequence  such that $S_j(n) \to S_j$ ($j=1,2$) and $\wt{S}_3({n}) \to \wt{S}_3$ a.s. as $n \to \infty$ along that subsequence. For a given $k \ge 1$, we define $n_k$  to be the least integer in that subsequence which is $\ge n_k'$.

Fix a sequence of positive real numbers $M_j \uparrow \infty$, such that $M_1 = \min_j M_j > \sqrt{2}$. 

On the event $\O^m_{n_k}$ (which entails that $\fn$ has $m$ zeroes inside $\D$), let $\zn$ and $\on$  denote the vector of inside and outside zeroes of $\fn$ (taken in uniform random order) respectively. Let $\sn$ denote the sum of  the inside zeroes: $\sn:=\sum_{j=1}^m \zn_j$, where $\zn_j$ are the coordinates of the vector $\zn$. By $\Sigma_{\sn}$ we will denote the (random) set $\{\uz' \in \D^m: \sum_{j=1}^m \uz'_j=\sn  \}$. Also recall the notation  $\rho^{n_k}_{\uo,s}({\uz})$ to be the conditional density (with respect to the Lebesgue measure on $\Sigma_s$) of the inside zeroes (at $\uz \in \D^m$) given the vector of outside zeroes to be $\uo$ and the sum of the inside zeroes to be $s$. 

We \textbf{define our event $\onk$}  by the following conditions on the zero set $(\zn,\on)$ of $\fn$:
 \begin{enumerate}
 \item There are exactly $m$ zeroes of $f_{n_k}$ in $\D$
 \item There are no zeroes of $f_{n_k}$ in the closed annulus of width $1/M_j$ around $\D$.
 \item $|S_1({n_k})|\le M_j, |S_2({n_k})|\le M_j, |\wt{S}_3({n_k})|\le M_j$.
 \item There exists $\uz' \in \Sigma_{\sn}$ such that $(\uz',\on)$ satisfies ($\uo$ here is an abbreviation for $\on$):
\begin{enumerate} 
\item ${\displaystyle \l| \sum_{r=0}^{n_k} \frac{\overline{\s_r(\uz',\uo)}}{\cnew} \frac{\s_{r-i}(\uo)}{\cnew} \r|} \bigg/ {\displaystyle \l( \sum_{r=0}^{n_k}  \l| \frac{\s_{r}(\uz',\uo)}{\cnew} \r|^2 \r) } \le M_j/n_k$ for each $2\le i \le m$.
\item ${ \displaystyle \l| \sum_{r=0}^{n_k} \frac{\overline{\s_{r-i}(\uo)}}{\cnew} \frac{\s_{r-j}(\uo)}{\cnew} \r|} \bigg/  {\displaystyle \l( \sum_{r=0}^{n_k}  \l| \frac{\s_{r}(\uz',\uo)}{\cnew} \r|^2 \r) }\le M_j/n_k$ for all $2 \le i,j \le m$.
\item $ \frac{1}{M_j} \le |\Delta(\uz')|^2 \le M_j  $
\end{enumerate}
\end{enumerate}

%
%

Clearly, $\onk$ depends only on the quantities $\on$ and $\sn$ (and on no other information from $\zn$). In particular, for a vector $\uo$ of outside zeroes of $\fn$, if there exists $\uz \in \Sigma_{s}$ such that $(\uz,\uo)$ satisfies the conditions required in the definition of $\onk$, then all $(\uz',\uo)$  (with $\uz' \in \Sigma_s$) satisfies these conditions. 
From (\ref{condgaf1}), Proposition \ref{nr2}, (\ref{exp2}), (\ref{exp3}) and the discussion therein, it follows that on the event $\onk$ we have \begin{equation} \label{compar} b(j)|\Delta(\uz)|^2 \le {\rho^{n_k}_{\uo,s}(\uz)}\le B(j)|\Delta(\uz)|^2 \end{equation} for positive quantities $B(j)$ and $b(j)$ (that depend on $M_j$). More explicitly, if $(\uz,\uo) \in \onk$ such that $(\uz',\uo)$ is as in condition 4 (in the definition of $\onk$), then from (\ref{condgaf1}), we have \[ \rho^n_{\uo,s}(\uz)=  \rho^n_{\uo,s}(\uz') \frac {|\Delta(\uz,\uo)|^2} {|\Delta(\uz',\uo)|^2} \l(\frac{D(\uz,\uo)} {D(\uz',\uo)}\r)^{n+1}. \] But from Proposition \ref{nr2} (with $\del=1/M_j$), we have \[ \bigg( - 2mK(\D,\delta)\X_n(\uo) \bigg) \frac {|\Delta(\uz)|^2} {|\Delta(\uz')|^2} \le  \frac {|\Delta(\uz,\uo)|^2} {|\Delta(\uz',\uo)|^2} \le \bigg( 2mK(\D,\delta)\X_n(\uo) \bigg) \frac {|\Delta(\uz)|^2} {|\Delta(\uz')|^2}.  \]
where $X_n(\uo)=|S_1(n_k)|+|S_2(n_k)|+|\wt{S}_3(n_k)|$. Using conditions 3 and 4(c) in the definition of $\onk$ (and recalling that $m,D$ is fixed for our purposes) we deduce that \[ C_1(j) |\Delta(\uz)|^2  \le \frac {|\Delta(\uz,\uo)|^2} {|\Delta(\uz',\uo)|^2} \le C_2(j) |\Delta(\uz)|^2 \] for positive numbers $C_1(j)$ and $C_2(j)$. In order to bound $ \l(\frac{D(\uz,\uo)} {D(\uz',\uo)}\r)^{n+1}$, we refer to (\ref{upper}), (\ref{exp2}), (\ref{exp3}) and the discussion therein. From the assumptions 4(a) and 4(b) in the definition of $\onk$, it follows that \[ C_3(j) \le \l(\frac{D(\uz,\uo)} {D(\uz',\uo)}\r)^{n+1} \le C_4(j) \] for positive quantities $C_3(j)$ and $C_4(j)$. Combining the inequalities in the two displays above, we get (\ref{compar}).
 
To obtain condition (\ref{abscond}), we introduce some further notations. On the event $\Omega^m_{n_k}$, let $\gamma_{n_k}(\uo;s)$ denote the conditional probability measure on the sum $s$ of inside zeroes given the vector of outside zeroes of $f_{n_k}$ to be $\uo$.
Further, let $\el_s$ denote the Lebesgue measure on the set $\Sigma_s$.
 
For any $A \in \AA$ and $B \in \B$ , we can write 
\begin{equation}
\label{equ1}
\tP [  (\xin \in A) \cap  ( X_{\out}^{n_k} \in B) \cap \Omega_{n_k}(j) ] = \int_{B \cap \onk} \l( \int_{A \cap \Sigma_s} \rho^{n_k}_{\uo,s}(\uz) \mathrm{d}\el_s(\uz) \r) \mathrm{d}\gamma_{n_k}(\uo;s) \mathrm{d}\P_{\xout^{n_k}}(\uo) \end{equation}
Setting $h(s,A):= \l( \int_{A \cap \Sigma_s} |\Delta(\uz)|^2 \mathrm{d}\el_s(\uz) \r) \big/ \l( \int_{\Sigma_s} |\Delta(\uz)|^2 \mathrm{d}\el_s(\uz) \r)$, we get from (\ref{compar}) that the right hand side of (\ref{equ1}) is \begin{equation} \label{equ2} \int_{B \cap \onk} \l( \int_{A \cap \Sigma_s} \rho^{n_k}_{\uo,s}(\uz) \mathrm{d}\el_s(\uz) \r) \mathrm{d}\gamma_{n_k}(\uo;s) \mathrm{d}\P_{\xout^{n_k}}(\uo) \asymp_j \int_{B \cap \onk}  h(s,A) \mathrm{d}\gamma_{n_k}(\uo;s) \mathrm{d}\P_{\xout^{n_k}}(\uo).  \end{equation}
The integral on the right hand side can be written as $\E \bigg[ h(\sn,A)1\hskip-4pt{\rm 1}[\xout^{n_k} \in B]1\hskip-4pt{\rm 1}[\onk] \bigg]$, where  $1\hskip-4pt{\rm 1}[E]$ is the indicator function of the event $E$ and the expectation is with respect to the random variable $(\sn,\xout^{n_k})$.  Note that for fixed $A$,  the function $h(s,A)$ is continuous in $s$, because $A \in \A^m$, and that $0\le h(s,A)\le 1$. Since, as $k \to \infty$, we have $\sn \to S(\xout)$ a.s. Using the Dominated Convergence Theorem we get \begin{equation} \label{equ3}\E \l[ h(\sn,A)1\hskip-4pt{\rm 1}[\xout^{n_k} \in B]1\hskip-4pt{\rm 1}[\onk] \r] = \E \l[ h(S(\xout),A)1\hskip-4pt{\rm 1}[\xout^{n_k} \in B]1\hskip-4pt{\rm 1}[\onk] \r] + o_k(1),\end{equation} where $o_k(1)$ is a quantity which $\to 0$ as $k \to \infty$, for fixed $A$ and $B$ (actually, in this particular case, it can be easily seen that the convergence is uniform in $B \in \B$) . Also, as $k \to \infty$, we have $1\hskip-4pt{\rm 1}[\xout^{n_k} \in B] \to 1\hskip-4pt{\rm 1}[\xout \in B]$ since $B$ is a compact set. Since $0\le h(s,A) \le 1$ a.s., this implies that \begin{equation} \label{equ4} \E\l[ h(S(\xout),A)1\hskip-4pt{\rm 1}[\xout^{n_k} \in B]1\hskip-4pt{\rm 1}[\onk] \r]=\E \l[ h(S(\xout),A)1\hskip-4pt{\rm 1}[\xout \in B]1\hskip-4pt{\rm 1}[\onk] \r]+o_k(1).\end{equation} 

Observe that for $\xi \in \Xi$, we have $h(S(\xout(\xi)),A)=\nu(\xout(\xi),A)$. Therefore, putting (\ref{equ1})-(\ref{equ4}) together,  we obtain (\ref{abscond}).
The only condition in Theorem \ref{abs} that remains to be verified is the fact that $\O(j)$-s exhaust $\O^m$, which will be taken up in Proposition \ref{exhaust}.

By Theorem \ref{abs}, this proves that a.s. we have $\rho(\xout,\cdot)\equiv {\nu}(\xout,\cdot)$. Since a.s. ${\nu}(\xout,\cdot)\equiv \els$, we have $\rho(\xout,\cdot)\equiv \els$, as desired. 
\end{proof}

We end this section with a proof that $\O(j)$-s exhaust $\O^m$:

\begin{proposition}
\label{exhaust}
With definitions as above, $\O(j):=\varliminf_{k \to \infty} \onk$ exhausts $\O^m$ as $j \to \infty$. 
\end{proposition}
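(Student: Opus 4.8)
The plan is to show the two required properties in the definition of "exhaust": monotonicity $\O(j) \subset \O(j+1) \subset \O^m$, and $\tP(\O^m \setminus \O(j)) \to 0$ as $j \to \infty$. Monotonicity in $j$ is essentially built in, since the sequence $M_j \uparrow \infty$ and every defining condition of $\onk$ becomes weaker as $M_j$ grows (the separation thickness $1/M_j$ shrinks, the bounds $|S_i(n_k)| \le M_j$ and the ratio bounds $\le M_j/n_k$ in condition 4 all loosen); hence $\onk$ for parameter $M_j$ is contained in the analogous event for $M_{j+1}$, and the same inclusion passes to the $\varliminf$ over $k$. That $\O(j) \subset \O^m$ follows because condition 1 forces $f_{n_k}$ to have exactly $m$ zeroes in $\D$ for all large $k$, and since $\F_{n_k} \to \F$ a.s. on the compact set $\overline{\D}$ (with no zero of $\F$ on $\partial \D$ a.s.), this forces $\F$ to have exactly $m$ zeroes in $\D$, i.e. we are on $\O^m$.

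The substance is the convergence $\tP(\O^m \setminus \O(j)) \to 0$. I would mirror the Ginibre argument (Proposition \ref{sigma}): for fixed $\eps > 0$ I will produce an event $A_\eps^{(j)}$, measurable with respect to $X_{\out}$, together with a "bad" event $\Omega_{\mathrm{bad}}$ of probability $< \eps$, such that $\O(j) \setminus \Omega_{\mathrm{bad}} = A_\eps^{(j)} \setminus \Omega_{\mathrm{bad}}$. The event $A_\eps^{(j)}$ is the analogue of $\onk$ stated directly for $\F$: namely $N(\F_{\out}) = m$; no zero of $\F$ in the closed annulus of thickness $1/M_j$ around $\D$; $|S_1|, |S_2|, |\widetilde{S}_3| \le M_j - \delta$; and the limiting versions of the ratio conditions 4(a), 4(b), that is, there exists $\uz' \in \Sigma_{S(\F_{\out})}$ with the sums $\sum_{r=0}^{\infty} \overline{\s_r(\uz',\uo)}\s_{r-i}(\uo)/(\cdots)$ etc. bounded by $M_j/n$-type quantities — more precisely, using the limiting objects $\L_{ij}, \M_{ij}, \N_{ij}, \e_n$ introduced before Proposition \ref{gafestprep} and using $\e_n \asymp n$. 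I then choose $\Omega_{\mathrm{bad}}$ as a union of: an Egorov-type event off which $S_i(n_k) \to S_i$ ($i=1,2$) and $\widetilde{S}_3(n_k) \to \widetilde{S}_3$ uniformly; an event off which $\F_{n_k} \to \F$ on $2 \cdot \overline{\D}$ so that the zero count in $\overline{\D}$ and in the annulus $\Gamma$ stabilize; an event off which the tail differences $\tau(L_{ij}^{(n_k)}), \tau(M_{ij}^{(n_k)}), \tau(N_{ij}^{(n_k)})$ are $< \delta$ for all large $k$ (using Proposition \ref{uselat1} plus Egorov); and an event off which $\e_{n_k}/n_k \in (\tfrac12, 2)$. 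Since $M_j$ can be taken to avoid the (countably many) atoms of the distributions of $|S_1|, |S_2|, |\widetilde{S}_3|$ and of the relevant limiting ratio quantities, a $\delta = \delta(\eps)$ can be fixed so that on $\Omega_{\mathrm{bad}}^c$ each such random variable is bounded away from the threshold $M_j$ by $2\delta$; the usual "$|S_i(n_k) - S_i| < \delta$ for large $k$" sandwich then makes the finite-$n_k$ conditions and their limiting counterparts simultaneously true or false. Letting $\eps = 2^{-l}$ and setting $\O^{\mathrm{corr}}(j) = \varliminf_l A_{2^{-l}}^{(j)}$ gives an $X_{\out}$-measurable event with $\tP(\O(j) \triangle \O^{\mathrm{corr}}(j)) = 0$.

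Finally, to get $\tP(\O^m \setminus \O(j)) \to 0$: define $A'(j)$ by replacing every occurrence of $M_j$ in $A_\eps^{(j)}$ by a version with $\delta$ replaced by $0$ (or by $1$, as in Proposition \ref{sigma}), so that $A'(j) \subset A_\eps^{(j)}$ for all $\eps < 1$, hence $A'(j) \subset \O^{\mathrm{corr}}(j)$. Then $\O^m \setminus A'(j)$ is contained in the union of the events $\{N(\F_{\out}) \neq m\} \cap \O^m$ (null), $\{$some zero of $\F$ within distance $1/M_j$ of $\partial \D\}$, $\{|S_1| > M_j\} \cup \{|S_2| > M_j\} \cup \{|\widetilde{S}_3| > M_j\}$, and the event that no $\uz' \in \Sigma_{S(\F_{\out})}$ satisfies the limiting ratio bounds with parameter $M_j$. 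Each of these has probability $\to 0$ as $j \to \infty$: the separation event because a.s. $\F$ has finitely many zeroes near $\partial \D$ and none on it; the $S_i$ events because $S_1, S_2, \widetilde{S}_3$ are a.s. finite (Proposition \ref{invgafest-tail}); and the ratio event because the limiting quantities $\L_{ij}, \M_{ij}, \N_{ij}$ are a.s. finite and $\e_n \sim n$, so by Proposition \ref{gafestprep}-type bounds the ratios are $O(M_j / n)$ for the right choice of witness $\uz'$ once $M_j$ is large. The main obstacle I anticipate is the bookkeeping in condition 4: unlike the Ginibre case, the event $\onk$ involves an existential quantifier over $\uz' \in \Sigma_{\sn}$ and finite sums $\sum_{r=0}^{n_k}$ of symmetric functions, so matching these to their $n = \infty$ limits requires controlling $\tau(L_{ij}^{(n_k)})$, $\tau(M_{ij}^{(n_k)})$, $\tau(N_{ij}^{(n_k)})$ and the normalization $\e_{n_k}/n_k$ simultaneously and uniformly in the witness — but this is exactly what Propositions \ref{uselat1} and \ref{gafestprep}(ii) provide, and the observation (already noted after the definition of $\onk$) that the conditions depend only on $(\sn, \on)$ and not on the particular $\uz' \in \Sigma_{\sn}$ removes the quantifier difficulty.
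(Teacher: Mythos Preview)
Your approach is correct and would go through, but it differs from the paper's in one structural respect worth noting. You follow the template of Proposition~\ref{sigma} (the Ginibre case): use Egorov's theorem to get uniform convergence of $S_i(n_k)$, $\widetilde{S}_3(n_k)$, and the tails $\tau(L_{ij}^{(n_k)}),\tau(M_{ij}^{(n_k)}),\tau(N_{ij}^{(n_k)})$ off a small bad set, then sandwich the finite-$n_k$ conditions against limiting ones. The paper instead exploits the specific choice of the subsequence $n_k$ made in the proof of Theorem~\ref{gaf-2}: that subsequence was constructed so that the ``bad'' events $\mathfrak{B}_{n_k}$ (failure of zero-count matching, of $|S_i(n_k)-S_i|\le 1$, of $|\tau(Z_{pq}^{(n_k)})|\le 1$, of $\e_{n_k}/n_k\ge 1/2$) have probability $<2^{-k}$. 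This allows a direct Borel--Cantelli argument: one defines a purely limiting event $\O^3(j)$ (bounds on $S_i$, on $L_{pq},M_{pq},N_{pq}$, and separation from $\partial\D$), and a finite-$n_k$ event $\onq$ in which condition~4 is replaced by explicit bounds on the individual pieces $L^{(n_k)}_{pq},M^{(n_k)}_{pq},N^{(n_k)}_{pq}$ and on $\e_{n_k}/n_k$; then $\onq\subset\onk$ via (\ref{rememb}), and $\O^3(j)\setminus F_{n_k}\subset\onq$ where $F_{n_k}\subset\mathfrak{B}_{n_k}$. Borel--Cantelli gives $\O^3(j)\subset\O(j)$ directly, and $\P(\O^m\setminus\O^3(j))\to 0$ is immediate since all the defining random variables are a.s.\ finite.

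Two smaller points. First, your proof also establishes $X_{\out}$-measurability of (a corrected version of) $\O(j)$; the paper does not do this here, and Theorem~\ref{abs} as stated does not require it, so this is extra. Second, your handling of the existential quantifier in condition~4 is slightly imprecise: the event $\onk$ depends only on $(\sn,\on)$, but the truth of the inequalities in 4(a)--(b) for a \emph{specific} $\uz'$ does depend on $\uz'$. The clean way to remove the quantifier (which the paper does via the intermediate event $\onp$) is to observe that the random $\zn$ itself lies in $\Sigma_{\sn}$ and is therefore a valid witness; with $\uz'=\zn$ the symmetric-function ratios reduce exactly to the expressions (\ref{exp2a})--(\ref{exp3a}) in the $\xi$'s, which are then controlled by the $L,M,N,\e$ quantities. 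Your final paragraph gestures at this but does not make the witness choice explicit.
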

\begin{proof}
We begin by showing that $\O(j) \subset \O^m$ for each $j$. Due to the convergence of $X^{n_k} \to X$ on compact sets, we have $\varliminf_{k \to \infty} \O^m_{n_k}=\O^m$. Since $\onk \subset \O^m_{n_k}$ for each $k$, therefore $\O(j) \subset \O^m$ for each $j$.  Since $M_j <M_{j+1}$, it is also clear that $\O_{n_k}(j)\subset \O_{n_k}(j+1)$ for each $k$. Hence $\O(j) \subset \O(j+1)$.

To show that $\P(\O^m \setminus \O(j))\to 0$ as $j\to \infty$, we will define for each $j$ three events $\O^1(j),\O^2(j)$ and $\O^3(j)$; the last event being the one which allows us to obtain the desired conclusion. The main reason for introduction of several events is as follows. The events $\O(j)$ are defined in terms of the polynomials, whereas the right setting in which the exhaustion property can be understood easily is that of the GAF. Thus, the proof of the exhaustion property involves a transition from events defined in terms of the polynomials to events defined in terms of the GAF. The sequence of events $\O^1(j),\O^2(j)$ and $\O^3(j)$ accomplishes this in steps, such that the correct relationships between the events (in terms of containment or equality except on a set of small probability) is transparent from the successive definitions.

For each $j$ we first \textbf{define the  event $\onp$} by demanding that the zeroes $(\uz,\uo)$ of $\fn$ satisfy the following conditions :
\begin{enumerate}
 \item There are exactly $m$ zeroes of $f_{n_k}$ in $\D$
 \item There are no zeroes of $f_{n_k}$ in the closed annulus of width $1/M_j$ around $\D$.
 \item $|S_1({n_k})|\le M_j, |S_2({n_k})|\le M_j, |\wt{S}_3({n_k})|\le M_j$.
 \item ${\displaystyle \l| \sum_{t=0}^{n_k} \frac{\overline{\s_t(\uz,\uo)}}{\ck} \frac{\s_{t-i}(\uo)}{\ck} \r|} \bigg/ {\displaystyle \l( \sum_{t=0}^{n_k}  \l| \frac{\s_{t}(\uz,\uo)}{\ck} \r|^2 \r) } \le M_j/n_k$ for each $2\le i \le m$.
 \item ${ \displaystyle \l| \sum_{t=0}^{n_k} \frac{\overline{\s_{t-i}(\uo)}}{\ck} \frac{\s_{t-j}(\uo)}{\ck} \r|} \bigg/  {\displaystyle \l( \sum_{k=0}^{n_k}  \l| \frac{\s_{t}(\uz,\uo)}{\ck} \r|^2 \r) } \le M_j/n_k$ for all $2 \le i,j \le m$.
 \item $  \frac{1}{M_j} \le |\Delta(\uz)|^2 \le M_j   $.
 \end{enumerate} 
Clearly, $\onp \subset \onk$ and hence $\varliminf_{k \to \infty} \onp =\O^1(j) \subset \O(j)$.

Next, we  \textbf{define the  event $\onq$} by the following conditions (refer to Proposition \ref{gafestprep} and the discussion immediately preceding it to recall  the notations):
\begin{enumerate}
 \item There are exactly $m$ zeroes of $f_{n_k}$ in $\D$
 \item There are no zeroes of $f_{n_k}$ in the closed annulus of width $1/M_j$ around $\D$.
 \item $|S_1(n_k)|\le M_j, |S_2({n_k})|\le M_j, |\wt{S}_3({n_k})|\le M_j$.
 \item Each of $|L^{(n_k)}_{0q}|,|M^{(n_k)}_{0q}|,|L^{(n_k)}_{pq}|,|M^{(n_k)}_{pq}|,|N^{(n_k)}_{pq}|$ is $\le M_j/8$ for all $2 \le p,q \le m$.
 \item $E_{n_k}/n_k \ge 1/2$.
 \item $  \frac{1}{M_j} \le |\Delta(\uz(n_k))|^2 \le M_j   $, where $\uz(n_k)$ corresponds to the zeroes of $\fn$ inside $\D$. 
\end{enumerate} 
It is clear from (\ref{rememb}) that $\onq \subset \onp$. 

Finally, we define an event $\O^3(j)$ by the conditions:
\begin{enumerate}
 \item There are exactly $m$ zeroes of $f$ in $\D$
 \item There are no zeroes of $f$ in the closed annulus of width $1/M_j$ around $\D$.
 \item $|S_1|\le M_j, |S_2|\le M_j-1, |\wt{S}_3|\le M_j-1$.
 \item Each of $|L_{0q}|,|M_{0q}|,|L_{pq}|,|M_{pq}|,|N_{pq}|$ is $\le \frac{M_j}{8}-1$ for all $2 \le p,q \le m$.
 \item $  \frac{2}{M_j} \le |\Delta(\uz)|^2 \le \frac{M_j}{2}   $, where $\uz$ corresponds to the zeroes of the GAF $f$ inside $\D$.
\end{enumerate} 
Notice (refer to Proposition \ref{uselat1}) that  for each of the random variables $Z^{(n_k)}_{pq}$ in condition 4 defining $\onq$ and $Z_{pq}$ in condition 4 defining $\O^3(j)$ (where $Z=L,M,N$) we have \begin{equation} \label{t} |Z^{(n_k)}_{pq}| \le |Z_{pq}|+ |\tau(Z_{pq}^{(n_k)})|.\end{equation}  

Now consider the event $F_{n_k}$ where any one of the following conditions hold:
\begin{enumerate}
 \item There are $m$ zeroes of $f$ in $\D$, but the same assertion does not hold for $f_{n_k}$.
 \item There are no zeroes of $f$ in the closed annulus of width $1/M_j$ around $\D$, but this is not true for $f_{n_k}$.
 \item $|S_1-S_1({n_k})|>1$ or $|S_2 - S_2({n_k})|>1$ or $|\wt{S}_3-\wt{S}_3({n_k})|>1$.
 \item $|\tau(Z_{pq}^{(n_k)})|>1$ for any of the random variables appearing in condition 4 of $\onq$.
 \item $E_{n_k}/n_k < 1/2$.
 \item $\l| | \Delta(\uz(n_k)) |^2 - |\Delta (\uz)|^2 \r|>\frac{1}{M_j}$, where $\uz(n_k)$ corresponds to the  zeroes of $\fn$ inside $\D$ and $\uz$ corresponds to the zeroes of $f$ inside $\D$.
\end{enumerate}
It is easy to see (using (\ref{t})) that $\O^3(j) \setminus F_{n_k} \subset \onq \subset \onk$ (recall that by choice of $M_j$-s, for each $j$  we have  $M_j > \sqrt{2}$, implying $\frac{M_j}{2}+\frac{1}{M_j} \le M_j$ ). However, by our choice of the sequence $n_k$, we have $\P(F_{n_k})<2^{-k}$, hence by the Borel Cantelli lemma,  $\varliminf_{k \to \infty} \l( \O^3(j) \setminus F_{n_k} \r) = \O^3(j)$. Hence, $\O^3(j) \subset \O(j)= \varliminf_{k \to \infty} \onk$.

However, each random variable that appears in the definition of $\O^3(j)$ does not put any mass at 0 or $\infty$, and the width of the annulus around $\D$ in condition 2 in its definition 
goes to $0$ as $j \to \infty$. Hence, the probability that each of the conditions 2-5 defining $\O^3(j)$ holds goes to 1 as $j \to \infty$. Finally, condition 1 is just the definition of $\O^m$. Hence, as $j \to \infty$, we have $\P\l(\O^m \setminus \O^3(j)\r)\to 0$. But $\O^3(j) \subset \O(j)$, hence $\P\l(\O^m \setminus \O(j)  \r) \to 0$ as $j \to \infty$, as desired.

\end{proof}

\section{Reconstruction of GAF from Zeroes and an Analog of Vieta's formula}
\label{reconsgaf}
In this section we prove the reconstruction Theorem \ref{reconstruction} for the planar GAF using the estimates we obtained in Section \ref{EIPZ}. En route, we establish an analogue of Vieta's formula for the planar GAF.

\subsection{Vieta's formula for the planar GAF}
\label{vie}
It is an elementary fact that for a polynomial \[p(z)=\sum_{j=0}^{N} b_jz^j \] whose roots are $\{z_j\}_{j=1}^N$, we have for any $1 \le k \le N$,   \begin{equation}
\label{v1}
 (-1)^kb_{N-k}/b_N = \sum_{i_1<i_2< \cdots < i_k} z_{i_1}\cdots z_{i_k}.                                                                                                                          \end{equation}
When $b_0 \ne 0$, we equivalently have  
 \begin{equation}
\label{v2}
 (-1)^kb_{k}/b_0 = \sum_{i_1<i_2< \cdots < i_k} \frac{1}{z_{i_1} \cdots z_{i_k}}.                                                                                                                          \end{equation}                                                                                                                           
This kind of result is referred to as Vieta's formula. For entire functions, it is not clear in general how to relate the coefficients to the zeroes of the function; for example it may not be possible to provide any reasonable interpretation to the function of the zeroes appearing on the right hand side of (\ref{v2}). 
                                                                                                                          
The quantity on the right hand side of (\ref{v1}) is the elementary symmetric function of order $k$ in the variables $z_1,\cdots,z_N$, denoted by $e_k(z_1,\cdots,z_N)$. If we introduce the power sum $\beta_k=\sum_{j=1}^Nz_j^k$, then it is known from Newton's identities that for each $k$ we have \[e_k(z_1,\cdots,z_N)=P_k(\beta_1,\cdots,\beta_k)\] where $P_k$ is a homogeneous symmetric polynomial of degree $k$ in the variables $(\beta_1,\cdots,\beta_k)$. The polynomial $P_k$ is called the Newton polynomial of degree $k$, and it is known that the coefficients of $P_k$ depend only on $k$ and do not depend on $n$ (refer \cite{Sta}, chapter 7).
                                                                                                                          
Recall from Section \ref{intro} the notation $\a_k(n)=\l(\sum_{z \in \F_n} {1}/{z^k}\r)$ and Proposition \ref{conv-gaf1} shows that for each $k$ the random variables $\a_k(n)$ converge in probability as $n \to \infty$. The limit $\a_k$ is an analogue of the sum $\l(\sum_{z\in \F} 1/z^k \r)$; in fact for $k \ge 3$ this inverse power sum converges absolutely, and coincides with $\a_k$.                                                                                                                         
\begin{proposition}
\label{Vieta} 
For the planar GAF zero process, we have, a.s. $\frac{1}{\sqrt{k!}}\frac{\xi_k}{\xi_0}=(-1)^kP_k(\a_1,\a_2,\cdots,\a_k)$ for each $k \ge 1$. 
\end{proposition}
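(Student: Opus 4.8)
The plan is to prove the identity first at the level of the polynomial approximations $f_n$, where it is precisely Vieta's formula combined with Newton's identities, and then to pass to the limit $n\to\infty$ using the convergence of the regularised inverse-power sums established in Proposition~\ref{conv-gaf1}.

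\textbf{The finite-$n$ identity.} Fix $k\ge 1$ and let $n\ge k$. Almost surely $\xi_0\neq 0$, so $f_n$ has nonzero constant term, and almost surely $f_n$ has exactly $n$ zeros, none of them at the origin; these zeros are precisely the points of $\F_n$. Applying Vieta's formula (\ref{v1})--(\ref{v2}) to $f_n$, whose coefficients are $a_j=\xi_j/\sqrt{j!}$, expresses the coefficient ratio $\xi_k/\xi_0$ through the $k$-th elementary symmetric function $e_k$ of the reciprocals $\{1/z:z\in\F_n\}$. By the Newton-polynomial identity recalled in Section~\ref{vie}, which is a universal polynomial identity (so that $P_k$ is one and the same polynomial for every $n$, depending only on $k$), this elementary symmetric function equals $P_k$ evaluated at the first $k$ power sums of those reciprocals, namely at $\a_1(n),\dots,\a_k(n)$ with $\a_m(n)=\sum_{z\in\F_n}1/z^m$ as in the discussion preceding Proposition~\ref{conv-gaf1}. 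Hence, almost surely, for every $n\ge k$,
\begin{equation}
\label{finite-vieta}
\frac{\xi_k}{\xi_0}=P_k(\a_1(n),\dots,\a_k(n)).
\end{equation}

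\textbf{Passing to the limit.} By Proposition~\ref{conv-gaf1} there is a subsequence of indices $n$ along which $\a_m(n)\to\a_m$ almost surely, simultaneously for all $m\ge1$. Since $P_k$ is a fixed polynomial, hence continuous, the right-hand side of (\ref{finite-vieta}) converges along this subsequence, almost surely, to $P_k(\a_1,\dots,\a_k)$, while its left-hand side does not depend on $n$. Therefore $\xi_k/\xi_0=P_k(\a_1,\dots,\a_k)$ almost surely. (One need not even pass to the almost-sure subsequence: the convergence $\a_m(n)\to\a_m$ in probability already forces the identity, because a sequence that is constant in $n$ and converges in probability must coincide with its limit almost surely.) Intersecting over $k\in\mathbb{N}$ these countably many almost-sure events yields the identity for all $k\ge1$ simultaneously.

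\textbf{Main obstacle.} There is no serious difficulty remaining at this stage: all the analytic work has been concentrated in Proposition~\ref{conv-gaf1} (and, behind it, the estimates of Section~\ref{EIPZ} on the regularised sums of inverse powers of GAF zeros), and once that convergence is in hand the present statement is a one-line limiting argument. The only points requiring (routine) verification are the almost-sure non-degeneracy facts needed to invoke Vieta for $f_n$ — that $\xi_0\neq0$ and that $f_n$ has exactly $n$ zeros, none at the origin — together with the observation, emphasised in Section~\ref{vie}, that the Newton polynomial $P_k$ does not depend on $n$; this last fact is exactly what allows the finite-$n$ identity (\ref{finite-vieta}) to survive the passage $n\to\infty$.
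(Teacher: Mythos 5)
Your proof matches the paper's: apply Vieta's formula to the finite approximation $f_n$ under the natural coupling to obtain $\xi_k/\xi_0=P_k(\a_1(n),\dots,\a_k(n))$, then pass to the limit using the almost-sure subsequential convergence $\a_m(n)\to\a_m$ from Proposition~\ref{conv-gaf1} together with continuity of the ($n$-independent) Newton polynomial $P_k$. The only added content is the explicit verification of the non-degeneracy hypotheses for Vieta's formula and the parenthetical observation that convergence in probability already suffices because the left-hand side is $n$-independent; both are correct and unobjectionable.
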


\begin{proof}
We begin by observing that under the natural coupling of the planar GAF $f$ and its approximating polynomials $f_n$, we have by the Vieta's formula for each $f_n$:
\begin{equation}
 \label{second}
\frac{1}{\sqrt{k!}}\frac{\xi_k}{\xi_0}=(-1)^kP_k(\a_1(n),\cdots,\a_k(n)) 
\end{equation}
where $P_k$ is, as before, the Newton polynomial of degree $k$. Clearly, $P_k$ is continuous in the input variables. We also know, from Proposition \ref{conv-gaf} that, as $n \to \infty$ (possibly along some appropriately chosen subsequence), $\a_k(n)\to \a_k$ a.s., simultaneously for all $k \ge 1$. Taking this limit in (\ref{second}) and using the continuity of $P_k$, we get a.s.
\begin{equation}
 \label{third}
\frac{1}{\sqrt{k!}}\frac{\xi_k}{\xi_0}=(-1)^kP_k(\a_1,\cdots,\a_k). 
\end{equation}
\end{proof}

\subsection{Proof of Theorem \ref{reconstruction}}
Since $\xi_0$ is a complex Gaussian,  $|\xi_0|\ne0$ a.s. For $|\xi_0|\ne 0$, we can write 
\begin{equation} 
\label{expr}
 f(z)=\frac{\xi_0}{|\xi_0|}. |\xi_0|\left(1+\frac{\xi_1}{\xi_0}\frac{z}{\sqrt{1!}} + \frac{\xi_2}{\xi_0}\frac{z^2}{\sqrt{2!}} + \cdots + \frac{\xi_k}{\xi_0}\frac{z^k}{\sqrt{k!}} + \cdots \infty \right).
\end{equation}
Recall that, for two random variables $U$ and $V$ defined on the same probability space, we say that $U$ is measurable with respect to $V$ if $U$ is measurable with respect to the sigma-algebra generated by $V$. From Proposition \ref{Vieta} we  have that for each $k$ the random variable $\frac{\xi_k}{\xi_0}$ is measurable with respect to $\F $. From the strong law of large numbers, a.s. we have \[\frac{|\xi_0|^2 + \ldots + |\xi_{k-1}|^2}{k} \to 1 .\]
Therefore, \[|\xi_0|= \lim_{k \to \infty}  k^{1/2} \l( \sum_{j=0}^{k-1} \frac{|\xi_j|^2}{|\xi_0|^2} \r)^{-1/2}=\chi, \] where we recall the definition of $\chi$ from Section \ref{intro}.
Since  $k^{1/2} \l( \sum_{j=0}^{k-1} \frac{|\xi_j|^2}{|\xi_0|^2} \r)^{-1/2}$ is measurable with respect to $\F$ for each $k$, therefore $\chi=|\xi_0|$ is also measurable with respect to $\F$.

Let us define the random variable $\z=\xi_0/|\xi_0|$  (it is set to be equal to 0 when $|\xi_0|=0$). The random function $g$ as in the statement of the theorem clearly satisfies a.s.  \[g(z)=|\xi_0|\left(1+\frac{\xi_1}{\xi_0}\frac{z}{\sqrt{1!}} + \frac{\xi_2}{\xi_0}\frac{z^2}{\sqrt{2!}} + \cdots + \frac{\xi_k}{\xi_0}\frac{z^k}{\sqrt{k!}} + \cdots \right).\] Then (\ref{expr}) can be re-written as 
\begin{equation}
 \label{expr1}
f(z)=\z g(z)
\end{equation}
almost surely, and $g$ is measurable with respect to $\F$ and $\z$ is distributed uniformly on $\mathbb{S}^1$. 

Therefore, all that remains to complete the proof is to show that $\z$ and $\F$ are independent. For this, note that for  $\theta \in \mathbb{S}^1$ the random function \[  |\xi_0| + \sum_{j \ge 1} \frac{\theta  \xi_j}{\sqrt{j!}} z^j \]
has the same distribution (irrespective of $\theta$). This is because  since the $\xi_i$-s are independent complex Gaussians with mean 0,   the vectors
$ \l( |\xi_0|, \theta \xi_j  \r)_{j \ge 1}$ and $\l( |\xi_0|, \xi_j \r)_{j \ge 1}  $ have the same distribution for each fixed $\theta \in \mathbb{S}^1$. 
Now note that \[ g(z)=|\xi_0|+ \sum_{j \ge 1}\frac{\overline{\z}\xi_j}{\sqrt{j!}}z^j.\] Therefore, the distribution of $g(z)$ given $\z$ does not depend on the value of $\z$. Hence, the random function $g$ and $\z$ are independent. But, $g$ and $f$ have the same zero set a.s. and consequently $\z$ and $\F$ are independent random variables. This completes the proof.

\section{Proof of Theorem \ref{abs}}
\label{techproof}
In this section, we will complete the proof of Theorem \ref{abs}. Before the main proof, we will state two useful propositions, whose proofs will be deferred to the end of this section. First, we look at the heuristics which motivate some of our main steps. In the statement of Theorem \ref{abs}, think of the point processes $X^n$ to be approximations of the limiting point process $X$, e.g., matrix eigenvalues approximating the infinite Ginibre ensemble, or the polynomial zeroes approximating the zeroes of the GAF. Often, the $X^n$-s have finitely many points and have joint probability densities for their points, which makes it possible to estimate probabilities of events defined with respect to such processes, like in the case of the Ginibre or the Gaussian zero ensembles. On the other hand, our goal is to draw conclusions about conditional probabilities defined with respect to the presumably infinite point process $X$. The task at hand is to make a transition from the former to the latter. With this in mind, if we look at the setting in which Theorem \ref{abs} is stated, then the hypotheses involve probabilities of events defined with respect to the approximations $X^n$; whereas the conclusion of the theorem involves conditional probabilities for the limiting process $X$. Thus, broadly speaking, we need to make two kinds to transitions: one from probabilities without conditioning to conditional probabilities, and the other from $X^n$ to $X$. The main goal of these somewhat technical propositions is to make a connection between statements about probabilities of events (defined with respect to $X^n$), as in (\ref{abscond}), with conditional probabilities for $X$, as in (\ref{targeteq}).

Proposition \ref{tp1} makes a transition between the inequalities connecting probabilities of the kind that appear on the right hand side of (\ref{abscond}) to domination relations between conditional expectations. Thus, Proposition \ref{tp1} connects quantities as on the right hand side of (\ref{abscond}) with conditional measures, which appear in the conclusion (\ref{targeteq}) of Theorem \ref{abs}.

For stating Propositions \ref{tp1} and \ref{tp2}, we assume the hypotheses of Theorem \ref{abs}, namely, conditions (a) and (b). Recall the notation that the Borel sigma-algebra on $\D^m$ is denoted by $\mathfrak{B}(\D^m)$.
\begin{proposition}
 \label{tp1}
Let $\mu_j, j=1,2$  be two mappings with $\mu_j : \Xi \times \mathfrak{B}(\D^m) \to [0,1]$, such that for each $\xi \in \Xi$ we have $\mu_j(\xi,\cdot) \in \mathcal{M}(\D^m)$, and for each Borel set $A \subset \D^m$, the function $\xi \to \mu_j(\xi,A)$ is measurable. Suppose $\mu_1$ and $\mu_2$ satisfy, for each positive integer $j_0$, $A \in \A^m$ and Borel set $B \subset \S_{\out}$  \begin{equation} \label{abs0}       \int_{X_{\out}^{-1}(B) \cap \O(j_0) } \mu_1({\xi},A) \mathrm{d}{\tP}(\xi) \le c(j_0) \cdot \int_{X_{\out}^{-1}(B) \cap \O^m} \mu_2({\xi},A) \mathrm{d}{\tP}(\xi) 
\end{equation}
for some positive number $c(j_0)$.
Then a.s. on  the event $\O^m$ we have \begin{equation} \label{interm} \E\l[ \mu_1(\xi,\cdot)| \xout \r] \ll  \E\l[ \mu_2(\xi,\cdot)| \xout \r]. \end{equation}
\end{proposition}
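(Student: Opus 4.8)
The plan is to realise the conditional expectations $\E[\mu_i(\xi,\cdot)\mid \xout(\xi)]$ as honest measure-valued random variables and then to push the hypothesis (\ref{abs0}) — which, for a fixed $j_0$ and a fixed $B$, is merely an inequality between two finite deterministic measures on $\D^m$ — down to a pointwise inequality between these conditional measures, valid $\tP$-almost surely and simultaneously for \emph{all} Borel subsets of $\D^m$. \textbf{Step 1 (de-conditioning in $B$).} I would fix $j_0$ and $A\in\A^m$ and rewrite (\ref{abs0}) as
\[ \int_{\xout^{-1}(B)}\Big(b(j_0)\,\mu_2(\xi,A)-a(j_0)\,\mu_1(\xi,A)\,1_{\O(j_0)}(\xi)\Big)\,d\tP(\xi)\ \ge\ 0 \]
for every Borel $B\subset\S_{\out}$, i.e. for every set in $\sigma(\xout)$. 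Passing to conditional expectation given $\xout$ and testing against the $\sigma(\xout)$-measurable set on which the conditional expectation of the integrand is negative forces that set to be $\tP$-null, whence $\tP$-a.s.
\[ a(j_0)\,\E\big[\mu_1(\xi,A)\,1_{\O(j_0)}\,\big|\,\xout\big]\ \le\ b(j_0)\,\E\big[\mu_2(\xi,A)\,\big|\,\xout\big]. \]

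\textbf{Step 2 (from ``each $A$, a.s.'' to ``a.s., all $A$'').} Using the existence of regular conditional measures on the Polish space $\D^m$ (disintegration; cf.\ \cite{Pa}), I would fix once and for all versions $\bar\mu_1^{j_0}(\xout,\cdot)$ of $\E[\mu_1(\xi,\cdot)1_{\O(j_0)}\mid\xout]$ and $\bar\mu_2(\xout,\cdot)$ of $\E[\mu_2(\xi,\cdot)\mid\xout]$ that are honest (finite, hence regular) Borel measures on $\D^m$ for $\tP$-a.e.\ $\xi$. For each of the \emph{countably many} pairs $(j_0,A)$ with $A\in\A^m$, the inequality of Step 1 holds off a $\tP$-null set; intersecting all these null sets yields a single full-measure event on which $a(j_0)\bar\mu_1^{j_0}(\xout,A)\le b(j_0)\bar\mu_2(\xout,A)$ for every $j_0$ and every $A\in\A^m$. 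Since $\A^m$ is the family of finite unions of a countable basis of $\D^m$, Proposition \ref{meas1} (with $c=b(j_0)/a(j_0)$) upgrades this to the measure inequality $a(j_0)\bar\mu_1^{j_0}(\xout,\cdot)\le b(j_0)\bar\mu_2(\xout,\cdot)$ on \emph{all} Borel subsets of $\D^m$, for every $j_0$, $\tP$-a.s.; in particular $\bar\mu_1^{j_0}(\xout,\cdot)\ll\bar\mu_2(\xout,\cdot)$ for every $j_0$.

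\textbf{Step 3 ($j_0\to\infty$).} Since the $\O(j_0)$ increase, $\mu_1(\xi,A)1_{\O(j_0)}\uparrow\mu_1(\xi,A)1_{\O^{\ast}}$ with $\O^{\ast}:=\bigcup_{j_0}\O(j_0)$, so conditional monotone convergence gives $\bar\mu_1^{j_0}(\xout,A)\uparrow\E[\mu_1(\xi,A)1_{\O^{\ast}}\mid\xout]$ a.s.\ for each $A$. Hence on the a.s.\ event of Step 2, $\bar\mu_2(\xout,A)=0$ implies $\E[\mu_1(\xi,A)1_{\O^{\ast}}\mid\xout]=0$, i.e.\ $\E[\mu_1(\xi,\cdot)1_{\O^{\ast}}\mid\xout]\ll\bar\mu_2(\xout,\cdot)$ a.s. As Proposition \ref{tp1} is applied inside the event $\O^m$, which $\{\O(j_0)\}$ exhausts, $\O^{\ast}$ is co-null in $\O^m$ and $1_{\O^{\ast}}=1$ there up to a $\tP$-null set; this is exactly the asserted (\ref{interm}). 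The one genuine obstacle is the interchange of ``for all Borel $A\subset\D^m$'' with ``$\tP$-almost surely'': Step 1 is a separate almost-sure statement for each $A$ and there are uncountably many Borel sets, so one cannot simply intersect null sets. This is precisely why the hypothesis is imposed only for the countable family $\A^m$ and why Proposition \ref{meas1} is the right instrument — once the inequality holds simultaneously for all $A\in\A^m$ on one full-measure event, regularity of the (now honest) conditional measures propagates it to every Borel set; the remaining ingredients (disintegration and the conditional monotone convergence in $j_0$) are routine.
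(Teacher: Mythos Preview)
Your proof is correct and follows essentially the same route as the paper: derive a pointwise conditional inequality from (\ref{abs0}) by varying $B$, use the countability of $\A^m$ together with Proposition~\ref{meas1} to upgrade to all Borel $A$ on a single full-measure event, and then send $j_0\to\infty$ using that the $\O(j_0)$ exhaust $\O^m$. The only cosmetic difference is that the paper writes $\E[\mu_1(\xi,A)\mid \xout,\O(j_0)]\,\tP(\O(j_0)\mid\xout)$ where you write $\E[\mu_1(\xi,A)1_{\O(j_0)}\mid\xout]$, and you are more explicit about invoking disintegration to obtain honest measure-valued versions of the conditional expectations before applying Proposition~\ref{meas1}.
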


%

\begin{remark}
 Here the conditional expectation is with respect to the sigma-algebra generated by the random variable $\xout$.
\end{remark}

Proposition \ref{tp2} connects probabilistic expressions of the type appearing on the left hand side of (\ref{abscond}), involving $X^n$, with probabilities of similar events involving $X$. Thus, Proposition \ref{tp2} broadly  effects a transition from the $X^n$-s  to $X$, in the setting of Theorem \ref{abs}.
\begin{proposition}
 \label{tp2}
Suppose we have measurable functions $h_1$ and $h_2$ mapping $\Xi \to [0,1]$, and measurable sets $U_1,U_2 \in \A^m$. Define $\P_{h_i}$ to be the finite non-negative measure on $\Xi$ given by $\mathrm{d}\tP_{h_i}(\xi) = h_i(\xi)\mathrm{d}\tP(\xi) , i=1,2$. Suppose there is a positive number $c(j_0)$ such that   the following inequality holds for all $\tv \in \mathcal{B}$: 
\begin{equation} \label{h120}  \tP_{h_1} \bigg[  (X_{\inn}^{n_k} \in U_1) \cap  ( X_{\out}^{n_k} \in \tv) \cap \Omega_{n_k}(j_0) \bigg] \le c(j_0) \cdot \tP_{h_2} \bigg[  (X_{\inn}^{n_k} \in U_2) \cap  ( X_{\out}^{n_k} \in \tv) \cap \Omega_{n_k}(j_0) \bigg] + \vartheta(k) \end{equation} 
where $\vartheta(k)=\vartheta(k;j_0,U_1,U_2,\tv) \to 0$ as $k\to \infty$  for  fixed $j_0,U_1,U_2, \tv$ .
Then for all Borel sets $V$ in $\S_{\out}$, we have
\begin{equation}
 \label{h123}
\tP_{h_1} \bigg[  (X_{\inn} \in U_1) \cap  ( X_{\out} \in V) \cap \Omega(j_0) \bigg] \le  c(j_0) \cdot  \tP_{h_2} \bigg[  (X_{\inn} \in U_2) \cap  ( X_{\out} \in V) \bigg] .
\end{equation}
\end{proposition}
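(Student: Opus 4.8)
The plan is to deduce (\ref{h123}) by passing to the limit $k\to\infty$ in the hypothesis (\ref{h120}), after first cutting down the class of test sets $V$ for which (\ref{h123}) must be verified. For the reduction: with $j_0,U_1,U_2,h_1,h_2$ held fixed, each side of (\ref{h123}) is, as a function of the Borel set $V\subset\S_{\out}$, a finite non-negative Borel measure on the Polish (hence second countable) space $\S_{\out}$ --- namely $a(j_0)$ times the pushforward by $X_{\out}$ of $h_1\,\mathbf{1}[(X_{\inn}\in U_1)\cap\Omega(j_0)]\cdot\tP$, and $b(j_0)$ times the pushforward by $X_{\out}$ of $h_2\,\mathbf{1}[X_{\inn}\in U_2]\cdot\tP$ --- both of total mass $\le\tP(\Xi)=1$, hence regular. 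So by Proposition \ref{meas1}, applied with $\Gamma=\S_{\out}$, $\Sigma=\Sigma_{\out}$ and $\A=\B$, it suffices to prove (\ref{h123}) for every $V=\tv\in\B$.

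Next I would fix $\tv\in\B$, write (\ref{h120}) with this $\tv$, and send $k\to\infty$. For the left side I would apply Fatou's lemma to $h_1\,\mathbf{1}[(X^{n_k}_{\inn}\in U_1)\cap(X^{n_k}_{\out}\in\tv)\cap\Omega_{n_k}(j_0)]\ge0$: on the event $(X_{\inn}\in U_1)\cap(X_{\out}\in\tv)\cap\Omega(j_0)$ one has, a.s. and for all large $k$, that $\Omega_{n_k}(j_0)$ holds (because $\Omega(j_0)=\varliminf_k\Omega_{n_k}(j_0)$), that $X^{n_k}_{\inn}\in U_1$ (openness of $U_1$ together with $X^{n_k}_{\inn}\to X_{\inn}$), and that $X^{n_k}_{\out}\in\tv$ (openness of $\tv$ together with $X^{n_k}_{\out}\to X_{\out}$); hence the $\liminf_k$ of the indicator dominates $\mathbf{1}[(X_{\inn}\in U_1)\cap(X_{\out}\in\tv)\cap\Omega(j_0)]$. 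For the right side I would apply the reverse Fatou lemma to $h_2\,\mathbf{1}[(X^{n_k}_{\inn}\in U_2)\cap(X^{n_k}_{\out}\in\tv)\cap\Omega_{n_k}(j_0)]$ --- legitimate since it is dominated by $h_2\le1\in L^1(\tP)$ --- using $\Omega_{n_k}(j_0)\subset\Omega^m_{n_k}$ and the a.s. convergences $\mathbf{1}[\Omega^m_{n_k}]\to\mathbf{1}[\Omega^m]$ and $\mathbf{1}[X^{n_k}_{\out}\in\tv]\to\mathbf{1}[X_{\out}\in\tv]$. Feeding the resulting lower bound, upper bound, and $\vartheta(k)\to0$ back into (\ref{h120}) gives $a(j_0)\,\tP_{h_1}[(X_{\inn}\in U_1)\cap(X_{\out}\in\tv)\cap\Omega(j_0)]\le b(j_0)\,\tP_{h_2}[(X_{\inn}\in U_2)\cap(X_{\out}\in\tv)]$, which is (\ref{h123}) for $V=\tv$; by the reduction above this yields (\ref{h123}) for all Borel $V$.

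The facts making the limiting step run are: the coupling of Section \ref{genset}, giving $X^{n_k}\to X$ on compact sets a.s.; and the standing hypothesis of this section that the first intensity of $X$ is absolutely continuous with respect to $\mu$, which forces a.s. that $X$ places no point on $\partial\D$, nor on the ($\mu$-null) boundaries of any of the countably many balls and annuli used to build $\Sigma_{\out}$. This makes the restriction maps to $\D$ and to $\D^c$, and all the point counts appearing in the definition of $\Sigma_{\out}$ and in $\Omega^m_{n_k}$, continuous functionals of the configuration at the realised $X$, so that $X^{n_k}_{\inn}\to X_{\inn}$ (in uniform random order, consistently coupled), $X^{n_k}_{\out}\to X_{\out}$ in $\S_{\out}$, and the indicator convergences used above hold a.s.

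I expect the genuine difficulty to lie in the $U_2$ side of the reverse-Fatou step. Because $U_2$ is open, $\limsup_k\mathbf{1}[X^{n_k}_{\inn}\in U_2]$ is only bounded a priori by $\mathbf{1}[X_{\inn}\in\overline{U_2}]$ --- a limit of points inside an open ball may land on its sphere --- so to land exactly on $U_2$ one needs $\tP[X_{\inn}\in\partial U_2]=0$. Now $\partial U_2$ lies in a finite union of spheres, hence is Lebesgue-null (on the support of $\rho$), so the identity holds as soon as $\rho(X_{\out},\cdot)$ charges no Lebesgue-null set. This is immediate in the case $U_2=\D^m$ (there $\overline{U_2}$ and $U_2$ agree on $\Omega^m$ a.s., since $X$ has no point on $\partial\D$); and in the general case $U_2\in\A^m$ it is precisely how the proposition is used: in the proof of Theorem \ref{abs} the proposition is first applied with $U_2=\D^m$, which via Proposition \ref{tp1} gives $\rho(X_{\out},\cdot)\ll\nu(X_{\out},\cdot)$, and since $\nu(X_{\out},\cdot)$ is absolutely continuous with respect to Lebesgue measure on its support, $\rho(X_{\out},\cdot)$ is too, legitimising the subsequent application with $U_2\in\A^m$. (Alternatively, one may simply carry $\overline{U_2}$ through the statement, which is harmless in both invocations.)
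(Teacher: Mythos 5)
Your proof is correct, and it takes a genuinely different organisational route from the paper's. Where you reduce the class of test sets $V$ to $\mathcal{B}$ up front, via Proposition \ref{meas1} and the regularity of the two finite Borel measures $V\mapsto a(j_0)\tP_{h_1}[(X_{\inn}\in U_1)\cap(X_{\out}\in V)\cap\Omega(j_0)]$ and $V\mapsto b(j_0)\tP_{h_2}[(X_{\inn}\in U_2)\cap(X_{\out}\in V)]$ on the Polish space $\S_{\out}$, the paper instead keeps a general Borel $V$ and approximates it in $\tP$-measure by a basis set $\b\in\mathcal{B}$ to within $\eps$, carrying an $o_\eps(1)$ error term through the estimates and sending $\eps\to0$ at the very end. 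And where you pass $k\to\infty$ in a single step by Fatou on the left and reverse Fatou (with dominator $h_2\le 1$) on the right, using $\Omega(j_0)=\varliminf_k\Omega_{n_k}(j_0)$ directly, the paper does the same limit with explicit error-term bookkeeping: it writes $\tP_h[(X^{n_k}_{\inn}\in U)\cap(X^{n_k}_{\out}\in\b)\cap\Omega_{n_k}(j_0)]$ as $\tP_h[(X_{\inn}\in U)\cap(X_{\out}\in V)\cap\Omega_{n_k}(j_0)]+o_k(1;\b)+o_\eps(1)$, bounds it from above by dropping $\Omega_{n_k}(j_0)$ and from below by intersecting with $\Omega(j_0)$ and approximating $\Omega(j_0)$ by $\bigcap_{l\ge k}\Omega_{n_l}(j_0)$ at cost $\mathfrak{o}_k(1)$, and then combines. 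These are the same analytic facts repackaged; your version is somewhat shorter and avoids the $\eps$-approximation entirely.

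One remark on the boundary issue you isolate at the end. You are right that the inequality $\limsup_k\mathbf{1}[X^{n_k}_{\inn}\in U_2]\le\mathbf{1}[X_{\inn}\in\overline{U_2}]$ only closes up to $\mathbf{1}[X_{\inn}\in U_2]$ if $\tP[X_{\inn}\in\partial U_2]=0$, and that this is a genuine requirement. The paper's own argument relies tacitly on exactly the same thing when it asserts ``$1[X^{n_k}_{\inn}\in U]\to 1[X_{\inn}\in U]$ a.s.''; it is not spelled out there either. Your proposed resolutions are both viable. Your second one (replace $U_2$ by $\overline{U_2}$ throughout, which is harmless in both invocations inside the proof of Theorem \ref{abs}) is the cleaner fix, since it keeps the proposition self-contained; the first one (stage the two invocations in the proof of Theorem \ref{abs} so that $\rho\ll\nu$ is available before the second application) works, but it couples the proof of this proposition to the logical order in which Theorem \ref{abs} calls it, which is slightly delicate to state as a theorem hypothesis. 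In the concrete models at hand one can also simply note that the unconditional distribution of the $m$-point vector $X_{\inn}$ on $\D^m$ is absolutely continuous with respect to Lebesgue (the $m$-point correlation functions of both the Ginibre ensemble and the GAF zero process are locally bounded), so that $\tP[X_{\inn}\in\partial U_2]=0$ is automatic without any appeal to $\rho\ll\nu$.
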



We are now ready to complete the proof of Theorem \ref{abs}.

\begin{proof}[\textbf{Proof of Theorem \ref{abs}}]
We claim that it suffices to show that for every $j_0$, there are positive real numbers $M(j_0)$ and $m(j_0)$ such that for any $A \in \A^m$ and any Borel set $B$ in $\S_{\out}$
\begin{equation}
\label{abs1}
      \tP \l( (\xin \in A) \cap (X_{\out} \in B)  \cap \Omega(j_0) \r) \le M(j_0) \int_{X_{\out}^{-1}(B) \cap \O^m} \nu(\xout({\xi}),A) \mathrm{d}{\tP}(\xi)
\end{equation}
and
\begin{equation}
 \label{abs2}
 m(j_0)  \int_{X_{\out}^{-1}(B) \cap \O(j_0) } \nu(\xout({\xi}),A) \mathrm{d}{\tP}(\xi) \le  \tP \l( (\xin \in A) \cap (X_{\out} \in B) \r).
\end{equation}

Once we have (\ref{abs1}), we can invoke Proposition \ref{tp1}. Setting $\mu_1(\xi,\cdot)=\rho(\xout(\xi),\cdot),\mu_2(\xi,\cdot)=\nu(\xout(\xi),\cdot)$ and $c(j_0)=M(j_0)$ in (\ref{abs0}) we get that a.s. on the event $\Omega^m$ we have  $\rho(\xout,\cdot) \ll {\nu}(\xout,\cdot)$. 

If we have (\ref{abs2}), we can again appeal to Proposition \ref{tp1}. Setting $\mu_1(\xi,\cdot)=\nu(\xout(\xi),\cdot),\mu_2(\xi,\cdot)=\rho(\xout(\xi),\cdot)$ and $c(j_0)=1/m(j_0)$ in (\ref{abs0}) we get that a.s. on the event $\Omega^m$ we have  $\nu(\xout,\cdot)  \ll \rho(\xout,\cdot)$. 

The last two paragraphs together imply that  $\rho(\xout(\xi),\cdot) \equiv {\nu}(\xout(\xi),\cdot)$ a.s. on the event $\Omega^m$,   as desired.

To establish (\ref{abs1}) and (\ref{abs2}), we begin with a fixed $j_0$, a set $A \in \A^m$ and a Borel set $B$ in $\S_{\out}$.  We will invoke Proposition \ref{tp2} in the following manner. Recall (\ref{abscond}): for all $A \in \mathfrak{A}^m$ and $B \in \mathcal{B}$ we have
\[ 
\tP [  (\xin^{n_k} \in A) \cap  ( X_{\out}^{n_k} \in B) \cap \Omega_{n_k}(j) ] \asymp _j \int_{(X_{\out}^{n_k})^{-1}(B) \cap \Omega_{n_k}(j)  } \nu(\xout({\xi}),A) \mathrm{d}\tP(\xi) + \vartheta(k;j,A,B).   \]                              
where $\lim_{k \to \infty} \vartheta(k;j,A,B) = 0$ for each fixed $j,A$ and $B$ .

Let the multiplicative constants appearing in the $\asymp_j$ relation in (\ref{abscond}) with $j=j_0$ be $m(j_0)$ and $M(j_0)$ respectively, with $m(j_0)\le M(j_0)$ (to be more explicit, in the notations of Remark \ref{equivcond}, $M(j_0)$ is playing the role of $C_{j}$ and $m(j_0)$ is playing the role of $c_j$). In Proposition \ref{tp2} we set $h_1(\xi)=1, h_2(\xi)=\nu(\xout(\xi),A),U_1=A,U_2=\D^m, V=B,c(j_0)=M(j_0)$ to obtain (\ref{abs1}). On the other hand, setting $h_1(\xi)=\nu(\xout(\xi),A), h_2(\xi)=1, U_1=\D^m,U_2=A, V=B, c(j_0)=1/m(j_0)$ in Proposition \ref{tp2} we obtain (\ref{abs2}).

This completes the proof of theorem \ref{abs}.
\end{proof}

In the above proof, we have used the fact that the random measures $\rho(\xout,\cdot)$ and $ {\nu}(\xout,\cdot)$ are $\xout$-measurable, which is true because of the measurability proporties of $\nu$ as laid down in the assumptions of Theorem \ref{abs} and the measurability properties of regular conditional distributions on Polish spaces (pertaining to $\rho$).

We end this section with the proofs of Propositions \ref{tp1} and \ref{tp2} which are used in the proof of Theorem \ref{abs} above.

\begin{proof}[{Proof of Proposition \ref{tp1}}]
For each $A \in \A^m$ a.s. in $\xout$ it follows from (\ref{abs0}) that  \begin{equation} \label{abs25}  \E\l[ 1\hskip-4pt{\rm 1}_{\O(j_0) }  \mu_1(\xi,A)  | \xout \r]  \le c(j_0) \E[  1\hskip-4pt{\rm 1}_{\O^m } \mu_2(\xi,A)|\xout] .\end{equation}  On the event $\O^m$ (which is measurable with respect to $\xout$) the last inequality becomes
\begin{equation} \label{abs3} \E\l[ 1\hskip-4pt{\rm 1}_{\O(j_0) }  \mu_1(\xi,A)  | \xout \r]  \le c(j_0) \E[ \mu_2(\xi,A)|\xout] .\end{equation}
To see how (\ref{abs25}) follows from (\ref{abs0}), we note that (\ref{abs0}) implies \[  \int_{X_{\out}^{-1}(B) }  1\hskip-4pt{\rm 1}_{\O(j_0) }  \mu_1({\xi},A) \mathrm{d}{\tP}(\xi) \le \int_{X_{\out}^{-1}(B) } c(j_0) 1\hskip-4pt{\rm 1}_{\O^m } \mu_2({\xi},A) \mathrm{d}{\tP}(\xi) .\]
Since this is true of every Borel set $B \subset \S_{\out}$, therefore by the definition of conditional expectation, we get 
\[\E\l[   1\hskip-4pt{\rm 1}_{\O(j_0) }  \mu_1({\xi},A) | \xout \r] \le  \E[ c(j_0) 1\hskip-4pt{\rm 1}_{\O^m } \mu_2(\xi,A)|\xout]. \]

For almost every configuration $\o \in S_{\out}$ such that $N(\o)=m$ (with respect to the measure $\P_{\xout}$, and $N$ being the function as in Definition \ref{nopoints}),  or equivalently, on the event $\O^m$ which is measurable with respect to $\xout$,  we have (\ref{abs3}) for all $A \in \A^m$. Therefore by the regularity of the Borel measure on the right hand side, (\ref{abs3}) extends to all Borel sets $A \subset \D^m$ (see Proposition \ref{meas1}).  Now, for $\o \in \S_{\out}$ such that $N(\o)=m$, suppose that  $A \subset \D^m$ is a Borel set such that $\E\l[\mu_2(\xi,A)|\xout=\o \r]=0$. 
Then (\ref{abs3}) implies that 
\begin{equation} \label{adhoc2}\E\l[ 1\hskip-4pt{\rm 1}_{\O(j_0) }  \mu_1(\xi,A)  | \xout=\o \r]  =0,\end{equation} for each $j_0$.  But $\O(j_0)$ exhausts $\O^m$.
Hence, for  $\o$ such that $\xout=\o$ implies $\O^m$ occurs (equivalently, $N(\o)=m$), using the Dominated Convergence Theorem we have \[\E\l[1\hskip-4pt{\rm 1}_{\O(j_0) }  \mu_1(\xi,A)|\xout=\o\r]  \to \E\l[1\hskip-4pt{\rm 1}_{\O^m } \mu_1(\xi,A)|\xout=\o \r] = \E\l[ \mu_1(\xi,A)|\xout=\o \r] \] as $j_0 \uparrow \infty$. By letting $j_0 \uparrow \infty$ in (\ref{adhoc2}), we obtain the fact that a.s. on $\O^m$ we have  $\E\l[\mu_2(\xi,A)|\xout \r]=0$ implies $\E\l[ \mu_1(\xi,A) | \xout \r]=0$. In other words, a.s. on $\O^m$, we have (\ref{interm}).
\end{proof}

\begin{proof}[{Proof of Proposition \ref{tp2}}]

In what follows, we will denote by $\tP_h$ the non-negative finite measure on $\Xi$ obtained by setting $\mathrm{d}\tP_h(\xi)=h(\xi)\mathrm{d}\tP(\xi)$ where $h:\Xi \to [0,1]$ is a measurable function. We will proceed in stages as follows: we will first perform certain computations that hold true for a general $\P_h$. Subsequently, we use the result of these computations successively for $h=h_1$ and $h=h_2$, and then use (\ref{h120}) to connect the resulting quantities via an inequality.

We note that for any event $E$, we have $0 \le \tP_h(E) \le \tP(E)$.  Fix a $U \in \A^m$.  

For any Borel set $V$ in $\S_{\out}$ and $\eps>0$ we can find a  $\b \in \mathcal{B}$  such that  \[ \tP \l(  X_{\out}^{-1}(V) \Delta  X_{\out}^{-1} (\b)   \r) < \eps.\] This can be seen by considering the fact that the push forward probability measure $(X_{\out})_*\tP$ is a regular Borel measure on $\S_{\out}$, and $\mathcal{B}$ consists of finite unions of sets that form a topological basis for $\S_{\out}$. The purpose of this reduction is to exploit the fact that as $k\to \infty$, we have ${1\hskip-4pt{\rm 1}}_{\b}\l({X_{\out}^{n_k}}\r) \rightarrow {1\hskip-4pt{\rm 1}}_{\b}\l({X_{\out}}\r)$ a.s.

Let us consider the quantity $\tP_h [  (X_{\inn}^{n_k} \in U) \cap  ( X_{\out}^{n_k} \in \b) \cap \Omega_{n_k}(j_0) ]$. This is equal to \[ \tP_h \bigg[  (X_{\inn} \in U) \cap  ( X_{\out} \in \b) \cap \Omega_{n_k}(j_0) \bigg] + o_k(1;U,\b), \] where $o_k(1;U,\b)$ stands for a quantity that tends to $0$ as $k \to \infty$ for fixed sets $U$ and $\b$. This step uses the fact that $1\hskip-4pt{\rm 1}\l[X^{n_k}_{\out} \in \b\r] \to 1\hskip-4pt{\rm 1}\l[X_{\out} \in \b\r]$  and $1\hskip-4pt{\rm 1}\l[X^{n_k}_{\inn} \in U\r] \to 1\hskip-4pt{\rm 1}\l[X_{\inn} \in U\r]$ a.s. The expression in the last display above equals
\begin{equation} \label{tp2eq}  \tP_h \bigg[  (X_{\inn} \in U) \cap   ( X_{\out} \in V)  \cap \Omega_{n_k}(j_0) \bigg]  + o_{\eps}(1) + o_k(1;U,\b), \end{equation}  where $o_{\eps}(1)$ denotes a quantity that tends to $0$ uniformly in $k$ as $\eps \to 0$.

We can bound the probability in (\ref{tp2eq}) simply by $\tP_h \bigg[  (X_{\inn} \in U) \cap   ( X_{\out} \in V)  \bigg]$. Putting all these together, we have 
\begin{equation}
 \label{interm1}
\tP_h [  (X_{\inn}^{n_k} \in U) \cap  ( X_{\out}^{n_k} \in \b) \cap \Omega_{n_k}(j_0) ] \le  \tP_h \bigg[  (X_{\inn} \in U) \cap   ( X_{\out} \in V)  \bigg]    + o_{\eps}(1) + o_k(1;U,\b).
\end{equation}

To obtain a comparable lower bound requires some more work. We begin with \[\tP_h \bigg[  (X_{\inn} \in U) \cap   ( X_{\out} \in V)  \cap \Omega_{n_k}(j_0) \bigg] \ge \tP_h \bigg[  (X_{\inn} \in U) \cap   ( X_{\out} \in V)  \cap  \Omega_{n_k}(j_0) \cap \O(j_0) \bigg]. \]
Observe that  $ \Omega({j_0}) = \varliminf_{k \to \infty} \Omega_{n_k}(j_0)$ and $\tP \l( \big(\varliminf_{l \to \infty} \Omega_{n_l}(j_0)\big) \Delta \l( \bigcap_{l \ge k} \Omega_{n_l}(j_0)  \r)  \r) = \op_k(1)$ where $\op_k(1)$ denotes a quantity which, for a fixed $j_0$, converges to $0$ as $k \to \infty$,   uniformly in all the other quantities.

Hence \[ \tP_h \bigg[(X_{\inn} \in U) \cap ( X_{\out} \in V) \cap \Omega_{n_k}(j_0)  \cap  \Omega(j_0)   \bigg] \]\[ = \tP_h \bigg[  (X_{\inn} \in U) \cap  ( X_{\out} \in V) \cap \Omega_{n_k}(j_0)  \cap \l( \bigcap_{l \ge k} \Omega_{n_l}(j_0)  \r) \bigg]  + \op_k(1)  \hspace{3 pt}.  \]
But $ \Omega_{n_k}(j_0) \cap  \l( \bigcap_{l \ge k} \Omega_{n_l}(j_0) \r)   = \l( \bigcap_{l \ge k} \Omega_{n_l}(j_0) \r) $ and hence the probability on the right hand side of the last display equals
\[  \tP_h \bigg[  (X_{\inn} \in U) \cap  ( X_{\out} \in V) \cap \Omega(j_0)  \bigg] + \op_k(1).  \]
The arguments above result in
\begin{equation}
 \label{interm2}
\begin{aligned}
 \tP_h \bigg[  (X_{\inn}^{n_k} \in U) \cap  ( X_{\out}^{n_k} \in \b) \cap \Omega_{n_k}(j_0) \bigg] \ge &\tP_h \bigg[  (X_{\inn} \in U) \cap  ( X_{\out} \in V) \cap \Omega(j_0) \bigg] \\  &+   o_k(1;U,\b)  + o_{\eps}(1) + \op_k(1).
\end{aligned}
\end{equation}

Now, we wish to prove (\ref{h123}). We appeal to (\ref{interm1}) with $h=h_2,U=U_2$ and to (\ref{interm2}) with $h=h_1,U=U_1$ to obtain, using (\ref{h120}) (applied with $\tv=\b$),  
\begin{equation}
\label{lims} 
\begin{aligned}
\tP_{h_1} \bigg[  (X_{\inn} \in U_1) \cap  ( X_{\out} \in V) \cap \Omega(j_0) \bigg]    \le &c(j_0) \tP_{h_2} \bigg[  (X_{\inn} \in U_2) \cap   ( X_{\out} \in V)  \bigg] \\   &+ o_{\eps}(1) + o_k(1;U,\b) + \op_k(1) + \vartheta(k).
\end{aligned}
\end{equation}
We first keep all the other quantities fixed and let $k \to \infty$, after that we let $\eps \to 0$ to obtain (\ref{h123}),  as desired. 
\end{proof}

\textbf{Acknowledgements.} We are very grateful to Fedor Nazarov and Mikhail Sodin for suggesting the approach to the tolerance theorems, and for many fruitful discussions. We thank Ron Peled for helpful comments. We are also grateful to the anonymous referees for their meticulous reading of the paper, for pointing out relevant references and a gap in the  proof of Theorem \ref{gin-1}, along with numerous thoughtful suggestions for the improvement of the manuscript.

\noindent \textbf{Email:} \newline
S. Ghosh \hspace{15 pt} \textsl{subhrowork@gmail.com} \hspace{115 pt}
Y. Peres \hspace{17 pt} \textsl{peres@microsoft.com}


\begin{thebibliography}{9}

\bibitem[AGZ09]{AGZ}
G. Anderson, A. Guionnet, O. Zeitouni, \emph{An Introduction to Random Matrices}, Cambridge University Press, 2009.


\bibitem[Bh05]{Bh}
J. Ben Hough, \emph{Large deviations for the zero set of an analytic function with diffusing coefficients}, http://arxiv.org/abs/math/0510237.

\bibitem[Bo11]{Bo}
A. Borodin, \emph{Determinantal Point Processes}, The Oxford Handbook of Random Matrix Theory, Oxford University Press, 2011. 

\bibitem[BK89]{BK}
R.M. Burton, M. Keane, \emph{Density and Uniqueness in Percolation}, Comm. Math. Phys., Volume 121, Number 3 , 501-505, 1989 .

\bibitem[Du10]{Du}
R. Durrett, \emph{Probability: Theory and Examples, Fourth Edition}, Cambridge University Press, 2010.

\bibitem[DG09]{DG}
P. Deift, D. Gioev, \emph{Random Matrix Theory: Invariant Ensembles and Universality}, Courant Lecture Notes in Mathematics 18, American Mathematical Society, 2009. 

\bibitem[DaV97]{DV}
D.J. Daley, D. Vere Jones, \emph{An Introduction to the Theory of Point Processes (Vols. I \& II)}, Springer, 1997.

\bibitem[FH99]{FH}
P.J. Forrester, G. Honner, \emph{Exact statistical properties of the zeros of complex random polynomials}, J. Phys. A: Math. Gen., Vol. 32, No. 16, 1999.

\bibitem[G12-I]{G12}
S. Ghosh, \emph{Determinantal processes and completeness of random exponentials: the critical case}, http://arxiv.org/abs/1211.2435.

\bibitem[G12-II]{G12-2}
S. Ghosh, \emph{ Rigidity and Tolerance in Gaussian zeroes and Ginibre eigenvalues: quantitative estimates}, http://arxiv.org/abs/1211.3506.

\bibitem[Gin65]{Gin}
J. Ginibre, \emph{Statistical ensembles of complex, quaternion, and real matrices}, Journal of Mathematical Physics, 1965.

\bibitem[Go10]{Go}
A. Goldman, \emph{The Palm measure and the Voronoi tessellation for the Ginibre process}, Ann. Appl. Probab., Volume 20, Number 1 (2010), 90-128.

\bibitem[GK]{GK}
S. Ghosh, M. Krishnapur, \emph{Rigidity hierarchy in random point fields: random polynomials and determinantal processes}, in preparation.

\bibitem[GKP12]{GKP12} 
S. Ghosh, M.Krishnapur, Y. Peres, \emph{Continuum Percolation for Gaussian zeroes and Ginibre eigenvalues}, http://arxiv.org/abs/1211.2514v1.

\bibitem[HKPV10]{HKPV}
J.B. Hough, M.Krishnapur, Y.Peres, B.Virag, \emph{Zeros of Gaussian Analytic Functions and Determinantal Point Processes}, A.M.S., 2010.

\bibitem[HLy03]{HL}
D. Heicklen, R. Lyons, \emph{Change Intolerance in Spanning Forests}, Journal of Theoretical Probability, Volume 16, Number 1 (2003), 47-58.

\bibitem[HP05]{HP}
A.E. Holroyd, Y. Peres, \emph{Extra heads and invariant allocations}, Ann. Probab., 33(1):31-52, 2005.

\bibitem[HS10]{HS}
A.E. Holroyd, T.Soo, \emph{Insertion and deletion tolerance of point processes}, arXiv:1007.3538v2, 2010.

\bibitem[Kr06]{Kr}
M. Krishnapur, \emph{Zeros of Random Analytic Functions}, PhD Thesis, University of California, Berkeley [arXiv.math.PR/0607504], 2006.

\bibitem[Ly03]{Ly}
R. Lyons, \emph{Determinantal Probability Measures}, Publ. Math. Inst. Hautes Etudes Sci., 98, 167-212, 2003.

\bibitem[NS10]{NS}
F. Nazarov, M.Sodin, \emph{Random Complex Zeroes and Random Nodal Lines}, Proceedings of the International Congress of Mathematicians, Vol. III, 1450-1484, 2010.

\bibitem[NS11]{NS1}
F. Nazarov, M. Sodin, \emph{Fluctuations in Random Complex Zeroes: Asymptotic Normality Revisited}, Int. Math. Res. Not., Vol. 2011, No. 24, 5720-5759, 2011.

\bibitem[NSV07]{NSV}
F. Nazarov, M. Sodin, A. Volberg, \emph{Transportation to random zeroes by the gradient flow. Geometric and Functional Analysis}, Vol 17-3, 887-935, 2007.

\bibitem[Pa00]{Pa}
K.R. Parthasarathy, \emph{Probability Measures on Metric Spaces}, Amer. Math. Soc., 2000.

\bibitem[PS14]{PS}
Y. Peres, A. Sly, \emph{Rigidity and tolerance for perturbed lattices}, arXiv:1409.4490.

\bibitem[PV05]{PV}
Y. Peres, B. Virag, \emph{Zeros of the i.i.d. Gaussian power series: a conformally invariant determinantal process }, Acta Mathematica, Volume 194, Issue 1, pp 1-35, 2005. 

\bibitem[Rud87]{Rud}
W Rudin, \emph{Real and Complex Analysis}, McGraw-Hill, 3rd edition, 1987. 

\bibitem[RV07]{RV}
B. Rider, B. Virag, \emph{The Noise in the Circular Law and the Gaussian Free Field }, Int. Math. Res. Not., Vol. 2007, 2007.

\bibitem[Sos00]{Sos}
A. Soshnikov, \emph{Russian Math. Surveys} 55:5 923–975 (\emph{Uspekhi Mat. Nauk} 55:5 107–160), 2000.

\bibitem[Sta99]{Sta}
R. Stanley, \emph{Enumerative Combinatorics} Volume 2, Cambridge University Press, 1999.

\bibitem[S00]{Sod}
M. Sodin, \emph{Zeros of Gaussian analytic functions}, Mathematical Research Letters, Volume 7, Number 4, 371–381, 2000.

\bibitem[STs1-04]{STs1}
M. Sodin, B. Tsirelson, \emph{Random complex zeroes, I. Asymptotic normality}, Israel Journal of Mathematics, Volume 144, Number 1, 125-149, 2004.

\bibitem[STs2-06]{STs2}
M. Sodin, B. Tsirelson, \emph{Random complex zeroes, II. Perturbed lattice}, Israel Journal of Mathematics, Volume 152, Number 1, 105-124, 2006.

\bibitem[STs3-05]{STs3}
M. Sodin, B.Tsirelson, \emph{Random complex zeroes, III. Decay of the hole probability}, Israel Journal of Mathematics, Volume 147, Number 1, 371-379, 2005.

%
%
\bibitem[ShZe08]{SZ1}
B. Shiffman, S. Zelditch, \emph{Number Variance of Random Zeros on Complex Manifolds}, Geom. Funct. Anal., Volume 18 Number 4, 1422-1475, 2008.

\bibitem[ShZeZ-08]{SZ2}
B. Shiffman, S. Zelditch, S, Zrebiec, \emph{Overcrowding and hole probabilities for random zeros on complex manifolds}, Indiana Univ. Math. J. 57 , 1977-1997, 2008.

\bibitem[ShZe10]{SZ3}
B. Shiffman, S. Zelditch, \emph{Number variance of random zeros on complex manifolds, II: smooth statistics}, Special issue in honor of Joseph J. Kohn, Pure Appl. Math. Q. 6 , 1145-1167, 2010.

\bibitem[ShZeZh11]{SZ4}
B. Shiffman, S. Zelditch, Q. Zhong, \emph{Random zeros on complex manifolds: conditional expectations}, J. Math. Inst. Jussieu 10 , 753-783, 2011.

\bibitem[Str65]{Strassen}
V. Strassen,  \emph{The existence of probability measures with given marginals}, Ann. Math. Statist., 36, 423–439, 1965.

\bibitem[ZeiZe10]{ZZ}
O. Zeitouni, S. Zelditch, \emph{Large deviations of empirical zero point measures on Riemann surfaces, I: g=0}, I.M.R.N. Vol. 2010, No. 20, 3939-3992, 2010.

\bibitem[Ze13]{Ze}
S. Zelditch, \emph{Large deviations of empirical measures of zeros on Riemann surfaces}, I.M.R.N., Vol. 2013, 592-664, 2013.


\end{thebibliography}
\end{document}